\providecommand{\keywords}[1]
{
  \small	
  \textbf{\textit{Keywords}:} #1
}
\newtheorem{theorem}{Theorem}
\newtheorem{lemma}{Lemma}
\newtheorem{assumption}{Assumption}
\newtheorem{proposition}{Proposition} 
\theoremstyle{remark}
\newtheorem{definition}{Definition}
\newtheorem{corollary}{Corollary}
\newtheorem{remark}{Remark}
\def\R{\mathbb R}
\def\P{\mathbb P}
\def\S{\mathcal S}
\def\E{\mathbb E}
\def\col{\mathrm{col}}
\def\wh{\widehat}
\def\var{\mathrm{var}}
\def\mb{\mathbb}
\def\bs{\boldsymbol}
\def\ol{\overline}
\def\Cov{\mathrm{Cov}}
\def\mr{\mathrm}
\newcommand{\Tr}{{\text{Tr}}}
\newcommand{\supp}{\textrm{supp}}
\newcommand{\kl}{\textrm{KL}} 
\newcommand{\sgn}{\mathrm{sgn}}
\def\X{\mathcal{X}}
\newcommand{\wt}{\widetilde}
\newcommand{\ttE}{\mathtt{E}}
\newcommand\independent{\protect\mathpalette{\protect\independenT}{\perp}}
\def\independenT#1#2{\mathrel{\rlap{$#1#2$}\mkern2mu{#1#2}}}
\newcommand{\bO}{\boldsymbol{\Omega}}
\def\independenT#1#2{\mathrel{\rlap{$#1#2$}\mkern2mu{#1#2}}}
\newcommand{\indp}{\protect\mathpalette{\protect\independenT}{\perp}}
\newcommand{\bS}{\boldsymbol{\Sigma}}
\newcommand{\tp}{^\top}
\newcommand{\N}{\mathcal{N}}
\newcommand{\bbE}{\mathbb{E}}
\newcommand{\bbP}{\mathbb{P}}
\newcommand{\leqs}{\leqslant}
\newcommand{\geqs}{\geqslant}
\def\Ent{\mathrm{Ent}}
\newcommand{\bbR}{\mathbb{R}}
\newcommand{\va}{\boldsymbol{a}}
\newcommand{\ve}{\boldsymbol{e}}
\newcommand{\vl}{\boldsymbol{l}}
\newcommand{\vm}{\boldsymbol{m}}
\newcommand{\vt}{\boldsymbol{t}}
\newcommand{\vu}{\boldsymbol{u}}
\newcommand{\vv}{\boldsymbol{v}}
\newcommand{\vx}{\boldsymbol{x}}
\newcommand{\vy}{\boldsymbol{y}}
\newcommand{\vz}{\boldsymbol{z}}
\newcommand{\vA}{\boldsymbol{A}}
\newcommand{\vB}{\boldsymbol{B}}
\newcommand{\vC}{\boldsymbol{C}}
\newcommand{\vD}{\boldsymbol{D}}
\newcommand{\vE}{\boldsymbol{E}}
\newcommand{\vF}{\boldsymbol{F}}
\newcommand{\vG}{\boldsymbol{G}}
\newcommand{\vI  }{\mathbf{I}}
\newcommand{\vJ}{\boldsymbol{J}}
\newcommand{\vK}{\boldsymbol{K}}
\newcommand{\vL}{\boldsymbol{L}}
\newcommand{\vM}{\boldsymbol{M}}
\newcommand{\vQ}{\boldsymbol{Q}}
\newcommand{\vU}{\boldsymbol{U}}
\newcommand{\vV}{\boldsymbol{V}}
\newcommand{\vW}{\boldsymbol{W}}
\newcommand{\vX}{\boldsymbol{X}}
\newcommand{\vZ}{\boldsymbol{Z}}
\newcommand{\bbeta}{\boldsymbol{\beta}}
\newcommand{\bLambda}{\boldsymbol{\Lambda}}
\newcommand{\bxi}{\boldsymbol{\xi}}
\newcommand{\one}{\mbox{~$1$}}
\newcommand{\vmu}{{ \boldsymbol{\mu} }}
\def\diff{\,\textrm{d}\,}
\newcommand{\mc}{\mathcal}
  \newcommand{\be}{%
  \begingroup
  \eqnarray%
   \@ifstar{\nonumber}{}%
  }
\title{On the Structural Dimension of Sliced Inverse Regression}
\author{
Dongming Huang\thanks{Department of Statistics and Data Science, National University of Singapore. Email: \texttt{stahd@nus.edu.sg}}, 
Songtao Tian~\thanks{Department of Mathematical Sciences, Tsinghua University. Email: \texttt{tst20@mails.tsinghua.edu.cn}}~\thanks{Co-first author},
Qian Lin\thanks{Center for Statistical Science \& Department of Industrial Engineering, Tsinghua University; Beijing Academy of Artificial Intelligence.  Email: \texttt{qianlin@tsinghua.edu.cn}}.
}
\date{}
\begin{document}
\maketitle
\begin{abstract}
In this work, we address the longstanding puzzle that Sliced Inverse Regression (SIR) often performs poorly for sufficient dimension reduction when the structural dimension $d$ (the dimension of the central space) exceeds 4. 
We first show that in the multiple index model $Y=f( \mathcal P \boldsymbol{X})+\epsilon$ where 
$\boldsymbol{X}$ is a $p$-standard normal vector, 
$\epsilon$ is an independent noise, 
and $\mathcal P$ is a projection operator from $\mathbb R^{p}$ to $\mathbb R^{d}$, if the link function $f$ follows the law of a Gaussian process, then with high probability, the $d$-th eigenvalue $\lambda_{d}$ 
 of $\mathrm{Cov}\left[\mathbb{E}(\boldsymbol{X}\mid Y)\right]$ satisfies $\lambda_{d}\leq C e^{-\theta d}$ for some positive constants $C$ and $\theta$. 
We then focus on the low signal regime where $\lambda_{d}$ can be arbitrarily small and not larger than $d^{-8.1}$, and prove that the minimax risk of estimating the central space is lower bounded by $\frac{dp}{n\lambda_{d}}$. 
Combining these two results, we provide a convincing explanation for the poor performance of SIR when $d$ is large, a phenomenon that has perplexed researchers for nearly three decades. The technical tools developed here may be of independent interest for studying other sufficient dimension reduction methods.
\end{abstract}

\keywords{
{Central space},
{sufficient dimension reduction},
{sliced inverse regression},
{structural dimension},
{minimax lower bound},
{multiple-index model}
}

\section{Introduction}\label{sec:introduction}

A subspace $\mathcal S\subset \mathbb R^{p}$ is a dimension reduction subspace of a joint distribution of $(\vX,Y)\in\mathbb R^{p}\times \mathbb R$ if $Y\independent \vX| P_{\mathcal S}\vX$, 
where $P_{\mc{S}}$ denotes the projection operator from $\mathbb{R}^{p}$ to the subspace $\mc{S}$.
The \textit{central space}, denoted by $\mathcal S_{Y\mid \vX}$, is the intersection of all dimension reduction subspaces.
The primary objective of sufficient dimension reduction (SDR) is to estimate the central space $\mathcal S_{Y \mid \vX}$. \text{~~~~~}

\cite{li1991sliced} considers the following multiple-index model that
\begin{align}\label{eq:multiple index model vector}
Y=f(\bbeta_1\tp\vX,\dots,\bbeta_d\tp\vX,\epsilon),\quad d < p
\end{align}
where $f:\R^{d+1}\to \R$ is an unknown nonparametric link function,  $\bbeta_{i}\in\R^p$ are the index vectors, and ${\epsilon}$ is a random noise independent of $\boldsymbol{X}$, and proposes \textit{sliced inverse regression} (SIR), the first SDR method, to estimate the central space. 
Note that the model for $(\vX,Y)$ in \eqref{eq:multiple index model vector} is semiparametric where $f$ is a infinite-dimensional nuisance parameter and the central space is $\mathcal S_{Y \mid \vX}=\mr{span}\{\bbeta_1,\dots,\bbeta_d\}$. 
Besides SIR, various SDR algorithms have been proposed in the literature, such as \textit{sliced average variance estimation} (SAVE, \cite{cook1991sliced}), \textit{directional regression} (DR, \cite{li2007directional}), and many others. 
These methods are widely applied and often serve as an intermediate step in modelling the relation between $Y$ and $\vX$ in various fields. 
However, in modern data analyses, the dimension of covariates $p$ can often be so large that traditional SDR methods do not apply, which compels us to develop high-dimensional SDR methods. 
In addition, \emph{the structural dimension} $d$ (i.e., the dimension of the central space $\mc S_{Y\mid \vX}$) may also be large, which presents further challenges. 
Understanding the theoretical limitations of existing SDR methods in these challenging scenarios is important for advancing this development.

The asymptotic properties of SIR have been of particular interest in recent decades, as SIR is considered one of the most popular SDR methods due to its simplicity and computational efficiency. 
When the dimension $p$ is either fixed or growing at a slower rate than the sample size $n$, researchers have extensively studied SIR's asymptotic properties in various settings. 
For example,   \cite{hsing1992asymptotic} established the root-$n$ consistency and asymptotic normality of SIR when the sample size in each slice (denoted by $c$) is $2$; 
\cite{zhu1995asymptotics} provided asymptotic results of SIR for an arbitrary fixed constant $c$ or a growing $c$ as the sample size $n$ increases; 
\cite{zhu2006sliced} discussed the condition for SIR to give a consistent estimator when $p=o(\sqrt n)$;  \cite{wu2011asymptotic} determined the convergence rate of SIR for sparse multiple-index models with $p=o(n/\log(n))$. 

It becomes more challenging to study the properties of SIR in the high-dimensional regime, where the dimension $p$ could be comparable to or even larger than $n$. 
\cite{lin2018consistency} studied the situation where $\delta:=\lim p/n$ is a constant and proved that the SIR estimator for the central space is consistent if and only if $\delta=0$. 
To deal with the challenges in high-dimensional settings, they proposed the DT-SIR method that is consistent under the sparsity assumption that the central space only depends on a small proportion of the predictors. 
We hereafter call by \emph{sparse SIR} any method that modifies the original SIR so as to estimate the central space under the sparsity assumption. 
 \cite{lin2021optimality} established the minimax rate optimality of sparse SIR over a large class of high-dimensional multiple-index models. 
 \cite{lin2019sparse} proposed the Lasso-SIR algorithm, which is computationally efficient and achieves the minimax rate.  
 In a different setting, \cite{tan2020sparse} 
 studied the minimax rates under various loss functions and proposed a computationally tractable adaptive estimation scheme for sparse SIR.

Despite these advancements, there is a noticeable gap between theory and application. 
It is widely observed that SIR and other SDR algorithms often fail to fully recover the central space when the structural dimension $d$ is larger than $4$ \citep{ferre1998determining} and most numerical experiments are reported only for $d\leq 4$ (see, for example, \cite{li1991sliced,cook1991sliced,zhu2006sliced,li2007sparse,ma2012semiparametric,jiang2014variable,yu2016marginal}). 
In contrast, theoretical research asserts that these algorithms can consistently estimate the central space for any given $d$. 
This discrepancy between empirical observations and theoretical assertions motivates our investigation into the impact of the structural dimension $d$ on estimation accuracy.

The theory established by \cite{lin2021optimality} suggests that for a fixed structural dimension $d$, the performance of SIR crucially depends on the ``generalized signal-to-noise ratio'' (gSNR) introduced by \cite{lin2019sparse}. 
Inspired by this result, we conducted extensive numerical experiments and observed that as $d$ increases, the gSNR tends to decay quickly (see Section~\ref{sec:simulation small gSNR} for details). 
In light of this phenomenon, if we can 
(1) quantitatively characterize the decay rate of the gSNR as $d$ increases, such as gSNR=$O_{P}(e^{-\theta d})$ for some positive constant $\theta$, and (2) extend the minimax results in \cite{lin2021optimality} to situations where the structure dimension $d$ is unbounded, then it becomes clear that to guarantee the risk of estimation is smaller than any prescribed constant $\varepsilon$, the required sample size must be larger than $\frac{d  p}{\varepsilon  \text{gSNR}}$. This implies that the number of required samples grows rapidly (or even exponentially) as $d$ increases. 
However, neither of these two statements is available in the literature. First, existing minimax theories are restricted because they have to assume that the structural dimension $d$ is either bounded or fixed \citep{lin2021optimality,tan2020sparse}. 
Second, few theoretical tools are available for analyzing the gSNR, so there is a need for novel proof techniques. 
This paper aims to rigorously implement the aforementioned strategy, so that we can offer a convincing explanation for the puzzling discrepancy between empirical observations and existing theories for SIR and related SDR algorithms.

\subsection{Major contributions}
In this article, we provide a rigorous theory to explain how the structural dimension $d$ affects the estimation accuracy of the central space $\mc S_{Y\mid \vX}$. Throughout the paper,  we assume that the structural dimension $d$ is known but we allow $d$ to be arbitrarily large (i.e., there is no constant upper bound on $d$).

Our first major contribution is the introduction of a relatively mild condition to replace the vague technical condition, the sliced stable condition (SSC), used in previous studies to obtain the concentration inequality for the SIR estimator \citep{lin2018consistency}. The new condition, called weak sliced stable condition (WSSC), guarantees all the results previously established under SSC (such as those in \cite{lin2018consistency,lin2019sparse,lin2021optimality}). We prove that WSSC holds under some mild conditions that are readily interpretable: 
$\sup_{\|\bbeta\|=1}\E[|\langle\bs X,\bbeta\rangle|^\ell]<\infty$ for some $\ell>2$, $Y$ is a continuous random variable, and $\bbE[\vX\mid Y=y]$ is a continuous function; see Theorem~\ref{thm:moment condition to sliced stable}. 
This relaxation allows us to investigate the theory of SIR under the least restrictive conditions found in the SIR literature, and it may inspire further theoretical investigation of other slicing-based SDR algorithms.

Our second major contribution is demonstrating that the generalized signal-to-noise ratio (gSNR), defined as the $d$-th largest eigenvalue $\lambda_{d}$ of $\Cov(\bbE[\vX|Y])$, decays quickly as the structural dimension $d$ increases. 
Extensive numerical experiments illustrate that the gSNR appears to decay exponentially with increasing $d$. 
We rigorously prove that with high probability, $\lambda_{d}=O(e^{-\theta d})$ for some positive constant $\theta$ under the assumption that the link function $f$ follows the law of a Gaussian Process; see Theorem~\ref{thm:GP-gSNR-fast-decay}. 
This property of the eigenvalue $\lambda_{d}$ of $\Cov(\bbE[\vX|Y])$ is closely related to the study of level sets of Gaussian processes \citep{azais2009level}. 
However, to the best of our knowledge, no result regarding the conditional mean $\bbE[\vX|Y=y]$ and its covariance matrix has been developed in the literature on level sets of Gaussian processes. 
Therefore, further development of the related theory might be of independent interest.

The third major contribution of this paper is establishing a sharp lower bound on the minimax risk of estimating the central space with a diverging structural dimension $d$ in the low gSNR regime. 
Specifically, for a large class of link functions where $\lambda_{d}$ can be arbitrarily small but no larger than $O(d^{-8.1})$, we show that the minimax lower bound on the risk of estimating the central space is given by $\frac{dp}{n\lambda_{d}}$; see Theorem~\ref{thm:lower-smallp}. 
Apart from the classical Fano's method, 
our proof involves a novel construction of nonlinear dependent distributions and a new sharp upper bound on the pairwise Kullback-Leibler (KL) divergence. 
Unlike most previous works on related problems such as linear regression, principal component analysis, and canonical correlation analysis \citep{raskutti2011minimax,cai2013sparse,gao2014minimax}, 
the KL-divergence in our problem does not have a closed-form expression because the relationship between $Y$ and $\vX$ is nonlinear and semiparametric (recall that $f$ is a nonparametric nuisance parameter and the estimation is about $\mr{span}\{\bbeta_1,\dots,\bbeta_d\}$). 
Furthermore, the proof strategy for bounded $d$ in \citet{lin2021optimality} does not apply here, since their upper bound on the KL-divergence fails to permit the desired minimax rate. 
What makes the current problem even more challenging is that the KL-divergence depends on the gSNR $\lambda_{d}$ in a complicated way since $\lambda_{d}$ is defined through the inverse regression (i.e., the conditional mean of $\vX$ given $Y$). 
With few existing results to utilize, we have to carefully analyze the KL-divergence and its derivatives with respect to the parameters of interest in order to establish a sharp bound. 
This effort turns out to be fruitful; 
combining the minimax lower bound with the exponential decay rate of $\lambda_{d}$, we come to the most important conclusion of this paper: 
\begin{quote}
\emph{As the structural dimension $d$ increases, the sample size required for consistent estimation of the central space typically grows at least at the order of $e^{\theta d}dp$.}
\end{quote} 
This explains why SIR and other SDR algorithms perform poorly in practice when $d$ is large.
Although this theory is developed by assuming $d$ is known, our conclusion here continues to hold because the estimation problem will be harder if $d$ is unknown.

In addition to the major contributions mentioned above, this work also presents some novel results. 
For example, we prove a universal upper bound on the gSNR, which is tight but only attained if the link function is deliberately constructed. 
In addition, we extend the results about minimax rates reported in \cite{lin2021optimality} to situations where the structural dimension $d$ is large and $\lambda_d$ is small and establish the rate optimality of SIR in low-dimensional settings and one of its sparse variants in high-dimensional settings.

\bigskip
\subsection{Organization of the paper} 

The paper is organized as follows. 
In Section \ref{sec:A brief review of SIR}, we briefly review the SIR procedure for estimating the central space. 
Sections \ref{sec: Key lemma under weak sliced stable condition} and \ref{sec: Weak sliced stable condition is very mild} introduce a new mild condition for analyzing SIR estimators. 
In Section~\ref{sec:small-gSNR}, we demonstrate that the gSNR decays fast as the structural dimension increases. 
Specifically, Section~\ref{sec:eigen-upper} provides a universal upper bound on the gSNR and Section~\ref{sec:gSNR-exponential-decay} shows that the gSNR decays exponentially fast if the link function $f$ is sampled from a Gaussian process. 
Section~\ref{sec:minimax lower bounds} presents our minimax lower bounds for estimating the central space when the structural dimension is large. 
In Section \ref{sec:minimax rate}, we establish upper bounds on the risk of SIR and sparse SIR, which match the respective minimax lower bounds under certain mild conditions. 
Numerical studies illustrating the phenomena revealed by our theory are presented in Section~\ref{sec:simulation}. 
Section~\ref{sec:discussions} discusses related problems and future directions.

\subsection{Notation}
For a matrix $\vV\in\R^{l\times m}$, we denote its column space, its $i$-th row, $j$-th column, and $k$-th singular value by $\col(\vV)$, $\vV_{i,*}$, $\vV_{*,j}$, and $\sigma_k(\vV)$, respectively.
We use $\mc{P}_{\vV}$  to denote the orthogonal projection w.r.t. the standard inner product $\langle\cdot,\cdot\rangle$ in Euclidean space onto $\col(\vV)$. 
For a square matrix $\vA\in\R^{m\times m}$, we denote the $i$-th largest eigenvalue and the trace of $\vA$ by $\lambda_i(\vA)$ and  $\Tr(\vA):=\sum_{i=1}^m \vA_{i,i}$, respectively. 
The Frobenius norm and the operator
norm ($2$-norm) of the matrix $\vV\in\R^{l\times m}$   are defined as $\|\vV\|_{F}:=\sqrt{\Tr(\vV^\top \vV)}$ and $\|\vV\|:=\sqrt{\lambda_1(\vV^\top \vV)}$, respectively.
For a vector $\vX$, denote by $X_k$ the $k$-th entry of $\vX$ and write
$\vX^{\otimes}=\vX\vX\tp$. 
For two  numbers $a$ and $b$, we use $a\vee b$ and $a\wedge b$ to denote $\max\{a,b\}$ and  $\min\{a,b\}$, respectively.
For a positive integer $c$, denote by $[c]$ the set $\{1,2,...,c\}$. 
For any positive integers $p$ and $d$ such that $p\geqslant d$, denote by $\mathbb{O}(p,d)$ the set of all $p\times d$ matrices with orthonormal columns. 
We use $\mb{S}^{p-1}$ to denote the ($p-1$)-sphere, i.e., $\mb{S}^{p-1}:=\{\vx\in\R^p:\|\vx\|=1\}$.

We use $C$, $C'$, $C_1$, and $C_2$ to denote generic absolute constants, though their actual values may vary from case to case.
For two sequences $a_{n}$ and $b_{n}$, we write $a_{n}\gtrsim b_{n}$ (resp. $a_{n}\lesssim b_{n}$) when there exists a positive constant $C$ such that $a_{n} \geqslant Cb_{n}$ (resp. $a_{n} \leqslant C'b_{n}$). 
If both $a_{n}\gtrsim b_{n}$ and $a_{n}\lesssim b_{n}$ hold, we write $a_{n}\asymp b_{n}$. We write $a_n=o(b_n)$ if $\lim_{n\to\infty}a_n/b_n=0$.

\section{Asymptotics of SIR under Weak Sliced Stable Condition}\label{sec:concentration under WSSC}

Recently, a series of studies \citep{lin2018consistency,lin2021optimality,lin2019sparse} have investigated the behavior of SIR on high-dimensional data. 
These studies showed that SIR can produce a consistent estimate of the central space if and only if $\lim \frac{p}{n}=0$ and SIR attains the minimax rate in various settings. 
One of the key technical tools developed in these studies is the `key lemma': a `deviation property' of the quantity $\bm{\beta}^{\top}\widehat{\bLambda}_{H}\bm{\beta}$ (see the definition in Equation~\eqref{eqn:lambda}) for any unit vector $\bbeta\in\R^p$. 
The `key lemma' relies heavily on the sliced stability condition (SSC), which is scarcely seen. In this section, we propose the weak sliced stable condition (WSSC), a mild condition that is easy to verify, and we investigate the asymptotic behavior of SIR under WSSC.

\subsection{A brief review of SIR}\label{sec:A brief review of SIR}
Suppose that we observed $n$ i.i.d. samples $\{(\vX_{i},Y_{i})\}_{i\in[n]}$ drawn from the joint distribution of $(\vX,Y)$ given by the following multiple-index model: 
\begin{align}\label{model:modified:multiple}
Y=f(\vB\tp\vX,\epsilon), \quad \vB\tp \bS \vB = \vI_d,
\end{align}
where $f$ is an unknown link function, $\vB=\left[\bbeta_1,\dots,\bbeta_d\right]\in\R^{ p\times d}$ is the index matrix, $\epsilon\sim N(0,1)$ is independent of $\vX$, and $\bS$ is the covariance matrix of $\vX$. 
Throughout the paper, we assume that $\E[\vX]=0$ without loss of generality. 
Though $\vB$ is not identifiable because of the unknown link function $f$, the central space $\S_{Y|\bs{X}}:=\col(\vB)$ can be estimated.

The SIR procedure for estimating $\col(\vB)$ can be briefly described as follows.
First of all, the samples $\{(\vX_{i},Y_{i})\}_{i\in[n]}$ are divided into $H$ equal-sized slices according to the order statistics $Y_{(i)}$; for simplicity, we assume that $n=cH$, where $c$ is a positive integer. 
Next, the data can be re-expressed as $\vX_{h,j}$ and $Y_{h,j}$, with $(h,j)$ as the double subscript, where $h$ denotes the order of the slice and $j$ the order of the sample in the $h$-th slice, i.e.,
\begin{equation}\label{eq:sliced-xy}
\vX_{h,j}=\vX_{(c(h-1)+j)}, \mbox{\quad \quad }Y_{h,j}=Y_{(c(h-1)+j)} .
\end{equation}
Here $\vX_{(k)}$ is the concomitant of $Y_{(k)}$ \citep{yang1977}. 
Let $\S_{h}$ be the $h$-th interval $(Y_{(h-1,c)},Y_{(h,c)}]$ for $h=2,\dots,H-1$, $\S_1=\{y\mid y\leqslant Y_{(1,c)}\}$, and $\S_{H}=\{y\mid y>Y_{(H-1,c)} \}$. 
Consequently, $\mathfrak{S}_{H}(n):=\{\S_{h}, h=1,..,H\}$ is a partition of $\R$ and is referred to as the \textit{sliced partition}. 
Denote the sample mean of $\vX$ in the $h$-th slice by  $
\overline{\vX}_{h,\cdot}$. 
The SIR algorithm estimates the candidate matrix $\bLambda:= \Cov(\bbE[\vX|Y])$ by 
\begin{equation}\label{eqn:lambda}
\widehat{\bLambda}_{H}=\frac{1}{H}\sum_{h=1}^H\ol{\vX}_{h,\cdot}\ol{\vX}_{h,\cdot}\tp.
\end{equation}
In \cite{lin2018consistency}, it is shown that when the ratio $p/n\to0$,
a consistent estimator of the central space $\col(\vB)$ is given by $\widehat{\bS}^{-1}\col(\widehat{\eta}_{H})$,  where $\widehat \bS$ is the 
sample covariance matrix of $\vX$ and $\widehat{\eta}_{H}$ is the matrix formed by the top $d$ eigenvectors of $\widehat{\bLambda}_{H}$. 
Alternatively, the central space $\col(\vB)$ could be estimated by $\col(\widehat{\vB}_{H})$, where $\wh\vB_H$ is defined by 
\begin{equation}\label{eqn:Bhat}
\begin{aligned}
\widehat{\vB}_{H}:= 
\arg\max_{\vB} ~~~\Tr(\vB\tp
\widehat{\bLambda}_{H}\vB) \quad \mbox{ s.t. } \vB\tp \widehat{\bS} \vB=\vI_d 
\end{aligned},
\end{equation} 
since $\col(\widehat{\eta}_{H})=\widehat{\bS}\col(\widehat{\vB}_{H})$ (see, e.g., Proposition~4 in \citet{li2007sparse}).

To ensure that SIR provides a consistent estimator of the central space, the relation $\mr{col}( \bLambda) =\bS\S_{Y|\bs{X}}$ is needed and the following two assumptions have been suggested in the literature \citep{li1991sliced,hsing1992asymptotic,zhu2006sliced,lin2018consistency}. 

\begin{assumption}\label{as:linearity and coverge} The joint distribution of $(\vX,Y)\in\bbR^{p}\times \bbR$ satisfies the following conditions:  
\begin{itemize}
 \item[\bf{i)}] \textit{Linearity condition:}
 	For any $\va\in\R^p$, the conditional expectation $\mathbb E\left[\langle \va,\vX\rangle \mid \vB^\top \vX\right]$ is linear in $\vB^\top \vX$.
 \item[\bf{ii)}]\textit{Coverage condition:} for some positive number $\lambda$, the eigenvalues of $\Cov(\bbE[\vX \mid Y])$ satisfy	
$$
\lambda\leqslant\lambda_{d}(\Cov(\bbE[\vX \mid Y])) \leqslant \lambda_{1}(\Cov(\bbE[\vX \mid Y]))\leqslant \lambda_{1}(\bS).
$$ 
\end{itemize}
\end{assumption}

The condition ($\bf i$) is a classic condition for SIR \citep{li1991sliced}. The condition ($\textit{\bf ii}$) is a refinement of the coverage condition in the SIR literature, as explained in Condition A2 of \citet{lin2019sparse}. 
We want to emphasize that the coverage condition is crucial for SDR methods that rely on the slicing procedure. 
As demonstrated by the minimax lower bounds (see Theorems \ref{thm:lower-smallp} and  \ref{thm:lower-largep}), if the eigenvalues of $\Cov(\bbE[\vX \mid Y])$ are too small, then no estimation method can accurately estimate the central space.

\bigskip

\subsection{Key lemma under weak sliced stable condition}\label{sec: Key lemma under weak sliced stable condition}
We begin by revisiting the sliced stable condition (SSC) used in the works of \cite{lin2018consistency,lin2021optimality,lin2019sparse} and then introduce an alternative condition. 
Throughout the paper, $\gamma$ is a fixed small positive constant. 

\begin{definition}[Sliced Stable Condition]\label{def:strong-sliced-stable}
	Let $Y\in\R$ be a random variable, $K$ a positive integer, and $\vartheta >0$ a constant. 
	A continuous curve $\boldsymbol{\kappa}(y): \R\to\R^p$ is said to be \textit{$( K, \vartheta)$-sliced stable} w.r.t. $Y$, if 
	for any $H\geqslant K$ and any  partition  $\mathcal{B}_{H}:=\{-\infty=a_0<a_1<\dots<a_{H-1}<a_H=\infty\}$ of $\R$ such that
	\begin{align}\label{eq:gamma partition}
	\frac{1-\gamma}{H} \leqslant \mathbb{P}(a_h\leqslant Y\leqslant a_{h+1})\leqslant \frac{1+\gamma}{H}, \qquad \forall h=0,1,\ldots, H-1,
	\end{align}
	it holds that
	\begin{align*}
	\frac{1}{H}\sum_{h=0}^{H-1}\mathrm{var}\left(\bbeta\tp\boldsymbol{\kappa}(Y)\big|  a_h\leqslant Y\leqslant a_{h+1} \right)\leqslant \frac{1}{H^{\vartheta}} \mathrm{var}\left(\bbeta\tp\boldsymbol{\kappa}(Y)\right)\quad(\forall\bbeta\in\mb{S}^{p-1}).
	\end{align*}
\end{definition}

\cite{lin2018consistency}  utilized this condition to establish the deviation properties of the eigenvalues, eigenvectors, and entries of $\widehat{\bLambda}_{H}$.  
Although they showed that the SSC is a mild condition, it is hard to verify whether the central curve  $\vm(y):=\mathbb E [ \vX\mid Y=y]$ of a given joint distribution $(\vX,Y)$ satisfies SSC.

To motivate a more manageable condition than SSC, we revisit an intuitive explanation of SSC: for any well-behaved continuous curve $\boldsymbol{\kappa}(y)$, if it is divided into $K$ pieces with roughly equal probability mass of the distribution of $Y$, then the average of the variances of $\boldsymbol{\kappa}(Y)$ in each piece  tends to $0$ as the slice number $H$ tends to $\infty$. 
The $(K,\vartheta)$-sliced stable condition requires the average of the variances to tend to $0$ at a certain rate (e.g., $H^{-\vartheta}$). 
This geometric explanation leads us to introduce the following definition of weak sliced stable condition (WSSC), which only requires the average of the variances to be sufficiently small.
\begin{definition}[Weak Sliced Stable Condition]\label{def:weak-sliced-stable}
	Let $Y\in\R$ be a random variable, $K$  a positive integer, and $\tau >1$ a constant. 
	A continuous curve $\boldsymbol{\kappa}(y): \R\to\R^p$ is said to satisfy the \textit{weak $( K, \tau)$-sliced stable condition} (weak $(K,\tau)$-SSC) w.r.t. $Y$, if 
	for any $H\geqslant K$ and  any partition  $\mathcal{B}_{H}$ of $\R$ such that
	$
	\frac{1-\gamma}{H} \leqslant \mathbb{P}(a_h\leqslant Y\leqslant a_{h+1})\leqslant \frac{1+\gamma}{H}, \forall h=0,1,\ldots, H-1,$
	it holds that
	\begin{align*}
	\frac{1}{H}\sum_{h=0}^{H-1}\mathrm{var}\left(\bbeta\tp\boldsymbol{\kappa}(Y)\big|   a_h\leqslant Y\leqslant a_{h+1} \right)\leqslant \frac{1}{\tau} \mathrm{var}\left(\bbeta\tp\boldsymbol{\kappa}(Y)\right)\quad(\forall\bbeta\in\mb{S}^{p-1}).
	\end{align*}
\end{definition}

In the following, we show that Lemma~1 in \citet{lin2018consistency}, referred to as the `key lemma' therein, still holds  if we replace  SSC with WSSC. Specifically, we can establish the
deviation properties of $\bbeta\tp\wh\bLambda_H\bbeta$ under WSSC if $H$ is sufficiently large.

We first recall the sub-Gaussian norm of a random variable $X$ is defined as  $\|X\|_{\psi_2}=\sup_{p \geq 1} p^{-1 / 2}\left(\mathbb{E}|X|^p\right)^{1 / p}$
and $X$ is said to be sub-Gaussian if its sub-Gaussian norm is finite. For a random vector $\vX$ in $\mathbb{R}^p$, its sub-Gaussian norm is defined as $\|\vX\|_{\psi_2}=\sup_{\boldsymbol{\beta} \in \mathbb{S}^{n-1}}\|\langle \vX, \boldsymbol{\beta}\rangle\|_{\psi_2}$, and it is said to be sub-Gaussian if its sub-Gaussian norm is finite. 

\begin{lemma}\label{lem:key lemma in Lin under moment condition}
Suppose the random vector $\vX$ is sub-Gaussian with sub-Gaussian norm bounded by $M$, and its covariance matrix $\bS$ satisfies $\|\bS\|\vee\|\bS^{-1}\| \leqslant M$, where $M$ is a positive constant. 
Suppose that Assumption \ref{as:linearity and coverge} holds

and that the central curve $\vm(y):=\mb E[\vX \mid Y=y]$ satisfies the weak $(K,\tau)$-SSC w.r.t. $Y$  with $\tau>16$.

If $\lambda\leqslant\lambda_{d}(\Cov(\bbE[\vX \mid Y])) \leqslant \lambda_{1}(\Cov(\bbE[\vX \mid Y]))\leqslant \kappa\lambda$ for some constant $\kappa>1$, 
	then there exist positive absolute constants $C,C_1,C_2$, and $C_3$  such that if $H\geqslant K\vee Cd$, then for any 
	$\nu\in(1,\tau/16]$, any unit vector $\bbeta\in\mr{col}(\bLambda)$ and any sufficiently large $n>1+4H/\gamma$, it holds that
	\begin{align*}
\bbP\left(\left|\bbeta^\top\left(\widehat{\bLambda}_{H}-\bLambda\right)\bbeta\right|\geqslant  \frac{1}{2\nu}\bbeta\tp\bLambda\bbeta\right)\leqslant C_{1} \exp\left(-C_{2}\frac{n\bbeta\tp\bLambda\bbeta}{H^2\nu^2}+C_{3}\log(nH)\right).
	\end{align*}
\end{lemma}

Compared with the `key lemma' in \cite{lin2018consistency}, the range of $\nu$ in Lemma~\ref{lem:key lemma in Lin under moment condition} is narrower, i.e., $\nu$ cannot go to infinity. 
The reason behind this difference is that, in the current lemma, we have relaxed the condition from SSC to WSSC. 
However, it is worth noting that the narrower range of $\nu$ makes no essential difference in the asymptotic theory developed for SIR, as we can establish convergence under WSSC in Section~\ref{sec:minimax rate}.

\subsection{Weak sliced stable condition is very mild}\label{sec: Weak sliced stable condition is very mild}
Though it is hard to verify whether the central curve $\vm(y)$ satisfies SSC, we can show that $\vm(y)$ satisfies WSSC under plausible assumptions. 

\begin{theorem}\label{thm:moment condition to sliced stable}
	Suppose that the  joint distribution of $(\vX,Y)\in \mathbb{R}^{p}\times\mb{R}$ satisfies the following  conditions:
\begin{itemize}
    \item[${\bf i)}$] for any  $\bbeta\in \mb S^{p-1}$,  $\mathbb{E}\left[|\langle\bbeta,\vX\rangle|^{\ell}\right]\leqslant c_{1}$ holds for absolute constants $\ell>2$ and $c_{1}>0$;
    \item[${\bf ii)}$]$Y$ is a continuous random variable;
\item[${\bf iii)}$]the central curve $\vm(y):=\bbE[\vX|Y=y]$ is continuous.
\end{itemize}
Then for any $\tau>1$, there exists an integer $K=K(\tau,d)\geqslant d$ such that $\vm(y)$ satisfies the weak $(K,\tau)$-SSC w.r.t. $Y$.
If we further assume that $\|\vm(y)-\vm(y')\|\leqslant C\zeta|y-y'|$ 
 for any $y,y'$ defined on $\R$ 
 and $C>0$ is a constant, then the WSSC coefficient $K$ can be as small as $\lceil K_0\zeta\rceil$ for some absolute constant $K_0$.

\end{theorem}

Theorem~\ref{thm:moment condition to sliced stable} provides a simple way to ensure the WSSC condition, for example, by utilizing the smoothness of the central curve $\vm(y)$. We discuss this point in the following remarks. 

\begin{remark}\label{remark:sufficient condition of K=Cd}

Since $\mr{dim}\{\mr{span}\{\vm(y):y\in \R\}\}=\mr{rank}(\mr{Cov}(\vm(Y)))=d$, one can always turn the last $p-d$ entries of $\vm(y)$ into zero through an orthogonal transformation. If we assume the $d$-dimensional curve $\vm(y)$ is $(Cd)$-Lipschitz for some constant $C$, we can take $\zeta$ to be $d$ in Theorem~\ref{thm:moment condition to sliced stable} and conclude that the WSSC holds with $K\asymp d$. 
We consider the $(Cd)$-Lipschitz condition on $\vm(\cdot)$ to be rather mild. 
First of all, it is less restrictive than the conditions in \cite{wu2011asymptotic} and \cite{tan2020sparse}, which require $\vm(\cdot)$ to take only a limited number of values. 
Furthermore, the Lipschitz condition is more suitable for studies with diverging $d$ than the previous conditions proposed in the literature, such as the smoothness condition \citep{hsing1992asymptotic} and the total variation condition \citep{zhu1995asymptotics,zhu2006sliced}, since these conditions view $d$ as fixed.

\end{remark}

\begin{remark}
    If the WSSC does not hold with $K$ sufficiently smaller than the sample size $n$, SIR may fail because it may require too many slices for $\widehat{\bLambda}_{H}$ to well approximate $\bLambda$. For example, Chapter 3.3.2 in \cite{huang2020reliable} constructs a population where the WSSC does not hold unless $K$ grows at the order of $2^d$. In this example, slicing methods perform poorly if $n$ is not much larger than $K$. 
\end{remark}

A result in \cite{lin2018consistency} guarantees that with high probability, the probability mass of the distribution of $Y$ in each slice $\S_{h}$ is roughly the same, i.e., \eqref{eq:gamma partition} holds for the sliced partition $\mathfrak{S}_{H}(n):=\{\S_{h}, h=1,..,H\}$. Combining this result with Theorem \ref{thm:moment condition to sliced stable} leads to the following corollary.
\begin{corollary}\label{cor:moment to WSSC}
Suppose that the conditions ${\bf i)}$ - ${\bf iii)}$ in Theorem \ref{thm:moment condition to sliced stable} hold. 
For any sufficiently large $H\geqslant K$, if $n>1+4H/\gamma$ is sufficiently large, then 
for the sliced partition $\mathfrak{S}_H(n)=\{\S_{h}, h=1,..,H\}$, the inequality 
\begin{align*}
\frac{1}{H}\sum_{h=1}^H\mathrm{var}\left(\bbeta\tp\vm(Y)\big|  Y\in \S_h \right)\leqslant \frac{1}{\tau} \mathrm{var}\left(\bbeta\tp\vm(Y)\right)\quad(\forall\bbeta\in\mb{S}^{p-1})
\end{align*}
holds with probability at least  $1-CH^2\sqrt{n+1}\exp\left(-\gamma^2(n+1)/32H^2\right)$ for some absolute constant  $C>0$.    
\end{corollary}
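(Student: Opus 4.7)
The plan is to reduce the corollary to a direct application of Theorem~\ref{thm:monment condition to sliced stable} by verifying that the data-driven sliced partition $\mathfrak{S}_H(n)$ satisfies the deterministic mass condition \eqref{eq:gamma partition} with high probability. On the event where each slice $\mathcal{S}_h$ carries probability mass in $[(1-\gamma)/H,(1+\gamma)/H]$, the desired inequality becomes precisely the conclusion of weak $(K,\tau)$-sliced stability of $\vm(Y)$, which holds by Theorem~\ref{thm:monment condition to sliced stable} because $H\geqslant K$ is assumed.

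The first step reformulates the randomness via the probability integral transform. Since $Y$ is continuous, the variables $U_i:=F_Y(Y_i)$ are i.i.d.\ $\mr{Unif}[0,1]$ and the slice boundaries $Y_{(c(h-1))}$, $Y_{(ch)}$ correspond to the order statistics $U_{(c(h-1))}$, $U_{(ch)}$. The conditional mass of an internal slice $\mathcal{S}_h$ given the data is therefore
\begin{align*}
P(Y\in\mathcal{S}_h\mid Y_1,\dots,Y_n)=U_{(ch)}-U_{(c(h-1))},
\end{align*}
which follows a Beta distribution with mean $c/(n+1)\approx 1/H$ under the assumption $n>1+4H/\gamma$; the two boundary slices $\mathcal{S}_1,\mathcal{S}_H$ are treated analogously via $U_{(c)}$ and $1-U_{(c(H-1))}$.

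The second step is a Chernoff-type tail bound on each Beta gap. Using a Stirling-type normalization of the Beta density at its mode, one obtains a per-slice deviation bound of the form
\begin{align*}
P\!\left(\left|P(Y\in\mathcal{S}_h\mid\{Y_i\})-\tfrac{1}{H}\right|>\tfrac{\gamma}{H}\right)\leqslant C\sqrt{n+1}\,\exp\!\left(-\gamma^2(n+1)/(32H^2)\right),
\end{align*}
which is essentially the concentration estimate already developed in \cite{lin2018consistency} to underpin their SSC analysis. A union bound over the $H$ slices and the two-sided deviations (with the prefactor $H^2$ arising from aggregating the union bound with the per-slice scaling) upgrades this to the stated probability bound on the complement of the good event.

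On the good event, the deterministic mass condition \eqref{eq:gamma partition} holds for $\mathfrak{S}_H(n)$, so Theorem~\ref{thm:monment condition to sliced stable} applies pointwise and yields the target inequality. The main (and essentially only) technical obstacle is the Beta concentration estimate above; since that is available off-the-shelf from \cite{lin2018consistency}, I expect the proof to be a short composition with no additional complications beyond bookkeeping the constants to match the stated bound.
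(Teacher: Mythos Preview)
Your proposal is correct and follows essentially the same approach as the paper: the paper simply invokes Lemma~11 of \cite{lin2018supplement} to assert that $\mathfrak{S}_H(n)$ is a $\gamma$-partition with the stated probability, and then applies Theorem~\ref{thm:monment condition to sliced stable} on that event. Your probability-integral-transform and Beta-concentration sketch is exactly how that cited lemma is proved, so there is no substantive difference.
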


\section{Decay of gSNR with diverging $d$}\label{sec:small-gSNR}
\cite{lin2021optimality} have made a conjecture that the minimax optimal rate for estimating the central space under multiple-index models should be inversely proportional to the gSNR (i.e., the $d$-th largest eigenvalue of $\Cov\left(\bbE \left( \vX \mid  Y  \right)  \right)$). 
This indicates that a small gSNR would result in a large estimation error, so it is important to understand how the gSNR depends on the structural dimension $d$. 

In this section, we show that for a general multiple-index model, the gSNR always decreases as the structural dimension $d$ grows and often decays exponentially fast. 

\subsection{An upper bound on the gSNR}\label{sec:eigen-upper}

We first show that when the structural dimension $d$ increases, the gSNR must decrease at least at the rate of $\frac{\log d}{d}$.

\begin{theorem}\label{thm:max-smallest-eigen}
Assume $\vX\sim N(\bs{0}, \bs{I}_{p})$ and $Y$ is a random variable. 
If $\vm(y)$ satisfies the weak $(K,\tau)$-SSC w.r.t. $Y$ with $\tau>1+\gamma$ and $K\leq C_0 d$, then the $d$-th largest eigenvalue of $\Cov\left[ \E \left( \vX \mid  Y  \right) \right]$ (i.e., the gSNR) is no greater than $C \frac{\log (d)}{d}$, where $C$ is a constant that only depends on $\tau$, $\gamma$, and $C_0$. 
\end{theorem}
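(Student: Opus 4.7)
The plan is to show that $\tr(\bLambda)=O(\log d)$ and then deduce the eigenvalue bound by pigeonhole. The strategy combines a weak-SSC-based spectral comparison between $\bLambda$ and its sliced discretization with a Gaussian mean inequality that limits how large $\E[\vX\mid Y\in S_h]$ can be when the slice probability is of order $1/H$.

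First, I would apply weak $(K,\tau)$-SSC with the quantile partition of $Y$ into $H=K$ slices $S_1,\dots,S_H$ of probabilities $\pi_h\in[(1-\gamma)/H,(1+\gamma)/H]$. Writing $\vZ_h=\E[\vX\mid Y\in S_h]$ and $\widetilde{\bLambda}=\sum_{h=1}^H\pi_h\vZ_h\vZ_h^\top$, the law of total variance gives, for any unit $\bbeta$,
\[
\bbeta^\top\bLambda\bbeta \;=\; \sum_{h=1}^H \pi_h\,\var\bigl(\bbeta^\top\vm(Y)\bigm|Y\in S_h\bigr) + \bbeta^\top\widetilde{\bLambda}\bbeta.
\]
Weak SSC bounds the first sum above by $\tfrac{1+\gamma}{\tau}\bbeta^\top\bLambda\bbeta$ (the $(1+\gamma)$ factor comes from bounding each $\pi_h$ by $(1+\gamma)/H$). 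Since $\tau>1+\gamma$, this yields $\bLambda\preceq \tfrac{\tau}{\tau-1-\gamma}\widetilde{\bLambda}$, hence $\tr(\bLambda)\leq C_{\tau,\gamma}\tr(\widetilde{\bLambda})$.

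Second, I would invoke the Gaussian mean inequality: for $\vX\sim N(\vzero,\vI_p)$ and any event $A$ with $\P(A)=\alpha$,
\[
\|\E[\vX\mid A]\| \;\leq\; \phi(t_\alpha)/\alpha, \qquad t_\alpha := \Phi^{-1}(1-\alpha),
\]
where $\phi,\Phi$ are the standard Gaussian density and cdf. This follows because for each fixed unit $\vu$, the integral $\E[\vu^\top\vX\cdot\mathbf{1}_A]$ is maximized over events of probability $\alpha$ by the half-space $\{\vu^\top\vX\geq t_\alpha\}$ (a one-dimensional Gaussian rearrangement, using that $\vu^\top\vX\sim N(0,1)$), giving $\E[\vu^\top\vX\mid A]\leq \phi(t_\alpha)/\alpha$; one then takes the supremum over $\vu$. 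The Mills ratio estimate $\phi(t)/(1-\Phi(t))\leq t+1/t$ combined with $t_\alpha\leq\sqrt{2\log(1/\alpha)}$ yields $\phi(t_\alpha)/\alpha\leq C\sqrt{\log(1/\alpha)}$. Applying this with $\alpha=\pi_h\asymp 1/H$ gives $\|\vZ_h\|^2\leq C_\gamma\log H$ uniformly in $h$, so $\tr(\widetilde{\bLambda})=\sum_h \pi_h\|\vZ_h\|^2\leq C_\gamma\log H$.

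Chaining the two bounds with $H=K=O(d)$ gives $\tr(\bLambda)\leq C_1\log H\leq C_1'\log d$. Because $\lambda_d(\bLambda)$ is dominated by the average of the top $d$ eigenvalues, pigeonhole yields $\lambda_d(\bLambda)\leq \tr(\bLambda)/d\leq C_1\log(d)/d$, matching the claim (when the rank of $\bLambda$ drops below $d$ the gSNR is $0$ and the bound is trivial). The main obstacle I anticipate is the Gaussian rearrangement step: one has to verify that half-spaces maximize $\E[\vu^\top\vX\cdot\mathbf{1}_A]$ among events of given probability for each unit $\vu$, and then propagate the uniform-in-$\vu$ Mills bound into a norm control on $\vZ_h$. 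A secondary technical issue is the bookkeeping of the $\gamma$-slack in slice probabilities, which must be tracked through both the WSSC inequality and the Mills-ratio estimate so that the final constant depends only on $\tau$ and $\gamma$.
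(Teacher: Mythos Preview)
Your proposal is correct and follows essentially the same route as the paper. The paper's proof also (i) uses WSSC plus the law of total variance to pass from $\Cov[\E(\vX\mid Y)]$ to its sliced discretization (their Proposition~\ref{prop:ssc-discrete-approx} is exactly your inequality $\bLambda\preceq\tfrac{\tau}{\tau-1-\gamma}\widetilde{\bLambda}$), (ii) bounds $\|\E[\vX\mid W=w]\|$ by the one-dimensional half-space rearrangement you describe (their Lemma~\ref{lem:decay-lambda-optimal-variation}) combined with the Mills-ratio estimate (their Lemma~\ref{lem:decay-lambda-tail-moment-bound}), and (iii) extracts $\lambda_d$ via $\lambda_d\leqslant d^{-1}\tr(\bLambda)$; the only cosmetic difference is that the paper packages step~(ii) as an entropy bound $\lambda_{\min}\leqslant 37\,d^{-1}\Ent(W)$ (Theorem~\ref{thm:eigen-bound-ent}), whereas you go directly to $\log H$ since the slice probabilities are all $\asymp 1/H$.
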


The rate of the upper bound in Theorem~\ref{thm:max-smallest-eigen} is tight, as we can construct a joint distribution of $(\vX,Y)$ where the gSNR is asymptotically $O( \frac{\log d}{d} )$. 
This construction is given in Section~\ref{app:eigen-upper} in the supplementary material. 
However, except for such a deliberate construction, the upper bound is unlikely to be attained by most distributions in real-world practice. 
As discussed in Section~\ref{sec:introduction}, we have conducted extensive numerical experiments and observed that the gSNR decays much faster as the structural dimension $d$ increases. We report some of these experiments in Section~\ref{sec:simulation small gSNR} and Appendix~\ref{app:assitional-simulation} in the supplementary material.

\subsection{Fast decay of the gSNR}\label{sec:gSNR-exponential-decay}

We show in this section that if we place a distribution on the space of link functions, then with high probability, the gSNR decays exponentially fast in $d$.  
Specifically, we use a Gaussian process (GP) to model the $d$-variate link function and show that with probability at least $1-\tilde{C}e^{-\theta d}$, the gSNR is bounded above by $O(e^{-\theta d})$ for some positive constants $\tilde{C}$ and $\theta$.

For a given domain $\mathcal{Z}$, a Gaussian process $W$ is a stochastic process such that for any $ m \in \mathbb{N}_+$ and any $z_1, \ldots, z_m \in \mathcal{Z},$, the finite-dimensional distribution of $\left(W\left(z_1\right), \ldots, W\left(z_m\right)\right)$ is multivariate normal. The mean and the covariance are defined by the mean function $\mu: \mathcal{Z} \rightarrow \mathbb{R}$ and the covariance kernel $K: \mathcal{Z} \times \mathcal{Z} \rightarrow \mathbb{R}$ such that 
$$
\mu(z)=\bbE[W(z)], \quad K(z_1, z_2)=\Cov(W(z_1), W(z_2)).
$$
See \cite{rasmussen2006} for a detailed introduction to GPs. 
GPs have often been employed as the prior measures for regression functions in nonparametric Bayesian inference; see, for example, \citep[Chapter 11]{ghosal2017fundamentals}. 
Here, we utilize the GP to postulate a probability measure on the class of link functions, under which most populations witness an exponential decay of the gSNR as the structural dimension $d$ increases.

Specifically, we assume the following model on $(\vX, Y)$: 
\begin{align}\label{eq: GP model}
\begin{split}
\vX& \sim N(0,\bs{I}_p);\\ 
\mc{M}_{GP}:Y&=f(\vB\tp\vX)+ \sigma \varepsilon,
\end{split}
\end{align}
where $\vB \in \mathbb{R}^{p\times d}$ has orthonormal columns, 
$\varepsilon$ is an independent bounded noise with a smooth density function $\varphi(\cdot)$, and $f$ is drawn from a $d$-variate GP with zero mean and squared exponential covariance function $K\left(\mathbf{z}_1, \mathbf{z}_2\right)=\exp \left(-\alpha\left\|\mathbf{z}_1-\mathbf{z}_2\right\|^2\right)$ for any $\mathbf{z}_i\in \mathbf{R}^{d}$, where $\alpha$ is a positive constant. 
Without loss of generality, we assume the density of the noise $\varphi(\cdot)\in C^{2}(\R)$ is supported on $(-1,1)$ with $\left\|\varphi\right\|_{\infty} \leqslant C_{0}$ and has a bounded gradient $\left\|\varphi^{\prime}\right\|_{\infty} \leqslant C_{1}$ for some constants $C_1$ and $C_0$. 

We have the following result regarding the gSNR for this model. 
\begin{theorem}\label{thm:GP-gSNR-fast-decay}
There exist two positive constants $\theta$ and $\tilde{C}$ depending only on $(\sigma, \alpha, C_0,C_1)$   
such that for any integer $d\geq 2$, with probability at least $1-\tilde{C} e^{-\theta d}$, the gSNR resulting from the model in \eqref{eq: GP model} satisfies that 
$$
\lambda_{d}( \Cov\left[ \bbE \left( \vX \mid  Y  \right) \right] )  \leq  \tilde{C} e^{-\theta d}.
$$
\end{theorem}

Theorem~\ref{thm:GP-gSNR-fast-decay} states that for most link functions drawn from the GP, the corresponding gSNRs will be bounded above by $\tilde{C}e^{-\theta d}$, which decays exponentially fast as the structural dimension $d$ increases. 
To the best of our knowledge, this is the first result to provide a theoretical justification of the fast decay phenomenon of the gSNR in the SDR literature.

The result in Theorem~\ref{thm:GP-gSNR-fast-decay} is closely related to level sets of Gaussian processes, since studying property of the eigenvalues of $\Cov(\bbE[\vX|Y])$ involves the conditional mean of $\vX$ given $Y=y$. The study of level sets of Gaussian processes, and random fields in general, is an important subject that remains largely unexplored \citep{azais2009level}. Most of the existing works focus on the study of the geometric measure of the level set (see, for example, \citet{benzaquen1982expected,armentano2019conditions,azais2020}) rather than the properties of the mean value, which is needed in proving Theorem~\ref{thm:GP-gSNR-fast-decay}. Our result may serve as a starting point and motivate future development of the related theory.

\section{Lower bound for sufficient dimension reduction}\label{sec:minimax lower bounds}

In this section, we establish a minimax lower bound for central space estimation with the structural dimension $d$ diverging in the low gSNR regime. 
This result clarifies that it is mainly the weakened signal strength that causes the observed difficulty in SDR problems when $d$ is large.

\subsection{Minimax lower bound}\label{sec:lower bound oracle risk}

To introduce our lower bound, we define a loss function for the central space estimation and a large class of joint distributions for $(\vX, Y)$. 

Let $\wh\vB$ be an estimate of $\vB$, whose columns form a basis of the central space $\mc S_{Y|\vX}$. 
Note that the parameter $\vB$ itself is not identifiable, while $\vB\vB^{\top} $ is identifiable. 
To evaluate the estimated central space $\col(\wh\vB)$, we consider the loss function
$L(\widehat{\vB}, \vB):=\|\widehat{\vB} \widehat{\vB}^{\top} -\vB\vB^{\top} \|_{\mathrm{F}}^{2}$, which we will refer to as the \textit{general loss}. 
This loss function is commonly used in the sufficient dimension reduction literature; for more details, see Section~1.1 of \citet{tan2020sparse}.

Our lower bound is related to the following class of distributions. 
\begin{definition}\label{def:class-lower-bound}
Define $\mathfrak{M}\left(p,d,\lambda\right)$ as the class of distributions for $(\vX,Y)$ that satisfy the followings:
\begin{enumerate}
\item The covariance matrix of $\vX$ is $\bS \in \mathbb{R}^{p\times p}$, such that $\|\bS\|\vee\|\bS^{-1}\|<\infty$. 

    \item $Y=f(\vB^{\top}\vX,\epsilon)$, where $\epsilon\sim N(0,1)$ is independent of $\vX$, $\vB\in \mathbb{R}^{p\times d}$ such that $\vB^{\top} \bS\vB=\vI_d$, and $f: \mathbb{R}^{d+1}\mapsto \mathbb{R}$. 
    \item Assumption~\ref{as:linearity and coverge} is satisfied. 
    
 \item   The central curve
		$\vm(y)=\bbE[\vX|Y= y]$ satisfies the weak $(K,32)$-SSC w.r.t. $Y$ where $K\geq 8d$. 
\end{enumerate}
\end{definition}

In the following, we do not require the population parameters $(p,d,\lambda)$ of the model class $\mathfrak{M}(p,d,\lambda)$ to be fixed; instead, they are allowed to depend on the sample size $n$, i.e., both $p$ and $d$ may grow and $\lambda$ may decay as $n$ increases. 
We are now ready to state the minimax lower bound for estimating the central space over the model class $\mathfrak{M}\left(p,d,\lambda\right)$ in terms of the triplet $(n,p,d)$ as well as $\lambda$. 
Throughout this section, the infimum $\inf_{\wh \vB}$ is taken over all estimators that depend on the sample $\{(\vX_{i},Y_{i})\}_{i=1}^{n}$, which consists of $n$ i.i.d. observations from $\mc M\in \mathfrak{M}(p,d,\lambda)$. In addition, the expectation $\bbE_{\mc M}$ is taken with respect to the randomness of the sample.

\begin{theorem}[Minimax lower bound]\label{thm:lower-smallp}
There exists a universal positive constant $c_{L}$, such that if  $\lambda\leqslant c_{L} d^{-8.1}$ and $2d<p$, 
then we have
	\begin{align*}
	\inf_{\widehat{\vB}}  \sup _{ \mathcal{M}\in \mathfrak{M}\left(p,d,\lambda\right)  }  
 \bbE_{\mathcal{M}} 
 L(\widehat{\vB}, \vB)
	\gtrsim \min\left(\frac{d(p-d)}{n \lambda}, 1 \right). 
	\end{align*}
 \end{theorem}

As proved in Section~\ref{sec:gSNR-exponential-decay}, the gSNR often decays at the order of $e^{-\theta d}$ as the structural dimension $d$ increases. 
For such a small gSNR, Theorem~\ref{thm:lower-smallp} states that to guarantee the risk for central space estimation is smaller than any prescribed small constant $\varepsilon$, the sample size required by any SDR algorithm must be at least $O\left(\frac{d  p e^{\theta d}}{\varepsilon }\right)$. 
This leads to an important conclusion in SDR: as $d$ increases, the sample size required for any SDR algorithm to work often grows exponentially in $d$. 
To the best of our knowledge, this is the first theoretically solid explanation for the poor performance of the SIR and other SDR algorithms when the structural dimension $d$ is large.

The proof of our minimax lower bound is based on a novel application of Fano's method (see, e.g., \citep{yu1997assouad}). 
Specifically, we have to find a collection of distributions in $\mathfrak{M}\left(p,d,\lambda\right)$ that are separated from each other in terms of $\vB$ while being close in terms of the Kullback–Leibler divergence (KL-divergence). 
As the main technical contribution of this paper, we provide a novel construction of such a collection of distributions and derive a sharp upper bound on the pairwise KL-divergence. 
The difficulty arises from the nonlinear relationship between $Y$ and $\vX$
and the semiparametric nature of the SDR problem since in the multiple-index model $Y=f(\vB^{\top}\vX,\epsilon)$, $f$ is a nonparametric nuisance parameter while the estimation is regarding $\vB$.

\begin{remark}\label{rem:other-SDR}

We briefly discuss whether the result in Theorem~\ref{thm:lower-smallp} can be generalized to describe the performance of other SDR methods. 
For the SAVE method, the space spanned by its candidate matrix includes the space spanned by SIR's candidate matrix under mild conditions \citep{cook2000identifying,ye2003using}, which suggests that the corresponding model class should include $\mathfrak{M}\left(p,d,\lambda\right)$ and thus the minimax lower bound in Theorem~\ref{thm:lower-smallp} also applies. Therefore, SAVE also experiences a performance decay when $d$ is large. 
However, significant differences arise in the theoretical development of other SDR methods such as the ordinary least squares estimates (OLS, \citet{li1989regression}), the principal Hessian directions (PHD, \citet{li1992principal}), and \textit{minimum average variance estimation} (MAVE, \cite{xia2009adaptive}). 
Crucially, these methods aim to estimate the central mean subspace, which is the smallest subspace $\mathcal{S}_m$ such that $\bbE(Y \mid X)=\bbE\left[Y \mid \mathcal{P}_{\mathcal{S}_m}\vX\right]$. Under mild conditions, $\mathcal{S}_m$ is a subspace of the central space $\mathcal{S}$ \citep[Theorem 8.1]{liSufficient2017} and its dimension could be smaller than $d$. 
Consequently, the development of the corresponding minimax theory will be quite different from the one in this work, and we left further exploration of other SDR methods to future studies.

\end{remark}

\subsection{Sketch of proof}\label{sec:sketch-lower-bound}
We will first describe how to construct a collection of distributions of $(\vX, Y)$ and then sketch the proof of the minimax lower bound based on this construction.
As a building block for the construction, we introduce a piecewise constant function as follows. 
\begin{definition}\label{def:fn-lower-median}
Let $m$ be the median of $\chi^2_{d}$ distribution. 
The function $\psi(s_{1},\ldots, s_d)$ is from $\R^{d}$ to $\{i\in \mathbb{Z}:|i|\leq d\}$

and satisfies:
\begin{enumerate}
  \item If $\sum_{i}s_i^2 \leqslant m$, suppose $|s_{i}|$ is uniquely the largest among $|s_1|,\ldots, |s_d|$, then $\psi(s_{1},\ldots, s_d)=\sgn(s_i) i$;
  \item if  $\sum_{i}s_i^2 > m$ or no absolute value is uniquely the largest, 
  then $\psi(s_{1},\ldots, s_d)=0$. 
\end{enumerate}
\end{definition}

For each $\vB\in \mathbb{O}(p,d)$, we construct the following joint distribution $\bbP_{\vB}$ of $(\vX,Y)$: 
\begin{align}\label{eq:joint-x-y-B}
\vX ~ & ~ \sim N(0, \bs{I}_{p}),\\ \nonumber
\vZ~ & ~ = 	\rho \vB^{\top} \vX + \sqrt{1-\rho^2} \xi, \quad \xi \sim N(0, \bs{I}_{d}),\\ \nonumber
W ~ & ~ = \psi (\vZ),\\\nonumber
Y~ & ~ = W + \eta, \quad \eta \sim \text{Unif}(-\sigma ,\sigma ), \nonumber 
\end{align}
where $\vX, \xi, \eta$ are independent of each other, and 
$\sigma\in (0,1/2]$  and $\rho\in (0,1)$ are fixed constants. For this distribution $\bbP_{\vB}$, we have the following important proposition that summarizes its properties.  
\begin{proposition}
For any $\lambda\leqs c_L d^{-8.1}$, we can choose $\rho>0$ such that $\mb{P}_{\vB}$ constructed in \eqref{eq:joint-x-y-B} satisfies the following two properties:  
\begin{itemize}
	\item[$\mathbf{(i)}$] 
 for any $\vB \in \mathbb{O}(p,d)$,  $Y$ can be represented as $f(\vB^{\top}\vX,\epsilon)$ where $\epsilon \sim N(0,1)$ and $\mb{P}_{\vB}$ belongs to  $\mathfrak{M}\left(p,d,\lambda\right)$; 
	\item[$\mathbf{(ii)}$] There is  some absolute constant $C$ such that 
 for any $\vB,\widetilde{\vB}\in \mathbb{O}(p,d)$, 
 \begin{equation}\label{eq:lower-KL-bound}
 \kl( \mb{P}_{\vB}, \mb{P}_{\widetilde{\vB}})\leqslant C\lambda\|\vB-\widetilde{\vB}\|_{F}^{2}. 
 \end{equation}
\end{itemize} 
\end{proposition}

We are now ready to present the proof sketch for the minimax lower bound. 
We can show that for any sufficiently small  $\varepsilon>0$ and any $\alpha \in (0,1)$, there is a subset $\Theta\subset \mathbb{O}(p,d)$ such that $|\Theta| \geqslant \left( C_{0}/\alpha  \right)^{d(p-d)}$ and $\|\vB-\widetilde\vB\|_{\mathrm{F}} \leqslant 2 \varepsilon$, $
\|\vB\vB^{\top} -\widetilde{\vB}\widetilde{\vB}^{\top}\|_F \geqslant  \alpha\varepsilon$ 
for any $\vB, \widetilde{\vB}\in \Theta$, where  $C_{0}$ is an absolute constant. 
Therefore, the class of distributions $\{\mb{P}_{\vB}:\vB\in \Theta\}$ are separated from each other in terms of $\vB\vB^{\top}$ and are close to each other in terms of KL-divergence, which allows the application of the classical Fano's method for obtaining minimax lower bounds.

Although we follow the standard Fano method framework, our proof is very different from all previous minimax lower bounds developed in related problems \citep{raskutti2011minimax,cai2013sparse,gao2014minimax,lin2021optimality}. 
The main technical difficulty in proving Theorem~\ref{thm:lower-smallp} lies in upper bounding the KL-divergence between any two joint distributions, $\bbP_{\vB}$ and $\bbP_{\widetilde{\vB}}$, sharply by $\lambda \|\vB-\widetilde{\vB}\|^{2}_{F}$. 
For the construction in \eqref{eq:joint-x-y-B}, the relationship between $Y$ and $\vX$ is nonlinear (otherwise $d=1$). Consequently, the KL-divergence does not have any closed-form expression, and it depends on the parameter $\lambda$ in a complicated way. 
Therefore, we cannot use the arguments for minimax lower bounds based on closed-form expressions as in the previous studies. 
Furthermore, if we instead make use of the basic property of KL-divergence that $ \kl (\bbP_{\vB}(\vX, Y), \bbP_{\widetilde{\vB}}(\vX, Y))\leqs\kl (\bbP_{\vB}(\vX, \vZ, Y), \bbP_{\widetilde{\vB}}(\vX, \vZ, Y))$, the bound is not as sharp as \eqref{eq:lower-KL-bound} and the resultant minimax lower bound will be at the order of $\frac{p}{n\lambda}$, which does not capture the linear dependence on the structural dimension $d$. 
To help readers further understand the differences between our proof of Theorem~\ref{thm:lower-smallp} and the proof of the most related result in \cite{lin2021optimality}, we provide a detailed comparison in Appendix~\ref{sec:comparison-2021}.

To prove the sharp upper bound in \eqref{eq:lower-KL-bound}, we have carefully analyzed the relationship between the KL-divergence and the difference $\Delta\vB=\tilde{\vB}-\vB$. 
By the construction in \eqref{eq:joint-x-y-B}, we have $\bbP_{\vB}( Y \mid \vX, W)=\bbP_{\widetilde{\vB}}(Y\mid \vX, W)$ a.s. and we can use basic properties of KL-divergence to show that
\begin{align*}
	\kl(\bbP_{\vB}, \bbP_{\widetilde{\vB}}) \leqslant  &\;  	\kl(\bbP_{\vB}, \bbP_{\widetilde{\vB}})  + \E_{(\vX, Y)\sim \bbP_{\vB} }\left(
	\kl( \bbP_{\vB}( W \mid \vX, Y),  \bbP_{\widetilde{\vB}}(W\mid \vX, Y) 	\right)  \nonumber \\
	=	&	\,  \kl (\bbP_{\vB}(\vX, W, Y), \bbP_{\widetilde{\vB}}(\vX, W, Y)) \nonumber \\
	=&\,  \kl (\bbP_{\vB}(\vX, W), \bbP_{\widetilde{\vB}}(\vX, W)). 
\end{align*} 
Denoting by $A_i$'s the inverse images of $i$ under the mapping $\psi$, we can show that 
\begin{align*}
\kl (\bbP_{\vB}(\vX, W), \bbP_{\widetilde{\vB}}(\vX, W)) =& \E_{\vB}\left[ \log \left( \frac{  \bbP_{\vB}(\vZ\in A_W \mid \vX) }{ \bbP_{\widetilde{\vB}}(\vZ\in A_W \mid \vX) } \right) \right]. 
\end{align*}
Note that the conditional probability of $\vZ$ given $\vX$ under $\bbP_{{\vB}}$  depends on $\vX$ through ${\vB}^{\top} \vX$. Therefore, it suffices to analyze the function $h({\vB}^{\top} \vx):=\log \bbP_{{\vB}}(\vZ\in A_W \mid \vX=\vx)$ for any $\vx\in \mathbb{R}^p$, because the log ratio in the above expression is $\left. h(\vB^{\top} \vx + \vt) \right|_{\vt={\Delta\vB}^{\top}\vx }^{\bs{0}}$. This analysis involves a careful derivation of the second-order Taylor expansion, upper bounds on the derivatives, and estimates of $d$-dimensional Gaussian measures uniformly for most $\vx$. 
To bound the remainder term in the Taylor expansion, the parameter $\lambda$ (or equivalently $\rho$ in \eqref{eq:joint-x-y-B}) has to be sufficiently small. 
In order to determine how small $\lambda$ needs to be, we meticulously derive accurate estimates of each term involved. The details of the proof can be found in Appendix~\ref{sec:proof lower bound given K}.

\begin{remark}\label{rem:gSNR-joint}
The gSNR of the distribution constructed by \eqref{eq:joint-x-y-B} can be explicitly computed as  
\begin{equation}\label{eq:gSNR-joint-x-y-B}
\lambda_{d}=
2 d^{-1} \rho^2\left(\bbE[ \max_{i\in[d]} \left( |Z_{i}|\right)  1_{\|\vZ\|^2 \leqslant m}]  \right)^{2},
\end{equation}
where $\vZ$ is a $d$-variate standard normal vector. 
The expression \eqref{eq:gSNR-joint-x-y-B} will allow researchers to conduct simulation experiments to numerically examine the dependence of the estimation error on the gSNR and the structural dimension. See Section~\ref{sec:simulation-dependence} for such an example. 
\end{remark}

\subsection{Lower bounds for sparse high-dimensional models}

In cases where the dimension $p$ of the predictor is comparable to or larger than the sample size $n$, the minimax lower bound developed in Theorem~\ref{thm:lower-smallp} suggests that there is no SDR method consistent for estimating the central space. 
To overcome the curse of dimensionality, many SDR methods have been proposed to make use of the sparsity assumption that only a small proportion of the underlying parameter values are nonzero. 
It is thus of particular interest to determine the minimax lower bound in the sparse high-dimensional setting.

Specifically, we impose the sparsity assumption on the index matrix $\vB$ as follows. 
Denote by $\supp(\vB)$ the support of $\vB$:
$$
\supp(\vB)=\{j\in [p]:  \|\vB_{j, *}\| >0 \}, 
$$
and by $\|\vB\|_{0}$ the number of non-zero rows of $\vB$, i.e., $\|\vB\|_{0} =  | \supp(\vB) |$. 
We assume that \textit{$\vB$ is $\ell_0$-sparse}, i.e., $\|\vB\|_{0}\leqslant s$ for some integer $s$. 
Accordingly, we refine the definition of the model class $\mathfrak{M}\left(p,d,\lambda\right)$ in Definition~\ref{def:class-lower-bound} to the following class of sparse models:
\begin{align}\label{model:high:dim}
\mathfrak{M}_{s}\left(p,d,\lambda\right)=\mathfrak{M}\left(p,d,\lambda\right)\cap \left\{\text{distribution of }(\vX,Y=f(\vB^{\top}\vX,\epsilon))~ \vline ~\|\vB\|_0\leqslant s\right\}. 
\end{align}

We consider the estimation of the central space while allowing the parameters $(p,s,d)$ to grow and $\lambda$ to decay as $n$ increases. 
Particularly, in the high-dimensional setting, $p$ might be much larger than $n$ while $d$ and $s$ might grow at a slow rate in $n$.
Extending the result in Theorem \ref{thm:lower-smallp}, we establish a lower bound for the minimax risk for the high-dimensional sparse model~\eqref{model:high:dim}. 

\begin{theorem}[Lower bound for sparse models] \label{thm:lower-largep}	
There exist a universal positive constant $c_{L}$ such that if 
$2d<s$ and $\lambda\leqslant c_{L} d^{-8.1}$, then we have
	\begin{align}\label{eqn:high-dim:rate}
	\inf_{\widehat{\vB}} \sup _{\mathcal{M}\in \mathfrak{M}_{s}\left(p,d,\lambda\right)  }
 L(\widehat{\vB}, \vB)
	\gtrsim \min \left(1, \frac{ds+s\log(ep/s)}{n\lambda}\right).
	\end{align}
\end{theorem}

The proof of Theorem~\ref{thm:lower-largep} is based on a modification of the construction in \eqref{eq:joint-x-y-B} and the inequality in \eqref{eq:lower-KL-bound}. The details can be found in Appendix~\ref{app:rate-largep}.

Similar to Theorem~\ref{thm:lower-smallp}, Theorem~\ref{thm:lower-largep} characterizes the difficulty in high-dimensional central space estimation with diverging $d$ for sparse multiple-index models. 
The lower bound contains two terms, namely $ds/(n\lambda)$ and $s\log(ep/s)/(n\lambda)$. 
The first term of the lower bound appears in the oracle case where $\supp(\vB)$ is known and the estimation error is no less than the lower bound given in Theorem~\ref{thm:lower-smallp} with $p$ substituted by $s$. 
The second term of the lower bound is induced by the ignorance about $\supp(\vB)$, which has to be estimated based on observed data. 
Both terms can be large if the value of $\lambda$ is small, which is particularly the case when $d$ is large, as demonstrated in Section~\ref{sec:small-gSNR}. 
This result suggests that even though the sparsity assumption substantially reduces the order of the model complexity (the effective number of unknown parameters of interest) from $dp$ to $ds+s\log(p/s)$, the gSNR still plays an important role in estimation accuracy.

\section{Minimax optimal rates with diverging $d$}\label{sec:minimax rate}
As a byproduct, the novel minimax lower bounds developed in Section~\ref{sec:minimax lower bounds} allow us to study the minimax optimal rates for central space estimation in settings where the structural dimension $d$ is unbounded. 
In this section, we illustrate how to achieve this goal by establishing some upper bounds that match the rates of the minimax lower bounds. 
To facilitate this illustration, we will introduce a few additional conditions on the population.

\subsection{Upper bound and optimal rate}
In order to obtain a tight upper bound that matches the lower bound in Theorem~\ref{thm:lower-smallp}, we consider the following subclass of $\mathfrak{M}\left(p,d,\lambda\right)$ with more specific conditions. 

\begin{definition}\label{def:class-upper-bound}
Given three constants $\kappa\geq 1$, $K_0\geq 8$, and $M\geq 1$, 
define $\overline{\mathfrak{M}}\left(p,d,\lambda\right)$ as the class of distributions for $(\vX,Y)$ that satisfy the following conditions:
\begin{enumerate}
\item $\vX\sim N(0,\bS)$, where the covariance matrix $\bS \in \mathbb{R}^{p\times p}$ satisfies
$\|\bS\|\vee\|\bS^{-1}\|\leqslant M$.
    \item $Y=f(\vB\tp\vX,\epsilon)$, where $\epsilon\sim N(0,1)$ is independent of $\vX$ and $\vB\in \mathbb{R}^{p\times d}$ such that $\vB\tp \bS\vB=\vI_d$.
    \item The eigenvalues of SIR's candidate matrix $\bLambda=\Cov(\bbE[\vX\mid Y])$ satisfy
$$
\lambda\leqslant\lambda_{d}(\bLambda) \leqslant \lambda_{1}(\bLambda)\leqslant \kappa \lambda.
$$
 \item The central curve
		$\vm(y)=\bbE[\vX| y]$ satisfies the weak $(K_0 d, 32\kappa)$-SSC w.r.t. $Y$.
\end{enumerate}
\end{definition}

\begin{remark}
We comment that $\overline{\mathfrak{M}}\left(p,d,\lambda\right)$ is fairly large to cover distributions of our interest. 
The normality assumption on $\vX$ aligns with those in related works on PCA \citep{cai2013sparse}, CCA \citep{gao2014minimax}, and SIR \citep{lin2019sparse,tan2020sparse}. This assumption can be relaxed to the linearity condition together with the assumption that $\vX$ is a sub-Gaussian vector as stated in Lemma \ref{lem:key lemma in Lin under moment condition}. 
This relaxation would involve a more complicated argument from \cite{lin2018consistency}, which makes the proof unnecessarily tedious. 
Since this relaxation is not the main focus of this paper, we impose the normality assumption here to simplify the exposition.

Compared to the coverage condition in Assumption~\ref{as:linearity and coverge}, we also require $\lambda_{1}(\bLambda)\leqslant \kappa \lambda$ so that the condition number of $\bLambda$ is bounded by $\kappa$. 
A similar condition is also assumed in  \cite{cai2013sparse} for PCA problems. 
If the distribution of $(\vX,Y)$ lies in $\overline{\mathfrak{M}}\left(p,d,\lambda\right)$, 
then we have $\vZ:=\vB\tp \vX \sim N(0,\vI_{d})$ and  $\bbE[\vX|Y]=\bbE[\bbE[\vX|\vB\tp \vX]|Y]=\bbE[\bS \vB\vB\tp \vX |Y]=\bS\vB\bbE[\vZ |Y]$ by the law of total expectation. 
Therefore, the WSSC for the central curve $\vm(y)=\bbE[\vX\mid Y=y]$ is equivalent to the WSSC for $\vm_{z}(y):=\bbE[\vZ|Y=y]$ because $\vm(y)=\bS\vB\vm_{z}(y)$. 
Note that $\vm_{z}(y)$ is a $d$-dimensional curve. 
From Theorem \ref{thm:moment condition to sliced stable}, it is clear that under the mild assumption that $Y$ is continuous and $\vm_{z}(y)$ is Lipschitz continuous with Lipschitz constant $O(d)$, $\vm_{z}(y)$ satisfies the WSSC, so does $\vm(y)$. 
Therefore, $\overline{\mathfrak{M}}\left(p,d,\lambda\right)$ is a fairly large class of distributions.
\end{remark}

From the definitions, it is clear that $\mathfrak{M}\left(p,d,\lambda\right)$ contains $\overline{\mathfrak{M}}\left(p,d,\lambda\right)$. 
A bound on the minimax risk for the class $\overline{\mathfrak{M}}\left(p,d,\lambda\right)$ is given by the following result. 

\begin{theorem}[Upper bound]
	\label{thm:risk:oracle:upper:d} 

There are two constants $C_1$ and $C_2$ depending only on $(\kappa, K_0,M)$ such that whenever $dp+d^2\left( \log\left(nd\right)+ d \right)  < C_1 n \lambda<C_1e^p$, the SIR estimator $\widehat{\vB}$ defined in \eqref{eqn:Bhat} satisfies 
	\begin{align}\label{eqn:SIR-upper}
	 \sup _{\mathcal{M}\in \overline{\mathfrak{M}}\left(p,d,\lambda\right)  }\bbE_{\mathcal{M}} 
  L(\widehat{\vB}, \vB)
  \leq C_2 \frac{dp }{n\lambda}.
	\end{align}
\end{theorem}

The proof of this upper bound is a nontrivial refinement of the proof in \cite{lin2021optimality}.

Apart from the relaxation that we only require the central curve to satisfy WSSC rather than SSC, we consider more general covariance matrices than those considered in the previous work. 
This improvement is substantial and cannot be achieved using the original proof. 
The detailed proof can be found in the supplementary material, where we provide a simplified proof for the case $\bS=\bs{I}_p$ in Appendix~\ref{app:sir:low-d} and the complete proof for the general case in Appendix~\ref{app:ub-ld-general}.

Note that the minimax lower bound in Theorem~\ref{thm:lower-smallp} also applies for the class $\overline{\mathfrak{M}}\left(p,d,\lambda\right)$: as revealed by the proof of Theorem~\ref{thm:lower-smallp}, the distributions constructed in \eqref{eq:joint-x-y-B} belong to $\overline{\mathfrak{M}}\left(p,d,\lambda\right)$, and thus the proof there still applies if $\mathfrak{M}\left(p,d,\lambda\right)$ is replaced by $\overline{\mathfrak{M}}\left(p,d,\lambda\right)$. 
Combining this minimax lower bound with the upper bound in Theorem~\ref{thm:risk:oracle:upper:d}, we conclude the following minimax optimal rate for estimating the central space over $\overline{\mathfrak{M}}\left(p,d,\lambda\right)$ when the structural dimension $d$ is allowed to grow.

\begin{theorem}[Minimax optimal rate] \label{thm:rate-smallp}

Suppose $(\kappa,K_0,M)$ are fixed. 
There are two positive constants $C_1$ and $c_{L}$ such that if $2d<p$, $\lambda\leqslant c_{L} d^{-8.1}$, and $dp+d^2\left( \log\left(nd\right)+ d \right)  < C_1 n \lambda<C_1e^p$, then we have
	\begin{align}
	\inf_{\widehat{\vB}} \sup _{\mathcal{M}\in \overline{\mathfrak{M}}\left(p,d,\lambda\right)  }
 \bbE_{\mathcal{M}}
 L(\widehat{\vB}, \vB)
	\asymp  \frac{d(p-d)}{n \lambda}. 
	\end{align}
\end{theorem}

Theorem~\ref{thm:rate-smallp} characterizes the dependence of the minimax rate for estimating the central space on the parameters $p$, $n$, $\lambda$, and the structural dimension $d$.  In particular, the minimax rate is inversely proportional to $\lambda$ and is linear w.r.t. the structural dimension $d$ (provided that $d$ is negligible relative to $p$). 
We provide a numerical illustration of these relationships in Section~\ref{sec:simulation-dependence}.

\subsection{Optimal rate in high-dimensions}

For high-dimensional sparse multiple-index models, we consider the following subclass:
\begin{align}\label{model:sparse-model}
\overline{\mathfrak{M}}_{s}\left(p,d,\lambda\right)=\overline{\mathfrak{M}}\left(p,d,\lambda\right)\cap \left\{\text{distribution of }(\vX,Y=f(\vB\tp\vX,\epsilon))~ \vline ~\|\vB\|_0\leqslant s\right\}, 
\end{align}
which is also contained by $\mathfrak{M}_{s}\left(p,d,\lambda\right)$ defined in \eqref{model:high:dim}. 
An upper bound on the minimax risk of estimating the central space over $\overline{\mathfrak{M}}_{s}\left(p,d,\lambda\right)$ is given in the following theorem. 
\begin{theorem}[Upper bound for sparse models]
	\label{thm:risk:sparse:upper:d}
 
 There are two positive constants $C_1$ and $C_2$ depending only on $(\kappa, K_0, M)$ such that 
	\begin{align}\label{eqn:rate:sparse:rsik:s=q}
	\inf_{\widehat{\vB}} \sup_{\mathcal{M}\in \overline{\mathfrak{M}}_{s}\left( p,d,\lambda\right) } \bbE_{\mathcal{M}} 
 L(\widehat{\vB}, \vB)
 \leq C_2 \frac{ds  + s\log(ep/s) }{n\lambda} 
	\end{align}
	holds wherever  $d^2\left( \log(nd)+ d \right)+s \log(p/s)+ds<C_1n\lambda< C_1e^{s}.$
\end{theorem}

In order to obtain the upper bound in Theorem \ref{thm:risk:sparse:upper:d}, we introduce an aggregation estimator following the ideas in \cite{cai2013sparse} and \cite{lin2021optimality}. 
This estimator is constructed through sample splitting and aggregation. 
For simplicity, assume that there are $n=2Hc$ samples for some positive integer $c$ and that these samples are divided into two equal-sized sets. 
We denote by $\bLambda^{(i)}_{H}$ $(i=1,2)$ the SIR estimates of $\bLambda=\Cov(\bbE[\vX|Y])$ using the $i$-th set of samples. 
Denote by $\wh\bS^{(1)}$ the sample covariance matrix based on the first set of samples. 
Let $\mathcal{L}( s)$ be the collection of all subsets of $[p]$ with cardinality $s$. 
The aggregation estimator $\widehat{\vB}$ is constructed as follows:
\begin{itemize}
 	\item[$\mathbf{(i)}$]  	For each $L \in \mathcal{L}( s)$, let 
	\begin{equation}\label{estimator:sample general covariance}
	\begin{aligned}
	&\widehat{\vB}_{L}:= 
	\arg\max_{\vB} ~~~\Tr(\vB\tp
	{\bLambda}_{H}^{(1)}\vB)\\
	&\mbox{ s.t. } \vB\tp \wh\bS^{(1)} \vB = \vI_d  \mbox{ and } \supp(\vB)\subset L.
	\end{aligned}
	\end{equation}
	
	\item[$\mathbf{(ii)}$]  
 Define the aggregation estimator $\widehat{\vB}$ to be $\widehat{\vB}_{L^{*}}$ where 
	\begin{align*}
	L^{*}:= 	\arg\max_{L\in \mathcal{L}(s)}~~~\Tr(\widehat{\vB}_{L}\tp
	{\bLambda}_{H}^{(2)}\widehat{\vB}_{L}).
	\end{align*}
\end{itemize}	
\smallskip
We show that the risk of the aggregation estimator is bounded by the right-hand side of \eqref{eqn:rate:sparse:rsik:s=q}; see Appendix~\ref{app:sparse:risk:proof} for the special case where $\bS$ is known to be $\bs{I}$ and Appendix~\ref{app:high dimension upper bound general cov} for general unknown $\bS$.

We highlight the difference between Theorem~\ref{thm:risk:sparse:upper:d} and the upper bound for high-dimensional sparse models in \cite{lin2021optimality}.

In Theorem~\ref{thm:risk:sparse:upper:d}, we do not require the covariance matrix $\bS$ to be known, nor do we assume $\bS$ has any sparsity. 
In contrast, the previous work requires the row-sparsity of $\bS$ to derive upper bounds, and the proof there cannot be extended to the case we considered here.

The rate given by Theorem~\ref{thm:risk:sparse:upper:d} is optimal. 
Since the distributions constructed for proving Theorem~\ref{thm:lower-largep} also belong to $\overline{\mathfrak{M}}_{s}\left(p,d,\lambda\right)$, and thus the minimax lower bound in Theorem~\ref{thm:lower-largep} is also a minimax lower bound for the class $\overline{\mathfrak{M}}_{s}\left(p,d,\lambda\right)$. 
The following theorem summarizes the minimax optimal rate for the high-dimensional sparse model. 

\begin{theorem}[Optimal rate of sparse models] \label{thm:rate-largep}		

Suppose $(\kappa,K_0,M)$ are fixed. 
There are positive constants $C_1$ and $c_{L}$ such that if $2d<s$, $\lambda\leqslant c_{L} d^{-8.1}$, and $d^2\left( \log(nd)+ d \right)+s \log(p/s)+ds< C_1 n\lambda< C_1e^{s}$, then we have
	\begin{align}
	\inf_{\widehat{\vB}} \sup _{\mathcal{M}\in \overline{\mathfrak{M}}_{s}\left(p,d,\lambda\right)  }
 \bbE_{\mathcal{M}}
 L(\widehat{\vB}, \vB)
	\asymp \frac{ds+s\log(ep/s)}{n\lambda}.
	\end{align}
\end{theorem}

\section{Simulation studies}\label{sec:simulation}
This section presents simulation studies that illustrate the phenomena revealed by our theory, specifically, the fast decay rate of gSNR indicated by Theorem~\ref{thm:GP-gSNR-fast-decay} and the optimal rate for estimating the central space established in Theorem~\ref{thm:rate-smallp}. 
Throughout this section, the estimation error is defined using the general loss $L(\widehat{\vB}, \vB)$ introduced in Section~\ref{sec:minimax lower bounds}.

\subsection{Small gSNR}\label{sec:simulation small gSNR}

In this subsection, we examine several numerical examples to demonstrate that the gSNR of a general multiple-index model tends to be extremely small when $d$ is large.

We first present two exemplary five-index models to illustrate that SIR does not perform well when $d=5$, a puzzling phenomenon that has been well-documented in the literature \citep{ferre1998determining, lin2021optimality}. 
We point out that this poor performance is due to the small gSNR in each example.
Subsequently, we explore a scenario where the link function is randomly sampled from a GP, and we find that the estimated gSNR appears to decay exponentially with
increasing $d$. 
The theoretical development of Theorem~\ref{thm:GP-gSNR-fast-decay} is inspired by these empirical findings.

\subsubsection{Synthetic experiments}\label{sec:Synthetic experiments}

In the following experiments with $p=15$ and $d=5$, SIR performs poorly, and the gSNR is very close to $0$. 
Suppose $\vX= (X_{1},...,X_{15})\tp\sim N(0,\vI_{15})$ and $\epsilon \sim N(0,1)$, and consider two different models for $Y$:
\begin{align*}
M_1&:Y=X_{1}+\exp(X_{2})+\log(|X_{3}+1|+1)+\sin(X_{4})+\arctan(X_{5})+0.01*\epsilon;\\
\mc M_2&:Y=X_1^3+\frac{X_2}{(1+X_3)^2}+ \operatorname{sgn}(X_4)\log(|X_5 +0.02 |+5)+0.01*\epsilon.
\end{align*}

In both $\mc M_1$ and $\mc M_2$, the columns of $\vB$ are $\ve_1,\dots,\ve_5$, the first five standard basis vectors in $\R^{15}$. 
Table~\ref{tab:poor performance of SIR} shows the estimation error of the SIR method.

The sample size $n$ ranges in $\{10^3,10^4,10^5,10^6,10^7,8\cdot10^7\}$ and the number of slices $H$ ranges in $\{2,5,10,20,50,100,200,500\}$. 
Each entry of the table is the averaged estimation error of SIR estimates for a given pair of $n$ and $H$ across $100$ replications. 
The results indicate that whatever $H$ is chosen, the estimation error decays very slowly as $n$ increases  and remains large even when $n$ reaches $8\cdot10^7$. 

\begin{table}[!htbp]\small
	\begin{tabular}{|c|c|c|c|c|c|c|c|c|c|}
		\hline
		& $n$   & $H=2$ & $H=5$ & $H=10$  &  $H=20$ & $H=50$ & $H=100$ & $H=200$ & $H=500$ \\
		\hline
		\hline
		\multirow{5}{*}{$\mc M_1$} 
		& $10^3$  &  {6.350} &  {4.303} &  {4.821} &  {4.850} &  {4.668} &  {4.644} &  {4.641}&  {4.602} \\ 
		& & {\footnotesize(0.2122)} & {\footnotesize(0.0699)} & {\footnotesize(0.0604)} & {\footnotesize(0.0507)} & {\footnotesize(0.0492)} & {\footnotesize(0.0482)} & {\footnotesize(0.0534)} & {\footnotesize(0.0511)} 
		\\
		& $10^4$  &  {5.762} &  {3.505} &  {4.468} &  {4.479} &  {4.438} &  {4.511} &  {4.434}&  {4.564} \\ 
		& & {\footnotesize(0.2012)} & {\footnotesize(0.0510)} & {\footnotesize(0.0510)} & {\footnotesize(0.0520)} & {\footnotesize(0.0505)} & {\footnotesize(0.0571)} & {\footnotesize(0.0530)} & {\footnotesize(0.0468)}
		\\ 
		& $10^5$  &  {5.578} &  {3.247} &  {3.726} &  {3.548} &  {3.620} &  {3.681} &  {3.644}&  {3.958} \\ 
		& & {\footnotesize(0.2242)} & {\footnotesize(0.0703)} & {\footnotesize(0.0497)} & {\footnotesize(0.0459)} & {\footnotesize(0.0437)} & {\footnotesize(0.0375)} & {\footnotesize(0.0432)} & {\footnotesize(0.0566)} 
		\\
		& $10^6$  &  {5.735} &  {2.492} &  {2.806} &  {2.712} &  {2.907} &  {2.914} &  {3.112}&  {3.216} \\ 
		& & {\footnotesize(0.2314)} & {\footnotesize(0.0665)} & {\footnotesize(0.0422)} & {\footnotesize(0.0445)} & {\footnotesize(0.0435)} & {\footnotesize(0.0468)} & {\footnotesize(0.0452)} & {\footnotesize(0.0451)} 
		\\
		& $10^7$  &  {5.924} &  {1.646} &  {1.929} &  {1.877} &  {1.915} &  {1.912} &  {1.956}&  {2.068} \\ 
		& & {\footnotesize(0.2509)} & {\footnotesize(0.0683)} & {\footnotesize(0.0293)} & {\footnotesize(0.0260)} & {\footnotesize(0.0263)} & {\footnotesize(0.0271)} & {\footnotesize(0.0207)} & {\footnotesize(0.0240)} 
		\\[1pt]
		& $8\cdot10^7$  &  {6.004} &  \textbf{1.062} &  {1.832} &  {1.851} &  {1.829} &  {1.835} &  \textbf{1.802}&  {1.819} \\ 
		& & {\footnotesize(0.2301)} & {\footnotesize(0.0818)} & {\footnotesize(0.0220)} & {\footnotesize(0.0202)} & {\footnotesize(0.0243)} & {\footnotesize(0.0235)} & {\footnotesize(0.0233)} & {\footnotesize(0.0251)} 
		\\[1pt]
		\hline
		\multirow{5}{*}{$\mc M_2$} 
		& $10^3$  &  {8.027} &  {3.749} &  {3.709} &  {3.601} &  {3.549} &  {3.514} &  {3.628}&  {3.972} \\ 
		& & {\footnotesize(0.2795)} & {\footnotesize(0.0681)} & {\footnotesize(0.0426)} & {\footnotesize(0.0468)} & {\footnotesize(0.0456)} & {\footnotesize(0.0376)} & {\footnotesize(0.0404)} & {\footnotesize(0.0493)} 
		\\
		& $10^4$  &   {7.274} &  {2.795} &  {2.780} &  {2.866} &  {3.024} &  {3.123} &  {3.192} &  {3.243}  \\ 
 	&  &\footnotesize(0.2617) & {\footnotesize(0.0683)} & \footnotesize(0.0411) & \footnotesize(0.0458) & \footnotesize(0.0483) & \footnotesize(0.0453) &
\footnotesize(0.0399) & \footnotesize(0.0414) 	\\ 
		& $10^5$  &  {7.214} &  {2.324} &  {1.984} &  {1.963} &  {1.962} &  {1.984} & { 2.096}&  {2.375} \\ 
		& & 
\footnotesize(0.2909) &  \footnotesize(0.0604) & \footnotesize( 0.0218) & \footnotesize(0.0248) & \footnotesize(0.0240) & \footnotesize(0.0274) &
\footnotesize(0.0213) & \footnotesize(0.0374)	\\
		& $10^6$  &  {6.553} &  {1.859} &  {1.844}&  {1.840} &  {1.865} &  {1.857} &  {1.863}&  {1.828} \\ 
		& & 
\footnotesize(0.3084) & \footnotesize(0.0312) & \footnotesize(0.0221) & \footnotesize(0.0194) & \footnotesize(0.0171) & \footnotesize(0.0182) &
\footnotesize(0.0216) & \footnotesize(0.0249) 
		\\
		& $10^7$  &  {5.445} &  {1.659} &  {1.747} &  {1.722} &  {1.759} &  {1.837} &  {1.792}&  {1.824} \\ 
		& &
\footnotesize(0.2813) & \footnotesize(0.0413) & \footnotesize(0.0258) & \footnotesize(0.0299) & \footnotesize(0.0246) & \footnotesize(0.0177) &
\footnotesize(0.0226) & \footnotesize(0.0220) 
		\\
		& $8\cdot10^7$  &  {5.152} &  {1.239} &  \textbf{0.688} &  \textbf{0.821} &  {1.025} &  {1.298} &  {1.397}&  {1.652} \\ 
		& &
\footnotesize(0.2724) & \footnotesize(0.0627) & \footnotesize(0.0355) & \footnotesize(0.0444) & \footnotesize(0.0506) & \footnotesize(0.0492) &
\footnotesize(0.0480) & \footnotesize(0.0407) 
		\\
  [1pt]
		\hline
	\end{tabular}\caption{Estimation error of SIR under models $\mc M_1$ and $\mc M_2$ with each $(n, H)$ combination. 
 The two best combinations are highlighted in bold. The average and the standard error (in parenthesis) are based on 100 replications. }\label{tab:poor performance of SIR}
\end{table}

To further elucidate the poor performance of SIR, we check whether the estimated directions of SIR, denoted by $\wh\bbeta_i$ $(i=1,\dots,5)$, lie in the central space $\mathcal{S}$, which is the space spanned by $\ve_1,\dots,\ve_5$ in our example. 
If an estimated direction lies outside $\mathcal{S}$, its last 10 entries will not be all zeros. 
As a quantitative measure, we compute the mean squared value of each of the last 10 entries of $\wh\bbeta_i$ $(i=1,\dots,5)$, based on $100$ replications. 
We focus on model $\mc M_1$ and consider the cases where $n=8\cdot10^7$ and $H\in\{5,200\}$---these two configurations have the smallest estimation error among all choices of $n$ and $H$ in Table~\ref{tab:poor performance of SIR}. 
The results are shown in Figure~\ref{ figure, estimation directions bad simple}, where we can see the last two estimated directions ($\wh\bbeta_4$ and $\wh\bbeta_5$) could lie substantially outside the central space $\mathcal{S}$. 
This observation implies that SIR fails to provide a reasonable estimate of the central space when $d=5$ even if the sample size is as large as $8\cdot10^{7}$ and $H$ enumerates all reasonable choices. 
This observation is consistent with the high loss presented in Table~\ref{tab:poor performance of SIR}. 

\begin{figure}[!htbp]
	\centering
	\begin{minipage}{0.5\textwidth}
		\includegraphics[width=\textwidth]{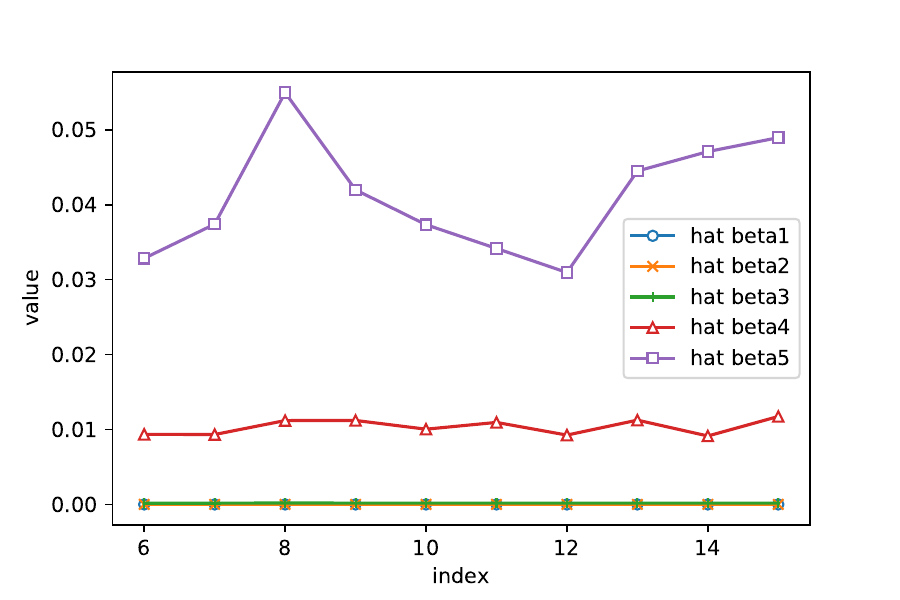}
	\end{minipage}\hfill
	\begin{minipage}{0.5\textwidth}
		\includegraphics[width=\textwidth]{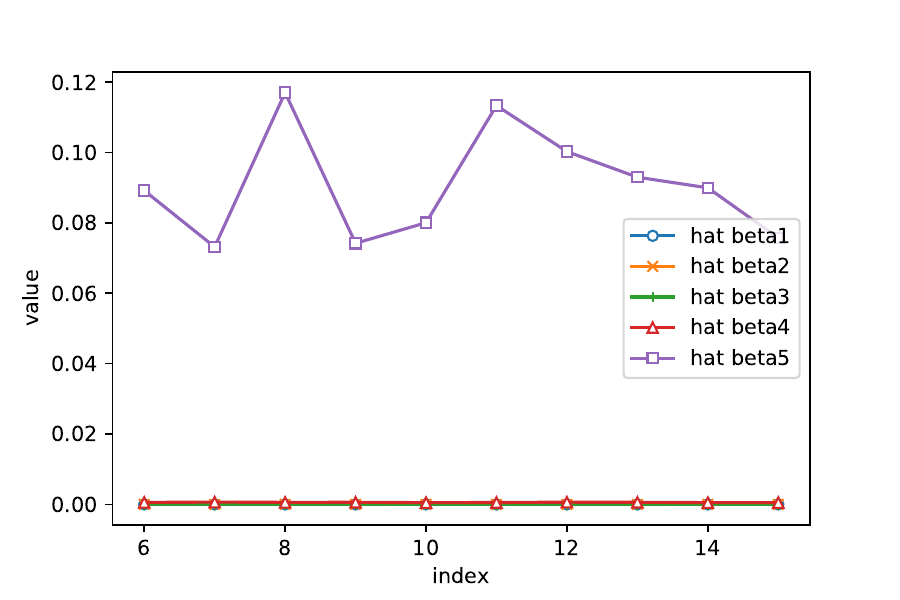}
	\end{minipage}\hfill
	\caption{Mean squared value of the last $10$ entries of estimated directions of SIR for model $\mc M_1$ with $n=8\cdot10^7$, $H=5$ (left) and $H=200$ (right).}
	\label{ figure, estimation directions bad simple}
\end{figure}

We proceed to examine the gSNR in these models by calculating the average of the logarithm of $\lambda_i(\wh\bLambda_H)$, the eigenvalues of the SIR estimate of $\Cov(\bbE(\vX\mid Y))$ with $n=8\cdot10^7$ (the largest sample size in this experiment). 
As seen in Table \ref{tab:eigvals}, the estimated gSNRs (i.e. $\lambda_5(\wh\bLambda_H)$) are very close to $0$ for both models $\mc M_1$ and $\mc M_2$.

\begin{table}[!htbp]
	\begin{tabular}{|c|c|c|c|c|c|c|c|c|c|}
		\hline
		 & index   & $H=2$ & $H=5$ & $H=10$  &  $H=20$ & $H=50$ & $H=100$ & $H=200$ & $H=500$ \\
		\hline
		\hline
		\multirow{5}{*}{$\mc M_1$} 
		& $1$  & -0.54 & -0.22 & -0.16 & -0.15 & -0.14 & -0.14 & -0.14& -0.14 \\ 
		
		& $2$  & -39.19 & -2.83 & -2.33 & -2.16 & -2.09 & -2.08 & -2.07& -2.07 \\ 
		
		& $3$  & -45.88 & -9.45 & -8.21 & -7.60 & -7.24 & -7.12 & -7.05& -7.00 \\ 
		
		& $4$  & -51.60 & -13.61 & -11.37 & -10.88 & -10.53 & -10.70 & -10.62& -10.48 \\ 
		
		& \textbf{gSNR}   & \textbf{-54.56} & \textbf{-41.31} & \textbf{-15.07} &  \textbf{-14.45} & \textbf{-13.71} & \textbf{-13.15} & \textbf{-12.56} & \textbf{-11.75} \\ 
		[1pt]
		\hline
		\multirow{5}{*}{$\mc M_2$}

& $1$  & -0.78 &-0.58 &-0.54 &-0.53 
&       -0.53 &-0.53 &-0.53 &-0.53\\
& $2$  &-40.11 &-1.72 &-1.43 &-1.39 
&       -1.38 &-1.37 &-1.37 &-1.37\\
& $3$  &-47.09 &-3.32 &-2.30 &-2.25 
&       -2.22 &-2.22 &-2.22 &-2.21 \\
& $4$  &-51.97 & -9.50 &-6.78  &-6.68 
&       -6.62 &-6.61 &-6.61 &-6.60\\
 & \textbf{gSNR}   &\textbf{-53.97}  &\textbf{-47.14} & \textbf{-14.52} & \textbf{-14.10} 
&      \textbf{-13.57}  &\textbf{-13.09} & \textbf{-12.54} &\textbf{-11.75}\\
		[1pt]
		\hline
	\end{tabular}
	\caption{Logarithm of the eigenvalues of $\wh\bLambda_H$ the SIR estimate of $\Cov(\bbE(\vX\mid Y))$ (averaged based on $100$ replications) under models $\mc M_1$ and $\mc M_2$. For each model, the $i$-th row corresponds to the $i$-th eigenvalue. 
 }\label{tab:eigvals}
\end{table}

\paragraph*{Gaussian process}
To further explore the decay of gSNR as $d$ increases, we study a general setting where the link function is a random continuous function sampled from a GP.

For each $d\in  \, \{1,2,3,4,5\}$, let $\vB=[\ve_1,\dots,\ve_d]$, where $\ve_i$ is the $i$-th standard basis vector of $\R^{15}$ and consider the following joint distribution of $(\vX,Y)$:
\begin{align}\label{eq: multiple index model GP}
\begin{split}
\vX&= (X_{1},...,X_{15})\tp\sim N(0,\vI_{15});\\
\mc M_3:Y&=f(\vB\tp\vX)+0.01*\epsilon,\quad\epsilon \sim N(0,1), 
\end{split}
\end{align}
where $f$ is a random function generated from the GP with mean function $\mu( \vx)=\bs{0}$ and covariance function $\Sigma(\vx,\vx^{\prime})=e^{-\frac{\| \vx- \vx^{\prime}\|^2}{2}}$.

We numerically explore the relationship between the gSNR for model $\mc M_3$ in \eqref{eq: multiple index model GP} and the structural dimension $d$. 
Specifically, for each value of $d$, we sample $f$ for $1,000$ times. 
For each sampled $f$, we draw a sample of $(\vX, Y)$ of size $n$ from model $\mc M_3$ and compute the estimated gSNR of $\mc M_3$ by $\lambda_{d}(\wh\bLambda_H)$, the $d$-th eigenvalue of the SIR estimate of $\Cov(\bbE(\vX\mid Y))$. 
Due to computational limitations, we set the maximum sample size at $50,000$. 
Here we present the result for $H=15$ (the results are not sensitive to the choice of $H$). Detailed sampling procedures and results for other values of $H$ can be found in Appendix~\ref{app:assitional-simulation}.

Figure \ref{figure, log plot of average gSNR} plots the average logarithm of the estimated gSNR over $1,000$ replications. 
We show the average  as a function of $n$ for various values of $d$ in the left subfigure and as a function of $d$ for various values of $n$ in the right subfigure. 
All of the associated standard errors are less than $0.005$.
Histograms of the estimated gSNR can be found in Appendix~\ref{app:Histogram of sample gSNR of GP}.
Based on the left subfigure, the estimated gSNR keeps decreasing as $n$ grows, indicating that it overestimates the true gSNR. 
The right subfigure shows that for a sufficiently large $n$, the estimated gSNR exhibits an exponential decay with respect to $d$ and becomes extremely small when $d=5$. 
The findings in this experiment reveal the rapid decay rate of the gSNR with increasing $d$ in realistic situations and motivate our theoretical development in Theorem~\ref{thm:GP-gSNR-fast-decay}.

\begin{figure}[H]
	\centering
	\begin{minipage}{0.5\textwidth}
		\includegraphics[width=\textwidth]{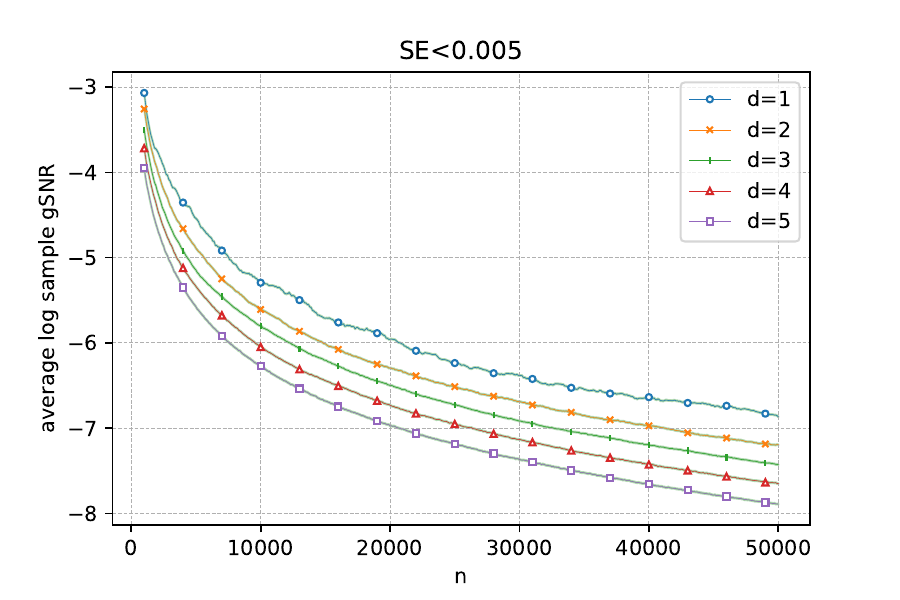}
	\end{minipage}\hfill
	\begin{minipage}{0.5\textwidth}
		\includegraphics[width=\textwidth]{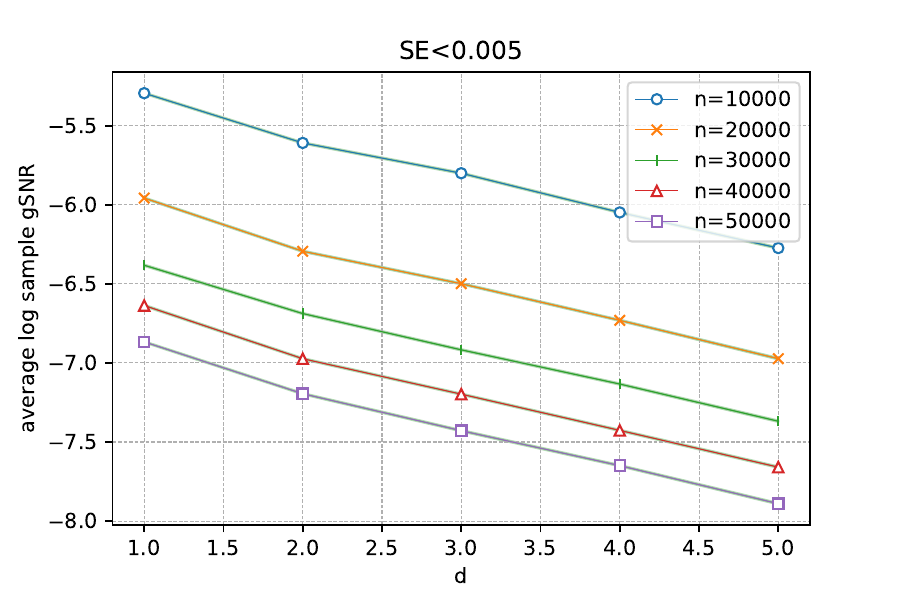}
	\end{minipage}\hfill
	\caption{Average logarithm of gSNR with increasing $n$ (left) and $d$ (right).}
	\label{figure, log plot of average gSNR}
\end{figure}

\subsection{Dependence of Estimation Error on $d$ and $\lambda$}\label{sec:simulation-dependence}
This subsection examines the dependence of the estimation error of SIR on the structural dimension $d$ and the gSNR $\lambda_d$. 
We observe that for various values of $d$ and $\lambda_d$, the averaged estimation error of SIR exhibits a linear relationship with $d$ and an inversely proportional relationship with $\lambda$. This is consistent with the theoretical result in Theorem~\ref{thm:rate-smallp}, where the minimax optimal rate for estimating the central space is achieved by SIR. 

In this experiment, we construct  $(\vX, Y)$ as per Equation~\eqref{eq:joint-x-y-B}, with $\vB$ chosen as  $\left[\bs{I}_{d}, \bs{0}_{d\times (p-d)} \right]\tp$, $\sigma=0.5$. 
We set $\rho=\theta ~\cdot~  \sqrt{ d } \left(\bbE[ \max |Z_{i}| 1_{\|\vZ\|^2 \leqslant m}]  \right)^{-1}$, where $\theta$ is a scaling factor such that $\rho$ lies in $(0,1)$, $\vZ$ is a $d$-variate standard normal random vector, and the expectation is computed numerically. 
Under such a specification of $\rho$, Remark~\ref{rem:gSNR-joint} implies two properties: 
\begin{enumerate}
    \item[1)] the gSNR remains constant if we fix the value of $\theta$, and 
    \item[2)] the gSNR  $\propto \theta^2$ if we fixed the value of $d$. 
\end{enumerate}
Therefore, we can illustrate the linear dependence of the estimation error on $d$ by varying the value of $d$ while fixing the triplets $(n,p,\theta)$. 
In addition, we can demonstrate that the estimation error is inversely proportional to $\lambda$ by varying the value of $\theta$ while fixing the triplets $(n,p,d)$.
The value of $d$ ranges in $\{2,4,6,8,10\}$, $\theta$ in $\{0.01,0.02,\dots,0.07\}$, and we fix $n=10^6$, $p=200$, and $H=1000$.

Figure~\ref{fig:error-with-increasing-d-good-example} shows the relationship between the estimation error of SIR and the varying factors $d$ and $\theta$ based on $100$ replications. 
In the left subplot, the solid line represents the averaged estimation error as $d$ increases (with a fixed $\theta=0.05$), which aligns perfectly with the dotted straight line fitted by least squares regression ($1-R^2<0.001$). The shaded areas represent the standard error associated with these estimates, which are all less than $0.003$. 
This plot indicates a linear dependence of the estimation loss on $d$. 
In the right subplot, the solid line plots the averaged logarithm of the estimation error against the logarithm of $\theta$ (with a fixed $d=10$). 
The dotted line is the straight line fitted by least squares regression, featuring a slope of $-2.021$ and $1-R^2<0.03$. 
This plot indicates that the estimation loss is inversely proportional to $\theta^2$, and thus to $\lambda$. 
The observations in this experiment are consistent with the theoretical result in Theorem~\ref{thm:rate-smallp}. 

\begin{figure}[H] 
	\centering
	\begin{minipage}{0.5\textwidth}
		\includegraphics[width=\textwidth]{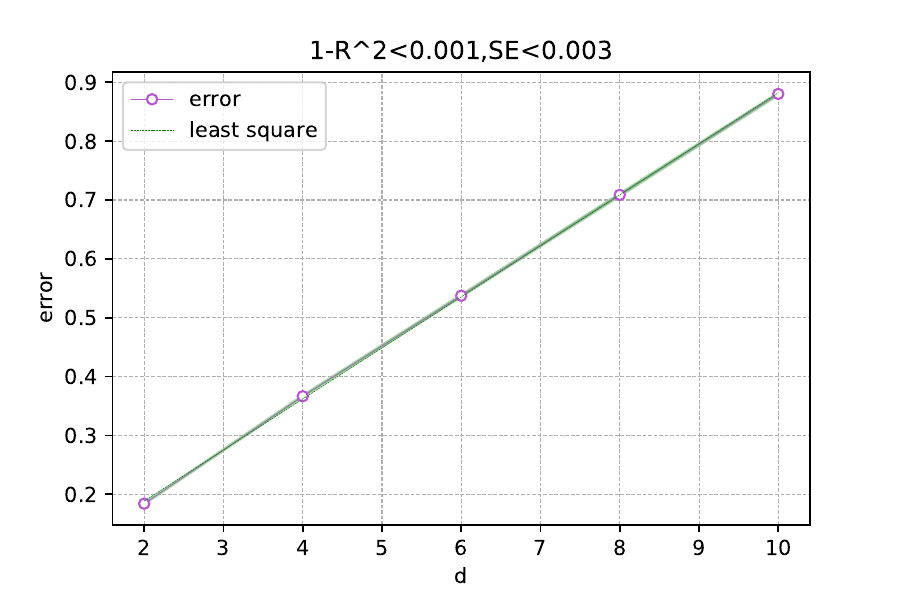}
	\end{minipage}\hfill
	\begin{minipage}{0.5\textwidth}
		\includegraphics[width=\textwidth]{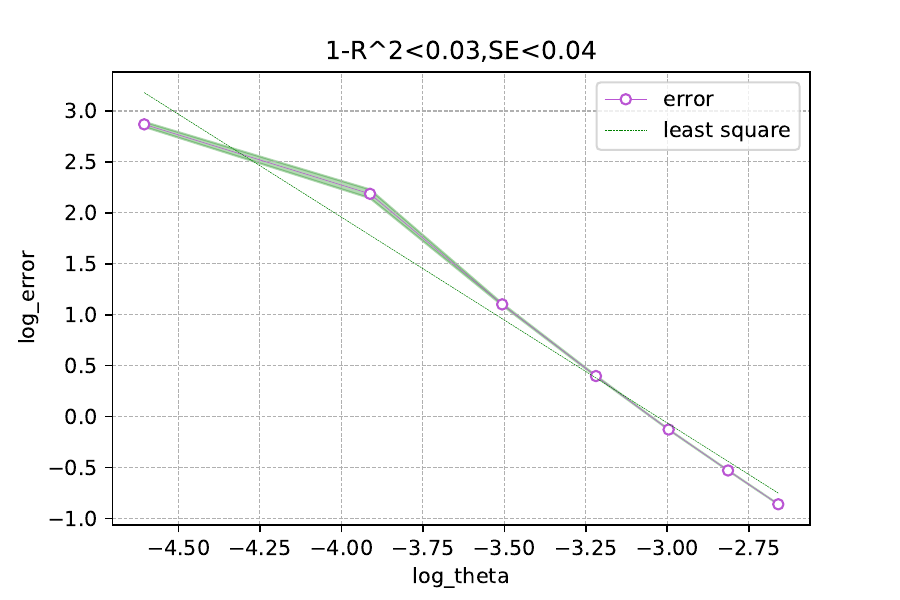}
	\end{minipage}\hfill
	\caption{Left: estimation error against $d$; Right: logarithm of estimation error against $\log(\theta)$.}
	\label{fig:error-with-increasing-d-good-example}
\end{figure}

\section{Discussions}
\label{sec:discussions}

In this paper, we study the performance of sliced inverse regression (SIR) for sufficient dimension reduction (SDR) under a multiple-index model where the structural dimension $d$ diverges. 
We demonstrate that the generalized signal-to-noise ratio (gSNR), defined as  $\lambda_{d}=\lambda_{d}(\Cov\left[\bbE(\boldsymbol{X}\mid Y)\right]$, tends to decay exponentially fast as $d$ grows, and we prove that the minimax risk for estimating the central space is lower bounded by $C\frac{dp}{n\lambda_d}$ for some constant $C>0$. 
Our results quantify the intrinsic difficulty of SDR in terms of $d$ and $\lambda_d$; in particular, to guarantee a prescribed accuracy level, the number of samples required grows rapidly with $d$. 
To our knowledge, this is the first theoretical explanation of the empirically observed poor performance of SIR when $d$ is large.

Our theory on the decay rate of gSNRs contains two parts. 
First, we prove that if the link function $f$ of the multiple-index model is sampled from a Gaussian process, then with high probability, the gSNR is bounded by $O(e^{-\theta d})$ for some $\theta>0$. 
Our proof involves a novel analysis of the conditional mean $\bbE(\vX\mid Y=y, f)$, which is related to the study of level sets of Gaussian processes. 
This result suggests that the gSNR generally tends to decay fast as $d$ grows, which has been empirically observed. 
Second, we prove that for any link function, the gSNR is bounded by $O(\log d/d)$, though this bound is rarely attained unless the link function is deliberately constructed.

Our minimax lower bounds are very different from existing ones. 
The result in \cite{lin2021optimality} has certain limitations since it requires the structural dimension $d$ to be bounded above and the gSNR to be bounded away from $0$. 
In contrast, our results not only allow $d$ to grow but also allow the gSNR to approach $0$. 
Besides, our results are also different from those of \cite{tan2020sparse}, which considered Gaussian mixture models and required $d$ to be fixed. 
Compared to previous results, our minimax lower bounds highlight the crucial roles of the structural dimension and the gSNR in SDR problems.

Our introduction of the weak sliced stable condition (WSSC) also contributes to the theoretical development of SDR. 
This condition simplifies our proofs for both the upper and lower bounds of the minimax rates presented in this paper. 
Furthermore, we expect that deriving the WSSC from the moment condition in Theorem~\ref{thm:moment condition to sliced stable} would inspire future research on the high-dimensional behavior of other SDR methods, such as SAVE, without the need for higher-order sliced stability conditions.  
For instance, if one intends to study the phase transition phenomenon of SAVE in high dimensions, as was done for SIR in \cite{lin2018consistency}, 
it might initially seem that a higher-order sliced stability conditions is indispensable. 
This speculation stems from the observation that in low-dimensional settings, the asymptotic theory of SAVE developed by \cite{li2007asymptotics} required several conditions similar to those proposed for SIR by \cite{hsing1992asymptotic,zhu1995asymptotics}, but with a higher order. 
However, our results shed light on another possibility: the higher-order sliced stability condition could potentially be replaced by the existence of $\ell>2$ moments of predictors.

Our findings raise several open questions. 
\begin{itemize}
    \item 
Our theory focuses on the SIR method, and our notion of gSNR is related to $\Cov\left(\bbE[\vX \mid Y]\right)$.
When studying other SDR methods, it might be possible to consider alternative definitions of gSNR in the corresponding contexts and establish the minimax optimality. 
In particular, if the conditional mean $\E(Y \mid \vX)$ is of interest, it is beneficial to consider estimating the central mean subspace rather than the central space since it provides a further reduction of dimension. 
As discussed in Remark~\ref{rem:other-SDR}, the theoretical analysis for estimating the central mean subspace is very different from the one in the current paper. 
To the best of our knowledge, there is no minimax theory developed for estimating the central mean subspace with a diverging structural dimension. 
Whether or not the theory developed in the current paper can be extended to the estimation of the central mean subspace is open for future exploration.

\item 
In our proof of the upper bound on the minimax risk for high-dimensional sparse models, we construct the aggregation estimator primarily for theoretical purposes rather than computational efficiency. 
From a practical perspective, it is beneficial to develop computationally efficient algorithms that can achieve or nearly achieve the optimal rate.

\item 
To reveal the intrinsic difficulty of SDR with a large structural dimension $d$, our theory presumes that $d$ is known, which allows us to focus on studying the impact of $d$ on the gSNR and the estimation error. 
In real practice, the value of $d$ has to be inferred from the data, which is known as order determination and has been widely studied in the literature (see \citet[Chapter 9]{liSufficient2017} and references therein). 
Our minimax lower bound continues to hold when $d$ is unknown, but it remains to be explored whether there is an even larger lower bound in this case.

\end{itemize}

\section*{Acknowledgements}
Huang's research is supported in part by NUS Start-up Grant A-0004824-00-0 and Singapore Ministry of Education AcRF Tier 1 Grant A-8000466-00-00. 
Lin's research is supported in part by the National Natural Science Foundation of China (Grant 92370122, Grant 11971257).

\bibliography{reference.bib}
\bibliographystyle{chicago}

\appendix
\section{Proof of Lemma \ref{lem:key lemma in Lin under moment condition}}
\begin{proof}
	It is a direct corollary of Lemma 1 ({\it `key lemma'}) in \cite{lin2018consistency} by noticing that  factor $\frac{\gamma_3}{H^\nu}$ in the proof of Lemma 2(i) therein  corresponds to factor $\frac1\tau$ in Definition \ref{def:weak-sliced-stable}. 
\end{proof}

\section{Proof of Theorem \ref{thm:moment condition to sliced stable} and Corollary \ref{cor:moment to WSSC}}

We first introduce the following lemma, which shows that the  WSSC is easy to satisfy for general curves.
\begin{lemma}\label{lem:moment condition to sliced stable, continuous random variable}
	Let $Y\in\R$ be a random variable and $\boldsymbol{\kappa}:~\R\rightarrow \R^p$  be a nonzero continuous function satisfying 
	\begin{itemize}
		\item[(i)]$\sup_{\|\bbeta\|=1}\mathbb{E}[|\langle\boldsymbol{\kappa}(Y),\bbeta\rangle|^{\ell}]\leqslant c_{1}$ holds for some $\ell>2$ and $c_{1}>0$;
		\item[(ii)] $Y$ is a continuous random variable.
	\end{itemize}
	Then for any $\tau>1$,  there exists  a constant $K=K(\tau,d')\geqslant d'$ such that $\bs{\kappa}(y)$ satisfies the weak $(K,\tau)$-sliced stable condition w.r.t. $Y$ where $d':=\mr{dim}\{\mr{span}\{\bs\kappa(y):y\in \R \}\}$.

If we further assume that $\|\bs\kappa(y)-\bs\kappa(y')\|\leqslant C\zeta|y-y'|$ 
 for any $y,y'$ defined on $\R$ 
 and  $C>0$  a constant, then the WSSC coefficient $K= \lceil K_0\zeta\rceil$ for some absolute constant $K_0$.

\end{lemma}
By the density transformation formula of continuous random variables, it is easy to check that there exists a monotonic function $f$ such that the (probability density function) p.d.f. of $f(Y)$ is continuous and positive everywhere. Since any monotonic transformation $f(Y)$ keeps SIR procedures unchanged, (ii) in Lemma \ref{lem:moment condition to sliced stable, continuous random variable} can be replaced by
\begin{itemize}
    \item[(ii’)] The p.d.f. of $Y$ is continuous and  positive everywhere on $\R$.
\end{itemize}

\subsection{Proof of Lemma \ref{lem:moment condition to sliced stable, continuous random variable}}
To prove  Lemma \ref{lem:moment condition to sliced stable, continuous random variable}, we need some properties of   
 any  partition  $\mathcal{B}_{H}:=\{-\infty=a_0<a_1<\dots<a_{H-1}<a_H=\infty\}$ of $\R$. Before this, we introduce some important definitions.  For simplicity, we  let $U_h:=(a_{h-1},a_h]$.
\begin{definition}\label{def:gamma}
	Let $Y$ be a real random variable and $\gamma \in (0,1)$. A partition
	$\mathcal{B}_{H}:=\{U_h\}_{h=1}^H$
	of $\R$ is called a $\gamma$-partition w.r.t. $Y$ if 
	\[\frac{1-\gamma}{H}\leqslant \mathbb{P}(Y\in U_h)\leqslant\frac{1+\gamma}{H}\]
	for $h=1,\dots,H$.
\end{definition}
\begin{definition}\label{def:admissible} 
	Suppose $\delta>0$, $D\subset \mathbb{R}$ is  compact and $Y$ is a real random variable. 
	A  $\gamma$-partition  $\mathcal{B}_H$ w.r.t. $Y$ is $(\delta,D)$-admissible if there exists a compact set $D'\supset D$ with the following properties:
	\begin{itemize}
		\item [(1)]
		For any $U_h\in\mathcal{B}_H$ such that $U_h\bigcap D\neq\emptyset \empty$, then $U_h\subset D'$.
		\item[(2)]For any $U_h\in\mathcal{B}_H$ such that $U_h\subset D'$, then 
		$\mbox{diam}(U_h)<\delta$, where $\mbox{diam}(U_h)$ is the diameter of $U_h$.
	\end{itemize}
\end{definition}

Intuitively, when $\R$ is partitioned into sufficiently many intervals of the same probability mass of the distribution of $Y$, then the Euclidean lengths of these intervals within a specified compact set are all sufficiently small. Thus the partition  $\mathcal{B}_{H}$ is $(\delta,D)$-admissible. We shall illustrate this intuition by the following Lemma, which is an essential property of the  partition.
\begin{lemma}\label{lem:ddas}
	Suppose that $Y\in\R$ is a real random variable  and $p(y)$ is continuous and positive everywhere. Let $\{\mathcal{B}_{H}\}$ be a sequence of $\gamma$-partition.  For any $\delta>0$ and compact set $D$, there exists an $H_{0}=H_{0}(\gamma,\delta,D)\asymp \frac{1+\gamma}{\delta}$ such that for any $H>H_{0}$,  $\mathcal{B}_{H}$ is a $(\delta,D)$-admissible $\gamma$-partition.
\end{lemma}
\begin{proof}
	Let $D^{\prime}$ be a compact interval containing the set $\{x\mid \mbox{dist}(x,D)\leqslant 1/2\}$. Since $p(y)$ is positive and continuous over $D^{\prime}$, there exists a positive number $\mu$, such that for any interval $[a,b]\subset D^{\prime}$ satisfying $|b-a|\geqslant 1/2$, one has $\P(Y\in[a,b])\geqslant \mu$. Choose a real number $H_{1}>\frac{1+\gamma}{\mu}$. For any $H>H_{1}$, one has $\P(Y\in  U_h)< \mu$. Hence, if $  U_h\cap D\neq \emptyset$, we  must have $\mbox{diameter}( U_h)\leqslant 1/2$. This implies that $U_h\subset D^{\prime}$.  Let $\mu^{\prime}:=\min_{Y\in D^{\prime}} p(Y)$. As  $D^{\prime}$ is compact, our assumption about $p(Y)$ implies that $\mu^{\prime}$ is well-defined and $\mu^{\prime}>0$.
	Let $H_{2}\in \mb{Z}_{\geqslant1}$ satisfying $\frac{1+\gamma}{H_{2}\mu^{\prime}}<\delta$. For any $H>H_{2}$, let  $a_{h}<b_{h}$ be two endpoints of $U_h\subset D^{\prime}$, one has $\mu^{\prime}(b_{h}-a_{h})\leqslant \P(Y\in U_h)\leqslant \frac{1+\gamma}{H}$. Then we obtain
	$\mbox{diam}( U_h)=b_{h}-a_{h}\leqslant \frac{1+\gamma}{H\mu^{\prime}}< \delta$. Choose $H_{0}=\max\{H_{1},H_{2}\}$, then for any $H>H_{0}$, one has $\mathcal{B}_{H}$  is  $(\delta,D)$-admissible.
\end{proof}

\textbf{Proof of Lemma \ref{lem:moment condition to sliced stable, continuous random variable}:}
Now we are ready to prove Lemma \ref{lem:moment condition to sliced stable, continuous random variable} under conditions (i) and (ii'). We first prove that for any $\bbeta\in \mb{S}^{p-1}$, 
\begin{align}\label{eq:epsilon sliced stable initial version}
\frac{1}{H}\sum_{h:U_h\in\mathcal{B}_{H}}\mathrm{var}(\boldsymbol{\kappa}(\bbeta)\mid Y\in U_h)\leqslant\frac1\tau\lambda_{\min}^+(\mathrm{Cov}(\boldsymbol{\kappa}(Y)))
\end{align}
where $\boldsymbol{\kappa}(\bbeta):=  \bbeta^\top\boldsymbol{\kappa}(Y)$.

For any $\tau>0$, let us choose a compact set $D$ such that 
$\P(Y\in D^c)<\epsilon_{1}^{\frac{\ell}{\ell-2}}$,
where $\epsilon_{1}=\frac{(1-\gamma)\lambda_{\min}^+(\mathrm{Cov}(\boldsymbol{\kappa}(Y)))}{{\tau\sup_{\|\bbeta\|=1}\E[|\langle\boldsymbol{\kappa}(Y),\bbeta\rangle|^\ell]}^{2/{\ell}}}$. 
Let $\delta$ be some small positive constant, say, $0.2$.
By Lemma \ref{lem:ddas}, we know that there exists an $H_0$, such that for any $H>H_0$, the  partition $\mathcal{B}_{H}$ is a $(\delta, D)$ admissible $\gamma$-partition. This means 
there exists a compact set $D^{\prime}$,  such that if $U_h\cap D\neq\emptyset$, then  $U_h\subset D'$ and $\mbox{diam}(U_h)<\delta$.   Since $\boldsymbol{\kappa}$ is uniformly continuous on $D'$, for $\epsilon_{2}=\sqrt{\frac{\lambda_{\min}^+(\mathrm{Cov}(\boldsymbol{\kappa}(Y)))}{\tau}}$, there exists an $\epsilon'$ such that $\|\boldsymbol{\kappa}(y_{1})-\boldsymbol{\kappa}(y_{2})\|\leqslant \epsilon_{2}$ for any $y_{1},y_{2}\in D'$ satisfying that $|y_{1}-y_{2}|<\epsilon'$. Let $\delta'=\min\{\delta,\epsilon^{\prime}\}$.  By Lemma \ref{lem:ddas}, there exists an $H_0^{\prime}$, such that if $\mathcal{B}_{H}$ is a $\gamma$ partition, then it is $(\delta^{\prime}, D)$ admissible for any $H>H_0^{\prime}$.

In particular, when it holds that $\|\bs\kappa(y)-\bs\kappa(y')\|\leqslant C\zeta|y-y'|$, then we can ensure that $\epsilon'\asymp \frac1{\zeta}$ and $H_0'\asymp \zeta$.

 For such $(\delta^{\prime}, D)$ admissible partition $\mathcal{B}_{H}$, one has 
\begin{itemize}
	\item[1] For any $U_h\cap D\neq \emptyset$ and any  $\bbeta\in \mb{S}^{p-1}$, by intermediate value theorem, we know that there exists a $\xi\in U_h$, such that 
	\[\int_{U_h}\boldsymbol{\kappa}(\bbeta)p(y\mid Y\in U_h)dy=\bbeta^\top\boldsymbol{\kappa}(\xi).\]
	Thus, one has 
	\begin{align*}
	\mathrm{var}(\boldsymbol{\kappa}(\bbeta)\mid Y\in U_h)&=\int_{U_h}(\boldsymbol{\kappa}(\bbeta)-\bbeta^\top\boldsymbol{\kappa}(\xi))^2p(y\mid Y\in U_h)dy  
	\leqslant \epsilon_{2}^2=\frac{\lambda_{\min}^+(\mathrm{Cov}(\boldsymbol{\kappa}(Y)))}{\tau}.
	\end{align*}
	\item[2]For any $U_h\subset D^c$ and any  $\bbeta\in\mb{S}^{p-1}$, one has 
\begin{align*}	&\mathrm{var}(\boldsymbol{\kappa}(\bbeta)\mid Y\in U_h)\leqslant \int_{U_h}(\boldsymbol{\kappa}(\bbeta))^2p(y\mid Y\in U_h)dy\leqslant \frac{H}{1-\gamma}\int_{U_h}(\boldsymbol{\kappa}(\bbeta))^2p(y)dy
\end{align*}
because $\gamma\leqslant\frac{1}{\tau}$. 
Thus, 
\[
\frac{1}{H}\sum_{h:U_h\subset D^c}\mathrm{var}(\boldsymbol{\kappa}(\bbeta)\mid Y\in U_h)
\leqs \frac{1}{1-\gamma}\int_{D^c}(\boldsymbol{\kappa}(\bbeta))^2p(y)dy. 
\]

Let $f(y)=\kappa(\bbeta)^2$ and $q(y)=p(y) / \bbP(Y\in D^c)$. By Jensen's inequality, we have $\int_{D^c} f(y)q(y)dy\leqs 
 \left(\int_{D^c}f^{\ell/2}(y) q(y) dy\right)^{2/\ell}$. This can be written as 
 \begin{align*}	
 \int_{D^c}(\boldsymbol{\kappa}(\bbeta))^2p(y)dy \leqs  \left(\int_{D^c} (\kappa(\bbeta) )^{\ell} p(y) dy\right)^{2/\ell} \bbP(Y\in D^c)^{1-2/\ell}, 
\end{align*}
which is bounded by $$\sup_{\|\bbeta\|=1}\E[|\langle\boldsymbol{\kappa}(Y),\bbeta\rangle|^\ell]^{2/\ell}\epsilon_{1}=\frac{1}{\tau}\lambda_{\min}^+(\mathrm{Cov}(\boldsymbol{\kappa}(Y))).$$
\end{itemize}
Then
\begin{align*}
&\frac{1}{H}\sum_{h:U_h\in\mathcal{B}_{H}}\mathrm{var}(\boldsymbol{\kappa}(\bbeta)\mid Y\in U_h)=\frac{1}{H}\sum_{h:U_h\cap D\neq\emptyset}\mathrm{var}(\boldsymbol{\kappa}(\bbeta)\mid Y\in U_h) +\frac{1}{H}\sum_{h:U_h\subset D^c}\mathrm{var}(\boldsymbol{\kappa}(\bbeta)\mid Y\in U_h)\\
\leqslant& \frac{1}{H\tau}N_{1}\lambda_{\min}^+(\mathrm{Cov}(\boldsymbol{\kappa}(Y)))+\frac{1}{\tau}\lambda_{\min}^+(\mathrm{Cov}(\boldsymbol{\kappa}(Y)))\leqslant\frac{2}{\tau}\lambda_{\min}^+(\mathrm{Cov}(\boldsymbol{\kappa}(Y)))
\end{align*}
where $N_{1}$ is the number of $U_h$ such that $U_h\cap D\neq\emptyset$.
This completes the proof of \eqref{eq:epsilon sliced stable initial version}. 

Note that for any $\bbeta\in\mr{col}(\mathrm{Cov}(\boldsymbol{\kappa}(Y))),\lambda_{\min}^+(\mathrm{Cov}(\boldsymbol{\kappa}(Y)))\leqslant\bbeta\tp\mathrm{Cov}(\boldsymbol{\kappa}(Y))\bbeta$. In the case when $\mathrm{var}(\bbeta\tp\boldsymbol{\kappa}(Y))=0$, it holds that $\mathrm{var}(\bbeta\tp\boldsymbol{\kappa}(Y)|Y\in U_h)=0 (\forall U_h)$.  Thus, the proof is completed.

\qed

\subsection{Proof of Theorem \ref{thm:moment condition to sliced stable}}
\begin{proof}
Note that $\langle \vm(y),\bbeta\rangle = \E[\langle\vX,\bbeta\rangle|Y]$. By Jensen's inequality for conditional expectation, one has
	\begin{align*}	\bbE[|\E[\langle\vX,\bbeta\rangle|Y]|^\ell]\leqslant\E[\E[|\langle\vX,\bbeta\rangle|^\ell|Y]]=\E[|\langle\vX,\bbeta\rangle|^\ell]\leqslant c_1.
	\end{align*}
	The proof is then completed by Lemma \ref{lem:moment condition to sliced stable, continuous random variable}. 
\end{proof}
\subsection{Proof of Corollary \ref{cor:moment to WSSC}}
It is a direct corollary of Theorem \ref{thm:moment condition to sliced stable} by noticing that the following result.
	\begin{lemma}[Lemma 11 in \cite{lin2018supplement}]\label{lem:epfs}
		For any  sufficiently large $H,c$ and  $n>\frac{4H}{\gamma}+1$, $\mathfrak{S}_{H}(n)$ is a $\gamma$-partition with probability at least 
		\[1-CH^2\sqrt{n+1}\exp\left(-\frac{\gamma^2(n+1)}{32H^2}\right)\]
		for some absolute constant $C$.
	\end{lemma}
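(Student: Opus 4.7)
Proof plan. The goal is to reduce Corollary \ref{cor:moment to WSSC} to Theorem \ref{thm:monment condition to sliced stable} by conditioning: on a high-probability event, the data-dependent partition $\mathfrak{S}_H(n)$ behaves like a deterministic $\gamma$-partition in the sense of Definition \ref{def:gamma}, and then the weak sliced stability already proved in Theorem \ref{thm:monment condition to sliced stable} applies directly to yield the WSSC inequality for $\vm(y)$. The substantive content is therefore to establish the specific probability bound $1 - CH^{2}\sqrt{n+1}\exp(-\gamma^{2}(n+1)/(32H^{2}))$ for the event that $\mathfrak{S}_H(n)$ is a $\gamma$-partition.

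Step 1 (reduction to a $\gamma$-partition event). Let $F$ denote the CDF of $Y$; since $Y$ is continuous, $U_i := F(Y_i)$ are i.i.d.\ Uniform$[0,1]$, and $F(Y_{(k)}) = U_{(k)}$. The partition boundaries are $a_h = Y_{(ch)}$ for $h=1,\dots,H-1$, and
$$\bbP(Y \in \mathcal{S}_h \mid \text{data}) = F(a_h) - F(a_{h-1}) = U_{(ch)} - U_{(c(h-1))}.$$
Define
$$\mathcal{A} := \left\{\, \max_{1 \leqslant h \leqslant H-1} \left| U_{(ch)} - \tfrac{h}{H} \right| \leqslant \tfrac{\gamma}{2H} \,\right\}.$$
On $\mathcal{A}$ each slice has conditional probability within $[(1-\gamma)/H, (1+\gamma)/H]$, so $\mathfrak{S}_H(n)$ is a $\gamma$-partition. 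Conditioning on the realized boundaries (which do not affect the law of $\vm(Y)$ as a function of $Y$), Theorem \ref{thm:monment condition to sliced stable} then delivers the WSSC inequality for this partition, since $H \geqslant K$.

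Step 2 (controlling $\bbP(\mathcal{A}^c)$). For each fixed $h$, $U_{(ch)} \sim \mathrm{Beta}(ch, n-ch+1)$, with mean $ch/(n+1)$ close to $h/H$. I convert the tail event to a binomial one via the duality
$$\{U_{(ch)} > t\} \;=\; \{\,\#\{i : U_i \leqslant t\} < ch\,\},$$
whose right side is the lower tail of a $\mathrm{Binom}(n,t)$ variable. Plugging in $t = h/H + \gamma/(2H)$ and its symmetric analog, Chernoff/Bernstein for binomial tails gives an individual-$h$ bound of order $\exp(-c_1\gamma^{2}n/H^{2})$. To recover the stated prefactor $CH^{2}\sqrt{n+1}$ and the constant $1/32$ in the exponent, I sharpen the bound using the explicit Beta$(ch,n-ch+1)$ density: Stirling's formula applied to the normalization $\Gamma(n+2)/[\Gamma(ch)\Gamma(n-ch+1)]$ produces the $\sqrt{n+1}$ factor, one power of $H$ comes from the union bound over $h=1,\dots,H-1$ (two tails each), and another $H$ arises when integrating the density over an interval of width $\gamma/H$ to convert the pointwise density estimate into a probability.

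Main obstacle. The hard part is matching the precise form of the probability bound: the coarse DKW or Chernoff-only calculation recovers the correct $\exp(-\Theta(\gamma^{2}n/H^{2}))$ decay but not the precise constant $1/32$ nor the polynomial prefactor $H^{2}\sqrt{n+1}$. Obtaining these requires a careful Stirling-based analysis of the Beta density near the mean together with a uniform union bound across all $H-1$ interior boundaries. Once this tail inequality on $\bbP(\mathcal{A}^{c})$ is in place, combining it with the conditional application of Theorem \ref{thm:monment condition to sliced stable} in Step 1 completes the proof of the corollary.
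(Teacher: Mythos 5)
The paper does not actually prove Lemma~\ref{lem:epfs}: it is imported verbatim as Lemma~11 of the supplement of \citet{lin2018consistency}, so there is no in-paper argument to compare yours against. Judged on its own terms, your Step~2 is a correct and standard route to the result: reducing the $\gamma$-partition event to $\max_{1\leqslant h\leqslant H-1}|U_{(ch)}-h/H|\leqslant \gamma/(2H)$ for uniform order statistics is exactly right (the hypothesis $n>4H/\gamma+1$ is what guarantees the Beta mean $ch/(n+1)$ sits within $\gamma/(4H)$ of $h/H$, leaving room for the deviation), and the duality $\{U_{(ch)}>t\}=\{\#\{i:U_i\leqslant t\}<ch\}$ plus a binomial Chernoff bound and a union bound over the $2(H-1)$ tails finishes the job. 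Your one misstep is the ``main obstacle'' paragraph: there is no need to \emph{match} the prefactor $CH^2\sqrt{n+1}$ or the constant $1/32$ via a Stirling analysis of the Beta density, because the lemma only asserts a lower bound on the probability. Your own Hoeffding calculation already yields a failure probability of order $H\exp(-\gamma^2 n/(2H^2))$, which is dominated by $CH^2\sqrt{n+1}\exp(-\gamma^2(n+1)/(32H^2))$ for every $n\geqslant 1$, so the stated bound follows immediately and the Stirling refinement is superfluous. (The larger prefactor and weaker exponent in the cited version presumably reflect the cruder argument used in the original source.) Finally, note that your Step~1 proves Corollary~\ref{cor:moment to WSSC} rather than the lemma itself; that reduction is fine and matches how the paper uses the lemma, but it is outside the scope of the statement you were asked to prove.
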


\section{Proofs of results in Theorem~\ref{thm:max-smallest-eigen}}\label{app:eigen-upper}

To illustrate the upper bound in Theorem~\ref{thm:max-smallest-eigen} is tight, consider a function $\psi^{0}(\bs{z}=(z_{1},\ldots, z_d))$ from $\R^{d}$ to $\{-d, \ldots, 0,  1, \ldots, d\}$ such that if $|z_{i}|$ is uniquely the largest among $|z_i|$'s, then $\psi^{0}(\bs{z})=\sgn(z_i) i$; otherwise, $\psi^{0}(\bs{z})=0$. 
Let $\vB=\left[\bs{e}_1,\ldots, \bs{e}_{d}\right]$ whose columns are the first $d$ standard basis vectors in $\R^{p}$. 
We construct the following joint distribution of $(\vX,Y)$:
\begin{align} \label{eq:joint-basic}
\vX ~ & ~ \sim N(0, \bs{I}_{p}),\\ \nonumber
Y~ & ~ = \psi^{0} (\vB\tp \vX) +  \eta, \quad \eta \sim \text{Unif}(-1/2,1/2), \nonumber
\end{align}
where $\vX$ and $\eta$ are independent. This distribution satisfies the WSSC and its gSNR decays to $0$ at the rate of $\frac{\log d}{d}$ as $d$ tends to $\infty$.

The gSNR of the distribution in \eqref{eq:joint-basic} equals to $d^{-1}\left( \bbE[\max |Z_i| ] \right)^2$, which is at the same order as $\frac{\log(d)}{d}$. 

A proof of this statement is similar to the proof of Lemma~\ref{lem:joint-eigen} in Appendix~\ref{app:pf-prop:joint-class} and is omitted here.

Theorem~\ref{thm:max-smallest-eigen} relies on the following Theorem~\ref{thm:eigen-bound-ent}, which may be of independent interest. It provides an upper bound on the smallest eigenvalue of the covariance matrix of the conditional expectation of a normal random vector given any discrete random variable. 

\begin{theorem}\label{thm:eigen-bound-ent}
Suppose	$\vZ\sim N(0, \bs{I}_{d})$ and $ W $ is a discrete random variable whose probability mass function is smaller than $1/2$. The entropy of $ W $ is defined as $\sum_{ w }\bbP( W = w ) \log \frac{1}{\bbP( W = w )}$ and is denote by $\Ent( W )$. 
It holds that 
$$
	\lambda_{\min}\left\{ \Cov\left[ \bbE \left( \vZ \mid  W  \right) \right] \right\}\leqslant 37 ~ d^{-1} \Ent ( W ). $$
 In particular, if the support of $W$ has $K$ elements, we have $
	\lambda_{\min}\left\{ \Cov\left[ \bbE \left( \vZ \mid  W  \right) \right] \right\}\leqslant 37 ~ d^{-1} \log K. $
\end{theorem}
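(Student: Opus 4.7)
The plan is to reduce the smallest-eigenvalue bound to a trace bound and then control the trace by an information-theoretic argument tailored to the standard Gaussian. Write $p_{w}=\bbP(W=w)$ and $\bs{m}(w)=\bbE[\vZ\mid W=w]$. Since $\bbE\vZ=\sum_{w}p_{w}\bs{m}(w)=\bs{0}$, we have $\Cov[\bbE(\vZ\mid W)]=\sum_{w}p_{w}\bs{m}(w)\bs{m}(w)\tp$. For any $d\times d$ positive semi-definite matrix $M$, $\lambda_{\min}(M)\leqslant \tr(M)/d$, so
\begin{equation*}
\lambda_{\min}\{\Cov[\bbE(\vZ\mid W)]\}\leqslant \frac{1}{d}\sum_{w}p_{w}\|\bs{m}(w)\|^{2}.
\end{equation*}
Hence it suffices to prove $\sum_{w}p_{w}\|\bs{m}(w)\|^{2}\leqslant C\,\Ent(W)$ for a reasonable constant $C$.

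The key ingredient I will use is Talagrand's $T_{2}$ transportation-cost inequality for $\mu=N(\bs{0},\bs{I}_{d})$: for every probability measure $\nu$ on $\R^{d}$, $W_{2}^{2}(\nu,\mu)\leqslant 2\,\kl(\nu\,\|\,\mu)$, where $W_{2}$ denotes the quadratic Wasserstein distance. Applying this to the conditional law $\mu_{w}:=\mr{Law}(\vZ\mid W=w)$, and combining with the elementary coupling bound $W_{2}^{2}(\mu_{w},\mu)\geqslant \|\bbE_{\mu_{w}}\vZ-\bbE_{\mu}\vZ\|^{2}=\|\bs{m}(w)\|^{2}$ (which holds since $\bbE_{\pi}\|X-Y\|^{2}\geqslant \|\bbE_{\pi}X-\bbE_{\pi}Y\|^{2}$ for every coupling $\pi$), I obtain $\|\bs{m}(w)\|^{2}\leqslant 2\,\kl(\mu_{w}\,\|\,\mu)$ for every $w$. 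Averaging over $W$ and invoking the identity $\sum_{w}p_{w}\,\kl(\mu_{w}\,\|\,\mu)=I(\vZ;W)\leqslant H(W)=\Ent(W)$ (where the last inequality uses that $W$ is discrete) yields $\sum_{w}p_{w}\|\bs{m}(w)\|^{2}\leqslant 2\,\Ent(W)$. Combined with the trace step this produces $\lambda_{\min}\{\Cov[\bbE(\vZ\mid W)]\}\leqslant 2\,\Ent(W)/d$, which is even stronger than the claimed constant $37$. The ``in particular'' statement follows immediately from the standard bound $H(W)\leqslant \log K$ when $W$ is supported on $K$ values.

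The main obstacle is invoking Talagrand's inequality in a self-contained way. If one prefers to avoid it, the same conclusion can be reached via a Gaussian maximum-entropy argument: writing $\Sigma_{w}=\Cov(\vZ\mid W=w)$, the bound $h(\vZ\mid W=w)\leqslant \frac{d}{2}\log(2\pi e)+\frac{1}{2}\log\det\Sigma_{w}$ together with $I(\vZ;W)=h(\vZ)-h(\vZ\mid W)\leqslant \Ent(W)$ gives $-\bbE[\log\det\Sigma_{W}]\leqslant 2\,\Ent(W)$. Then the AM--GM inequality $\det\Sigma_{w}\leqslant (\tr\Sigma_{w}/d)^{d}$, followed by weighted Jensen and the elementary estimate $e^{-x}\geqslant 1-x$, shows $\bbE[\tr\Sigma_{W}]\geqslant d-2\,\Ent(W)$; the total-variance identity $\tr\Cov[\bbE(\vZ\mid W)]=d-\bbE[\tr\Sigma_{W}]$ then recovers the same bound. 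Notably, neither route makes use of the hypothesis $\max_{w}p_{w}<1/2$, suggesting that this condition is an artifact of the authors' chosen proof strategy rather than genuinely required for the conclusion.
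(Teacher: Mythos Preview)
Your proof is correct and takes a genuinely different, more conceptual route than the paper. Both start from the same reduction $\lambda_{\min}\leqslant d^{-1}\tr\Cov[\bbE(\vZ\mid W)]=d^{-1}\sum_{w}p_{w}\|\bs m(w)\|^{2}$. From there the paper proceeds by a bespoke variational argument: for each atom $w$ it bounds $\|\bbE(\vZ\one_{W=w})\|$ by $\theta p_{w}+\bbE(Z_{1}-\theta)_{+}$ via the rotational invariance of $N(0,\bs I_{d})$, chooses $\theta$ so that $\bbP(Z_{1}>\theta)=p_{w}$ (this is where $p_{w}<1/2$ enters, to guarantee $\theta>0$), and then uses explicit Gaussian tail estimates to show $\bbE(Z_{1}\mid Z_{1}>\theta)^{2}\leqslant 37\log(1/p_{w})$. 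Summing recovers the entropy.

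Your Talagrand route replaces all of this with the single estimate $\|\bs m(w)\|^{2}\leqslant W_{2}^{2}(\mu_{w},\mu)\leqslant 2\,\kl(\mu_{w}\,\|\,\mu)$, and the mutual-information identity does the rest. The payoff is a sharper constant ($2$ instead of $37$), no need for the hypothesis $\max_{w}p_{w}<1/2$, and a proof that generalises verbatim to any reference law satisfying $T_{2}$. Your alternative maximum-entropy argument is also correct as written (the chain is $-\bbE\log\det\Sigma_{W}\leqslant 2\Ent(W)$, AM--GM gives $\log\det\Sigma_{w}\leqslant d\log(\tr\Sigma_{w}/d)$, Jensen on the concave logarithm then yields $\log(\bbE[\tr\Sigma_{W}]/d)\geqslant -2\Ent(W)/d$, and $e^{-x}\geqslant 1-x$ finishes). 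The paper's approach, by contrast, is entirely self-contained and elementary --- it needs no transportation inequality or differential entropy --- which is its main advantage if one wants to avoid external machinery.
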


To prove Theorem~\ref{thm:max-smallest-eigen}, we need the following proposition in addition to Theorem~\ref{thm:eigen-bound-ent}. 

\begin{proposition}\label{prop:ssc-discrete-approx}
Suppose $\vZ$ is a $d$-random vector, $Y$ is a random element. 
Suppose $\{S_{h} : h=1,\ldots, H\}$ is a partition of the range of $Y$ such that for some $\gamma >0$ and $\tau> 1+\gamma $, $\bbP(Y\in S_{h})\leqslant (1+\gamma)  H^{-1}$ ,  and for any $\bbeta\in \mb S^{d-1}$, 
\begin{align}
\frac{1}{H}\sum_{h=1}^{H}\var\left(\bbeta\tp \bbE \left( \vZ \mid Y \right) \big| Y\in S_h \right)\leqslant \frac{1}{\tau} \var\left(\bbeta\tp \bbE \left( \vZ \mid Y \right) \right).
\end{align}
Let $W=\sum_{h=1}^{H} h \one_{Y\in S_h}$. It holds that for any $\bbeta\in  \mb S^{d-1}$, 
\begin{align}
\left(1- \frac{1+\gamma}{\tau} \right) \var\left(\bbeta\tp \bbE \left( \vZ \mid Y \right)  \right)\leqslant \var\left(\bbeta\tp \bbE \left( \vZ \mid W \right) \right). 
\end{align}
\end{proposition}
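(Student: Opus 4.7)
The plan is to exploit the fact that $W$ is a coarsening of $Y$ (a function of $Y$), so that by the tower property $\bbE(\vZ\mid W) = \bbE(\bbE(\vZ\mid Y)\mid W)$, and then apply the law of total variance to the scalar random variable $g(Y) := \bbeta\tp \bbE(\vZ\mid Y)$. The target inequality is really a statement that the ``variance lost'' in passing from $Y$ to $W$ is controlled by the WSSC-type hypothesis.

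More concretely, I would first observe that since $\sigma(W)\subset \sigma(Y)$, we have $\bbeta\tp \bbE(\vZ\mid W) = \bbE(g(Y)\mid W)$ almost surely. Then the law of total variance gives
\begin{equation*}
\var(g(Y)) \;=\; \bbE[\var(g(Y)\mid W)] + \var(\bbE(g(Y)\mid W)),
\end{equation*}
so it suffices to upper bound $\bbE[\var(g(Y)\mid W)]$ by $\frac{1+\gamma}{\tau}\var(g(Y))$. Since $\{W=h\}=\{Y\in S_h\}$,
\begin{equation*}
\bbE[\var(g(Y)\mid W)] \;=\; \sum_{h=1}^{H}\bbP(Y\in S_h)\,\var(g(Y)\mid Y\in S_h).
\end{equation*}

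The two hypotheses now plug in cleanly: the slice-probability bound $\bbP(Y\in S_h)\leqslant (1+\gamma)/H$ replaces $\bbP(Y\in S_h)$ by a uniform factor, giving
\begin{equation*}
\bbE[\var(g(Y)\mid W)] \;\leqslant\; \frac{1+\gamma}{H}\sum_{h=1}^{H}\var(g(Y)\mid Y\in S_h),
\end{equation*}
and the WSSC-type hypothesis applied to the unit vector $\bbeta\in \mb S^{d-1}$ bounds the latter sum by $\frac{1}{\tau}\var(g(Y))$. Substituting back into the total-variance identity and rearranging yields the claim.

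Honestly, I do not anticipate any serious obstacle here: the argument is essentially a one-line application of the law of total variance together with the two given inequalities, and the only subtle point is the tower identity $\bbE(\vZ\mid W)=\bbE(\bbE(\vZ\mid Y)\mid W)$ which justifies reducing everything to the scalar $g(Y)$. The proposition should be viewed as a ``discretization lemma'' whose value lies in enabling Theorem~\ref{thm:max-smallest-eigen} to invoke the entropy-based Theorem~\ref{thm:eigen-bound-ent}, where the role of the discrete random variable is played by $W$.
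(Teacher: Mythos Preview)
Your proposal is correct and essentially identical to the paper's own proof: both fix $\bbeta$, set the scalar $g(Y)=\bbeta\tp\bbE(\vZ\mid Y)$ (the paper writes this as $\bbE(U\mid Y)$ with $U=\bbeta\tp\vZ$), apply the law of total variance to split $\var(g(Y))$ along $W$, use the tower property to identify $\bbE(g(Y)\mid W)=\bbeta\tp\bbE(\vZ\mid W)$, and then bound $\bbE[\var(g(Y)\mid W)]=\sum_h\bbP(Y\in S_h)\var(g(Y)\mid Y\in S_h)$ by $\frac{1+\gamma}{\tau}\var(g(Y))$ via the two hypotheses before rearranging.
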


\begin{proof}[Proof of Theorem~\ref{thm:max-smallest-eigen}]
Without loss of generality, we assume $K\geq 4$. 
Let $\vZ=\vB^{\top}\vX$. It is clear that $\vZ\sim N(0,\bs{I}_{d})$ and $\bbE \left( \vX \mid  Y  \right) =\bbE \left[\bbE \left( \vX \mid  Y,Z  \right)\mid Y\right] = \bbE \left( \vB\vZ \mid  Y  \right)=$. Therefore, we have $\lambda_{d}( \Cov\left[ \bbE \left( \vX \mid  Y  \right) \right] ) =\lambda_{d}( \vB\Cov\left[  \bbE \left( \vZ \mid  Y  \right) \right] \vB^\top )=\lambda_{d}( \Cov\left[  \bbE \left( \vZ \mid  Y  \right) \right])$ since $\vB^{\top}\vB=\bs{I}_d$. Therefore, we can focus on $\Cov\left[  \bbE \left( \vZ \mid  Y  \right) \right]$.

Since $\vm(y)$ satisfies the weak $(K,\tau)$-sliced stable condition, then Proposition~\ref{prop:ssc-discrete-approx} indicates that the smallest eigenvalue of $\Cov \left[\bbE \left( \vZ \mid Y \right)\right]$ can be bounded by that of $\Cov \left[\bbE \left( \vZ \mid W \right)\right]$ for some discrete random variable $W$ with $K$ outcomes and each outcome has a probability mass no larger than $(1+\gamma)/K\leq 1/2$ . 
Then by Theorem~\ref{thm:eigen-bound-ent}, the smallest eigenvalue of $\Cov \left[\bbE \left( \vZ \mid W \right)\right]$ is bounded above by $O(d^{-1}\log K)$. Since $K=O(d)$, this bound becomes $O(\frac{\log (d)}{d})$. 
\end{proof}

\begin{proof}[Proof of Proposition~\ref{prop:ssc-discrete-approx}]
	Fix $\bbeta\in \R^d$ and let $U=\bbeta\tp \vZ$. By the law of total variance, we have
	\begin{align*}
		\var \left( \bbE \left( U \mid Y \right) \right) & = 		\bbE \left\{ \var \left[ \bbE \left( U \mid Y \right) \mid W \right] \right\} + \var \left\{ \bbE \left[  \bbE \left( U \mid Y \right)  \mid W  \right] \right\} \\ 
		&= \sum_{h=1}^{H} \bbP(W=h) \var \left[ \bbE \left( U \mid Y \right) \mid W=h \right] + \var\left[  \bbE \left( U \mid W \right)   \right] \\
		&\leqs \frac{1+\gamma }{H}\sum_{h=1}^{H}  \var \left[ \bbE \left( U \mid Y \right) \mid Y\in S_h \right] + \var\left[  \bbE \left( U \mid W \right) \right] \\
		& \leqs \frac{1+\gamma}{\tau} 	\var \left( \bbE \left( U \mid Y \right) \right) + \var\left[  \bbE \left( U \mid W \right) \right],
	\end{align*}
	where the two inequalities are due to the premises. It then follows that 
$$\left(1- \frac{1+\gamma}{\tau}  \right)	\var\left( \bbE \left( U \mid Y \right)  \right)\leqs \var\left( \bbE \left( U \mid W \right) \right)$$
and the proposition is proved. 
\end{proof}

The smallest eigenvalue of the covariance matrix of the conditional mean is not easy to characterize. Our proof of Theorem~\ref{thm:eigen-bound-ent} will make use of the following useful lemma, which provides an upper bound on the eigenvalue in terms of the average of the $\bbE\left[\bbE(Z_i |W)^2\right]$ across the coordinates. The upper bound given by this lemma is easier to characterize and will serve as an important step in proving the fast decay rate in Section~\ref{sec:gSNR-fast-decay}.

\begin{lemma}\label{lem:gsnr-to-ave-sq-cond-mean}
Suppose $Y$ is a random element and $\vZ$ is a $d$-dimensional random vector with mean $\bs{0}$. Then
$$ \lambda_{\min }( \Cov\left[ \bbE \left( \vZ \mid  Y  \right) \right] )  \leqslant \frac{1}{d} \sum_{i=1}^{d}\bbE\left(  \bbE \left( Z_i \mid  Y  \right)^{2}\right). 
$$	
\end{lemma}
\begin{proof}[Proof of Lemma~\ref{lem:gsnr-to-ave-sq-cond-mean}]
Suppose $ \bbeta \sim N(0, \bs{I}_{d})$ is independent with $(\vZ, Y )$. 
For any symmetric matrix $\vM$, by the definition of the smallest eigenvalue, it holds that 
\begin{align*}
	\lambda_{\min}(\vM) & \leqs \bbE \left( \frac{ \bbeta \tp}{\| \bbeta \|} \vM  \frac{ \bbeta }{\| \bbeta \|} \right)\\
	& = d^{-1} \bbE \| \bbeta \|^{2}\bbE \left( \frac{ \bbeta \tp}{\| \bbeta \|} \vM  \frac{ \bbeta }{\| \bbeta \|} \right)\\
	& = d^{-1}\bbE \left( \| \bbeta \|^{2} \frac{ \bbeta \tp}{\| \bbeta \|} \vM  \frac{ \bbeta }{\| \bbeta \|} \right)\\
	& = d^{-1}  \bbE \left(  \bbeta \tp \vM  \bbeta  \right), 
\end{align*}
where the first equality is implied by the normality of $\bbeta$, and the second equality is because $\|\bbeta\| $ is independent of $ \bbeta /\| \bbeta \|$. 
For any $\vu\in \R^d$, by the law of total expectation, we have $\bbE\left[ \bbE \left( \vu\tp \vZ \mid  Y  \right) \right]=\bbE \left( \vu\tp \vZ\right)=0$ and 
\begin{align*}
& \vu\tp \Cov\left[ \bbE \left( \vZ \mid  Y  \right) \right]  \vu \\
=	&  \Cov\left[ \bbE \left( \vu\tp \vZ \mid  Y  \right)  \right]   \\
=	& \bbE \left(\left[ \bbE \left( \vu\tp \vZ \mid  Y  \right) \right]^{2} \right) \\
=	& \bbE \left( \sum_{i=1}^{d}\sum_{j=1}^{d}\bbE \left( u_{i} Z_i \mid  Y  \right)   \bbE \left( u_j Z_j \mid  Y   \right)   \right) \\
=&  \sum_{i=1}^{d}\sum_{j=1}^{d} u_i u_j \bbE \left[ \bbE \left( Z_i \mid  Y  \right)   \bbE \left( Z_j \mid  Y   \right]   \right). 
\end{align*}
Therefore, using the equation that $\bbE( \bbeta _i  \bbeta _j)=1_{i=j}$ and the independence between $ \bbeta $ and $(\vZ, Y )$, we have
$$
\bbE \left(  \bbeta \tp \Cov\left[ \bbE \left( \vZ \mid  Y  \right) \right]   \bbeta  \right)=\sum_{i=1}^{d}\bbE\left(  \bbE \left( Z_i \mid  Y  \right)^{2}    \right). 
$$
Combining the two inequalities, we complete the proof. 
\end{proof}

\begin{proof}[Proof of Theorem~\ref{thm:eigen-bound-ent}]

By Lemma~\ref{lem:gsnr-to-ave-sq-cond-mean}, we have 
$$ \lambda_{\min }( \Cov\left[ \bbE \left( \vZ \mid  W  \right) \right] )  \leqslant \frac{1}{d} \sum_{i=1}^{d}\bbE\left(  \bbE \left( Z_i \mid  W  \right)^{2}\right), 
$$
so it is sufficient to show 
$$
\label{eq:upper-eigen-goal}
\sum_{i=1}^{d}\bbE\left(  \bbE \left( Z_i \mid  W  \right)^{2}\right) \leqs 37 ~ \Ent( W ). 
$$  
Fixed any $ w $ in the support of $ W $. Lemma~\ref{lem:decay-lambda-optimal-variation} shows that $$ \sum_{i=1}^{d} \bbE \left( Z_i \mid  W = w  \right)^{2} \leqs \bbP( W = w )^{-2}\min_{ \theta}\left[  \theta \bbP( W = w ) + \bbE (Z_1 - \theta)_{+}  \right]^{2}. $$

We now choose $ \theta$ such that $\bbP( W = w )=\bbP(Z_1> \theta)$. Here $ \theta>0$ because $\bbP( W = w )<1/2$. 
Since  $\bbE(Z_1-\theta)_{+}=\bbE (Z_1-\theta) 1_{Z_1>\theta}=\bbE \left(  Z_1 1_{Z_1>\theta}  \right) - \theta \bbP(Z_1>\theta)$,  we have 
$$ \sum_{i=1}^{d} \bbE \left( Z_i \mid  W = w  \right)^{2} \leqs  \left[ \bbE \left( Z_1 \mid Z_1> \theta  \right) \right]^{2}. $$
By Lemma~\ref{lem:decay-lambda-tail-moment-bound}, the right hand side of the last inequality is bounded by $37 ~ \log \frac{1}{\bbP( W = w )}$. Hence, we have 
$$
\sum_{i=1}^{d}\bbE\left(  \bbE \left( Z_i \mid  W  \right)^{2}\right)  \leqs  \sum_{ w } \bbP(  W  =  w  ) \cdot 37 ~ \log \frac{1}{\bbP( W = w )} =  37 ~ \Ent ( W ),
$$
and thus complete the proof of Equation~\ref{eq:upper-eigen-goal}. 

In particular, when the support of $W$ has $K$ elements, the maximum entropy of $W$ is maximized by a uniform distribution on these $K$ elements and $\max_{\mc{L}(W)}\left\{ \Ent(W) \right\}= \log K$. 
\end{proof}
\begin{lemma}\label{lem:decay-lambda-optimal-variation}
	Suppose the distribution of $\vZ$ is invariant to orthogonal transformations, and $ W $ is a discrete random variable. For any $w$ in the support of $W$ and any number $ \theta$, it holds that $$\| \bbE(\vZ 1_{ W = w }) \|^{2}\leqs \left[  \theta \bbP( W = w ) + \bbE (Z_1 - \theta)_{+}  \right]^{2}. $$
\end{lemma}
\begin{proof}[Proof of Lemma~\ref{lem:decay-lambda-optimal-variation}]
Let $f(\vz)=\bbP( W = w \mid \vZ=\vz)$ be the conditional probability of $ W = w $ given $\vZ=\vz$ and $\bbP( W = w ) \in (0,1)$. 
Then $f(\vz)\in [0,1]$ and $\bbE f(\vZ)= \bbP( W = w )$. 

Let $\bs{\alpha}=\bbE \left( \vZ 1_{ W = w } \right)$, which equals to $\bbE \left( \vZ \bbE[1_{ W = w }\mid \vZ] \right) = \bbE \left( \vZ f(\vZ) \right)$ by the law of total expectation. If $\bs{\alpha}=\bs{0}$, the lemma holds trivially.

Assume $\bs{\alpha}\neq \bs{0}$. 
Let $\vV=[\vV_1,\dots,\vV_d]$ be a $d$-dimensional orthogonal matrix such that $\vV_1=\bs{\alpha} /\|\bs{\alpha}\|$. Let $\vU=\vV\tp \vZ$.  $\vU$ has the same distribution as $\vZ$ because its distribution is invariant to orthogonal transformations. So $\bbE f(\vU)=\bbE f(\vZ)=\bbP(W=w)$. 

Note that $\bs{\alpha}\tp \vZ=\|\bs{\alpha}\|  U_1$. We have $\|\bs{\alpha}\|^{2} = \bbE\left[ \bs{\alpha} \tp \vZ f(\vZ) \right]=\|\bs{\alpha}\| \bbE \left[  U_1 f(\vV \vU) \right]$. 

For any number $ \theta$, we have 
\begin{align*}
&	\|\bs{\alpha}\| - \theta \bbP( W = w ) \\
= &	\bbE \left[  U_1 f(\vV \vU) \right]- \theta \bbE f(\vU) \\
=	&  \bbE \left[ Z_1 f(\vV \vZ) \right] - \bbE \left[  \theta f(\vV \vZ) \right] \\
=	&  \bbE \left[ (Z_1- \theta) f(\vV \vZ) \right]  \\
\leqs &  \bbE \left[ (Z_1- \theta)_{+}  \right],
\end{align*}
where the last inequality is because $f(\vV \vZ)\in [0,1]$ and $ a b \leqs \max(0,a)$ for any $b\in [0,1]$. Therefore, $\|\bs{\alpha}\| \leqs  \theta \bbP( W = w ) + \bbE \left[ (Z_1- \theta)_{+}  \right]$. 
\end{proof}
\begin{lemma}\label{lem:decay-lambda-tail-moment-bound}

	If $Z\sim N(0,1)$ and $ \theta>0$, then 
	$$\bbE(Z\mid Z> \theta)^{2} \leqs 37 ~ \log \frac{1}{\bbP(Z> \theta)}.$$
\end{lemma}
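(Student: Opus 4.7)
The plan is to express the conditional expectation via the hazard rate, invoke the classical two-sided Mills ratio bounds, and then split into two regimes in $\theta$ to compare with $\log(1/\bbP(Z>\theta))$.

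First, writing $\phi$ and $\Phi$ for the density and CDF of $N(0,1)$, I would start from the identity $\bbE(Z\mid Z>\theta) = \phi(\theta)/[1-\Phi(\theta)]$, obtained by direct integration of $z\phi(z)$. From the lower Mills bound $1-\Phi(\theta)\geqslant \frac{\theta}{1+\theta^{2}}\phi(\theta)$, valid for all $\theta>0$, this gives
\begin{equation*}
\bbE(Z\mid Z>\theta) \,\leqslant\, \frac{1+\theta^{2}}{\theta} \,=\, \theta+\frac{1}{\theta}.
\end{equation*}
In parallel, from the upper Mills bound $1-\Phi(\theta)\leqslant \phi(\theta)/\theta$, I get
\begin{equation*}
\log\frac{1}{\bbP(Z>\theta)} \,\geqslant\, \log\theta+\tfrac{1}{2}\log(2\pi)+\tfrac{1}{2}\theta^{2}.
\end{equation*}
These two inequalities are the only analytic inputs needed.

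Next, I would split on whether $\theta\geqslant 1$ or $0<\theta<1$. For $\theta\geqslant 1$, the first inequality yields $\bbE(Z\mid Z>\theta)^{2}\leqslant (\theta+1/\theta)^{2}\leqslant 4\theta^{2}$, while the second inequality yields $\log(1/\bbP(Z>\theta))\geqslant \theta^{2}/2$ (since $\log\theta\geqslant 0$ and $\log\sqrt{2\pi}>0$), so $\bbE(Z\mid Z>\theta)^{2}\leqslant 8\log(1/\bbP(Z>\theta))$. For $0<\theta<1$, the map $\theta\mapsto \phi(\theta)/[1-\Phi(\theta)]$ is continuous and monotone non-decreasing in $\theta$, so $\bbE(Z\mid Z>\theta)\leqslant \bbE(Z\mid Z>1)=\phi(1)/[1-\Phi(1)]<2$, giving $\bbE(Z\mid Z>\theta)^{2}<4$. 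Meanwhile $\bbP(Z>\theta)<1/2$, so $\log(1/\bbP(Z>\theta))>\log 2$, producing the bound $\bbE(Z\mid Z>\theta)^{2}\leqslant (4/\log 2)\log(1/\bbP(Z>\theta))<6\log(1/\bbP(Z>\theta))$. Taking the maximum of the two regime-dependent constants yields a universal factor well below $37$, so the stated bound holds (with substantial slack).

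There is essentially no deep obstacle. The only point requiring a tiny bit of care is that $\bbE(Z\mid Z>\theta)\leqslant \theta+1/\theta$ becomes useless as $\theta\downarrow 0$, which is exactly why the split into $\theta\geqslant 1$ and $0<\theta<1$ is needed; the small-$\theta$ regime is then handled by monotonicity together with the uniform floor $\log(1/\bbP(Z>\theta))\geqslant \log 2$. The generous constant $37$ in the statement gives ample room to absorb any loss from these elementary comparisons.
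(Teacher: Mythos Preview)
Your proposal is correct and follows essentially the same approach as the paper: both proofs use the identity $\bbE(Z\mid Z>\theta)=\phi(\theta)/(1-\Phi(\theta))$, the standard Mills-ratio bounds, and the same split at $\theta=1$, obtaining $\bbE(Z\mid Z>\theta)^2\leqslant 4\theta^2$ versus $\log(1/\bbP(Z>\theta))\geqslant \theta^2/2$ for $\theta\geqslant 1$, and a uniform constant bound versus the floor $\log 2$ for $0<\theta<1$. The only cosmetic difference is that in the small-$\theta$ regime you invoke monotonicity of the hazard rate to bound $\bbE(Z\mid Z>\theta)$ by its value at $\theta=1$, whereas the paper bounds numerator and denominator separately (yielding $4e$ rather than $4$); either way the constant lands comfortably below $37$.
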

\begin{proof}[Proof of Lemma~\ref{lem:decay-lambda-tail-moment-bound}]
It is well known that for any $t>0$, 
\begin{equation}
\label{eq:basic-normal-tail}
(2\pi)^{-1/2}\frac{t}{t^2+1}e^{-t^2/2} \leqs \bbP(Z>t) \leqs  e^{-t^2/2}. 
\end{equation}
By direct calculation,  
\begin{equation}\label{eq:basic-normal-tail-moment}
    \bbE \left(  Z 1_{Z> \theta}  \right) = (2\pi)^{-1/2} e^{- \theta^2/2}.
\end{equation}

We consider the value of $ \theta$ separately in two cases: 
\begin{enumerate}
    \item 
 $ \theta\geqslant 1$: Using the first inequality in Equation~\eqref{eq:basic-normal-tail} and  Equation~\eqref{eq:basic-normal-tail-moment}, we have
$
\bbE(Z\mid Z> \theta)^{2} \leqs ( \theta+1/ \theta)^{2}\leqs 4  \theta^{2}
$. 
Using the second inequality in Equation~\eqref{eq:basic-normal-tail}, we have 
$
  \log \frac{1}{\bbP(Z> \theta)}\geqslant   \theta^2/2 
$. 
\item 
 $ \theta\in (0,1)$: Then $\bbE \left(  Z 1_{Z> \theta}  \right) \leqs \bbE \left(  Z 1_{Z>0}  \right)$ and $ \bbP(Z> \theta)>\bbP(Z>1)$. 
We have 
$
\bbE(Z\mid Z> \theta)^{2}\leqs 4 e$. 
Also note that $\bbP(Z> \theta)<\bbP(Z>0)=1/2$, we have $\log \frac{1}{\bbP(Z> \theta)}\geqslant \log 2$. 
\end{enumerate}

Since $4< 37/2$ and $ 4e  < 37 ~ \log 2$, we  conclude the desired inequality for both cases. 
\end{proof}

\section{Proof of Theorem~\ref{thm:GP-gSNR-fast-decay} }\label{sec:gSNR-fast-decay}

Let $\vZ=\vB\tp\vX$. It is clear that $\vZ\sim N(0,\bs{I}_{d})$ and $\bbE \left( \vX \mid  Y  \right) =\bbE \left[\bbE \left( \vX \mid  Y,Z  \right)\mid Y\right] = \bbE \left( \vB\vZ \mid  Y  \right)$.  
Therefore, 
\begin{align*}
     \lambda_{\min }( \Cov\left[ \bbE \left( \vX \mid  Y  \right) \right] ) &=\lambda_{d}( \vB\Cov\left[  \bbE \left( \vZ \mid  Y  \right) \right] \vB^\top ) \\
     &\leq \frac{1}{d} \Tr\left(\vB\Cov\left[   \bbE \left( \vZ \mid  Y  \right) \right] \vB^\top \right) \\ 
     &= \frac{1}{d} \sum_{i=1}^{d}\bbE\left(  \bbE \left( Z_i \mid  Y  \right)^{2}\right).
\end{align*}
In the following, we focus on deriving bounds for each dimension separately.

We begin with bounding the supremum of the GP over a compact set. 
Let $K=48(72\alpha)^{1/4}+\sqrt{2}$ and define an event $A \triangleq \{ \sup_{\|\vt\|\leqslant 3\sqrt{d} } |f (\vt) | \leqslant K d  \}$. 
By the classical theory on the supremum of GPs, we have the following result, whose proof is deferred to Section~\ref{sec:pf-lem-GP-bound}. 
\begin{lemma}\label{lem:GP-bound}
The event $A$ happens with probability at least $1-e^{-d}$. 
\end{lemma}

\subsection{Main proof}

In the following, we assume the event $A$ happens. 
We introduce the shorthand  $\Gamma=\sigma+K d$, and we consider bounding $\bbE \left(Z_{i} \mid Y\right)^{2}$ for $|Y|>\Gamma$ and $|Y|\leq \Gamma$ separately. 

For any given $f$, the inequality $|Y|>\Gamma$ implies $\|\vZ\|>3\sqrt{d}$. As a result, we have the following:
\begin{equation}\label{eq:GP-Y-large}
\begin{aligned} \bbE \left[\bbE \left(Z_{i} \mid Y\right)^{2} \cdot I_{|Y|>\Gamma}\right] 
\leqslant &  
\sqrt{ \bbE \left[\bbE \left(Z_{i} \mid Y\right)^{4} \right] \cdot \bbP\left( |Y|>\Gamma\right)   } \\ 
 \leqslant &  \sqrt{\bbE  Z_{i}^{4} \cdot P(|Y|>\Gamma)} \\
 \leqslant &  \sqrt{3 \times P(\| \vZ\|> 3 \sqrt{d})}\\
 \leqslant &   \sqrt{3} e^{-(9d-d)/16} = \sqrt{3} e^{-d/2} ,
\end{aligned}
\end{equation}
where the first inequality is due to the Cauchy-Schwartz inequality, the second is due to Jensen inequality and the law of total expectation, the third is because the fourth moment of the standard normal distribution is 3 and $\{|Y|>\Gamma \}\subset \{\|\vZ\|> 3\sqrt{d} \}$, and the last inequality is because the tail probability of $\chi^2$ random variable can be bounded as 
\begin{equation*}
	\label{eq:trivial-chisq-dev}
\bbP(\|\vZ\|>r)= \bbP( \chi^2_{d} >d+(r^2-d))\leqslant e^{-(r^2-d)^2/(4r^2)} \leqslant e^{-(r^2-d)/8}
\end{equation*}
as long as $r^2>2d$.

The rest of our proof is to bound 
$$
d^{-1}\sum_{i=1}^{d} \bbE \left[\bbE \left(Z_{i} \mid Y\right)^{2} I_{|Y|\leq \Gamma}\right].
$$

Let $\psi_{d}(\cdot)$ be the density of a $d$-dimensional standard normal random vector. 
For any given $f$, the joint Lebesgue density of $(\vZ,Y)$ can be obtained by the product of the density of $\vZ$ and the conditional density of $Y$ given $\vZ$, which is 
$$\psi_{d}(\vz) \cdot \frac{1}{\sigma} \varphi\left(\frac{y-f(\vz)}{\sigma}\right), 
$$
and the Lebesgue density of $Y$ is 
$$f_Y(y)=\int_{\mathbb{R}} \psi_{d}(\vz) \cdot \frac{1}{\sigma} \varphi\left(\frac{y-f(\vz)}{\sigma}\right) \diff \vz.
$$
From these densities, we can explicitly write out the conditional expectation 
$$\bbE \left(Z_{i} \mid Y=y\right)=\frac{1}{f_{Y}(y)} \int z_{i} \psi_{d}(\vz) \frac{1}{\sigma} \varphi\left(\frac{y-f(\vz)}{\sigma}\right) \diff \vz. $$
For ease of notation, we denote the numerator by
$$
G_{y}^{f,i}=\int z_{i} \psi_{d}(\vz) \frac{1}{\sigma} \varphi\left(\frac{y-f(\vz)}{\sigma}\right) \diff \vz.
$$ 

The denominator will be handled by discussing the value of $y$ in $[-\Gamma,\Gamma]$. More concretely, define $H_{f}=\{y\in [-\Gamma,\Gamma]: f_{Y}(y) > e^{-\beta d} \}$ and  $L_{f}=[-\Gamma,\Gamma]\setminus H_{f}$. We have $\bbP(Y\in L_f)=\int_{L_f} f_{Y}(y) \diff y \leqslant 2 \Gamma e^{-\beta d} $. By Cauchy-Schwartz inequality and the law of total expectation, it holds that
\begin{equation}\label{eq:gSNR-Y-Lf}
\begin{aligned}  \bbE \left[\bbE\left(Z_{i} \mid Y\right)^{2} I_{Y\in L_{f}}\right] 
&\leqslant \sqrt{ \bbE Z_{i}^{4} \cdot \P(Y\in L_{f})} \\ 
&\leqslant \sqrt{6 \Gamma } e^{-\beta d/2}.
\end{aligned}
\end{equation}

For any $y\in H_f$, we have 
\begin{equation}\label{eq:gSNR-Y-Hf}
\bbE\left(Z_{i} \mid Y=y\right)^{2}\leqslant e^{2\beta d} \times \left(G_{y}^{f,i}\right)^2.
\end{equation}
To bound $G_{y}^{f,i}$, we have the following lemma. 

\begin{lemma}\label{lem:Gyf-bound}
For any $i\in [d]$, any $L>0$ and any positive integer $M$, it holds that 
$$
\bbP_{f} \left( \sup_{| y| \leq \Gamma} |G_{y}^{f,i}| > L + \frac{ \Gamma C_1}{M \sigma^2}  \right) \leqslant (1+M)  \frac{C_{0}^{2}}{L^2 \pi^2\sigma^{2}} (4+\frac{1}{2\alpha}) (1+4\alpha)^{- \frac{d-1}{2}}. 
$$
\end{lemma}
The proof of Lemma~\ref{lem:Gyf-bound} is deferred to Section~\ref{sec:pf-lem:Gyf-bound}. 

Denote by $\tilde{A}$ the event that 
$$
\max_{i\in [d]} \sup_{|y|\leq \Gamma} |G_{y}^{f,i}| \leq L + \frac{ \Gamma C_1}{M \sigma^2}.
$$
Using the Boole's inequality and Lemma~\ref{lem:Gyf-bound}, we see that the event $\tilde{A}$ happens with probability at least $1- d(1+M)  \frac{C_{0}^{2}}{L^2 \pi^2\sigma^{2}} (4+\frac{1}{2\alpha}) (1+4\alpha)^{- \frac{d-1}{2}}$.

In particular, we choose 
$L=(1+4\alpha)^{-\frac{d-1}{8}} C_1/\sigma^2$, $M=\lceil d(1+4\alpha)^{\frac{d-1}{8}}\rceil$, and $\beta=\frac{1}{9}\log(1+4\alpha)$. When $\tilde{A}$ happens, \eqref{eq:gSNR-Y-Lf} and \eqref{eq:gSNR-Y-Hf} together yield
\begin{equation}\label{eq:GP-Y-small}
  \begin{aligned}
  	d^{-1}\sum_{i=1}^{d} \bbE \left[\bbE \left(Z_{i} \mid Y\right)^{2} I_{|Y|\leq \Gamma}\right] & = d^{-1}\sum_{i=1}^{d} \bbE \left[\bbE \left(Z_{i} \mid Y\right)^{2} I_{Y\in L_{f}} \right] + d^{-1}\sum_{i=1}^{d} \bbE \left[\bbE \left(Z_{i} \mid Y\right)^{2} I_{Y\in H_{f}}\right] \\
  	& \leqslant \sqrt{6 \Gamma } e^{-\beta d/2} +  e^{2\beta d} \left( L + \frac{ \Gamma C_1}{M \sigma^2} \right)^2 \\
  	& \leqslant {C}^{\prime} \left( d^{1/2} e^{-\beta d/2} + e^{2\beta d} e^{-9 \beta d / 4}   \right) \\
  	& \leqslant {C}^{\prime\prime} e^{-\beta d/4},
  \end{aligned}
\end{equation}
where the constants ${C}^{\prime}$ and ${C}^{\prime\prime}$ only depend on $\alpha$, $C_1$, and $\sigma$, and the last inequality is because $d^{1/2} e ^{ - \beta d/4}$ is uniformly bounded for every positive integer $d$. 

In view of \eqref{eq:GP-Y-large}, \eqref{eq:GP-Y-small}, and  Lemma~\ref{lem:gsnr-to-ave-sq-cond-mean}, when both $\tilde{A}$ and ${A}$ happen, we have 
\begin{equation}
  \begin{aligned}
 \lambda_{\min }( \Cov\left[ \bbE \left( \vZ \mid  Y  \right) \right] )  & \leqslant  	d^{-1}\sum_{i=1}^{d} \bbE \left[\bbE \left(Z_{i} \mid Y\right)^{2} \right] \\
 & =   	d^{-1}\sum_{i=1}^{d} \bbE \left[\bbE \left(Z_{i} \mid Y\right)^{2}I_{|Y|\leq \Gamma} \right]  +   	d^{-1}\sum_{i=1}^{d} \bbE \left[\bbE \left(Z_{i} \mid Y\right)^{2}I_{|Y|> \Gamma} \right] 
  	 \\
  	& \leqslant {C}^{\prime\prime} e^{-\beta d/4} + \sqrt{3} e^{-d/2}.
  \end{aligned}
\end{equation}
Furthermore, we have 
\begin{equation}
  \begin{aligned}
  	\bbP\left(  (A\cap \tilde{A})^{c} \right) & \leqslant  e^{-d} + 2 d^2  (1+4\alpha)^{\frac{3(d-1)}{8} - \frac{d-1}{2} }  \frac{C_0^2}{\pi^2 \sigma^2}(4+\frac{1}{2\alpha}) \\ 
&  	\leqslant  e^{-d} +  \tilde{C}^{\prime} d^2 e^{-9\beta d /8} \\
&  	\leqslant  e^{-d} +  \tilde{C}^{\prime\prime}  e^{-\beta d/8}. 
  \end{aligned}
\end{equation}
Choosing $\theta=\min(\beta/8, 1/2)$ and $\tilde{C}=\max(C^{\prime\prime} +\sqrt{3} ,  1 +\tilde{C}^{\prime\prime})$ will complete the proof of the theorem.

\subsection{Proof of Lemma~\ref{lem:GP-bound}}\label{sec:pf-lem-GP-bound}
The cornerstone of our proof is an upper bound on the expected supremum of a Gaussian process (Theorem~\ref{thm:mean-sup-GP}) and an concentration inequality for the sample supremum (Theorem~\ref{thm:sup-GP-concentration}). 
The bound on the expected supremum of a Gaussian process is given by the Dudley integral \citep{dudley1967sizes} with respect to the canonical pseudometric defined as $d(\vx, \vy) = \sqrt{\operatorname{Var}_{f}\left(f(\vx)-f(\vy) \right)}$. 
Interested readers are referred to Section~3.4 in \citet{massart2007concentration} for more details.

\begin{theorem}[Theorem 3.18 in \citet{massart2007concentration}]\label{thm:mean-sup-GP}
 Let $(f_t)_{t \in \Omega}$ be some centered Gaussian process and $d$ be the covariance pseudometric of $(f_t)_{t \in \Omega}$. Assume that $(\Omega, d)$ is totally bounded and denote by $N(\Omega, d, \epsilon)$ the $\delta$-packing number of $(\Omega, d)$, for all positive $\epsilon$. If $\sqrt{\log N(\Omega, d, \epsilon)}$ is integrable at 0, then $(f_t)_{t \in \Omega}$ admits a version which is almost surely uniformly continuous on $(\Omega, d)$. Moreover, if $(f_t)_{t \in \Omega}$ is almost surely continuous on $(\Omega, d)$, then
$$
\mathbb{E}\left[\sup _{t \in \Omega} f_t \right] \leqslant 12 \int_0^{\sigma_\Omega} \sqrt{\log N(\Omega, d, \epsilon)}  \diff \epsilon
$$
where $\sigma_{\Omega}=\left(\sup _{t \in \Omega} \mathbb{E}\left[f^2_t\right]\right)^{1 / 2}$.

\end{theorem}

\begin{theorem}[Proposition 3.19  in \citet{massart2007concentration}]\label{thm:sup-GP-concentration}
Suppose $(f_t )_{t \in \Omega}$ is some almost surely continuous centered Gaussian process on the totally bounded set $(\Omega, d)$.  Define $\sigma_{\Omega}^2=\sup _{t \in \Omega}\left(\mathbb{E}\left[f^2_t\right]\right)$
and $Z= \sup _{t \in \Omega}|f_t |$. 
Then,

$$
\mathbb{P}[Z-\mathbb{E}[Z] \geq \sigma_{\Omega} \sqrt{2 x}] \leqslant \exp (-x)
$$

for all positive $x$. 

\end{theorem}

Let $r=\sqrt{9d}$ and let $\Omega=\{\vz\in \R^{d}: \|\vz\| \leqslant r \}$.  
By construction of the GP with the squared exponential kernel, we have $\sigma_\Omega=1$. 

It is straightforward to compute the covariance pseudometric of the Gaussian process as 
$$
\begin{aligned} d(\vz_1, \vz_2)^{2} &= 2- 2 \Cov \left(f(\vz_1), f(\vz_2)  \right)\\
& =2-2 e^{-\alpha\|\vz_1- \vz_2\|^{2}} \\ & \leqslant 2 \alpha\|\vz_1-\vz_2\|^{2} . \end{aligned} 
$$
The above inequality establishes a relationship between the canonical pseudometric and the Euclidean metric:

\begin{align*}
 & N(\Omega, d, \epsilon)\\
  \leqslant  & 
  N\left(\Omega,\|\cdot\|, \frac{\epsilon}{\sqrt{2 \alpha}}\right)\\
  \leqslant & \left( 1 + \frac{r}{\epsilon/\sqrt{8\alpha}}\right)^d, 
\end{align*}
where the second inequality is due to the standard volume argument and because that the balls centered at each point of the maximum packing set with radii of $\epsilon/\sqrt{8\alpha}$ are disjoint and are contained in the ball centered at the origin with a radius of $r+\epsilon/\sqrt{8\alpha}$. 

We can then bound the integral in Theorem~\ref{thm:mean-sup-GP} as follows:
$$
\begin{aligned} 
\int_0^1 \sqrt{\log N(\Omega, d, \epsilon)}  \diff \epsilon \leqslant & \int_{0}^{1} \sqrt{d \cdot \log \left(1+\frac{\sqrt{8 \alpha} r}{\epsilon}\right)} \diff \epsilon  \\ 
= & \sqrt{d} \int_{1}^{\infty} \sqrt{\lg (1+\sqrt{8 \alpha} r \cdot t)} \frac{1}{t^{2}} \diff t \\
\leqslant & 2 \sqrt{r d} ( 8 \alpha)^{1/4},
\end{aligned}
$$
where the last inequality is due to the following straightforward calculation with $a=\sqrt{8 \alpha \cdot r^{2}} $: 
$$
\begin{aligned} & \int_{1}^{\infty} \sqrt{\lg(1+a t)} \frac{1}{t^{2}} \diff t \quad \\ = & a \cdot \int_{a}^{\infty} \sqrt{\lg (1+x)} \frac{d x}{x^{2}} \\ \leqslant & a \cdot \int_{a}^{\infty} \frac{1}{x^{3 / 2}} \diff x=\left.\frac{a}{-\frac{1}{2}} \quad x^{-\frac{1}{2}}\right|_{a} ^{\infty}=2 a / \sqrt{a}=2 \sqrt{a}. \end{aligned}
$$

By Theorem~\ref{thm:mean-sup-GP}, we have 
$$
\begin{aligned}
\mathbb{E}\left[\sup _{t \in \Omega} | f_t| \right] 
& \leqslant \mathbb{E}\left[\sup _{t \in \Omega}  f_t \right] +\mathbb{E}\left[\sup _{t \in \Omega} - f_t \right] \\
& \leqslant 48 \left( 9 \cdot 8 \cdot \alpha \right)^{1/4} d^{3/4}
\end{aligned}
$$

In Theorem~\ref{thm:sup-GP-concentration}, we 
choose $x=d$ and 
conclude that $\sup _{t \in \Omega} | f_t|\leq 48 \left( 9 \cdot 8 \cdot \alpha \right)^{1/4} d^{3/4} +\sqrt{2d}$ with probability at least $1-e^{-d}$. Note that $d\geq 1$, the upper bound is no greater than $Kd$, which concludes Lemma~\ref{lem:GP-bound}.

\subsection{Proof of Lemma~\ref{lem:Gyf-bound}}\label{sec:pf-lem:Gyf-bound}~

Without loss of generality, we consider $i=1$ and drop the superscript $i$ in this proof. 

Given any $y$, we can show that the mean of $G_{y}^{f}$ is zero and the variance can be bounded as $$\operatorname{Var}_{f}\left(G_{y}^{f}\right) \leqslant \frac{C_{0}^{2}}{\pi^2\sigma^{2}} (4+\frac{1}{2\alpha}) (1+4\alpha)^{- \frac{d-1}{2}};
$$
the detailed calculation is deferred to the end of this subsection. 
It then follows from the Chebyshev's inequality that 
\begin{equation}\label{eq:Gyf-single}
	\bbP_{f} \left( |G_{y}^{f}| > L \right) \leqslant \frac{\operatorname{Var}_{f}(G_{y}^{f}) }{L^2} \leqslant   \frac{C_{0}^{2}}{L^2 \pi^2\sigma^{2}} (4+\frac{1}{2\alpha}) (1+4\alpha)^{- \frac{d-1}{2}}. 
\end{equation}

The length of the interval $[-\Gamma,\Gamma]$ is $2\Gamma$ and can be equalled divided by $1+M$ points $\{y_0=-\Gamma, \ldots, y_M=\Gamma\}$, such that each interval has length $2\Gamma/M$. 
For each of these points, we use \eqref{eq:Gyf-single} and the Boole's inequality to obtain
\begin{equation}\label{eq:Gyf-M}
\bbP\left( \max_{0\leqslant j\leqslant M}|G_{y_j}^{f}| > L \right)\leqslant \frac{(1+M)}{ L^2}  \frac{ C_{0}^{2}}{\pi^2 \sigma^{2}} (4+\frac{1}{2\alpha}) (1+4\alpha)^{- \frac{d-1}{2}}. 
\end{equation}

Note that the gradient of $G_y^f$ is 
$$\frac{\partial}{\partial y} G_{y}^{f}=\int z_{1} \psi_{d}(\vz) \frac{1}{\sigma^{2}} \varphi^{\prime}\left(\frac{y-f(\vz)}{\sigma}\right) \diff \vz,
$$
which, in view of the assumption that$|\varphi^{\prime}|\leqs C_1 $, can be bounded as 
\begin{equation}\label{eq:Gyf-grad-bound}
 \left|\frac{\partial}{\partial y} G_{y}^{f}\right| \leqslant \frac{C_{1}}{\sigma^{2}} \int\left|z_{1}\right| \psi_{d}(\vz) \cdot \diff \vz \leqslant \frac{ C_{1}}{\sigma^{2}}. 
\end{equation}

For any $y\in [-\Gamma,\Gamma]$, there is some $i$ between $0$ and $M$ such that $|y-y_i|\leq \Gamma/M$. By the mean value theorem, there is some $\tilde{y}$ lying between $y$ and $y_i$ such that 
$$
G_{y}^{f} - G_{y_i}^{f} = \left. \frac{\partial}{\partial y} G_{y}^{f}\right|_{y=\tilde{y}} (y-y_i).
$$
In view of \eqref{eq:Gyf-grad-bound}, we have
\begin{equation}\label{eq:Gyf-interval}
\sup_{y\in [-\Gamma,\Gamma]}|G_y^f|\leqslant \max_{0\leqslant j\leqslant M}|G_{y_j}^{f}|+\frac{C_1}{\sigma^2} \frac{\Gamma}{M}. 
\end{equation}
The lemma follows from the combination of \eqref{eq:Gyf-M} and \eqref{eq:Gyf-interval}.

\paragraph{Calculation of the mean and the variance of $G_{y}^{f}$: } 
To compute the expectation, we divide the domain of the integral into two parts by considering $z_1\geq 0$ and $z_1<0$, and use the change of variable $\tilde{\vz}=-\vz$ for the second resultant integral. Then we have 
\begin{align*}
   \bbE_{f}\left[ G_{y}^{f} \right] 
   & = \bbE_{f} \int_{0}^{\infty} z_1 \cdot \psi_{1}(z_1) \frac{1}{\sigma} \int_{\R^{d-1}}  \psi_{d-1}(\vz_{-1}) \left[\varphi\left(\frac{y-f(\vz)}{\sigma}\right)-\varphi\left(\frac{y-f(-\vz)}{\sigma}\right) \right] \diff \vz_{2:d} \\
& = \bbE_{f} \int_{0}^{\infty} z_1 \cdot \psi_{1}(z_1) \frac{1}{\sigma} \int_{\R^{d-1}}  \psi_{d-1}(\vz_{-1}) \left[\varphi\left(\frac{y-f(-\vz)}{\sigma}\right)-\varphi\left(\frac{y-f(\vz)}{\sigma}\right) \right] \diff \vz_{2:d}\\
& = 0, 
\end{align*}
where the second equation is because $(f(t))_{t\in \R^d} \overset{d}{=} (f(-t))_{t\in \R^d}$. 
Furthermore, we have
\begin{equation}
\operatorname{Var}_{f}\left(G_{y}^{f}\right)=\iint_{\R^d\times \R^d} x_{1} z_{1} \psi_{d}(\vx) \psi_{d}(\vz) \frac{1}{\sigma^{2}} \operatorname{Cov}_{f}\left(\varphi\left(\frac{y-f(\vx)}{\sigma}\right), \varphi\left(\frac{y-f(\vz)}{\sigma}\right)\right) \diff \vx \diff \vz. 
\label{eq:GP-var}
\end{equation}

In the following, we compute the covariance inside the above integral for any fixed $\vx$ and $\vz$ in $\R^{d}$. 
By definition of the GP, it holds that $[U,V]\triangleq [f(\vx), f(\vz)] \sim N\left( \bs{0},\left(\begin{array}{cc}1 & \rho \\ \rho & 1\end{array}\right)\right)$, where $\rho=k(\vx, \vz)=e^{-\alpha\|\vx-\vz\|^{2}}$. 
Let $\phi_{\rho}(u,v)$ be the density of the aforementioned bivariate normal distribution and let $F(\rho) = \bbE\left[ \varphi\left(\frac{y- U }{\sigma}\right)\cdot  \varphi\left(\frac{y- V }{\sigma} \right) \right]$. 
We have 
$$\operatorname{Cov}_{f}\left(\varphi\left(\frac{y-f(\vx)}{\sigma}\right), \varphi\left(\frac{y-f(\vz)}{\sigma}\right)\right) =F(\rho)-F(0)$$
and  
$$
\frac{\partial \phi_{\rho}(u,v)}{\partial \rho }=\frac{\partial^{2} \phi_{\rho}(u,v)}{\partial u ~ \partial v}.
$$
Following the proof of the celebrated Berman inequality (see, for example, \cite[Theorem 4.2.1]{leadbetter2012extremes}), we have 
\begin{equation}\label{eq:berman-ineq}
\begin{aligned}
	F(\rho)-F(0) &=\rho  \cdot \int_{0}^{1} F'(\rho h)  \diff h \\
	& =\rho \cdot \int_{0}^{1} \iint_{\R^2} \frac{\partial \phi_{\rho h}}{\partial (\rho h)} \cdot \varphi\left(\frac{y-u}{\sigma}\right) \cdot \varphi\left(\frac{y-v}{\sigma}\right)  \diff u \diff v  \diff h \\ 
	& =\rho \cdot \int_{0}^{1} \iint_{\R^2} \frac{\partial^{2} \phi_{\rho h} }{\partial u \partial v} \cdot \varphi\left(\frac{y-u}{\sigma}\right) \cdot \varphi\left(\frac{y-v}{\sigma}\right)  \diff u \diff v  \diff h \\ 
	& \leqslant \rho \cdot C_{0}^{2} \int_{0 }^{1} \iint_{\R^2}  \frac{\partial^{2} \phi_{\rho h}}{\partial u \partial v} \cdot 1_{|u-y|\leqslant \delta} \cdot 1_{|v-y| \leqslant \delta } \diff u \diff v \diff h \\ 
& = \rho \cdot C_{0}^{2} \int_{0}^{1} \left[ \phi_{\rho h}(y+\delta, y+\delta)- \phi_{\rho h}(y-\delta, y+\delta)- \phi_{\rho h}(y+\delta, y-\delta)+ \phi_{\rho h}(y-\delta, y-\delta)\right]  \diff h \\
	& \leqslant \rho \cdot C_{0}^{2} \int_{0}^{1} \left[ \phi_{\rho h}(y+\delta, y+\delta)+\phi_{\rho h}(y-\delta, y-\delta)\right]  \diff h \\
	& \leqslant  \frac{ 2 C_{0}^{2} \rho  }{2\pi(1-\rho^2)}, 
	\end{aligned}
\end{equation}
where the first inequality is due to the assumption about the support and the boundedness of $|\varphi|$ and the last inequality is because $\left(2\pi (1-\rho^2)\right)^{-1}$ is an upper bound on $\phi_{\rho h}$. 

\medskip

In view of \eqref{eq:GP-var} and \eqref{eq:berman-ineq}, we have
\begin{equation}\label{eq:Gyf-var-sum}
\begin{aligned} & \operatorname{Var}_{f}\left(G_{y}^{f}\right) \leqslant \iint_{\R^d\times \R^d} \left|x_{1} z_{1}\right| \psi_{d}(\vx) \psi_{d}(\vz) \frac{2 C_{0}^{2}}{2\pi \sigma^{2}}  \cdot \frac{\rho_{x  z}}{1-\rho_{x z}^{2}} \diff \vx \diff \vz \\ & =\frac{C_{0}^{2}}{\pi \sigma^{2}} \iint_{\R^d\times \R^d}  \left|x_{1} z_{1}\right| \psi_{1}\left(x_{1}\right) \psi_{1}\left(z_{1}\right) \cdot \psi_{d-1}\left( \vx_{-1}\right) \psi_{d-1}( \vz_{-1}) \rho_{x z} \sum_{k=0}^{\infty} \rho_{x z}^{2 k} \diff \vx \diff \vz \\ 
& =\frac{ C_{0}^{2}}{ \pi \sigma^{2}} \sum_{k=0}^{\infty} \iint_{\R^d\times \R^d} \left|x_{1} z_{1}\right| \psi_{1}\left(x_{1}\right) \psi_{1}\left(z_{1}\right) \cdot \psi_{d-1}\left( \vx_{-1}\right) \psi_{d-1}( \vz_{-1}) \rho_{x z}^{1+ 2 k} \diff \vx \diff \vz , 
\end{aligned}
\end{equation}
where $\rho_{x z}=e^{-\alpha \|\vx-\vz\|^2}$. 
By Fubini's theorem and the fact that the integrand is a product of different terms, the integral in \eqref{eq:Gyf-var-sum} can be taken separately with respect to $(x_1,z_1)\in \R\times \R$ and with respect to $(\vx_{-1},\vz_{-1})\in \R^{d-1}\times \R^{d-1}$.

Fix any natural number $k$ and let $\beta=\alpha \cdot(1+2 k)$. 
The integral w.r.t. $(x_1,z_1)$ is simple: since $e^{-\beta (x_1-z_1)^2}\leqslant 1$, we conclude that 
 $$ 
 \iint_{\R\times \R} \left|x_{1} z_{1}\right| \psi_{1}\left(x_{1}\right) \psi_{1}\left(z_{1}\right) \cdot e^{-\beta (x_1-z_1)^2} \diff x_1 \diff z_1 
 \leqslant (\bbE|X|)^2 = \frac{2}{\pi}. 
 $$

Furthermore, let $\vu=\vx_{-1}$ and $\vv=\vz_{-1}$, the integral w.r.t. $(\vx_{-1},\vz_{-1})\in \R^{d-1}\times \R^{d-1}$ can be written as
$$\begin{aligned}
&\iint_{\R^{d-1}\times \R^{d-1}} \psi_{d_{-1}}\left( \vx_{-1}\right) \psi_{d-1}( \vz_{-1})  \exp\left( -\beta \cdot\left\| \vx_{-1}-\vz_{-1}\right\| ^{2}\right) \diff \vx_{-1} \diff \vz_{-1}   \\
= & \left( 2\pi \right) ^{-\left( d-1\right) }\cdot \iint _{R^{d-1}\times R^{d-1}}\exp \left( -\dfrac{1}{2}\left( \vu, \vv\right) \begin{bmatrix} \left( 1+2\beta \right) I_{d-1} & -2\beta I_{d-1} \\
-2\beta I_{d-1} & (1+2\beta) I_{d-1} \end{bmatrix}\begin{pmatrix} \vu \\ \vv \end{pmatrix}\right) \diff \vu \diff \vv \\
=& \left[\operatorname{det}\left[\begin{array}{cc} 1+2\beta, & - 2 \beta \\ -2\beta, & 1+2 \beta \end{array}\right]\right]^{-\frac{d-1}{2}} \\
= & \left(1+4 \beta+4 \beta^{2}-4 \beta^{2}\right)^{-\frac{d-1}{2}}   = \left(1+4 \beta\right)^{-\frac{d-1}{2}}  
\end{aligned}$$

So 
$$
\begin{aligned} & \operatorname{Var}\left(G_{y}^{f}\right) \leqslant \frac{2 C_{0}^{2}}{\pi^2 \sigma^{2}} \sum_{k=0}^\infty (1+4\alpha(1+2k))^{-(d-1)/2}. \end{aligned}
$$

Since $d\geq 4$, the above summation can be bounded by $(2+\frac{1}{4\alpha}) (1+4\alpha)^{- \frac{d-1}{2}} $. A detailed derivation is given below:

Let $g(x)=(1+x)^{-(d-1)/2}$. Since $g''>0$, $g$ is convex.  
Therefore, for any $k\geq 1$ and any $x\in (-1,1)$, it holds that $g(4\alpha(1+2k) \leqslant \frac{1}{2}\left( g(4\alpha(1+2k-x))+g(4\alpha(1+2k+x)) \right)$. 
The summation can then be bounded as 
$$
\begin{aligned} & g(4 \alpha)+\sum_{k=1}^{\infty} g(4 \alpha(1+2 k)) \\ & \leqslant g(4 \alpha)+\sum_{k=1}^{\infty} \int_{0}^{1} \frac{1}{2}[g(4 \alpha(1+2 k+x))+g(4 \alpha(1+2 k-x))] \diff x \\ &= g(4 \alpha)+\frac{1}{2} \int_{2}^{\infty} g(4 \alpha \cdot x) \diff x\\ & =\left(\frac{1}{1+4 \alpha}\right)^{\frac{d-1}{2}}+\frac{1}{2} \frac{1}{4 \alpha} \frac{1}{\frac{d-3}{2}}(1+8 \alpha)^{\frac{-(d-3)}{2}}. 
\end{aligned}
$$
Using $(d-3) \geq 1$ and $8\alpha\geq 4\alpha$, the right hand side of the last display can be bounded by  $(2+\frac{1}{4\alpha}) (1+4\alpha)^{- \frac{d-1}{2}}$.

We conclude that 
$$\operatorname{Var}_{f}\left(G_{y}^{f}\right) \leqslant \frac{C_{0}^{2}}{\pi^2\sigma^{2}} (4+\frac{1}{2\alpha}) (1+4\alpha)^{- \frac{d-1}{2}}.
$$

\section{Comparison with minimax Lower bounds in \cite{lin2021optimality}}\label{sec:comparison-2021}
Although our minimax lower bound in Theorem~\ref{thm:lower-smallp} may look similar to the minimax lower bound developed in \cite{lin2021optimality}, the proofs for these two results are actually very different. 
Compared to the previous result, our proof, as presented in Appendix~\ref{sec:proof lower bound given K}, involves several highly nontrivial technical innovations.

Before we get into the details of our proof, we point out the differences between our construction of distributions in Section~\ref{sec:sketch-lower-bound} and the previous construction in \cite{lin2021optimality}.
The previous construction in the proof of Lemma 15 in \cite{lin2021optimalitysupplement} is summarized as follows: 

\begin{itemize}
    \item []
They first find a smooth $d$-variate function $h$ such that for $z \sim N_d(0, I)$, and $Y=h(z)$, then $\Cov(\mathbb{E}(z \mid Y))$ has eigenvalues bounded away from 0. 
They then set $g(\cdot)=Ah(\cdot)$ with some constant $A$ and define $p_{B,g}$ the joint distribution of $(\bs{X}, Y)$to be: 
\begin{equation}\label{Pxy-old}
\bs{X}\sim N(0,\bs I_p), \quad Y=g\left(\bs{B}^\top \bs{X}\right)+\varepsilon, \text{ where }\varepsilon\sim N(0,1).
\end{equation}
Finally, they make use of the following facts for deriving their minimax lower bound:
\begin{itemize}
    \item[(F1)] By choosing the constant $A$ sufficiently large, the eigenvalues of $\Cov(\mathbb{E}(\vX \mid Y))$ are not less than the parameter $\lambda$ that defines the model class. 
    \item[(F2)]
They derive the following inequality for pairwise KL-divergence:
\begin{equation}\label{kl-old}
    \kl\left(p_{\vB, g}, p_{\vB^{\prime}, g}\right) \leq A^2 \max |\nabla h|^2\left\|\vB-\vB^{\prime}\right\|_F^2, 
\end{equation}
\end{itemize}
The function $h$ is constructed as follows: 
For any $x\in\R$, let $\phi(x)$ be a smooth function which maps $(-\infty, 0]$ to 0 and $[1, \infty)$ to 1 and has a positive first derivative over $(0,1)$. For any  $\bs z=(z_i)_{i=1}^d\in\R^d$, let $h(\bs z):=\sum_{i \leqslant d} 2^{i-1} \phi\left(z_i /\zeta \right)$ where $\zeta$ is sufficiently small such that for $\vZ=(Z_i)_{i\in[d]}\sim N(0,\bs I_d)$, the following probability inequality holds: 
\begin{equation}\label{small-probability-tol-old}
    \mathbb{P}\left(\exists i, 0<Z_i<\zeta\right)\leqslant d\zeta/(\sqrt{2\pi}) <2^{-d}.
    \end{equation}

\end{itemize}

It is clear that our construction in \eqref{eq:joint-x-y-B} is very different from theirs in \eqref{Pxy-old}. 
Now we explain why their construction can only apply with bounded $d$. 
\begin{enumerate}
\item 
When $d$ is unbounded, the construction in \eqref{Pxy-old} fails to ensure $p_{B,g}$ satisfies the weak sliced stable condition with $K=O(d)$. In fact, we can show that the central curve spanned over all quadrants of $\mathbb{R}^d$ so the number of slices need to get stable slicing is at least at the order of $2^d$ (a rigorous statement has been proved in Lemma 3.3.8 in Chapter 3 of \cite{huang2020reliable}). 

\item 
As $d$ increases, (F1) and (F2) cannot hold at the same time. 
Note that $\max |\nabla h|=2^{d-1}/\zeta$. As $d$ increases, \eqref{small-probability-tol-old} requires $\zeta$ to decrease at the rate of $\frac{1}{d 2^d}$. Therefore, $\max |\nabla h|$ increases at the rate of $d 2^{2d-1}$. 
In order to make use of (F2), for any given $\lambda$, one has to match \eqref{eq:lower-KL-bound} with \eqref{kl-old}. Consequently, $A$ has to satisfy
$$
A \asymp \frac{\sqrt{\lambda} }{d 2^{2d-1}}.
$$
However, (F1) requires $A$ to be not too small; in fact, $A$ has to scale as $\sqrt{\lambda}$ to ensure eigenvalues of $\Cov(\mathbb{E}(\vX \mid Y))$ are not less than $\lambda$. 
This contradiction suggests that the construction in \cite{lin2021optimality} does not apply when $d$ is unbounded. 
\end{enumerate}

Next, we explain why our construction overcomes the aforementioned problems. 
\begin{enumerate}
    \item[(A)] The central curve of \eqref{eq:joint-x-y-B} is much simpler and takes only $2d+1$ different values. We can show the distribution satisfies the weak sliced stable condition with $K=O(d)$. 
    \item[(B)] The eigenvalues of $\Cov(\mathbb{E}(\vX \mid Y))$ are determined by the value of $\rho$ in the construction in \eqref{eq:joint-x-y-B}: they are all equal to $\rho^2 \lambda_{0,d}$ where $\lambda_{0,d}$ is a number that scales as $\frac{\log d}{d}$ as $d$ increases. 
    For any small $\lambda<d^{-8.1}$, we can choose $\rho=\sqrt{\lambda/\lambda_{0,d}}$ to make sure the eigenvalue condition is satisfied. 
    \item [(C)]
    Most importantly, the inequality \eqref{eq:lower-KL-bound} allows us to bound the KL-divergence using any small $\lambda$. 
\end{enumerate}
These properties of our novel construction distinguish our work from the previous work. 

However, to establish the sharp control on the KL-divergence in terms of the gSNR is very challenging. 
We consider a Taylor expansion of the KL-divergence and we go through a careful analysis to control the induced measures, which is highly nontrivial.

\section{Proof of Theorem~\ref{thm:lower-smallp}}\label{sec:proof lower bound given K}

Our proof makes use of the following lemma known as the generalized Fano method.

\begin{lemma}[\cite{yu1997assouad}]\label{lem:fano}
	Let $N\geqslant 2$ be an integer and $\{\theta_1,\dots, \theta_N\}\subset \Theta_{0}$ index a collection of probability measures $\bbP_{\theta_i}$ on  a measurable space $(\X, \mathcal{A})$. Let $\rho$ be a pseudometric on $\Theta_{0}$ and suppose that for all $i \neq j$ 
	$$
	\rho(\theta_i, \theta_j)\geqslant \alpha_N,\quad\text{and}\quad \kl(\bbP_{\theta_i}, \bbP_{\theta_j}) \leqslant \beta_N. 
	$$
	Then every $\mathcal{A}$-measurable estimator $\hat{\theta}$ satisfies 
	$$
	\max_{i} \E \rho(\hat{\theta}, \theta_{i}) \geqslant \frac{\alpha_N}{2}\left( 1-\frac{\beta_N +\log 2}{\log N} \right). 
	$$
\end{lemma}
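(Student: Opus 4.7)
The plan is to execute the standard reduction-to-testing argument underlying generalized Fano. First, I would introduce a random index $J$ drawn uniformly from $\{1,\ldots,N\}$ and a mixture observation $X \mid J{=}i \sim \bbP_{\theta_i}$. Given any estimator $\hat\theta$, I would define a randomized test $\hat J := \arg\min_{i\in[N]} \rho(\hat\theta,\theta_i)$ (breaking ties by any measurable rule). Using the triangle inequality for $\rho$ and the separation assumption $\rho(\theta_i,\theta_j)\geqslant \alpha_N$ for $i\neq j$, I would verify the key implication: on the event $\{\hat J\neq J\}$, necessarily $\rho(\hat\theta,\theta_J)\geqslant \alpha_N/2$, since otherwise both $\theta_J$ and $\theta_{\hat J}$ would lie within distance $\alpha_N/2$ of $\hat\theta$, contradicting the separation. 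Markov's inequality applied to this event then gives
\begin{align*}
\E\,\rho(\hat\theta,\theta_J)\;\geqslant\;\frac{\alpha_N}{2}\,\bbP(\hat J\neq J).
\end{align*}

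Next I would invoke the classical Fano inequality for the discrete channel $J\to X\to \hat J$, which states that
\begin{align*}
\bbP(\hat J\neq J)\;\geqslant\;1-\frac{I(J;X)+\log 2}{\log N},
\end{align*}
where $I(J;X)$ is the mutual information between $J$ and the observation $X$. I would treat this as a quoted ingredient (it is a standard consequence of the data-processing inequality together with the identity $H(J\mid \hat J)\leqslant \log 2 + \bbP(\hat J\neq J)\log N$). The remaining analytic step is to control $I(J;X)$ by the pairwise KL assumption. Writing $\bar\bbP:=N^{-1}\sum_i \bbP_{\theta_i}$ for the mixture, the standard identity $I(J;X)=N^{-1}\sum_i \kl(\bbP_{\theta_i},\bar\bbP)$ combined with joint convexity of $\kl$ yields
\begin{align*}
I(J;X)\;\leqslant\;\frac{1}{N^2}\sum_{i,j}\kl(\bbP_{\theta_i},\bbP_{\theta_j})\;\leqslant\;\beta_N.
\end{align*}

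Chaining the three inequalities gives
\begin{align*}
\E\,\rho(\hat\theta,\theta_J)\;\geqslant\;\frac{\alpha_N}{2}\Bigl(1-\frac{\beta_N+\log 2}{\log N}\Bigr),
\end{align*}
and since $\max_i \E_{\theta_i}\rho(\hat\theta,\theta_i)\geqslant \E\,\rho(\hat\theta,\theta_J)=N^{-1}\sum_i \E_{\theta_i}\rho(\hat\theta,\theta_i)$, the claimed bound follows by passing to the worst-case index. There is no real obstacle here, since both ingredients (the reduction via triangle inequality and Fano's inequality itself) are classical; the only point requiring a bit of care is the measurability of $\hat J$, which I would ensure by specifying a deterministic tie-breaking rule (e.g., lexicographic) so that $\hat J$ is an $\mathcal{A}$-measurable function of $X$, and the convexity bound on $I(J;X)$, which I would state explicitly to make the step from ``pairwise KL'' to ``mutual information'' transparent.
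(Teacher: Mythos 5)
Your argument is correct and complete: the reduction from estimation to testing via the triangle inequality and the separation condition, followed by Fano's inequality for the uniform prior and the convexity bound $I(J;X)\leqslant N^{-2}\sum_{i,j}\kl(\bbP_{\theta_i},\bbP_{\theta_j})\leqslant\beta_N$, is exactly the standard derivation of this lemma. The paper does not prove it — it quotes the result from \cite{yu1997assouad} — and your proof coincides with the classical one given there, so there is nothing to add beyond the minor care you already took with tie-breaking and measurability of $\hat J$.
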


To apply this lemma, we will analyze the class of distributions constructed in \eqref{eq:joint-x-y-B}. 
Recall the Definition~\ref{def:fn-lower-median} that $\psi(z_1,\ldots, z_d)$ equals to the index $i$ of the largest absolute values of the coordinates multiplied by $\sgn(z_i)$  if $\|\bs z\|^2$ is less than $m_d$ the median of $\chi_{d}^2$ the chi-squared distribution with $d$ degrees of freedom  and $A_{i}=\psi^{-1}(i)$ for all $i$ in $\{\pm 1, \ldots, \pm d\}$, or more explicitly,  
$$A_{i} \, =\, \{\vz\in \R^d: \|\vz\|^2 \leqs m_{d}, \sgn(z_{|i|}) = \sgn(i), \text{ and } |z_{|i|}| > |z_j|, \forall j\neq i\}. $$ 
Essentially, the ball centered at the original with radius $\sqrt{m_{d}}$ in  $\mathbb{R}^d$ is partitioned into $2d$ disjoint parts $A_i$'s that have the same shape. 
For our later convenience, we define $A_0 \, =\, \{z\in \R^d: \|z\|^2> m_{d}\}$ the complement of the ball. We have $\P(Z\in A_0)=\P(\psi(Z)=0)$. 

Define $$\lambda_{0,d}\,:=(2d)^{-1} \E\left( Z_1 \mid \vZ \in A_1 \right)^{2},~ \text{where } ~\vZ\sim N(0, \bs{I}_{d}).$$ 
This number will be used in the following two propositions, which show that the joint distribution $(\vX,Y)$ in \eqref{eq:joint-x-y-B}  enjoys the following desired properties.

\begin{proposition}\label{prop:joint-class}
	For any $\vB\in \mathbb{O}(p,d)$ and $\bbP_{\vB}$ constructed by Equation~\eqref{eq:joint-x-y-B}, $Y$ can be represented as $f(\vB\tp \vX, \epsilon)$ for $\epsilon\sim N(0,1)$. 
 Furthermore, the joint distribution of $(\vX, Y)$ belongs to $\mathfrak{M}\left(p,d,\lambda\right)$ where $\lambda = \rho^{2}\lambda_{0,d}$. 
 
\end{proposition}

\begin{proposition}\label{prop:joint-kl}
	Suppose $\vB$ and $\widetilde{\vB}$ are in $\mathbb{O}(p,d)$. 
	Let $\bbP_{\vB}$ and $\bbP_{\widetilde{\vB}}$ be defined by Equation~\eqref{eq:joint-x-y-B}. 
\begin{enumerate}
\item For any $\rho\in (0,1)$, it holds that \[
	\kl(\bbP_{\vB}, \bbP_{\widetilde{\vB}})\leqslant \frac{\rho^2}{2(1-\rho^2)}\|\vB - \widetilde{\vB}\|_{F}^{2}.
	\]
    \item 
 There exist a universal constant $C$ and a constant $\delta_{d}=\Theta(d^{-7.1/2})$ such that for any $\rho\in (0, \delta_d]$, it holds that
	\[
	\kl(\bbP_{\vB}, \bbP_{\widetilde{\vB}})\leqslant \frac{C \rho^2}{1-\rho^2} \lambda_{0,d} \|\vB - \widetilde{\vB}\|_{F}^{2}.
	\]
 
\end{enumerate}
\end{proposition}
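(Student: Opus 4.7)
For part 1, the plan is the standard chain-rule plus data-processing argument for KL divergence. Since under both $\bbP_{\vB}$ and $\bbP_{\widetilde{\vB}}$ the marginal of $\vX$ is $N(\bs{0},\bs{I}_p)$, the joint KL decomposes as $\bbE_{\vX}\kl(\bbP_{\vB}(Y\mid\vX),\bbP_{\widetilde{\vB}}(Y\mid\vX))$. Applying the data-processing inequality along the chain $\vZ\mapsto(\vZ,\eta)\mapsto Y$ (with $\eta$ independent of $(\vX,\vZ)$) bounds the conditional KL by $\kl(\bbP_{\vB}(\vZ\mid\vx),\bbP_{\widetilde{\vB}}(\vZ\mid\vx))$, which is a closed-form Gaussian KL between two $d$-variate normals with common covariance $(1-\rho^2)\bs{I}_d$ and means $\rho\vB^\top\vx$ and $\rho\widetilde{\vB}^\top\vx$, equal to $\frac{\rho^2}{2(1-\rho^2)}\|(\vB-\widetilde{\vB})^\top\vx\|^2$. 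Taking expectation with $\vX\sim N(\bs{0},\bs{I}_p)$ and using $\bbE\|(\vB-\widetilde{\vB})^\top\vX\|^2=\|\vB-\widetilde{\vB}\|_F^2$ completes part 1.

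For part 2 the previous approach is too lossy: routing through $\vZ$ discards the extra coarsening in $W=\psi(\vZ)$, which is precisely what yields the factor $\lambda_{0,d}$. Since $W$ is integer-valued with unit gaps and $|\eta|\leqslant\sigma\leqslant 1/2$, $W=\mathrm{round}(Y)$ is a deterministic function of $Y$, and conversely $Y=W+\eta$ with $\eta$ having the same independent law under both measures; applying data-processing in both directions gives the identity $\kl(\bbP_{\vB},\bbP_{\widetilde{\vB}})=\bbE_{\vX}\kl(P_{\vu(\vX)},P_{\widetilde{\vu}(\vX)})$, where $\vu=\vB^\top\vX$, $\widetilde{\vu}=\widetilde{\vB}^\top\vX$, and $P_\vu$ is the law of $W$ when $\vZ\sim N(\rho\vu,(1-\rho^2)\bs{I}_d)$. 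I then bound the inner KL by a second-order Taylor expansion along the segment $\vu_t=\widetilde{\vu}+t(\vu-\widetilde{\vu})$, reducing it to an integral of the Fisher information $\bs{I}(\vu_t)$ of $P_\vu$ against $\vu-\widetilde{\vu}$, plus a controllable remainder. By the score-function identity $\nabla_\vu\log p_w(\vu)=\frac{\rho}{1-\rho^2}\bigl(\bbE[\vZ\mid W=w,\vu]-\rho\vu\bigr)$, the Fisher information equals $\bs{I}(\vu)=\frac{\rho^2}{(1-\rho^2)^2}\Cov(\bbE[\vZ\mid W,\vu])$.

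The central computation is at $\vu=\bs{0}$: the $d$-fold symmetry of $\psi$ under coordinate permutations and sign flips forces $\Cov(\bbE[\vZ\mid W,\bs{0}])$ to be a scalar multiple of $\bs{I}_d$, and evaluating the scalar by computing $\bbE[\vZ\mid W=w]$ on each set $A_w$ gives $\frac{(1-\bbP(\vZ\in A_0))\,\bbE(Z_1\mid\vZ\in A_1)^2}{d}$, up to a factor from the rescaling $1-\rho^2$ of the conditional variance of $\vZ$. Since $\bbP(\vZ\in A_0)$ stays bounded away from $1$ (it is near $1/2$ by the choice of the threshold $m$), this equals $\Theta((1-\rho^2)\lambda_{0,d})$, so $\bs{I}(\bs{0})\preceq\frac{C\rho^2\lambda_{0,d}}{1-\rho^2}\bs{I}_d$, exactly matching the target bound.

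The main obstacle is extending this bound from $\vu=\bs{0}$ to $\vu_t$ ranging over the Gaussian tails, and controlling the higher-order Taylor remainder. For $\vu_t\neq\bs{0}$ the symmetry of $P_\vu$ is broken, so I would expand the conditional probabilities $p_w(\vu_t)$ perturbatively around $\vu_t=\bs{0}$; each order in the expansion introduces a factor of $(\rho\|\vu_t\|)^{2k}$, whose expectation against $\vx\sim N(\bs{0},\bs{I}_p)$ scales like $\rho^{2k}d^k$ via the Gaussian moment bounds $\bbE\|\vu_t\|^{2k}\lesssim d^k$. Requiring each such correction to be dominated by the leading $\rho^2\lambda_{0,d}\asymp\rho^2/d$ contribution forces $\rho$ to decay polynomially in $d$; a careful bookkeeping through the order that also closes the Taylor remainder produces the explicit threshold $\delta_d=\Theta(d^{-7.1/2})$. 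Finally, integrating the pointwise KL bound against $\vx\sim N(\bs{0},\bs{I}_p)$ via $\bbE\|(\vB-\widetilde{\vB})^\top\vX\|^2=\|\vB-\widetilde{\vB}\|_F^2$ yields the stated inequality.
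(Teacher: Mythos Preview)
Your proposal is correct and follows essentially the same approach as the paper: both reduce to the KL between the $(\vX,W)$ laws via data processing, Taylor expand the log-likelihood to second order (the first-order term vanishing by the score identity), identify the leading second-order contribution as $\tfrac{\rho^2}{1-\rho^2}\Cov(\bbE[\bxi\mid\psi(\bxi)])=\tfrac{\rho^2}{1-\rho^2}\lambda_{0,d}\bs{I}_d$, and then control deviations from the $\rho\to 0$ limit by comparing shifted and unshifted Gaussian measures on the sets $A_w$ after truncating $\|\bs{\Pi}\vX\|$. Your Fisher-information framing is a clean repackaging of the paper's explicit work with $\nabla^2\log g_w$; the one point you gloss over is that the integral remainder of $\kl(P_{\vu},P_{\vu_t})$ carries weights $p_w(\vu)$ rather than $p_w(\vu_t)$, so it is not literally $\bs{I}(\vu_t)$---the paper handles exactly this change-of-measure term in its Lemma~\ref{lem:lower-kl-2nd-small}, and the same perturbation estimate $|p_w(\vu)-p_w(\vu_t)|=O(\rho\sqrt{d})$ that you already invoke absorbs it.
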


To apply Lemma~\ref{lem:fano}, we also need a packing set from the following result. 
\begin{lemma}[Packing Set]\label{lem:packing}
	For any $\epsilon \in (0, \sqrt{ 2(d \wedge (p-d))}]$ and any $\alpha \in (0,1)$, there exists a subset $\Theta\subset \mathbb{O}(p,d)$ such that 
	\[
	|\Theta| \geqslant \left( \frac{c_0}{\alpha}  \right)^{d(p-d)}, 
	\]
	and for any $\vB, \widetilde{\vB}\in \Theta$, 
	\[
	\|\vB-\widetilde\vB\|_{F} \leqslant 2 \epsilon, \qquad
	\|\vB\vB\tp -\widetilde{\vB}\widetilde{\vB}\tp\|_F \geqslant  \alpha\epsilon, 
	\]
	where $c_0$ is an absolute constant. 
\end{lemma}

The proofs of Propositions~\ref{prop:joint-class}, \ref{prop:joint-kl}, and Lemma~\ref{lem:packing} will be given in the subsequent subsections. 
Note that Propositions~\ref{prop:joint-class} and \ref{prop:joint-kl} are highly technical and nontrivial while Lemma~\ref{lem:packing} can be obtained using results from the existing literature.

We now prove Theorem~\ref{thm:lower-smallp}. 
Fix any $\alpha \in (0,1)$ (e.g. $\alpha=1/2$) and take $\Theta$ to be the subset in Lemma~\ref{lem:packing}.
For each $\vB\in \Theta$, define $\bbP_{\vB}$ by Equation~\eqref{eq:joint-x-y-B}. 
Proposition~\ref{prop:joint-class} guarantees that $\bbP_{\vB}\in \mathfrak{M}\left(p,d,\lambda\right)$. 
Denoted by $\varpi_{d}:=\min(\delta_{d}^{2}, 1/2)\lambda_{0,d}$. 
Suppose $\lambda\leqslant \varpi_{d}$. Let $\rho=\sqrt{\lambda/\lambda_{0,d}}$. Then  $1/(1-\rho^{2})\leqslant 2$ and we can apply the second statement of Proposition~\ref{prop:joint-kl} to bound the KL-divergence between each pair of different populations $\bbP_{\vB}^{n}$ and $\bbP_{\widetilde{\vB}}^{n}$ for $\vB, \widetilde{\vB}\in \Theta$.

 Let $\epsilon^{2} = c_1 \frac{d (p-d)}{ C n \lambda}$, 
	where $C$ is the constant in Proposition~\ref{prop:joint-kl} and $c_1$ is a constant such that $c_1/(\log(c_0/\alpha)\leqslant 1/16$ for $c_0$ in Lemma \ref{lem:packing}. 
Then, using Lemma~\ref{lem:fano}, we have 
	\begin{align}
	\label{eq:fano:argument-new} & \inf_{\widehat{\vB}}  \sup_{\vB \in \Theta}\bbE\|\widehat{\vB}\widehat{\vB}\tp - \vB\vB\tp \|^{2}_{F}\\
	\geqslant& \min_{\vB, \widetilde{\vB}\in \Theta, \vB\neq \widetilde{\vB}} \|\vB\vB\tp-\widetilde{\vB}\wt{\vB}\tp\|^{2}_{F} \left(1-\frac{\max \kl(\bbP_{\vB}^{n}, \bbP_{\widetilde{\vB}}^{n}) +\log(2)}{\log(|\Theta|)}\right) \nonumber \\
	\geqslant & \alpha^2\epsilon^{2}\left(1-\frac{ 8 C n \rho^2 \lambda_{0,d}  \epsilon^{2}+\log2}{\log(|\Theta|)} \right) \nonumber \\
	\geqslant & \alpha^2 c_1 \,\cdot\,  \frac{d(p-d)}{ n\lambda } \,\cdot \, \left(1- \frac{8c_1 d(p-d)}{\log(|\Theta|)} - \frac{\log2}{\log(|\Theta|)} \right).  \nonumber
	\end{align} 
	Since $\log|\Theta| > d(p-d) \log( c_0 / \alpha)\geqslant 16 c_1 d (p-d)$, we have 
	\begin{align}\nonumber
	&\inf_{\widehat{\vB}} \sup_{\vB \in \Theta}\bbE_{\vB}\|\widehat{\vB}\widehat{\vB}\tp - \vB\vB\tp \|^{2}_{F}\gtrsim \frac{d(p-d)}{n\lambda}. 
	\end{align}
We complete the proof of Theorem~\ref{thm:lower-smallp}.

\subsection{Proof of Proposition~\ref{prop:joint-class}}\label{app:pf-prop:joint-class}

To prove Proposition~\ref{prop:joint-class}, we need the following lemma. 
\begin{lemma}\label{lem:joint-eigen}
	Suppose $(\vZ, Y)$ is constructed as in Equation~\eqref{eq:joint-x-y-B}. 
Define $\bs{e}_{0}=\bs{0}$ and for $i=1,\ldots, d$, define $\bs{e}_{-i}=-\bs{e}_{i}$. 
	\begin{enumerate}
		\item $P(W=i)=(4d)^{-1}$, for $i=\pm 1,\ldots, \pm d$ and 
 $\E \left[\bs{e}_W\bs{e}_{W}\tp \right]=\frac{1}{2d}\bs{I}$. 
		\item If $y \in (  i-\sigma,  i +\sigma)$, then $\vl(y)=\E[ \vZ \mid Y=y]=  \E(Z_1| \psi (\vZ)= 1 )  \ve_{i} = \sqrt{2 d \lambda_{0,d}} \ve_{i} $ for $i=\pm 1,\ldots, \pm d$. \\ 
  If $y \in (  -\sigma,  \sigma)$, then $\vl(y)= \bs{0} $ .
		\item $\Cov \left(\E[ \vZ \mid Y] \right)=\lambda_{0,d} \bs{I}_{d}$. 
		\item All eigenvalues of $\Cov( \E[ \vZ \mid Y] )$ equal to $\lambda_{0,d}$. 
		\item   $\vl(y)$ satisfies the weak $(K, \tau)$-sliced stable condition for any $K\geqslant  4d \max(1+2\gamma, \tau)$, where $\gamma\in (0,1)$ is defined as in Definition \ref{def:weak-sliced-stable}.  
  \item $\frac{1}{100 d}\leqs \lambda_{0,d}\leqs  \frac{4 \log(2d)}{d}$ and $\frac{1}{8d\sqrt{2}} \leqs \E\left( Z_1 \one_{ \vZ \in A_1} \right) \leqs \frac{\sqrt{2\log (2d)}}{2d}$. Furthermore, $\lambda_{0,d}\asymp \frac{\log(d)}{d}$ as $d\rightarrow \infty$. 
	\end{enumerate}
\end{lemma}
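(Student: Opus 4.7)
The plan is to prove the six statements in sequence, exploiting the symmetries of the construction. I first observe that marginally $\vZ \sim N(0, \bs{I}_d)$, since $\rho \vB\tp \vX$ and $\sqrt{1-\rho^2}\,\xi$ are independent normals with covariances $\rho^2 \bs{I}_d$ and $(1-\rho^2)\bs{I}_d$. Thus $W = \psi(\vZ)$ depends only on a standard normal $\vZ$, and $\vB$ plays no role in this lemma.

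For statement (1), the coordinate-permutation and sign-flip isometries of $\R^d$ map $A_1$ onto every other $A_i$ and preserve the law of $\vZ$, so $P(\vZ\in A_i)$ is constant across $i=\pm 1,\dots,\pm d$. Since ties occur with probability zero, $A_0 = \{\|\vz\|^2>m\}$ is exactly the complement of the ball, so $P(W=0)=1/2$ by the median definition of $m$ and each $P(W=i)=1/(4d)$. The formula $\E[\bs{e}_W\bs{e}_W\tp] = (2d)^{-1}\bs{I}_d$ follows by summing and using $\bs{e}_{-i}\bs{e}_{-i}\tp = \bs{e}_i\bs{e}_i\tp$. For statement (2), $\sigma\leq 1/2$ makes the intervals $(i-\sigma,i+\sigma)$ disjoint, so $W$ is determined by $Y$ a.s.\ and $\vl(y)=\E[\vZ\mid W=i]$ on $(i-\sigma,i+\sigma)$. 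Reflection symmetry $z_j\to -z_j$ on $A_i$ for $j\neq |i|$ forces the off-diagonal components to vanish, and permutation symmetry identifies the remaining component with $\E[Z_1\mid \vZ\in A_1]=\sqrt{2d\lambda_{0,d}}$. For $y\in(-\sigma,\sigma)$, full radial symmetry of $A_0$ gives $\vl(y)=\bs{0}$. Statements (3) and (4) follow immediately by plugging (2) into $\Cov(\vl(Y))=\E[\vl(Y)\vl(Y)\tp]=2d\lambda_{0,d}\E[\bs{e}_W\bs{e}_W\tp]$.

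The main technical work is statement (5). The key observation is that $\vl$ is piecewise constant, taking $2d+1$ distinct values with jumps located at the $2d$ boundary points $\pm(k+1/2)$ for $k=0,\dots,d-1$. The density of $Y$ equals $1/(4\sigma)$ on $(-\sigma,\sigma)$ and $1/(8d\sigma)$ on each other $(i-\sigma,i+\sigma)$; the $\gamma$-partition condition then forces each slice outside the central region to have length at most $4d(1+\gamma)/H$, which is strictly less than $1$ when $H\geq 4d(1+2\gamma)$. Since boundaries are separated by distance $1$, each slice contains at most one jump, so at most $2d$ slices contribute nonzero within-slice variance. On such a slice straddling boundary $k+1/2$, $\bbeta\tp\vl$ is a two-valued random variable whose gap is $\sqrt{2d\lambda_{0,d}}\,|\bbeta_k-\bbeta_{k+1}|$ (with the convention $\bbeta_0=0$), so the within-slice variance is at most $(d\lambda_{0,d}/2)(\bbeta_k-\bbeta_{k+1})^2$. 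Summing over all $2d$ boundaries (both positive and negative sides) and using the telescoping bound $\sum_{k}(\bbeta_{k+1}-\bbeta_k)^2 \leq 4\|\bbeta\|^2$ gives a total of $O(d\lambda_{0,d})$; dividing by $H\geq 4d\tau$ yields the required $\lambda_{0,d}/\tau$.

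Finally, for (6), I would use the spherical decomposition $\vZ=\sqrt{V}\,\vU$ with $V\sim\chi^2_d$ independent of $\vU\sim\mathrm{Unif}(\mathbb{S}^{d-1})$ to factorize
\[
\E[Z_1\one_{\vZ\in A_1}] \;=\; \E[\sqrt{V}\one_{V\leq m}]\,\cdot\, \E[U_1\one_{U_1>0,\,U_1=\max_j |U_j|}].
\]
The second factor equals $(2d)^{-1}\E[\max_j|U_j|]$. The upper bound comes from the equivalent expression $\E[Z_1\one_{\vZ\in A_1}]\leq (2d)^{-1}\E[\max_j|Z_j|]\leq \sqrt{2\log(2d)}/(2d)$ via the standard Gaussian maximum bound. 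The lower bound uses the deterministic inequality $\max_j|U_j|\geq 1/\sqrt{d}$ (since $\sum_j U_j^2=1$) together with a $\Theta(\sqrt{d})$ lower bound on $\E[\sqrt{V}\one_{V\leq m}]$ (which follows from the concentration of $V/d$ around $1$ for large $d$ and can be checked directly for small $d$). Combining yields $1/(100d)\leq \lambda_{0,d}=8d\,\E[Z_1\one_{\vZ\in A_1}]^2\leq 4\log(2d)/d$, and the asymptotic $\lambda_{0,d}\asymp\log(d)/d$ follows from the tight asymptotics of $\E[\max_j|Z_j|]$. The main obstacle throughout is statement (5): one must carefully track how a $\gamma$-partition interacts simultaneously with the non-uniform density of $Y$ (a large atom near the origin) and the $2d$ jumps of $\vl$, and the telescoping trick in $\bbeta$ is essential to keep the bound linear rather than quadratic in $d$.
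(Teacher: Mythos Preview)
Your proposal is correct overall and follows the paper's approach closely for parts (1)--(4) and (6); the spherical factorization in (6) is exactly the paper's idea, and the paper establishes the $\Theta(\sqrt d)$ lower bound on $\E[\sqrt V\,\one_{V\le m}]$ as a separate chi-square lemma (via Cauchy--Schwarz and the gamma-function estimate $\E\sqrt V\ge\sqrt{d-1/2}$).

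For part (5) there are two points worth noting. First, your ``length $\le 4d(1+\gamma)/H$'' argument is loose when $\sigma<1/2$: the support of $Y$ then has gaps $(i+\sigma,\,i+1-\sigma)$ of zero density, so a slice's interval length can exceed your bound even though its effective length (intersection with $\supp(Y)$) does not. The paper sidesteps this by arguing purely in probability: since $(1+\gamma)/H<1/(4d)=\bbP(Y\in J_i)$, no slice can fully contain any $J_i$, hence each slice meets at most two consecutive $J_i$'s. Second, your telescoping device works but is not ``essential''. The paper takes the cruder two-point bound $\var(\bbeta\tp\vl(Y)\mid S_h)\le \tfrac12\bigl[(\vl(i)\tp\bbeta)^2+(\vl(k)\tp\bbeta)^2\bigr]$ (AM--GM) and observes that each term $(\vl(i)\tp\bbeta)^2$ appears at most twice in the sum (once for each endpoint of $J_i$), giving
\[
\frac{1}{H}\sum_h\var(\bbeta\tp\vl(Y)\mid S_h)\;\le\;\frac{1}{H}\sum_{i}(\vl(i)\tp\bbeta)^2\;=\;\frac{4d}{H}\,\var(\bbeta\tp\vl(Y)),
\]
already linear in $d$. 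Your tighter bound $\tfrac14(a-b)^2$ plus telescoping lands at the same $4d/H$ factor.
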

\begin{proof}[Proof of Lemma~\ref{lem:joint-eigen}]~
\begin{itemize}
    \item[1:] 	It is a direct corollary of the fact that $\bs Z\sim N(0,\vI_d)$ and $m_d$ is the median of $\|\vZ\|^2$. 
    \item[2:] Since $A_0$ is the complement of a ball centered at $\bs{0}$, it is rationally invariant. By the symmetry of standard normal random vectors, $\E(\vZ\mid  \psi(\vZ) = 0 ) \overset{a.s.}{=}  \E(\vZ\mid \vZ \in A_0)=\bs{0}$. \\  Fix any $i=1,\ldots, d$ and any $\Upsilon = \pm 1$. Under the condition that $y \in ( \Upsilon i-\sigma, \Upsilon i +\sigma)$, one  has
    \begin{align*}
	&\E[\vZ|Y=y]=\E[\vZ|\psi(\vZ)=\Upsilon i]=\E[\vZ|\Upsilon Z_i=\max_{j\in [d]}|Z_j|],\\
	&\E[Z_k|\Upsilon Z_i=\max_{j\in [d]}|Z_j|]=0(\forall k\neq i),\\
 &\E[Z_i|\Upsilon Z_i=\max_{j\in [d]}|Z_j|]= \Upsilon \E[Z_1 \mid Z_1 = \max_{j\in [d]} |Z_j|] = \Upsilon  \E[Z_1 \mid \psi(\vZ)=1].\\
	\end{align*}

   \item [3 \& 4:] By the first and the second statements, 
    \begin{align*}
    \Cov \left(\E[ \vZ \mid Y] \right)=&\E[\E[\vZ|Y]\E[\vZ\tp|Y]]\\
=&\frac{1}{4d}\sum_{i=-d}^d \E^2(Z_1| \psi (\vZ)= 1 )\bs e_i\bs e_i\tp \\
 =& \lambda_{0,d} \bs{I}_{d}. 
\end{align*}
    \item[5:] In the following, fixed $\bbeta\in \R^d$. Since $\ell(y)=\bs{0}$ for $y\in (-\sigma, \sigma)$, we can focus on the case where $|Y|>\sigma$. Fix any $\gamma\in (0,1)$ as defined in Definition \ref{def:weak-sliced-stable}.  
	
	Let $J_{ i}=(  i-\sigma,  i +\sigma)$ for each $i=\pm 1, \ldots, \pm d$. 
	Suppose $\{\S_h = [a_{h-1}, a_{h}) :h=1,\ldots, H\}$ is a partition of $[-d-\sigma,d+\sigma]$ such that 
	$$
	\frac{1-\gamma}{H} \leqslant \mathbb{P}(Y\in \S_{h})\leqslant \frac{1+\gamma}{H}, \qquad \forall h=1,\ldots, H. 
	$$
	Since $(1+\gamma)/H\leqslant (1+\gamma)/K < (4d)^{-1}=P(Y\in J_{i})$ for any $i=\pm 1, \ldots, \pm d$, we conclude that $\S_h$ can overlap with at most two $J_{i}$'s. If $\S_h$ is covered by some $J_{i}$, then $\mathrm{Cov}\left(\bbeta\tp\vl(Y)\big|  Y\in \S_h \right)=0$ because of the second statement.  If $\S_h$ overlaps with $J_{i}$ and $J_{k}$, then by the AM-GM inequality, it holds that 
	\begin{align*}
	\mathrm{Cov}\left(\bbeta\tp\vl(Y)\big|  Y\in \S_h \right) & = \bbP( Y\in J_i \mid Y\in \S_h)\bbP( Y \in J_k \mid Y\in \S_h) \left( \vl(i)\tp \bbeta  -  \vl(k)\tp \bbeta   \right)^{2}  \\
	& \leqslant 2^{-1} \left(  \left[\vl(i)\tp \bbeta \right]^{2} + \left[ \vl(k)\tp \bbeta \right]^{2}  \right). 
	\end{align*}
	Summing over all $h$, one has 
	\begin{align*}
	& \qquad \frac{1}{H} \sum_{h=1}^{H}	\mathrm{Cov}\left(\bbeta\tp\vl(Y)\big|  Y\in \S_h \right) \\
 & \leqslant \frac{1}{H} \sum_{h=1}^{H}\sum_{\substack{
  i\neq k,  \\ i,k \in \{\pm 1, \dots, \pm d\}
     }
 } \one_{\S_h \cap J_i\neq 0} \one_{\S_h \cap J_k  \neq 0}2^{-1} \left(  \left[\vl(i)\tp \bbeta \right]^{2} + \left[ \vl(k)\tp \bbeta \right]^{2}  \right) \\
& \leqslant  \frac{1}{H} \sum_{i \in \{\pm 1, \dots, \pm d\} } \left[\vl(i)\tp \bbeta \right]^{2}\\
	& = \frac{4 d}{H} \mathrm{Cov}\left(\bbeta\tp\vl(Y) \right), 
	\end{align*}
	where the last inequality is due to the fact that for each $i$, there are at most two values of $h$ such that $\S_h$ is overlapped with $J_i$ but not covered by $J_i$ and thus $\left[\vl(i)\tp \bbeta \right]^{2}$ appears at most twice in the summation. 
	Since $H/(4d)\geqslant K/(4d)\geqslant \tau$, we conclude that $\ell(y)$ satisfies the weak $(K,\tau)$-sliced stable condition w.r.t. $Y$. 

\item[6:] Suppose $\vZ\sim N(0, \bs{I}_{d})$. By symmetry, 
	\begin{align*}
	\E\left( Z_1 \one_{ \vZ \in A_1} \right) =&\E\left( Z_i \one_{ \vZ \in A_i} \right)  \quad (\text{for any }~ i\in[d]) \\
	=&  (2d)^{-1} \sum_{i\in \{\pm 1,\ldots, \pm d\} } \E\left( |Z_{|i|}|  \one_{ \vZ \in A_i} \right)\\
		=&  (2d)^{-1} \sum_{i\in \{\pm 1,\ldots, \pm d\} } \E\left(  \max_{i=1,\ldots, d} |Z_{i}|  \one_{ \vZ \in A_i} \right)\\
	=& (2d)^{-1}  \E\left( \max_{i=1,\ldots, d} |Z_{i}| \one_{\| \vZ \|^2 \leqs m_{d}} \right). 
\end{align*}

To obtain an upper bound, we note that 
\begin{align*}
	 \E\left( \max_{i=1,\ldots, d} |Z_{i}| \one_{\| \vZ \|^2 \leqs m_{d}} \right) 
	 \leqs &  \E\left( \max_{i=1,\ldots, d} |Z_{i}| \right) \\
\leqs & \sqrt{ 2\log (2d)},
\end{align*}
where the last inequality is due to the maximal inequality of Gaussian r.v.s. (See Section~2.5 in \cite{boucheron2013}).

To get a lower bound, we note that $\max_{i}|Z_i|\geqslant \sqrt{ \frac{1}{d} \sum_{i} Z_{i}^2}$. Therefore
\begin{align*}
 \E\left( \max_{i=1,\ldots, d} |Z_{i}| \one_{\| \vZ \|^2 \leqs m_{d}} \right)
& \geqslant d^{-1/2} \E\left( \|\vZ\| \one_{\| \vZ \|^2 \leqs m_{d}}\right) \\
& \geqslant d^{-1/2} \frac{1}{10} \E \|\vZ\| \\ 
& \geqslant \frac{1}{10}\sqrt{\frac{d-\frac{1}{2}}{d}} \geqslant \frac{1}{10\sqrt{2}},
\end{align*}
where the second inequality is by part 2 of Lemma~\ref{lem:chisq} and the third is due to a lower estimate used in the proof of that lemma. 
The two bounds together yield $$\frac{1}{20 d\sqrt{2}}\leqs \E\left( Z_1 \one_{ \vZ \in A_1} \right)\leqs \frac{ \sqrt{ 2\log (2d)}}{2d}.$$

Recall that $\lambda_{0,d}\,:=(2 d)^{-1} \E\left( Z_1 \mid \vZ \in A_1 \right)^{2}$.  
By part 1, $\P(\vZ \in A_1)=(4d)^{-1}$. Therefore, $\frac{1}{100 d} \leqs \lambda_{0,d}\leqs \frac{4 \log (2d)}{d}$.

For $d$ sufficiently large, we have $m_{d}>\sqrt{\log d}$. Following the same proof of Equation~(3.14) in \citet[Chapter 3.3]{ledoux1991probability}, there is some positive constant $c_0$ such that $$ \E\left( \max_{i=1,\ldots, d} |Z_{i}| \one_{\| \vZ \|^2 \leqs m_{d}} \right) \geqslant c_0 \sqrt{\log d}.$$ 
Therefore,  $\lambda_{0,d}\asymp \frac{\log d}{d}$.

\end{itemize}
\end{proof}

\begin{proof}[Proof of Proposition~\ref{prop:joint-class}]
	Note that any $k\geqslant 2$ independent uniform random variable sequence (r.v.s. for short) can be constructed from a single $U \sim \text{Unif}(0,1)$ as follows. Represent $U$ as $\sum_{j=1}^{\infty} k^{-j} a_{j}$ for $a_{j}\in \{0, 1,\dots, k-1\}$.	Let $U^{(i)}=\sum_{j=1}^{\infty} k^{-j} a_{(j-1)  k + i}$ for each $i= 1,\dots, k$. Since 
	$a_{j}$'s are independent and identically distributed, we conclude that $U^{(i)}$'s are independent and identically distributed, each following Unif($0,1$). 
	
Let $U=\Phi(\epsilon)$, where $\Phi$ is the cumulative density function (C.D.F.) for the standard normal distribution. Since $\epsilon\sim N(0,1)$, one has $U\sim \text{Unif}(0,1)$. 
Let $k=1+d$. 
Using the above construction of $U^{(i)}$'s and let $\xi_j=\Phi^{-1}(U^{(j)})$ for $1\leqslant j\leqslant d$ and $\eta=\sigma \cdot U^{(1+d)}$, we can represent $Y$ as a function of $\vB\tp \vX$ and $\epsilon$. 
	
	Since 
	\begin{align*}
\begin{pmatrix}
  \bs X \\
  \bs Z
\end{pmatrix}=\begin{pmatrix}
 \bs I_p & \bs 0\\
 \rho\bs B^\top &\sqrt{1-\rho^2}\bs{I}_d
\end{pmatrix}	 
	 \begin{pmatrix}
 \bs X \\
  \xi
\end{pmatrix},
	\end{align*}
the joint distribution of $(\bs X,\bs Z)$ is also normal. Thus by elementary results for normal distributions, one has 
\begin{align*}
\vX\mid \vZ \sim& N\left(\E[\bs X]+\Cov(\vX,\vZ)\mr{var}^{-1}(\bs Z)(\bs Z-\E[\bs Z]),  \mr{var}(\bs X)-\Cov(\vX,\vZ)\mr{var}^{-1}(\bs Z)\Cov(\vZ,\vX)\right)\\
=&N( \rho\vB \vZ, \bs{I}_{p} - \rho^2 \vB\vB\tp).
\end{align*}
 Hence
	\begin{align*}
	\E[\vX \mid Y]
	=\E[ \E[ \vX \mid \vZ, \eta] \mid Y]
	= \E[ \rho \vB \vZ \mid Y]
	=	\rho \vB \E[ \vZ \mid Y],
	\end{align*}
	and 
	\begin{align*}
	\Cov( \E[\vX \mid Y] )
	=	 \rho^2 \vB \Cov( \E[ \vZ \mid Y] ) \vB\tp.
	\end{align*}
	Lemma~\ref{lem:joint-eigen} shows that all eigenvalues of $\Cov( \E[ \vZ \mid Y] )$ equal to $\lambda_{0,d}$, and $\vl(y)=\E[ \vZ \mid Y=y]$ satisfies the weak $(K, \tau)$-sliced stable condition for any $K\geqslant  4d \max(2, \tau)$. 
 If we choose $\tau=2$ and $\gamma=1/2$, we conclude that the distribution of $(\vX, Y)$ belongs to $\mathfrak{M}\left(p,d,\rho^{2}\lambda_{0,d}\right)$. 
 
\end{proof}

\subsection{Proof of Proposition~\ref{prop:joint-kl}}\label{app:proof-prop-joint-kl}
The first statement is relatively simple to prove. 
By the construction in Equation~\eqref{eq:joint-x-y-B}, $\bbP_{\vB}( Y \mid \vX, \vZ)=\bbP_{\widetilde{\vB}}(Y\mid \vX, \vZ)$, a.s. 
By basic properties of KL-divergence, 
	\begin{align}
	\kl(\bbP_{\vB}, \bbP_{\widetilde{\vB}}) \leqslant  &\;  	\kl(\bbP_{\vB}, \bbP_{\widetilde{\vB}})  + \E_{\vX, Y\sim \bbP_{\vB} }\left(
	\kl( \bbP_{\vB}( \vZ \mid \vX, Y),  \bbP_{\widetilde{\vB}}(\vZ\mid \vX, Y) 	\right)  \nonumber \\
	=	&	\,  \kl (\bbP_{\vB}(\vX, \vZ, Y), \bbP_{\widetilde{\vB}}(\vX, \vZ, Y)) \nonumber \\
	=&\,  \kl (\bbP_{\vB}(\vX, \vZ), \bbP_{\widetilde{\vB}}(\vX, \vZ)).
	\end{align}
Furthermore, let $\phi_{p}(\vx)$ be the density function  for $N(0,\bs{I}_{p})$. Then we have
	\begin{align*}
	&\kl (\bbP_{\vB}(\vX, \vZ), \bbP_{\widetilde{\vB}}(\vX, \vZ)) \\
 = & \E_{\vB}\left[ \log \left( \frac{\phi_p(\vX) (1-\rho^2)^{-d/2} \phi_{d}(\vZ- \rho \vB\tp \vX) }{\phi_p(\vX) (1-\rho^2)^{-d/2}  \phi_{d}(\vZ- \rho  \widetilde{\vB}\tp \vX) } \right) \right] \nonumber \\ 
	 =& \E_{\vB}\left[\frac{1}{2(1-\rho^2)} \left( \|\vZ- \rho \vB\tp \vX\|^2-\|\vZ- \rho \widetilde{\vB}\tp \vX\|^2 \right)\right]\\
   =&\frac{\rho^2}{2(1-\rho^2)} \E\left[\| (\vB-\widetilde{\vB})\tp \vX\|^2\right]=\frac{\rho^2}{2(1-\rho^2)}\|\vB-\widetilde{\vB}\|_{F}^2. 
	\end{align*}
 The first statement is proved.

The rest of the proof is about the second statement. 
By the construction in Equation~\eqref{eq:joint-x-y-B}, $\bbP_{\vB}( Y \mid \vX, W)=\bbP_{\widetilde{\vB}}(Y\mid \vX, W)$, a.s. 
By basic properties of KL-divergence, 
	\begin{align}
	\kl(\bbP_{\vB}, \bbP_{\widetilde{\vB}}) \leqslant  &\;  	\kl(\bbP_{\vB}, \bbP_{\widetilde{\vB}})  + \E_{(\vX, Y)\sim \bbP_{\vB} }\left(
	\kl( \bbP_{\vB}( W \mid \vX, Y),  \bbP_{\widetilde{\vB}}(W\mid \vX, Y) 	\right)  \nonumber \\
	=	&	\,  \kl (\bbP_{\vB}(\vX, W, Y), \bbP_{\widetilde{\vB}}(\vX, W, Y)) \nonumber \\
	=&\,  \kl (\bbP_{\vB}(\vX, W), \bbP_{\widetilde{\vB}}(\vX, W)). \label{eq:lower-kl-1}
	\end{align}
Furthermore, let $\phi_{p}(\vx)$ be the density function  for $N(0,\bs{I}_{p})$. Then we have
	\begin{align*}
	&\kl (\bbP_{\vB}(\vX, W), \bbP_{\widetilde{\vB}}(\vX, W)) \\
 = & \E_{\vB}\left[ \log \left( \frac{\phi_p(\vX) \bbP_{\vB}(\vZ\in A_W \mid \vX) }{\phi_p(\vX) \bbP_{\widetilde{\vB}}(\vZ\in A_W \mid \vX) } \right) \right] \nonumber \\ 
	 =& \E_{\vB}\left[ \log \left( \frac{  \bbP_{\vB}(\vZ\in A_W \mid \vX) }{ \bbP_{\widetilde{\vB}}(\vZ\in A_W \mid \vX) } \right) \right]. 
	\end{align*}
 Since we need to analyze the probability $\bbP_{\widetilde{\vB}}(\vZ\in A_W \mid \vX)$, it is convenient to express it as functions of $(\widetilde{\vB}-\vB)\tp \vX$. 
In the following, $\bxi$ is a generic random vector that is independent with everything else and follows $N(0, \bs{I}_{d})$. 
 Let $\Delta \vB=\widetilde{\vB}- \vB$. 
For any fixed $\vmu\in \R^d$, $w\in \{-d,\ldots, d\}$, define $g_{w}^{\vmu}(\vt)=\P(\rho \vmu+\rho \vt + \sqrt{1-\rho^2} \bxi \in A_{w})$ for $\vt\in \R^{d}$. 
Now, we have $\bbP_{\vB}(\vZ\in A_W \mid \vX) =g_{W}^{\vB\tp \vX}(\bs{0})$ and  $\bbP_{\widetilde{\vB}}(\vZ\in A_W \mid \vX) =g_{W}^{ \vB\tp \vX}(\Delta \vB\tp \vX)$.  Furthermore, 
	\begin{equation}
 \kl (\bbP_{\vB}(\vX, W), \bbP_{\widetilde{\vB}}(\vX, W)) = - \E_{\vB} \left[\log g_{W}^{\vB\tp \vX}(\Delta \vB\tp \vX) -\log g_{W}^{\vB\tp \vX}(\bs{0})  \right]\label{eq:lower-kl-2}. 
	\end{equation}

 \smallskip 
 It is pedagogical to provide an overview of our argument in obtaining an upper bound of the KL-divergence. We will apply a second-order Taylor expansion to ${ \log g_{W}^{\vB\tp \vX}(\bs{t})}$ around $\bs{0}$.  Since the first-order derivative has a zero expectation, we only need a careful examination of the second derivative. At the end, we can show that the KL-divergence is close enough to $\frac{\rho^2}{1-\rho^2} 
  \Tr\left( \Delta \vB\tp \Delta \vB \E_{\vB}\left[   \E \left(\bxi \mid \psi(\bxi) = W \right)^{\otimes} \right]   \right)$ when $\rho$ is sufficiently small. 
By our construction in Equation~\eqref{eq:joint-x-y-B}, 
 $$\E_{\vB}\left[\E \left(\bxi \mid \psi(\bxi)=W \right)^{\otimes}\right]=\lambda_{0,d} \bs{I}_d,$$
 which implies that the KL-divergence is closed to $\frac{\rho^2 \lambda_{0,d} }{1-\rho^2} \Tr\left( \Delta \vB\tp \Delta \vB  \right)$. 
\medskip 

\textbf{I. Taylor expansion. }
 By Taylor expansion with an integral remainder, i.e., $f(\boldsymbol{t})=f(\boldsymbol{0})+\nabla f(\boldsymbol{0}) \boldsymbol{t}+ \int_{0}^{1}   \boldsymbol{t}^{\top}\nabla^{2} f( s  \boldsymbol{t}) \boldsymbol{t} (1-s) \diff s $, one has 
	\begin{align}
	& \log g_{w}^{\vB\tp \vx}(\Delta \vB\tp \vx) -\log g_{w}^{\vB\tp \vx}(\bs{0}) \nonumber \\
= &  \langle \Delta \vB\tp \vx, \nabla\log g_{w}^{\vB\tp \vx}(\bs{0}) \rangle \ldots \nonumber \\
 & + \int_0^1  \vx\tp \Delta\vB \left( \nabla^{2}\log g_{w}^{\vB\tp \vx}( \alpha   \Delta \vB\tp \vx )   \right)  \Delta \vB\tp \vx (1-\alpha) \diff \alpha . \label{eq:lower-kl-taylor}
	\end{align}

\begin{lemma}\label{lem:lower-derivatives}
  The derivative of $\log g_{w}^{\vmu}(\bs{t})$ is
\begin{equation}\label{eq:lower-kl-g-1}
		\nabla\log g_{w}^{\vmu}(\bs{t}) =\sqrt{\frac{\rho^{2}}{1-\rho^2} } \E \left( \bxi \mid \rho\vmu+\rho\vt +\sqrt{1-\rho^2} \bxi \in A_w \right),
		\end{equation} 
		and the second-order derivative is 
\begin{align}
		\nabla^{2}\log g_{w}^{\vmu}(\bs{t}) & =\frac{\rho^{2}}{1-\rho^2} \left\{ - \bs{I}_{d} 
 + \E\left( \bxi^{\otimes} \mid  \rho\vmu+\rho\vt +\sqrt{1-\rho^2} \bxi \in A_w \right)  \right. \nonumber \\
		&\qquad \qquad \quad \left. -   \E \left( \bxi \mid \rho\vmu+\rho\vt +\sqrt{1-\rho^2} \bxi \in A_w \right)^{\otimes}  \right\}.  \label{eq:lower-kl-g-2} 
		\end{align}
 \end{lemma}

	By Equation~\eqref{eq:lower-kl-g-1},  
	\begin{align*}
	&  \E_{\vB}  \langle \Delta \vB\tp \vX, \nabla\log g_{W}^{\vB\tp \vX}(\bs{0}) \rangle\\
	=\, & \sqrt{\frac{\rho^{2}}{1-\rho^2} }   \E_{\vB} \left\langle \Delta \vB\tp \vX, \E \left( \bxi \mid \rho\vB\tp \vX +\sqrt{1-\rho^2} \bxi \in A_W, \vX, W \right) \right\rangle. \\
		\end{align*}
		
We split the expectation into parts given by $\{W=w\}$ and use properties of conditional expectation to obtain
\begin{align*}
	&  \E_{\vB} \left\langle \Delta \vB\tp \vX, \E \left( \bxi \mid \rho\vB\tp \vX +\sqrt{1-\rho^2} \bxi \in A_W, \vX, W \right) \right\rangle \\
=\, &\sum_{w=-d}^{d}  \E_{\vB} \left( \one_{W=w} \left\langle \Delta \vB\tp \vX, \E \left( \bxi \mid \rho\vB\tp \vX +\sqrt{1-\rho^2} \bxi \in A_w, \vX \right) \right\rangle \right) \\
=\, &\sum_{w=-d}^{d}  \E_{\vB} \left(  \P\left( W=w \mid \vX \right) \left\langle \Delta \vB\tp \vX, \frac{\E \left( \bxi \one_{\rho\vB\tp \vX +\sqrt{1-\rho^2} \bxi \in A_w} \mid \vX \right)}{\P\left( \rho\vB\tp \vX +\sqrt{1-\rho^2} \bxi \in A_w\mid \vX \right) } \right\rangle \right) \\
=\, &\sum_{w=-d}^{d}  \E_{\vB} \left(  \left\langle \Delta \vB\tp \vX, \E \left( \bxi \one_{\rho\vB\tp \vX +\sqrt{1-\rho^2} \bxi \in A_w} \mid \vX \right)\right\rangle \right) \\
=\, &\sum_{w=-d}^{d}  \E_{\vB} \left(  \left\langle \Delta \vB\tp \vX, \bxi \one_{\rho\vB\tp \vX +\sqrt{1-\rho^2} \bxi \in A_w}\right\rangle \right) \\
=\, &  \E_{\vB} \left(  \left\langle \Delta \vB\tp \vX, \bxi\right\rangle \right)=0, \\
	\end{align*}
where the second equation is due to fact that the conditional distribution of $Z\mid \vX \overset{d}{=} \rho\vB\tp \vX +\sqrt{1-\rho^2} \bxi $ and the last equation is because $\xi$ is independent with $\vX$. 
We thus showed that 
\begin{align*}
&  \E_{\vB}  \langle \Delta \vB\tp \vX, \nabla\log g_{W}^{\vB\tp \vX}(\bs{0}) \rangle\\  =\, & \sqrt{\frac{\rho^{2}}{1-\rho^2} }   \E_{\vB} \left( \langle \Delta \vB\tp \vX, \bxi \rangle\right)= 0. 
\end{align*}

Therefore, it suffices to focus on the second-order term in Equation~\eqref{eq:lower-kl-taylor}. 

\textbf{II. Analysis of the second-order term. }
In the following, we fix any $\alpha \in (0,1)$.

As a shorthand, we write $J(w, \rho)\,:=\, (1-\rho^2)^{-1/2}(A_w- \rho\vB\tp \vX- \rho\alpha   \Delta \vB\tp \vX)$, which is a random set that depends on $\vX$ with parameters $w$ and $\rho$.

By Equation~\eqref{eq:lower-kl-g-2}, one has
	\begin{align}
	& - \E_{\vB} \left[ \vX\tp \Delta \vB ~ \nabla^{2}\log g_{W}^{\vB\tp \vX}(\alpha \Delta\vB\tp \vX) ~ \Delta \vB\tp \vX \right] \label{eq:lower-kl-2nd} \\
	=\, & \frac{\rho^{2}}{1-\rho^2} \Tr\left( \E_{\vB}  \left\{\Delta \vB\tp \vX \vX\tp \Delta \vB \left[ 
\bs{I}_{d}  -  \E\left( \bxi^{\otimes} \mid  \bxi \in J(W, \rho), \vX, W \right)   \right] \right\}  \right) \nonumber  \\
	& + \frac{\rho^{2}}{1-\rho^2} \Tr\left( \E_{\vB}  \left\{\Delta \vB\tp \vX \vX\tp \Delta \vB \left[     
 \E \left(\bxi \mid  \bxi \in J(W, \rho), \vX, W \right)^{\otimes} \right] \right\}  \right). \nonumber
	\end{align}

The rest of the proof is dedicated to bounding the two terms in \eqref{eq:lower-kl-2nd} by dropping the factor $\frac{\rho^{2}}{1-\rho^2}$.

\smallskip 
\textit{Some intuitions. \newline }
Before we move on, it is worth checking the limits of these two terms as $\rho\rightarrow 0$. In this case, $J(W, \rho)\rightarrow A_W$. 

\begin{enumerate}
    \item [(a).] The inner conditional expectation in the first term 
$$\E\left( \bxi^{\otimes} \mid  \bxi \in J(W, \rho ), \vX, W \right)\rightarrow \E\left( \bxi^{\otimes} \mid  \bxi \in A_{W} \right)=\frac{ \E\left( \bxi^{\otimes} \one_{\bxi \in A_{W}}\right)}{\bbP\left(\bxi \in A_{W}\right)}.$$
Furthermore, when $\rho=0$, $W$ and $\vX$ becomes independent, and the distribution of $W$ is the same as $\psi(\xi)$. Therefore, the expectation of the last equation w.r.t. $W$ equals to $\sum_{w} \E\left( \bxi^{\otimes} \one_{\bxi \in A_{w}}\right) =\E\left( \bxi^{\otimes}\right)=\bs{I}_{d}$, from which we conclude that the first term converges to $0$ as $\rho\rightarrow 0$. 
\item [(b).]
Similarly, the inner conditional expectation in the second term 
$$\E\left( \bxi \mid  \bxi \in J(W, \rho), \vX, W \right)\rightarrow \E\left( \bxi \mid  \bxi \in A_{W} \right)=\sqrt{2 d \lambda_{0,d} }\bs{e}_{W}, $$
where $\{\bs{e}_{1},\ldots,\bs{e}_{d}\}$ is the standard basis of $\mathbb{R}^d$, $\bs{e}_{0}=\bs{0}$, and $\bs{e}_{-i}=-\bs{e}_{i}$ for $i=1,\ldots, d$. 
Therefore, $$\E\left[ \E\left( \bxi \mid  \bxi \in J(W, \rho), \vX, W \right)^{\otimes} \right] \rightarrow \lambda_{0,d}  \bs{I}_{d}. $$
Thus, the second term converges to $\frac{\rho^{2}\lambda_{0,d}}{1-\rho^2} \Tr\left( \E_{\vB}  \left\{\Delta \vB\tp \vX \vX\tp \Delta \vB \right\} \right)=\frac{\rho^{2}\lambda_{0,d}}{1-\rho^2}\|\Delta \vB\|_{F}^2$. 
\end{enumerate}

\smallskip 

These intuitions can be justified rigorously by using the continuous dependence of the probability measure on $\rho$. We state the result in the next two lemmas, whose proofs are deferred.

\begin{lemma}\label{lem:lower-kl-2nd-small}
Let $\epsilon=100 \lambda_{0,d}$. 
There exists a constant $\delta_{d}^{(1)}$ such that for any $\rho\in (0, \delta_{d}^{(1)})$, any $\alpha\in (0,1)$, and any $\vB, \widetilde{\vB} \in \mathbb{O}(p,d)$, 
		$$\Tr\left( \E_{\vB}  \left\{ \Delta \vB\tp \vX \vX\tp \Delta \vB \left[  \bs{I}_{d} -   \E\left( \bxi^{\otimes} \mid  \bxi \in J(W, \rho), \vX, W \right) \right] \right\}\right)\leqslant   2 \epsilon  \| \Delta \vB\|_{F}^{2}.$$
		Furthermore, $\delta_{d}^{(1)}$ can be taken as $c^{\prime} d^{-5/2}$ where $c^{\prime}$ is the constant in Lemma~\ref{lem:lower-diff-bound}. 
\end{lemma}

 \begin{lemma}\label{lem:lower-kl-2nd-main}
Let $\epsilon=100 \lambda_{0,d}$. 
There exist a constant $\delta_{d}^{(2)}$ and a universal constant $C$  such that for any $\rho\in (0, \delta_{d}^{(2)})$, any $\alpha\in (0,1)$, and any $\vB, \widetilde{\vB} \in \mathbb{O}(p,d)$, 

$$ \Tr\left( \E_{\vB}  \left\{\Delta \vB\tp \vX \vX\tp \Delta \vB \left[   \E \left(\bxi \mid  \bxi \in J(W, \rho), \vX, W \right)^{\otimes} \right] \right\}  \right) <\left[\lambda_{0,d} + C\epsilon\right] \|\Delta \vB\|_{F}^{2}.$$
  
Furthermore, $\delta_{d}^{(2)}$ can be taken as $c^{\prime} d^{-7/2 - \varsigma}$ for some universal constant $c^{\prime}$ and any positive number $\varsigma$. 
\end{lemma}
	
We apply Lemmas~\ref{lem:lower-kl-2nd-small} and \ref{lem:lower-kl-2nd-main}, and 
let $\delta_{d}=\min \left(  \delta_{d}^{(1)},  \delta_{d}^{(2)} \right)$. 
In view of Equation~\eqref{eq:lower-kl-2nd}, we conclude that for any $\rho\in (0, \delta_{d} )$, any $\alpha\in (0,1)$, it holds that
	$$
	- \E_{\vB} \left[ \vX\tp \Delta \vB ~ \nabla^{2}\log g_{W}^{\vB\tp \vX}(\alpha \Delta\vB\tp \vX) ~ \Delta \vB\tp \vX \right] \leqslant C  \rho^2 \lambda_{0,d}  \|\Delta \vB\|_{F}^{2}.
	$$
Combining Equations~\eqref{eq:lower-kl-1}, \eqref{eq:lower-kl-2}, and \eqref{eq:lower-kl-taylor}, we conclude that 
	$$\kl(\bbP_{\vB}, \bbP_{\widetilde{\vB}}) \leqslant C \rho^2 \lambda_{0,d}   \|\Delta \vB\|_{F}^{2}.$$
	Therefore, we complete the proof of the proposition. 

\subsection{Lemmas for uniform controls on $d$-dimensional Gaussian measures}
This section collects some results about $d$-dimensional Gaussian random vectors. These results will be used in proving Lemmas~\ref{lem:lower-kl-2nd-small} and \ref{lem:lower-kl-2nd-main}. 

\begin{lemma}[Tail probability for Chi-square]\label{lem:chisq}
Suppose $X$ is a chi-squared random variable with $d$ degrees of freedom ($\chi^2_{d}$). 
\begin{enumerate}
  \item 
For any constant $k>0$, there exists a constant $C_{k}=O(k)$, such that $\P\left(X\geqslant C_{k} d \right)\leqs d^{-k}$ and $\E\left( X^{a} \one_{X>C_{k} d} \right)\leqs  C d^{-k }$ for $a=1,2$, where $C$ is a universal constant. 
\item 
Let $m_d$ be the median of $\chi^2_{d}$. It holds that 
 $$
 \E\left( \sqrt{X} \one_{X\leqs m_d}  \right) \geqslant  \frac{1}{10} \E\left( \sqrt{X}\right)  . 
 $$

\item 
There exists a universal constant $\pi_0>0$, such that 
$\P( X \geqslant  \frac{m_d}{ {1-\rho^2}} ) \geqslant \pi_0$ whenever $\rho^2 < \left( 3d e^{1/3}   \right)^{-1}$. 

\end{enumerate}
\end{lemma}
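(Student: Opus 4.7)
For part (1), my plan is to invoke the Laurent--Massart concentration inequality, $\P(X \geqs d + 2\sqrt{dt} + 2t) \leqs e^{-t}$, with $t = k\log d$. The threshold $d + 2\sqrt{kd\log d} + 2k\log d$ is at most $C_k d$ for some $C_k = O(k)$ (using $\log d \leqs d$), giving the probability bound. For the moment bound, I would enlarge the exponent from $k$ to $k + a$ and use the identity
\begin{align*}
\E(X^a \one_{X > c}) = c^a \P(X > c) + a \int_c^{\infty} t^{a-1}\P(X > t)\, dt
\end{align*}
at $c = C_{k+a} d$. The first term is at most $C_{k+a}^a d^a \cdot d^{-k-a} = O(d^{-k})$, and the tail integral decays at the same or faster rate because the Laurent--Massart bound beats any polynomial factor.

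For part (2), the plan is to combine $\E\sqrt{X} \leqs \sqrt{d}$ (Jensen) with a matching lower bound $\E(\sqrt{X}\one_{X \leqs m_d}) \gtrsim \sqrt{d}$, so that the ratio is controlled by an absolute constant. Since $\P(X \leqs m_d) = 1/2$ by definition and $m_d \asymp d$ (standard estimate for the chi-squared median), a Chernoff bound gives $\P(X \leqs m_d/4) \leqs e^{-cd}$ for $d$ large, so $\P(m_d/4 \leqs X \leqs m_d) \geqs 1/2 - o(1)$. On this event $\sqrt{X} \geqs \sqrt{m_d}/2 \gtrsim \sqrt{d}$, giving $\E(\sqrt{X}\one_{X \leqs m_d}) \gtrsim \sqrt{d}$. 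The finitely many small values of $d$ are verified by direct computation using explicit densities (for $d=1$ one has $\sqrt{X} = |Z|$ and the integral is elementary).

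For part (3), the plan is to exploit unimodality of the chi-squared density past the median. Writing
\begin{align*}
\P\bigl(X \geqs \tfrac{m_d}{1-\rho^2}\bigr) = \tfrac{1}{2} - \P\bigl(m_d \leqs X \leqs \tfrac{m_d}{1-\rho^2}\bigr),
\end{align*}
the task reduces to bounding the second term strictly below $1/2$. Since the mode of $\chi^2_d$ is never above $m_d$ (immediate from the explicit density for $d \in \{1,2\}$ and equal to $d-2 \leqs m_d$ for $d \geqs 3$), the density $f$ is nonincreasing on $[m_d,\infty)$, hence
\begin{align*}
\P\bigl(m_d \leqs X \leqs \tfrac{m_d}{1-\rho^2}\bigr) \leqs \frac{m_d \rho^2}{1-\rho^2}\, f(m_d).
\end{align*}
A Stirling-based computation shows $m_d f(m_d) = O(\sqrt{d})$ for large $d$ and is bounded by an absolute constant for small $d$; combined with the hypothesis $\rho^2 \leqs 1/(3 d e^{1/3})$, the product $m_d\rho^2 f(m_d)/(1-\rho^2)$ is uniformly bounded by a constant strictly less than $1/2$, producing a universal $\pi_0 > 0$.

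The main technical obstacle I foresee is obtaining a clean, uniform-in-$d$ control of $m_d f(m_d)$ along with the matching absolute-constant lower bound in part (2); both ultimately reduce to careful Stirling estimates on the gamma function together with a handful of explicit verifications in low dimensions.
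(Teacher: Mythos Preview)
Your proposal is correct in all three parts, but Parts 2 and 3 take different routes from the paper, and in both cases the paper's argument is shorter.

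\textbf{Part 1.} Essentially identical to the paper: Laurent--Massart with $t=k\log d$, then the layer-cake/Fubini identity for the truncated moments.

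\textbf{Part 2.} The paper avoids the Chernoff-plus-finite-check strategy entirely. It writes
\[
\E\bigl(\sqrt{X}\,\one_{X\leqs m_d}\bigr)=\E\sqrt{X}-\E\bigl(\sqrt{X}\,\one_{X>m_d}\bigr)
\]
and bounds the subtracted term by Cauchy--Schwarz: $\E(\sqrt{X}\,\one_{X>m_d})\leqs\sqrt{\E X\cdot\P(X>m_d)}=\sqrt{d/2}$. A lower bound $\E\sqrt{X}\geqs\sqrt{d-1/2}$ follows from Watson's inequality $\Gamma(x+1/2)/\Gamma(x)>\sqrt{x-1/4}$, and the ratio $1-\sqrt{d/(2(d-1/2))}$ exceeds $1/10$ for every $d\geqs 2$; only $d=1$ needs a one-line check. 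Your approach works too, but it leans on $m_d\asymp d$, a lower Chernoff bound, and unspecified ``direct computations'' for small $d$, whereas the paper's argument is a single inequality valid for all $d\geqs 2$.

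\textbf{Part 3.} Here the paper's argument is structurally different and much lighter. Rather than bounding the density on $[m_d,m_d/(1-\rho^2)]$, the paper shows directly that the threshold satisfies $m_d/(1-\rho^2)\leqs d$: the Berg--Chen estimate $m_d\leqs d\,e^{-1/(3d)}$ combined with $\rho^2<(3d e^{1/3})^{-1}$ gives $e^{-1/(3d)}/(1-\rho^2)\leqs 1$ via the elementary bound $1-e^{-x}>xe^{-1/3}$ on $(0,1/3]$. Once the threshold is below $d$, a cited non-asymptotic lower tail bound (Zhang 2020) gives $\P(X\geqs d)\geqs c>0$ uniformly in $d$. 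Your density-times-length bound is valid and your estimate $m_d f(m_d)=O(\sqrt{d})$ is correct, but it forces you through Stirling bookkeeping and small-$d$ verifications to secure a constant strictly below $1/2$; the paper sidesteps all of that by reducing to ``threshold $\leqs$ mean''.
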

\begin{proof}[Proof of Lemma~\ref{lem:chisq}]
~\\ \newline
\textbf{Statement 1.}\newline

If $d=1$, it is trivial. Suppose $d\geqslant 2$. By Equation~(4.3) in \citet{laurent2000adaptive}, 
\begin{equation}\label{eq:basic-chisq-tail}
\mathbb{P}(X-d \geqslant 2 \sqrt{d x}+2 x) \leqs \exp (-x).
\end{equation}
Choosing $x=k\log(d)$, we have $\mathbb{P}(X \geqslant  d + 2 \sqrt{k d\log{d}}+2 k\log(d) ) \leqs d^{-k}$.

Let $C_{0}\geqslant k  (\inf_{d\geqslant 2}\frac{d}{\log d})^{-1}$. Then $2 \sqrt{k d\log{d}}+2 k\log(d) \leqs 2(\sqrt{C_0}+C_0) d$. For any $C\geqslant 1+2(\sqrt{C_0}+C_0)=O(k)$, it holds that $\mathbb{P}(X \geqslant  C d  ) \leqs d^{-k}$. 

The inequality~\eqref{eq:basic-chisq-tail} also implies that for any $t>d$,  we have 
$$\P(X>t)\leqs \exp\left( - (t-d)^2/(4t) \right).$$ 
If $t>2d$, then $t-d> t/2$ and $(t-d)^2/(4t)\geqslant t/16$. 
By Fubini's theorem, for any $r>2d$, 
\begin{align*}
  \E\left( X^{a} \one_{X>r} \right)
& = r^{a} \P(X>r)+\int_{r}^\infty a t^{a-1} \P(X>t) \diff t \\
& = r^{a} \P(X>r)+\int_{r}^\infty a t^{a-1} e^{-t/16} \diff t\\
& \leqs e^{-r/16}\left( r^a+16+ \one_{a=2}\left( 240+32 r \right) \right)\\
& \leqs C_{1} r^2 e^{-r/16}. 
\end{align*}
For $r=2d+16 k  \log (d)=d \cdot O(k)$, $C_{1} r^2 e^{-r/16} \leqs \frac{1}{d^k} C_{1} (2 d + 16 k\log (d) )^2 e^{-d/8}\leqs C_{2}\frac{1}{d^k}$ because $d e^{-d/16}$ is bounded. 

~\\ \newline
\textbf{Statement 2.}\newline

Since $m_d$ is the median of $\chi^2_{d}$,  $\P( X \leqs m_d) = 1/2$. 

Note that $\E\left( \sqrt{X} \one_{X\leqs m_d}  \right) = \E\left( \sqrt{X}\right) - \E\left( \sqrt{X} \one_{X> m_d}  \right)$ and by the Cauchy--Schwarz inequality, $\E\left( \sqrt{X} \one_{X> m_d}  \right) \leqs \sqrt{ \E X \P \left( X> m_d \right) }$. Since $m_d$ is the median and $\E X=d$, we have 
\begin{align*}
\frac{\E\left( \sqrt{X} \one_{X\leqs m_d}  \right) }{\E\left( \sqrt{X}\right)} \geqslant 1-\frac{\sqrt{ d / 2 }}{\E\left( \sqrt{X}\right)}. 
\end{align*}
For $d=1$, $\bbE=\sqrt{\frac{2}{\pi}}$ and thus the RHS is no less than $1-\sqrt{\pi}/2 > 0.1$. 
A direct calculation yields 
\begin{align*}
  \E\left( \sqrt{X}\right) 
& = \int_{0}^{\infty} \frac{x^{(d+1)/2 - 1 } e^{-x/2}}{ 2^{d/2} \Gamma(d/2)} dx \\
& = 2^{1/2} \frac{\Gamma( (1+d)/2)}{\Gamma(d/2)} \\
&  \geqslant \sqrt{2 \left( \frac{d}{2} -\frac{1}{4} \right)}, 
\end{align*}
where in the last inequality we have applied a bound on the ratio of gamma functions that $\sqrt{x-\frac{1}{4}}<\frac{\Gamma(x+1 / 2)}{\Gamma(x)}$ proved by \citet{watson1959note}. 
Since $d/(d-1/2)$ is decreasing in $d$ and $d\geqslant 2$,  we conclude that  $\frac{\E\left( \sqrt{X} \one_{X\leqs m_d}  \right) }{\E\left( \sqrt{X}\right)} \geqslant 1/10$. 

~\\ \newline
\textbf{Statement 3.}\newline
	
	By Corollary~3 in \citet{zhang2020non}, there exist uniform constants $C, c>0$ such that  $$\mathbb{P}( X - d  \geqslant x) \geqslant c \exp \left(-C x \wedge \frac{x^2}{k}\right), \quad \forall x>0.$$ 
By the continuity of measure, we have $\mathbb{P}( X  \geqslant  d) \geqslant c$. 
	It remains to check that $\frac{m_d}{{1-\rho^2}}<d$ for $\rho^2$ small.

\citet{berg2006chen} have proved that $m_d\leqs d e^{- 1/ (3d)}$. Note that  
\begin{align*}
\frac{e^{-1/(3d)}}{{1-\rho^2}} \leqs 1 \Leftrightarrow 
e^{-1/(3d)} \leqs 1-\rho^2  \Leftrightarrow 
\rho^2 \leqs 1-e^{-1/(3d)}. 
\end{align*}

By simple calculus, we have an elementary inequality that $1- e^{-x} > x e^{-1/3}$ for any $x\in (0,1/3]$. Therefore, $1-e^{-1/(3d)}>  e^{-1/3} /(3d)$, which is greater than $\rho^2$. In other words, we have $\frac{m_d}{{1-\rho^2}}< d$. 

\end{proof}

\begin{lemma}\label{lem:lower-diff-bound}
Fix any $w\in \{-d, \ldots, 0, 1, \ldots, d\}$ and any $\vmu\in \R^d$. 
Denote by $\mathcal{N}_1$ the distribution $N( \rho\vmu  ; (1-\rho^2) \bs{I}_d)$ and by $\mathcal{N}_0$ the distribution  $N(\bs{0} ;\bs{I}_d)$. Denote by $\vZ$ the element in the sample space. 
\begin{enumerate}
  \item For $\rho^2<1/2$, it holds that $\left|	\bbP_{\mathcal{N}_1}(\vZ\in A_{w})-
	\bbP_{\mathcal{N}_0}(\vZ\in A_{w}) \right| \leqs \rho\|\vmu\|$. 
\item Let $c_0=\log(2)/4 $. 
For any $k>0$, there is a positive constant $c_k$ such that for any $\rho< c_k d^{-k-3/2}$, it holds that if $\rho \|\vmu\|^2<c_0$ then $\left|	\E_{\mathcal{N}_1}( \vZ \one_{\vZ \in A_{w}} )-\E_{\mathcal{N}_0}( \vZ \one_{\vZ\in A_{w}} ) \right| \leqs C \left(d^{-k} +(\rho^{1/3} \|\vmu\|)^2 d^{-k/3}\right)+\rho\|\vmu\|$. 
\item 
In particular, if $\|\vmu\|\leqs C^{\prime} d^{1/2}$, then there exists a constant $c^{\prime}>0$ depending only on $k$, so that \begin{enumerate}
  \item for any $\rho \leqs  c^{\prime}  d^{-k-1/2}$, it holds that $\left|	\bbP_{\mathcal{N}_1}(\vZ\in A_{w})-
	\bbP_{\mathcal{N}_0}(\vZ\in A_{w}) \right| \leqs  8^{-1} d^{-k}$;
	\item for any $\rho\leqs c^{\prime} d^{-k-3/2}$,  it holds that  $\left\|	\E_{\mathcal{N}_1}( \vZ \one_{\vZ \in A_{w}} )-\E_{\mathcal{N}_0}( \vZ \one_{\vZ\in A_{w}} ) \right\| \leqs  O(d^{-k})$.
\end{enumerate}

\end{enumerate}

\end{lemma}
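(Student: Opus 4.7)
The argument proceeds by introducing the intermediate centered Gaussian $\mathcal{N}'':=N(\bs{0},(1-\rho^2)\bs{I}_d)$ and separating the effect of the mean shift from that of the covariance shift. For Statement 1, I would write
\begin{align*}
\bbP_{\mathcal{N}_1}(\vZ\in A_w)-\bbP_{\mathcal{N}_0}(\vZ\in A_w) &= [\bbP_{\mathcal{N}_1}(A_w)-\bbP_{\mathcal{N}''}(A_w)] \\
&\quad + [\bbP_{\mathcal{N}''}(A_w)-\bbP_{\mathcal{N}_0}(A_w)].
\end{align*}
The first bracket concerns two Gaussians with identical covariance $(1-\rho^2)\bs{I}_d$, so Pinsker's inequality together with the closed-form KL divergence $\rho^2\|\vmu\|^2/(2(1-\rho^2))$ yields a total-variation bound of $\rho\|\vmu\|/\sqrt{2(1-\rho^2)}$. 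For the second bracket, the spherical symmetry of both $\mathcal{N}''$ and $\mathcal{N}_0$ lets me decompose $\vZ=R\vU$ with $\vU$ uniform on $\mb{S}^{d-1}$ and independent of $R$; since $A_w$ (for $w\ne 0$) is a cone intersected with the ball $\{\|\vz\|^2\leqslant m_d\}$, this difference collapses to $(2d)^{-1}\bbP(\chi^2_d\in (m_d,m_d/(1-\rho^2)))$, which is $O(\rho^2/\sqrt d)$ by the local density estimate $p_{\chi^2_d}(m_d)=O(1/\sqrt d)$ and $m_d=\Theta(d)$. Summing these two contributions yields the claimed bound up to absolute constants (and the covariance-shift term is the reason the bound is anchored at the regime where the product $\rho\|\vmu\|$ is the effective quantity).

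For Statement 2, I would eliminate the mean via the change of variables $\vZ=\rho\vmu+\sqrt{1-\rho^2}\,\vY$ with $\vY\sim\mathcal{N}_0$, which gives
\begin{align*}
\E_{\mathcal{N}_1}(\vZ\one_{\vZ\in A_w}) = \rho\vmu\,\bbP_{\mathcal{N}_1}(A_w) + \sqrt{1-\rho^2}\,\E_{\mathcal{N}_0}(\vY\one_{\vY\in\tilde A_w}),
\end{align*}
where $\tilde A_w:=(A_w-\rho\vmu)/\sqrt{1-\rho^2}$. The leading piece contributes at most $\rho\|\vmu\|$ since $\bbP_{\mathcal{N}_1}(A_w)\leqslant 1$. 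The remainder splits into a scaling defect $|\sqrt{1-\rho^2}-1|\cdot\|\E_{\mathcal{N}_0}(\vY\one_{A_w})\|=O(\rho^2\sqrt{\log d}/d)$ (using Lemma~\ref{lem:joint-eigen}.6) and a set-perturbation term $\|\E_{\mathcal{N}_0}(\vY\one_{\tilde A_w\triangle A_w})\|$. To bound the latter, I truncate on $E:=\{\|\vY\|^2\leqslant C_k d\}$: by Lemma~\ref{lem:chisq} the complement contributes at most $\sqrt{\E(\|\vY\|^2\one_{E^c})}\lesssim d^{-k/2}$ (squared inside an absolute value eventually giving $d^{-k}$), while on $E$ Hölder's inequality with exponents $3$ and $3/2$ yields
\begin{align*}
\E_{\mathcal{N}_0}(\|\vY\|\one_{\tilde A_w\triangle A_w})\le (\E\|\vY\|^3)^{1/3}\bbP(\tilde A_w\triangle A_w)^{2/3}\lesssim \sqrt d\cdot \bbP(\tilde A_w\triangle A_w)^{2/3},
\end{align*}
which reproduces the $(\rho^{1/3}\|\vmu\|)^2 d^{-k/3}$ shape once $\bbP(\tilde A_w\triangle A_w)$ is bounded by the Gaussian mass of a tube of width $O(\rho\|\vmu\|)$ around $\partial A_w$.

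Statement 3 is then a direct substitution: plug $\|\vmu\|\leqslant C'\sqrt d$ into the bounds of Statements 1 and 2 and choose $c'$ small enough (depending on $k$ and the fixed $C'$) so that every error term is dominated by $d^{-k}$ (respectively $8^{-1}d^{-k}$). The main technical obstacle lies in bounding $\bbP_{\mathcal{N}_0}(\tilde A_w\triangle A_w)$ with the sharp $d$-dependence: the boundary $\partial A_w$ is a union of hyperplanes $\{z_i=\pm z_{|w|}\}$ together with the sphere $\|\vz\|^2=m_d$, and Gaussian surface-tube estimates give each piece a mass of order (tube width) times a dimension-dependent factor from the surface measure. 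Tracking these factors through the Hölder step and the truncation on $E$, while keeping the final bound at $C(d^{-k}+(\rho^{1/3}\|\vmu\|)^2 d^{-k/3})+\rho\|\vmu\|$, is where most of the bookkeeping effort goes.
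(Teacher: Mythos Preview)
Your Statement~1 plan is fine and in some ways tidier than the paper's: the paper applies Pinsker directly to $\kl(\mathcal{N}_1\|\mathcal{N}_0)$ and asserts the covariance mismatch costs nothing, whereas your split through $\mathcal{N}''$ isolates that contribution as $O(\rho^2/\sqrt d)$, which is harmless in every use downstream.

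For Statement~2, however, your route diverges from the paper's and carries a real gap. The paper never touches the geometry of $A_w$: it writes
\[
\E_{\mathcal{N}_1}(\vZ\one_{A_w})-\E_{\mathcal{N}_0}(\vZ\one_{A_w})=\int_{A_w}\vz\,\phi_0(\vz)\Bigl(\tfrac{\phi_1(\vz)}{\phi_0(\vz)}-1\Bigr)\diff\vz,
\]
truncates at $\|\vz\|\leqslant r\asymp\sqrt{C_k d}$, and bounds the density ratio pointwise via $|e^t-1|\leqslant 2|t|$ for $|t|<\log 2$, giving $|\phi_1/\phi_0-1|\leqslant 4\rho(\|\vz\|^2+\|\vmu\|^2)$ on the ball. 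This yields $2\rho r(r^2+\|\vmu\|^2)$ in one line; the piece $2\rho r\|\vmu\|^2\asymp\rho\sqrt d\,\|\vmu\|^2$ is precisely what becomes $(\rho^{1/3}\|\vmu\|)^2 d^{-k/3}$ after invoking the hypothesis $\rho<c_k d^{-k-3/2}$ to rewrite $\rho\sqrt d\leqslant \rho^{2/3}d^{-k/3}$. Your set-perturbation approach instead pushes all the work into $\bbP_{\mathcal{N}_0}(\tilde A_w\triangle A_w)$, which you leave as an unspecified tube estimate. Even granting a bound of order $d\cdot\rho\|\vmu\|$ (the factor $d$ from the $\sim 2d$ hyperplane faces of $\partial A_w$), your H\"older$(3,3/2)$ step produces $\sqrt d\,(d\rho\|\vmu\|)^{2/3}$, which has the wrong $\|\vmu\|$-power and, in the regime of Statement~3(b) with $\|\vmu\|\leqslant C'\sqrt d$ and $\rho\leqslant c'd^{-k-3/2}$, gives only $d^{1/2-2k/3}$, never $O(d^{-k})$. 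The H\"older exponent choice is not salvaging this; you would need both a sharper tube bound (no $d$ factor) and an $L^\infty$ bound on $E$ rather than H\"older$(3,3/2)$, and neither is argued. The density-ratio computation sidesteps all of this.
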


\begin{proof}[Proof of Lemma~\ref{lem:lower-diff-bound}]
~\newline 
\textbf{Statement 1.} \\  
Using the formula for the KL-divergence between Gaussian measures (see for example, Equation A.23 in \citet{rasmussen2006}), we have 
\begin{align*} \mathrm{KL}\left(\mathcal{N}_1|| \mathcal{N}_0\right)= & \frac{1}{2} \log \left|\Sigma_0 \Sigma_1^{-1}\right|+ \\ & \frac{1}{2} \operatorname{tr} \Sigma_0^{-1}\left(\left(\boldsymbol{\mu}_1-\boldsymbol{\mu}_0\right)\left(\boldsymbol{\mu}_1-\boldsymbol{\mu}_0\right)^{\top}+\Sigma_1-\Sigma_0\right) \\ 
=& \frac{1}{2}\left( -d \log(1-\rho^2) + \|\rho\vmu \|^2 + d(1- \rho^2) - d \right) \\
\leqs  & \frac{\rho^2}{2} \|\vmu\|^2, 
\end{align*}
where the last inequality is because $x+\log(1-x)\geqslant 0$ for any $x\in (0, 1/2)$. 

By the definition of total variation, we have
\begin{align*}
&\left|	\P(\rho \vmu+\sqrt{1-\rho^2} \bxi \in A_{w})-
	\P( \bxi \in A_{w}) \right|   \\ 
 \leqs  &  TV(\mathcal{N}_1, \mathcal{N}_0)\leqs \sqrt{ \frac{1}{2} \mathrm{KL}\left(\mathcal{N}_1|| \mathcal{N}_0\right)  } \leqs \frac{\rho\|\vmu\|}{2}, 
\end{align*}
where the second inequality is Pinsker's inequality (see for example Lemma~2.5 in \citet{tsybakov2008introduction}).

~\newline 
\textbf{Statement 2.} \\  

By Lemma~\ref{lem:chisq}, one can choose any $r\geqslant\sqrt{ C_{k} d}$ such that 
\begin{equation}
	\label{eq:lower-conv-z-1}
\E_{\mathcal{N}_0}( \| \vZ\| \one_{\|\vZ\|>r} ) \leqs \sqrt{\P(\|\vZ\|>r)\E_{\mathcal{N}_0}( \| \vZ\|^2 \one_{\|\vZ\|>r} )} \leqs C d^{-k}. 
\end{equation}
Similarly, we can choose any $r\geqslant c_0 + \sqrt{ 2 C_{k} d}$ such that 
\begin{align}
  & \E_{\mathcal{N}_1}(\| \vZ\| \one_{\|\vZ\|> r} ) \label{eq:lower-conv-z-2}\\ 
 \leqs & \E_{\mathcal{N}_0}\left(\| \sqrt{1-\rho^2} \vZ + \rho \vmu \| \one_{\|\sqrt{1-\rho^2} \vZ + \rho \vmu \|> r} \right) \nonumber\\
 \leqs & \rho\|\vmu\|+ \sqrt{1-\rho^2} \E_{\mathcal{N}_0}\left(\|  \vZ  \| \one_{\| \vZ  \|> ( r- \rho \|\vmu\|})/\sqrt{1-\rho^2} \right)\nonumber \\
  \leqs &  \rho\|\vmu\| + \E_{\mathcal{N}_0}\left(\|  \vZ  \| \one_{\| \vZ  \|> \sqrt{C_{k} d}} \right)  \nonumber \\ 
  \leqs &  \rho\|\vmu\| + C d^{- k}  \nonumber 
\end{align}
Below, we fix $r= c_0 + \sqrt{ 2 C_{k} d}$.

Denote by $\phi_{1}( \vz)$ and $\phi_{0}( \vz)$ the density functions of $\mathcal{N}_1$ and $\mathcal{N}_0$, respectively. 
Note that 
$$
\frac{\phi_{1}(\vz)}{\phi_{0}(\vz)}=\exp \left(\frac{-1}{2(1-\rho^2)}\left( \rho \|\vz\|^2-2\rho \vmu\tp \vz + \rho^2 \|\vmu\|^2  \right)  \right)
$$

By calculus, we have an elementary inequality that $t\leqs e^{t}-1\leqs 2 t$ for any $|t|<\log 2$. 
Note that $\left| \rho \|\vz\|^2-2\rho \vmu\tp \vz + \rho^2 \|\vmu\|^2  \right| \leqs 2\rho\|\vz\|^2+ (\rho+\rho^2) \|\vmu\|^2\leqs 2\rho(\|\vz\|^2+\|\vmu\|^2)$. 

Recall that $c_0=(\log 2)/4$. If  $\rho\|\vmu\|^2<c_0$ and $\rho r^2 <c_0$, then 
\begin{align}	\label{eq:lower-conv-z-3}
&\left\|	\E_{\mathcal{N}_1}( \vZ \one_{\vZ \in A_{w}, \|\vZ\|\leqs r} )-\E_{\mathcal{N}_0}( \vZ \one_{\vZ\in A_{w},  \|\vZ\|\leqs r }  ) \right\|\\
\leqs & \int  \vz \one_{\vz \in A_{w},  \|\vZ\|\leqs r} \phi_{0}(\vz) \left| \frac{\phi_{1}(\vz)}{\phi_{0}(\vz)} -1   \right| \diff \vz  \nonumber\\
\leqs & \int  \vz \one_{\vz \in A_{w},   \|\vZ\|\leqs r} \phi_{0}(\vz) 2 \left( 2\rho\|\vz\|^2+ (\rho+\rho^2) \|\vmu\|^2 \right) \diff \vz  \nonumber\\
\leqs  & 2\rho r (r^2+\|\vmu\|^2) \int  \phi_{0}(\vz)  \diff \vz.  \nonumber
\end{align}
Combining Equations~\eqref{eq:lower-conv-z-1}, \eqref{eq:lower-conv-z-2}, and \eqref{eq:lower-conv-z-3}, we have if $\rho r^2<c_0$, then 
$$\left\|	\E_{\mathcal{N}_1}( \vZ \one_{\vZ \in A_{w}} )-
	\E_{\mathcal{N}_0}( \vZ \one_{\vZ\in A_{w}} ) \right\| \leqs \rho \|\vmu\|+2C d^{-k}+ 2\rho r (r^2+\|\vmu\|^2).$$ 
	Recall that $r= c_0 + \sqrt{ 2 C_{k} d}$ and $r^2=O(C_k d)$, one can choose $c_k$ small enough to guarantee $c_k d^{-k-3/2} r^2<c_0$ and $c_k d^{-k-3/2} r^3 < Cd^{-k}$, from which the desired inequality follows. 

Statement 3 is a direct corollary of Statement 2. 
\end{proof}

\begin{lemma}\label{lem:lower-kl-mainterm-tail}
Suppose $\bxi\sim N(0, \bs{I}_{d})$ and $\rho^2<(3d)^{-1}$. 
For any $\vt\in \R^d$ and any $w\in \{\pm 1, \ldots, \pm d\}$, there exists a universal constant $C$
it holds that 
	$$
\|	\E \left( \bxi\  \mid \sqrt{1-\rho^2}\bxi + \vt\in A_w \right)\| \leqslant C ( \sqrt{d}+\|\vt\|).
	$$
\end{lemma}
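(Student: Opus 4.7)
My plan is to exploit the fact that the set $A_w$ for $w\in\{\pm 1,\ldots,\pm d\}$ is contained in the ball $\{\vz\in\R^d:\|\vz\|^2\leqslant m_d\}$, where $m_d$ denotes the median of $\chi^2_{d}$. This is a direct consequence of Definition~\ref{def:fn-lower-median}. Consequently, on the event $\{\sqrt{1-\rho^2}\bxi + \vt \in A_w\}$, we automatically have the deterministic bound $\|\sqrt{1-\rho^2}\bxi + \vt\|\leqslant\sqrt{m_d}$, and thus, by the triangle inequality,
\[
\|\bxi\| \;\leqslant\; \frac{\sqrt{m_d}+\|\vt\|}{\sqrt{1-\rho^2}} \qquad \text{almost surely on this event}.
\]

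Next, I would combine two elementary facts to simplify this bound. First, it is standard (and used already in the proof of Lemma~\ref{lem:chisq}, via the estimate of \citet{berg2006chen}) that $m_d\leqslant d$. Second, the hypothesis $\rho^2<(3d)^{-1}\leqslant 1/3$ yields $1/\sqrt{1-\rho^2}\leqslant \sqrt{3/2}$. Putting these together gives a universal constant $C_0$ with $\|\bxi\|\leqslant C_0(\sqrt{d}+\|\vt\|)$ on the conditioning event.

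Finally, I would appeal to Jensen's inequality $\|\E[\bxi\mid \mathcal E]\|\leqslant \E[\|\bxi\|\mid \mathcal E]$ applied to the event $\mathcal E=\{\sqrt{1-\rho^2}\bxi + \vt \in A_w\}$, which, combined with the deterministic bound just established, immediately gives
\[
\|\E(\bxi\mid \sqrt{1-\rho^2}\bxi + \vt \in A_w)\| \;\leqslant\; C_0(\sqrt{d}+\|\vt\|),
\]
as desired. There is essentially no obstacle here: the proof is a one-line geometric observation that $A_w$ is contained in a ball of squared radius $m_d=O(d)$, followed by the triangle inequality and Jensen. The only subtlety is remembering that $w\neq 0$ by hypothesis, so that $A_w$ indeed lies inside the ball (rather than being its complement, as $A_0$ would be).
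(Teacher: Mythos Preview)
Your proposal is correct and matches the paper's own argument essentially line for line: the paper likewise observes that $\sqrt{1-\rho^2}\bxi+\vt\in A_w$ forces $\|\sqrt{1-\rho^2}\bxi+\vt\|\leqslant\sqrt{m_d}<\sqrt{d}$, and then concludes $\|\E(\bxi\mid\cdots)\|\leqslant(\sqrt{d}+\|\vt\|)/\sqrt{1-\rho^2}$. The only extra content in the paper's proof is a separate treatment of the case $w=0$ (via Anderson's inequality and Lemma~\ref{lem:chisq}), which is not required by the lemma as stated but is needed where the lemma is applied later.
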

\begin{proof}
For any  $w=\pm 1, \ldots, \pm d$, by definition of $A_w$, $\sqrt{1-\rho^2} \bxi + \vt\in A_w$ implies that $\|\sqrt{1-\rho^2}\bxi + \vt \|\leqs \sqrt{m_{d}}<\sqrt{d}$. Thus $\|	\E \left( \bxi\  \mid  \sqrt{1-\rho^2} \bxi + \vt\in A_w \right)\|\leqs (\sqrt{d}+\|\vt\|)/\sqrt{1-\rho^2}$. 
	
For $w=0$, by the famous Anderson inequality \citep{anderson1955integral}, we have 
$$
\P( \| \sqrt{1-\rho^2} \bxi + \vt \|\leqs \sqrt{ m_{d}})\leqs \P( \| \sqrt{1-\rho^2} \bxi  \|\leqs \sqrt{m_{d}})\leqs 1-\pi_0<1, 
$$
where the second inequality is due to Lemma~\ref{lem:chisq} and $\rho^2<(3d)^{-1}$. 
Therefore, $\P(\sqrt{1-\rho^2} \bxi + \vt\in A_0)\geqs \pi_0$. 
Note that $ \| \E \left( \bxi \one_{\sqrt{1-\rho^2} \bxi + \vt\in A_0} \right)\|\leqs \sqrt{\E \|\bxi\|^2}=\sqrt{d}$, we conclude that $$\|	\E \left( \bxi\  \mid  \sqrt{1-\rho^2} \bxi + \vt\in A_0 \right)\|\leqs \sqrt{d} / \pi_0.$$ 
\end{proof}

\subsection{Proofs of Lemmas in Section~\ref{app:proof-prop-joint-kl}}
We first prove Lemma~\ref{lem:lower-kl-2nd-small} and Lemma~\ref{lem:lower-kl-2nd-main}. The proof of Lemma~\ref{lem:lower-derivatives} is straightforward and can be found at the end of this section. 

Following the intuitions in Section~\ref{app:proof-prop-joint-kl}, 
we will make use of the continuous dependence of the probability measure on $\rho$. Such continuous dependence is uniform if the set $J(W,\rho)$ is bounded. Therefore, we consider dividing the sample space into two parts: one where $J(W,\rho)$ is bounded, and the other where the expectations are negligible. 

	Let $\bs{\Pi}$ be the projection matrix onto a $(2d)$-dimensional subspace of $\R^p$ formed by the columns of $\vB$ and $\widetilde{\vB}$; if the rank of $[\vB, \widetilde{\vB}]$ is smaller than $2d$, we can always add some extra columns provided that $2d<p$. Then $\vB\tp \vX=\vB\tp \bs{\Pi} \vX$, $\widetilde{\vB}\tp \vX=\widetilde{\vB}\tp \bs{\Pi} \vX$, and $\Delta\vB\tp \vX=\Delta\vB\tp \bs{\Pi} \vX$. 
	We also have $\| \vB\tp \vX +  \alpha   \Delta \vB\tp \vX\|\leqs   \| \bs{\Pi} \vX\|$.

In the proofs, we will fix some positive real number $R=O(d)$ and define two events $E_1\, :=\,  \{ \| \bs{\Pi} \vX \|\leqs R\}$ and $E_2 \, :=\,  \{ \| \bs{\Pi} \vX \|> R\}$. 
We will also make use of the following facts:
	\begin{enumerate}
	\item 
By Lemma~\ref{lem:chisq}, for any $k>0$, we can choose $R=\sqrt{ 2 C_{k}d}$ so that $\P(E_2)\leqs (2d)^{-k}$. 
\item By Lemma~\ref{lem:joint-eigen},  $\epsilon=100\lambda_{0,d}\geqslant d^{-1}$. 
 	\item On the event $E_1$, 
  \begin{equation}\label{eq:lower-kl-basic-2}
      \| \vB\tp \vX+\rho\alpha (\widetilde{\vB}-\vB)\tp \vX\|\leqslant R. 
  \end{equation}
  \item 
For any symmetric matrix $\vA$ of dimension $d$,  $\|\vA\|\bs{I}_{d} -\vA$ is positive definite. For any two positive definite matrices $\vB$ and $\vC$, $\Tr(\vB \vC)\geqslant 0$. Therefore, $\Tr(\vA \vB)\leqs \|\vA\| \Tr(\vB)$. 
\end{enumerate}

\begin{proof}[Proof of Lemma~\ref{lem:lower-kl-2nd-small}]

We write 
$$\E_{\vB}  \left\{ \Delta \vB\tp \vX \vX\tp \Delta \vB  \bs{I}_{d} \right\}=\E_{\vB}  \left\{ \one_{E_1} \Delta \vB\tp \vX \vX\tp \Delta \vB   \right\}+\E_{\vB}  \left\{ \one_{E_2} \Delta \vB\tp \vX \vX\tp \Delta \vB  \right\}$$
and
\begin{align*}
   &  -\Tr\left( \E_{\vB}  \left\{ \Delta \vB\tp \vX \vX\tp \Delta \vB   \E\left( \bxi^{\otimes} \mid  \bxi \in J(W, \rho), \vX, W \right) \right\} \right)\\
\leqs & -\Tr\left( \E_{\vB}  \left\{\one_{E_1} \Delta \vB\tp \vX \vX\tp \Delta \vB   \E\left( \bxi^{\otimes} \mid  \bxi \in J(W, \rho), \vX, W \right) \right\} \right).
\end{align*}

We first control the expectation on $E_2$. 
Since $\Delta\vB\tp \vX=\Delta\vB\tp \Pi \vX$, we have 
	\begin{align}
\label{eq:simple-trace}  \Tr\left[ \Delta \vB\tp \vX \vX\tp \Delta \vB \right]  &   \,=\, \Tr\left[  \left(\bs{\Pi}\vX \right)^{\otimes} \left(\Delta \vB\right)^{\otimes} \right] \\
& \,\leqs \, \|\left( \bs{\Pi}\vX \right)^{\otimes} \| ~ \Tr\left[  \left(\Delta \vB\right)^{\otimes} \right]. \nonumber
\end{align}
Thus
$$\Tr\left[ \E \left(\one_{E_2}\Delta \vB\tp \vX \vX\tp \Delta \vB \right) \right] \leqs  \|\Delta B\|_{F}^{2} \E \left( \| \bs{\Pi}\vX\|^2 \one_{E_2} \right). $$

We fix $R=2C_{4}d$ and obtain $\P( E_2)<(2d)^{-4}$. 
Note that the second moment of $\chi^2_{2d}$ is $4d(d+1)$. 
By Cauchy--Schwarz inequality, $\E \left( \| \bs{\Pi}\vX\|^2 \one_{E_2} \right)\leqs \E \left( \| \bs{\Pi}\vX\|^4\right)^{1/2} \P( E_2)^{1/2}  = 2 \sqrt{d(d+1)}\P( E_2)^{1/2}<d^{-1}$.
By Lemma~\ref{lem:joint-eigen},  $\epsilon=100\lambda_{0,d}\geqslant d^{-1}$, and thus 
\begin{equation}\label{eq:lem:lower-kl-2nd-small-target-1}
\Tr\left[ \E \left(\one_{E_2}\Delta \vB\tp \vX \vX\tp \Delta \vB \right) \right] \leqs \epsilon \|\Delta B\|_{F}^{2}.
\end{equation}

We next control the two expectations on $E_1$. 

We can split the expectation into parts given by $\{W=w\}$ and use properties of conditional expectation to obtain
\begin{align*}
&   \E_{\vB}  \left\{ \one_{E_1} \Delta \vB\tp \vX \vX\tp \Delta \vB \left[   -   \E\left( \bxi^{\otimes} \mid  \bxi \in J(W,\rho), \vX, W \right) \right] \right\}  \\
	= &  \sum_{w=-d}^{d}   \E_{\vB}  \left\{ \one_{E_1} \Delta \vB\tp \vX \vX\tp \Delta \vB \one_{W=w} \left[   -   \E\left( \bxi^{\otimes} \mid  \bxi \in J(w,\rho), \vX \right) \right] \right\} \\
	= & - \sum_{w=-d}^{d}   \E  \left\{ \one_{E_1} \Delta \vB\tp \vX \vX\tp \Delta \vB \bbP_{\vB}(W=w\mid \vX) \left[    \E\left( \bxi^{\otimes} \mid  \bxi \in J(w,\rho), \vX \right) \right] \right\}. 
	\end{align*}
In addition, we have 
	\begin{align*}
 \bs{I}_{d} = &   \E\left( \bxi^{\otimes} \sum_{w=-d}^{d} \one_{\bxi\in J(w,\rho)} \mid \vX \right)    \\
	= &  \E\left( \bxi^{\otimes}\one_{\bxi\in J(w,\rho)} \mid \vX \right)   \\
	= & \sum_{w=-d}^{d}  \bbP(\bxi \in J(w,\rho) \mid \vX) \E\left( \bxi^{\otimes} \mid \bxi \in J(w,\rho), \vX  \right). 
	\end{align*}
 From the last two expressions, we can write
	\begin{align*}
	&\Tr\left( \E_{\vB}  \left\{ \one_{E_1} \Delta \vB\tp \vX \vX\tp \Delta \vB \left[  \bs{I}_{d} -   \E\left( \bxi^{\otimes} \mid  \bxi \in J(W,\rho), \vX, W \right) \right] \right\}\right)\\
	= & \sum_{w=-d}^{d} \Tr\left( \E  \left\{ \one_{E_1} \Delta \vB\tp \vX \vX\tp \Delta \vB   \E\left( \bxi^{\otimes} \mid \bxi \in J(w,\rho), \vX \right)  \right. \right.  \\ 
	& \qquad\qquad \left. \left.  \left[\bbP( \bxi \in J(w,\rho) \mid \vX  ) - \bbP_{\vB}( W=w \mid \vX ) \right]   \right\}\right).\nonumber
	\end{align*}

On the event $E_1$, we apply Lemma~\ref{lem:lower-diff-bound}(3) together with the inequality in \eqref{eq:lower-kl-basic-2}, and conclude that for any $\rho \leqs  c^{\prime}  d^{-5/2}$ and for any 
 $w\in \{-d, \ldots, d\}$, 
 $$\left|\bbP( \bxi \in J(w,\rho) \mid \vX  ) - \bbP_{\vB}( W=w \mid \vX )\right| <  \frac{1}{8 d^2}. $$
 For any $w\neq 0$, since $\bbP_{\vB}( W=w \mid \vX )= (4d)^{-1}$, it then follows that $\bbP( \bxi \in J(w,\rho) \mid \vX  )\geqslant (8d)^{-1}$ and 
\begin{align*}
\left|\bbP( \bxi \in J(w,\rho) \mid \vX  ) - \bbP_{\vB}( W=w \mid \vX )\right| & < d^{-1}
	P( \bxi \in J(w,\rho) \mid \vX  ) \\
&	\leqs \epsilon \bbP( \bxi \in J(w,\rho) \mid \vX  ).
\end{align*}
The same holds for $w=0$. 

Using the last inequality, we have
	\begin{align*}
	&\Tr\left( \E_{\vB}  \left\{ \one_{E_1} \Delta \vB\tp \vX \vX\tp \Delta \vB \left[  \bs{I}_{d} -   \E\left( \bxi^{\otimes} \mid  \bxi \in J(W,\rho), \vX, W \right) \right] \right\}\right)\\
	\leqslant & \epsilon \sum_{w=-d}^{d} \Tr\left( \E  \left\{ \one_{E_1} \Delta \vB\tp \vX \vX\tp \Delta \vB   \bbP( \bxi \in J(W,\rho) \mid \vX  )   \E\left( \bxi^{\otimes} \mid \bxi \in J(W,\rho), \vX \right)  \right\}\right) \\
	= & \epsilon  \Tr\left\{ \E  \left[ \one_{E_1} \Delta \vB\tp \vX \vX\tp \Delta \vB    \E\left( \bxi^{\otimes} \sum_{w=-d}^{d} \one_{ \bxi \in J(W,\rho) } \mid \vX  \right) \right] \right\} \\
	= & \epsilon  \Tr\left\{ \E  \left[ \one_{E_1} \Delta \vB\tp \vX \vX\tp \Delta \vB  \bs{I}_{d} \right] \right\}\\
	\leqslant & \epsilon  \Tr\left\{ \E  \left[\Delta\vB\tp \vX \vX\tp \Delta \vB \right]  \right\}\\
	= &  \epsilon  \| \Delta \vB\|_{F}^{2}.
	\end{align*}
Combining the last inequality with \eqref{eq:lem:lower-kl-2nd-small-target-1}, we complete the proof. 
\end{proof}

\begin{proof}[Proof of Lemma~\ref{lem:lower-kl-2nd-main}]
In the following, we use $C$ to denote any universal constant, whose value may change from line to line. 

We first control the expectation on $E_2$. 
Without loss of generality, we can assume $\rho^2<(3d)^{-1}$. 
By an elementary trace inequality that $\Tr \left( \vu \vu\tp \vv\vv\tp \right)\leqslant \|\vv\|^{2} \|\vu\|^{2}$, one has
	\begin{align}
	\label{eq:lower-kl-mainterm-control-1}
 & \Tr\left( \Delta \vB\tp \vX \vX\tp \Delta \vB \left[   \E \left(\bxi \mid  \bxi \in J(W,\rho), \vX, W \right)^{\otimes} \right]  \right) \\	
	\leqslant & \| \Delta \vB\tp \vX\|^{2} \left\|   \E \left(\bxi \mid  \bxi \in J(W,\rho), \vX, W \right)^{\otimes} \right\|^{2} \nonumber \\
\leqslant & \|\ \Delta \vB\tp \vX\|^{2}  C(d+ \rho^2\| \bs{\Pi} \vX\|^2) \nonumber   \\
\leqslant & \|\Delta\vB\|_{F}^2 \| \bs{\Pi} \vX\|^{2} C(d+ \rho^2\| \bs{\Pi} \vX\|^2) \nonumber   , 	
	\end{align}
	where the second inequality is due to Lemma~\ref{lem:lower-kl-mainterm-tail} and the inequality in \eqref{eq:lower-kl-basic-2}, and the third is due to the inequality in \eqref{eq:simple-trace}.

	Denote by  $U:= \| \bs{\Pi} \vX\|$. Then $U^2 \sim \chi^{2}_{2d}$. We can pick $R= \sqrt{ 2 C_{1} d} $ so that $\E \left(\one_{ U >R} U^2(d+U^2)  \right) \leqs C d^{-1} $. 
	Then \eqref{eq:lower-kl-mainterm-control-1} implies that
	\begin{align*}
	& \Tr\left( \E_{\vB}  \left\{ \one_{E_2 } \Delta \vB\tp \vX \vX\tp \Delta \vB \left[   \E \left(\bxi \mid  \bxi \in J(W,\rho), \vX, W \right)^{\otimes} \right] \right\}  \right)\\	
	\leqslant & C \| \Delta \vB\|_{F}^{2}   \E   \left\{  \one_{U>R} \left( U^2(d+\rho^2 U^2 \right) \right\} \\
	\leqslant & C d^{-1} \| \Delta \vB\|_{F}^{2}. 
	\end{align*}

 We next control the expectation on $E_1$. 
 
	\begin{align*}
	& \E_{\vB} \left[   \E \left(\bxi \mid  \bxi \in J(W,\rho), \vX, W \right)^{\otimes} \mid \vX \right]   \\	
	=&	\sum_{w=-d}^{d} \E_{\vB} \left[ ~ \one_{W=w} ~   \E \left(\bxi \mid  \bxi \in J(w,\rho), \vX \right)^{\otimes}  \mid \vX \right] \\	
	=&	\sum_{w=-d}^{d}  \E_{\vB} \left[ ~ \bbP_{\vB}(W=w\mid \vX)  \E \left(\bxi \mid  \bxi \in J(w,\rho), \vX \right)^{\otimes}  \mid \vX \right] \\	
	=&\sum_{w=-d}^{d}  \E_{\vB} \left[ ~ \bbP_{\vB}(W=w\mid \vX) \bbP\left( \bxi\in J(W,\rho) \mid \vX \right)^{-2}   \E \left(\bxi \one_{\bxi \in J(w,\rho)} \mid \vX \right)^{\otimes}  \mid \vX \right] .
	\end{align*}

Note that as $\rho\rightarrow 0$, the limit of $\bbP\left( \bxi\in J(W,\rho) \mid \vX \right)$ is $\bbP(W=w \mid \vX)$ and the limit of $\E \left(\bxi \one_{\bxi \in J(w,\rho)} \mid \vX \right)$ is $\E \left(\bxi \one_{\bxi \in A_{w}} \right)$. Below we shall bound the deviation of these terms from their respective limits.

We write $\vmu=\vB\tp \vX+\rho\alpha (\widetilde{\vB}-\vB)\tp \vX$. On the event $E_1$, the inequality in \eqref{eq:lower-kl-basic-2} shows that $\|\vmu\|\leqs R$. 
Denote by $\mathcal{N}_1$ the distribution $N( \rho\vmu  ; (1-\rho^2) \bs{I}_d)$ and by $\mathcal{N}_0$ the distribution $N( \bs{0};\bs{I}_d)$. 
We have
\begin{align*}
  \E \left(\bxi \one_{\bxi \in J(w,\rho)} \mid \vX \right)  & = \left[  \E \left( \rho \vmu  + \sqrt{1-\rho^2} \bxi \one_{\bxi \in J(w,\rho)} \mid \vX \right) - \rho  \vmu \right] (1-\rho^2)^{-1/2}  \\
  & = \left[ \E_{\mathcal{N}_1}( \vZ \one_{\vZ \in A_{w}} ) - \rho  \vmu \right] (1-\rho^2)^{-1/2}. 
\end{align*}

We apply Lemma~\ref{lem:lower-diff-bound} and conclude that for any $\rho \leqs  c^{\prime}  d^{-4}$ and for any $w\in \{\pm 1, \ldots, \pm d\}$,  
 
 $$|\bbP( \bxi \in J(w,\rho) \mid \vX  ) - \bbP_{\vB}( W=w \mid \vX ) |<  \frac{1}{8 d^{7/2}}$$
and 
\begin{align*}
&  \| \E \left(\bxi \one_{\bxi \in J(w,\rho)} \mid \vX \right) - \E( \bxi  \one_{\bxi \in A_w})\|\\
\leqs   &  (1-\rho^2)^{-1/2}\left|	\E_{\mathcal{N}_1}( \vZ \one_{\vZ \in A_{w}} )-\E_{\mathcal{N}_0}( \vZ \one_{\vZ\in A_{w}} ) \right| ~~\dots \\
& \qquad +\rho(1-\rho^2)^{-1/2}\|\vmu\| +(1-(1-\rho^2)^{-1/2})\|\E( \bxi  \one_{\bxi \in A_w})\| \\
\leqs & O(d^{-5/2})+ 2\rho \|\vmu\| + \rho^2 \sqrt{\log(2 d) }/d = O(d^{-5/2}), 
\end{align*}
where we have used $R=O(\sqrt{d})$ and  $\|\E( \bxi  \one_{\bxi \in A_w})\|=\E( \bxi_{1}  \one_{\bxi \in A_1})\leqs \frac{\sqrt{2\log (2d)}}{2d}$ given in Lemma~\ref{lem:joint-eigen}. 
It also follows that 
$$\|\E \left(\bxi \one_{\bxi \in J(W,\rho)} \mid \vX \right)\|\leqs C \sqrt{\log(2 d) }/d \text{~and~} (8d)^{-1}\leqs \bbP( \bxi \in J(w,\rho) \mid \vX )\leqs (2d)^{-1}. $$

To utilize the bounds that have been established, we present the following elementary facts. 
For any positive numbers $b$, $B$ and any vectors $\vu$, $\vU$, we have 
\begin{enumerate}
  \item $  b^{-2}\vu^{\otimes} -   B^{-2}\vU^{\otimes} 
   =   b^{-2}\left( \vu^{\otimes} -  \vU^{\otimes} \right)+(  b^{-2}- B^{-2})\vU^{\otimes} $
   \item $\|\vu^{\otimes} -  \vU^{\otimes} \|_{F}\leqs \|\vu (\vu-  \vU)\tp \|_{F} + \| (\vu - \vU) \vU\tp  \|_{F}\leqs (\|\vu\|+\|\vU\|) \|\vu-\vU\|$
   \item $|b^{-2}-B^{-2}|\leqs b^{-2}B^{-2} (b+B) |b-B|.$
\end{enumerate}

Now let $B=\bbP_{\vB}( W=w \mid \vX )$, $b=\P( \bxi \in J(w,\rho) \mid \vX )$, $\vU=\E( \bxi  \one_{\bxi \in A_w})$, and $\vu=\E \left(\bxi \one_{\bxi \in J(W,\rho)} \mid \vX \right)$. We conclude that on $E_1$,  
\begin{align*}
&	\left\| \P\left( \bxi\in J(w,\rho) \mid \vX \right)^{-2} \left[   \E \left(\bxi \one_{\bxi \in J(w,\rho)} \mid \vX \right)^{\otimes} \right]  - \bbP_{\vB}(W=w\mid \vX)^{-2}\left[   \E( \bxi ~ \one_{\bxi \in A_w})^{\otimes} \right] \right\|_{F} \\
	\leqs & O ( d\sqrt{\log (2d)} d^{-5/2}+ d\log(2d)  d^{-7/2}) = O ( d^{-1}).
	\end{align*}
 A similar result can be obtained for $w=0$. 
 
Since $\epsilon\geqslant d^{-1}$, there is some universal constant $C$ such that
	\begin{align*}
	& \Tr\left( \E_{\vB}  \left\{ \one_{E_1} \Delta \vB\tp \vX \vX\tp \Delta \vB \left[   \E \left(\bxi \mid  \bxi \in J(W,\rho), \vX, W \right)^{\otimes} \right] \right\}  \right)\\	
	\leqslant & \sum_{w=-d}^{d} \Tr\left( \E_{\vB}  \left\{ \one_{E_1} \Delta \vB\tp \vX \vX\tp \Delta \vB  \bbP_{\vB}(W=w\mid \vX)  \left[   \E( \bxi ~ \one_{\bxi \in A_w})^{\otimes} \right] \right\}  \right) ~~ \ldots \\  
 & \qquad + C \epsilon  \E_{\vB}  \left\{ \one_{E_1}\| \Delta \vB\tp \vX\|^{2} \right\}\\
	\leqslant & C \epsilon   \|\Delta \vB\|_{F}^{2} + \Tr\left( \E_{\vB}  \left\{ \one_{E_1} \Delta \vB\tp \vX \vX\tp \Delta \vB  \Sigma_{0,d} \right\}  \right) \\
	\leqslant & ( C \epsilon + \|\Sigma_{0,d} \| )  \|\Delta \vB\|_{F}^{2}, 
	\end{align*}
	where $\Sigma_{0,d}=\sum_{w=-d}^{d} \bbP_{\vB}(W=w\mid \vX)   \left[   \E( \bxi ~ \one_{\bxi \in A_w}) \right]^{\otimes}= \Cov\left[ \E \left( \bxi \mid \psi (\bxi)  \right) \right]$. By Lemma~\ref{lem:joint-eigen}, $\| \Sigma_{0,d}\|=\lambda_{0,d}$. 
 \begin{remark}
    In the proof, we choose $\rho \leqs  c^{\prime}  d^{-4}$ to simplify the result. In fact, it is sufficient to choose $\rho \leqs  c^{\prime}  d^{-7/2-\xi'}$ for any small $\xi'>0$. 
\end{remark}

\end{proof}

 \begin{proof}[Proof of Lemma~\ref{lem:lower-derivatives}]
 It is straightforward to compute the derivative and Hessian of $\log g_{w}^{\vmu}(\bs{t})$ as follows:
 
\begin{list}{\textbullet}{}  
\item Let $\phi_d( \vz ; \va ; \vM)$ be the p.d.f. of $N(\va, \vM)$ for $\va\in \R^{d}$ and $\vM$ be a $d\times d$ positive definite matrix. Then $\frac{\diff }{ \diff \va} \phi_d( \vz ; \va ; \vM)= \vM^{-1}(\vz-\va)\phi_d( \vz ; \va ; \vM).$
		\item $g_{w}^{\vmu}(\bs{t})=\int \one_{\vz\in A_w} \phi_{d}\left( \vz; \rho \vmu + \rho \vt ; (1-\rho^2)\bs{I}_{d}\right) \diff \vz $. This is obtained by transforming $\vZ=\rho\vmu+\rho \vt +\sqrt{1-\rho^2} \bxi$.  
		\item Then $\nabla g_{w}^{\vmu}(\bs{t}) =\int \one_{\vz\in A_w}\frac{\rho}{1-\rho^2} (\vz-\rho\vmu-\rho \vt) \phi_{d}\left( \vz; \rho \vmu + \rho \vt ; (1-\rho^2)\bs{I}_{d}\right) \diff \vz $. This can also be written as $\nabla g_{w}^{\vmu}(\bs{t}) = \sqrt{\frac{\rho^{2}}{1-\rho^2}}  \E \left( \bxi \one_{\rho\vmu+\rho \vt +\sqrt{1-\rho^2} \bxi\in A_w} \right)$.
		\item \begin{align*}
		\nabla^{2} g_{w}^{\vmu}(\bs{t}) & = \int \one_{\vz\in A_w} \left(-\frac{\rho^{2}}{1-\rho^2} \bs{I}_{d} + \frac{\rho^{2}}{(1-\rho^2)^{2}} (\vz-\rho\vmu-\rho \vt)^{\otimes} \right) \ldots \\ 
		& \qquad \times \phi_{d}\left( \vz; \rho \vmu + \rho \vt ; (1-\rho^2)\bs{I}_{d}\right) \diff \vz \\
		& = -\frac{\rho^{2}}{1-\rho^2} \bs{I}_{d} g_{w}^{\vmu}(\bs{t})  \ldots \\ 
		& \quad +\frac{\rho^{2}}{(1-\rho^2)^{2}}  \int_{ A_w}(\vz-\rho\vmu-\rho \vt)^{\otimes}\phi_{d}\left( \vz; \rho \vmu + \rho \vt ; (1-\rho^2)\bs{I}_{d}\right) \diff \vz.
		\end{align*}
		\item $\nabla\log g_{w}^{\vmu}(\bs{t}) = \frac{1}{g_{w}^{\vmu}(\bs{t}) } \nabla g_{w}^{\vmu}(\bs{t})$. 
		\item  $\nabla^{2}\log g_{w}^{\vmu}(\bs{t})  = \frac{1}{g_{w}^{\vmu}(\bs{t}) } \nabla^{2} g_{w}^{\vmu}(\bs{t}) - \left( \frac{1}{g_{w}^{\vmu}(\bs{t}) } \nabla g_{w}^{\vmu}(\bs{t})  \right)^{\otimes}$. 
		\item By transforming $ \bxi=(\vZ-\rho\vmu-\rho \vt )/\sqrt{1-\rho^2}$, one has 
\begin{equation*} 
		\nabla\log g_{w}^{\vmu}(\bs{t}) =\sqrt{\frac{\rho^{2}}{1-\rho^2} } \E \left( \bxi \mid \rho\vmu+\rho\vt +\sqrt{1-\rho^2} \bxi \in A_w \right)
		\end{equation*} 
  and 
		$$\frac{1}{g_{w}^{\vmu}(\bs{t}) } \nabla^{2} g_{w}^{\vmu}(\bs{t})=-\frac{\rho^{2}}{1-\rho^2} \bs{I}_{d}+ \frac{\rho^{2}}{1-\rho^2}  \E\left( \bxi^{\otimes} \mid  \rho\vmu+\rho\vt +\sqrt{1-\rho^2} \bxi \in A_w \right).
		$$
		Therefore
		\begin{align}
		\nabla^{2}\log g_{w}^{\vmu}(\bs{t}) & =\frac{\rho^{2}}{1-\rho^2} \left\{ - \bs{I}_{d} 
 + \E\left( \bxi^{\otimes} \mid  \rho\vmu+\rho\vt +\sqrt{1-\rho^2} \bxi \in A_w \right)  \right. \nonumber \\
		&\qquad \qquad \quad \left. -   \E \left( \bxi \mid \rho\vmu+\rho\vt +\sqrt{1-\rho^2} \bxi \in A_w \right)^{\otimes}  \right\}. 
		\end{align}
	\end{list}
 
\end{proof}

\subsection{Proof of Lemma~\ref{lem:packing}}
We first state two lemmas from the literature. 
	\begin{lemma}[{\cite[Lemma 1]{cai2013sparse}}]\label{lem:cai2013}
		For any $\varepsilon \in (0, \sqrt{ 2(d \wedge (p-d))}]$, any $\alpha \in (0,1)$, and any $\vA\in\mathbb{O}(p,d)$, there exists a subset $\Theta\subset \mathbb{O}(p,d)$ such that 
		\[
		|\Theta| \geqslant \left( \frac{c_0}{\alpha}  \right)^{d(p-d)}, 
		\]
		and for any $\vB, \widetilde{\vB}\in \Theta$, 
		\[
		\rho (\vA\vA\tp ,\vB\vB\tp ) \leqslant \varepsilon, \qquad
		\rho (\vB\vB\tp ,\widetilde{\vB}\widetilde{\vB}\tp ) \geqslant  \alpha\varepsilon, 
		\]
		where $c_0$ is an absolute constant. 
	\end{lemma}
	\begin{lemma}[{\cite[Lemma 6.5]{ma2020subspace}}]\label{lem:linear-alg}
		For any matrices $\bs{A}_1, \bs{A}_2 \in  \mathbb{O}(p,d)$, there exists some $\bs{Q}\in \mathbb{O}(d,d)$ such that 
		\begin{align}
		\| \bs{A}_1-\bs{A}_2 \bs{Q}\|_{F}\leqslant \|\bs{A}_{1}\bs{A}\tp_{1}-\bs{A}_{2}\bs{A}\tp_{2}\|_F.
		\end{align}
	\end{lemma}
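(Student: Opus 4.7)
The plan is to solve the orthogonal Procrustes problem for the pair $(\bs{A}_1, \bs{A}_2)$ and then compare the resulting expression with $\|\bs{A}_1\bs{A}_1^{\top}-\bs{A}_2\bs{A}_2^{\top}\|_F^2$ via an elementary scalar inequality applied to the singular values of $\bs{A}_2^{\top}\bs{A}_1$.

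First, I would take a singular value decomposition $\bs{A}_2^{\top}\bs{A}_1 = \bs{U}\bs{\Sigma}\bs{V}^{\top}$, where $\bs{U},\bs{V}\in\mathbb{O}(d,d)$ and $\bs{\Sigma}=\mathrm{diag}(\sigma_1,\dots,\sigma_d)$. Since $\bs{A}_1,\bs{A}_2\in\mathbb{O}(p,d)$ have orthonormal columns, $\|\bs{A}_2^{\top}\bs{A}_1\|\leqslant 1$, and hence each $\sigma_i\in[0,1]$. I would then choose $\bs{Q}:=\bs{U}\bs{V}^{\top}\in\mathbb{O}(d,d)$, the known minimizer of the Procrustes problem $\min_{\bs{Q}\in\mathbb{O}(d,d)}\|\bs{A}_1-\bs{A}_2\bs{Q}\|_F$.

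Next, I would expand both quantities as traces. For the left-hand side, using $\bs{A}_i^{\top}\bs{A}_i=\bs{I}_d$,
\begin{align*}
\|\bs{A}_1-\bs{A}_2\bs{Q}\|_F^2
&=2d-2\,\mathrm{tr}(\bs{Q}^{\top}\bs{A}_2^{\top}\bs{A}_1)
 =2d-2\,\mathrm{tr}(\bs{V}\bs{U}^{\top}\bs{U}\bs{\Sigma}\bs{V}^{\top})
 =2\sum_{i=1}^{d}(1-\sigma_i).
\end{align*}
For the right-hand side, using that $\bs{A}_i\bs{A}_i^{\top}$ are projections of rank $d$,
\begin{align*}
\|\bs{A}_1\bs{A}_1^{\top}-\bs{A}_2\bs{A}_2^{\top}\|_F^2
&=2d-2\,\mathrm{tr}(\bs{A}_1\bs{A}_1^{\top}\bs{A}_2\bs{A}_2^{\top})
 =2d-2\|\bs{A}_2^{\top}\bs{A}_1\|_F^2
 =2\sum_{i=1}^{d}(1-\sigma_i^2).
\end{align*}

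Finally, because $\sigma_i\in[0,1]$, the elementary inequality $1-\sigma_i\leqslant (1-\sigma_i)(1+\sigma_i)=1-\sigma_i^2$ holds for every $i$. Summing over $i$ and comparing the two displays above yields the claimed bound $\|\bs{A}_1-\bs{A}_2\bs{Q}\|_F^2\leqslant\|\bs{A}_1\bs{A}_1^{\top}-\bs{A}_2\bs{A}_2^{\top}\|_F^2$. There is no real obstacle here; the only subtle point is recognizing that the natural choice of $\bs{Q}$ reduces the problem to a per-coordinate scalar inequality between $1-\sigma_i$ and $1-\sigma_i^2$, which is exactly where the constraint $\sigma_i\leqslant 1$ (i.e., the fact that $\bs{A}_1,\bs{A}_2$ have orthonormal columns) is used.
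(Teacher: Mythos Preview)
Your proof is correct and is the standard argument for this classical Procrustes-type bound. The paper does not actually prove this lemma; it is quoted directly from \cite[Lemma 6.5]{ma2020subspace} and used as a black box, so there is nothing further to compare.
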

\begin{proof}[Proof of Lemma~\ref{lem:packing}]	
	Pick any $\bs{A}\in \mathbb{O}(p,d)$. 
 Let $\Theta_0$ be the subset in Lemma~\ref{lem:cai2013}. 
	For each $\vB_0\in \Theta_0$, we can find $\bs{Q}_{\vB_0}$ such that $\|\vA-\vB_0 \bs{Q}_{\vB_0}\|_{F}\leqslant \epsilon$ due to  Lemma~\ref{lem:linear-alg}. 
	Define $\Theta=\{\vB_0 \bs{Q}_{\vB_0}:\vB_0\in \Theta_0\}$. By the triangle inequality, it is easy to see that $\Theta$ satisfies the requirement of the lemma. Thus we complete the proof of Lemma \ref{lem:packing}.
\end{proof}

\section{Proof of Theorem \ref{thm:lower-largep}}\label{app:rate-largep}
\begin{proof}
We only need to prove the lower bounds 
	\begin{align}\label{eq:high-exact-ds}
	\inf_{\widehat{\vB}} \sup_{\mathcal{M}\in \mathfrak{M}_{s }\left( p,d,\lambda\right) }\bbE_{\mathcal{M}} \left\| \widehat{\vB} \widehat{\vB}\tp -\vB\vB\tp \right\|_{\mathrm{F}}^{2}&\gtrsim \frac{d (s-d)}{n\lambda} .
	\end{align}
	and 
	\begin{align}\label{eq:high-exact-slogp}
	\inf_{\widehat{\vB}} \sup_{\mathcal{M}\in \mathfrak{M}_{s }\left( p,d,\lambda\right) } \bbE_{\mathcal{M}} \left\| \widehat{\vB} \widehat{\vB}\tp -\vB\vB\tp \right\|_{\mathrm{F}}^{2}&\gtrsim \frac{ (s-d)\log\frac{e(p-d)}{s-d}}{n\lambda} .
	\end{align}

To prove the inequality~\eqref{eq:high-exact-ds}, consider the following sub-model which assumes the support of the indices matrix is $\{1,2,\ldots,s\}$.

 \begin{align}\label{model:high:dim:oracle}
\widetilde{\mathfrak{M}}_{s }\left( p,d,\lambda\right) 
 :=\mathfrak{M}_{s}\left(p,d,\lambda\right)\cap \left\{\begin{aligned}
&\text{\quad distribution of } \\
&\left(\vX,Y=f(\vB\tp\vX,\epsilon)\right)
\end{aligned}
\quad\vline
\begin{aligned}
&\quad 
\vX\sim N(0,\bs{I}_{p}),\\ 
&\vB\tp\vB=\vI_d, ~~\supp(\vB)=[s] \\
\end{aligned}\right\}, 
\end{align}

	A sufficient statistic for estimating $\vB$ in this submodel is the data of $Y$ and $\vX_{1:s}$. Because $\widetilde{\mathfrak{M}}_{s }\left( p,d,\lambda \right)\subset \mathfrak{M}_{s }\left( p,d,\lambda\right)$ and $\widetilde{\mathfrak{M}}_{s }\left( p,d,\lambda\right)$ is essentially the same as a submodel of $\mathfrak{M}\left( s,d,\lambda\right)$ for $(\vX_{1:s}, Y)$ with $\Cov(X_{1:s})=\bs{I}_{s}$ assumed known. Following the exact same proof of Theorem~\ref{thm:lower-smallp}, we obtain the inequality~\eqref{eq:high-exact-ds}. 
	
To prove the inequality~\eqref{eq:high-exact-slogp}, we apply the Fano method (Lemma~\ref{lem:fano}) and the construction of $\bbP_{\vB}$ in Equation~ \eqref{eq:joint-x-y-B}. The main challenge is to construct a rich packing set $\Theta\subset \mathbb{O}_{s }(p,d)$. 
The rest of the proof is nearly identical to the proof of Theorem~4 in \cite{lin2021optimality}, which in turn follows from the argument in  \cite[Theorem~2.1]{vu2012minimax} and \cite[Theorem~3]{cai2013sparse}. We present it here for the sake of completeness.
For any $\varepsilon\in (0,1]$, 
	\cite[Lemma 3.1.2]{vu2012minimax} have constructed a set $\Theta_{0} \subset \mathbb{O}_{s-d+1}(p-d+1,1)$, such that
	\begin{itemize}
		\item[1.] $\varepsilon / \sqrt{2}< \|\theta_{1}-\theta_{2}\|\leqslant \sqrt{2}\varepsilon$ for all distinct pairs $\theta_{1},\theta_{2} \in \Theta_{0}$,
		\item[2.] $\log|\Theta_{0}|\geqslant c (s-d) [\log(p-d)-\log(s-d)]$, where $c$ is a positive constant. 
	\end{itemize}
	
	For each $\theta\in \Theta_{0}$, define 
	$$
	\vB_{\theta}=\left(\begin{array}{cc}
	\theta & \bs0_{(p-d+1)\times (d-1)} \\
	\bs0_{(d-1)\times 1}& \vI_{d-1}
	\end{array} \right). 
	$$
	Then $\vB_{\theta} \in \mathbb{O}_{s }(p,d)$. Let $\Theta=\{\vB_{\theta}  : \theta\in \Theta_{0} \}$. 
	We then use the construction in Equation~\eqref{eq:joint-x-y-B} to obtain a family of distributions with the indices matrix  $\vB_{\theta}$ for each $\theta\in \Theta_{0}$. 
Applying the argument in Equation~\eqref{eq:fano:argument-new} with $\varepsilon^2=c_1' \frac{ (s-d) [\log(p-d)-\log(s-d)]}{n\lambda}$,  we obtain Equation~\eqref{eq:high-exact-slogp}. 

Since $2d<s$, the right hand sides on the inequalities~\eqref{eq:high-exact-ds} and \eqref{eq:high-exact-slogp} can be reduced to $\frac{ds}{n\lambda}$ and $\frac{s\log(ep/s) }{n\lambda}$, respectively. 
\end{proof}

\section{Proofs of upper bounds with a known covariance matrix}\label{app:ub-identity}
Here we present the proofs of the upper bounds  in Section~\ref{sec:minimax rate} with $\bS$ known. 
These proofs are adapted from \cite{lin2021optimality}. 
Without loss of generality, we can assume $\bS= \vI_p$. 
The proof for general cases with unknown $\bS$ is
presented in Appendix~\ref{app:ub-general}, which makes use of the results here.

\subsection{Proof of Theorem~\ref{thm:risk:oracle:upper:d}}\label{app:sir:low-d}

\subsubsection*{Preliminary}

Before we start proving the theorems, we need some preparations. 

{\bf Notations:} 
Suppose that we have  $n=Hc$ samples $(\vX_{i},Y_{i})$ from a distribution $\mathcal{M}\in \mathfrak{M}(p,d,\lambda)$. 
Throughout this section, $H$ is taken to be an integer such that $H$ satisfies the inequality $H>K\vee Cd$ in Lemma~\ref{lem:key lemma in Lin under moment condition} and $H\leqslant H_{0}d$ for some constant $H_{0}>K_0\vee C$. In this way, we can apply the result of Lemma~\ref{lem:key lemma in Lin under moment condition} and we will implicitly use $H=O(d)$.

Since $\bS=\vI_p$, the SIR estimator in \eqref{eqn:Bhat} is defined directly as $\widehat{\vB}=\left[\widehat{\vB}_{1},...,\widehat{\vB}_{d}\right]$,  where $\widehat{\vB}_{i}$ is the $i$-th leading eigenvector of $\widehat{\bLambda}_{H}$.  

Let $\vB_{\perp}$ be a $p\times (p-d)$ orthogonal matrix such that $\vB\tp \vB_{\perp}=0$. 

For any pair of $(\vX, Y)$ sampled from $\mc M$, let $\vZ=\vB\tp  \vX$ and $\vE=\vB_{\perp}\tp  \vX$.  Since $\vB\tp\vB=\vI_d$, $\vB_\perp\tp\vB_\perp=\vI_{p-d}$, one has $\vZ\sim N(0, \bs{I}_{d})$ and $\vE\sim N(0, \bs{I}_{p-d})$. Furthermore, $\vZ\indp \vE$ since $\mathrm{Cov}(\vZ,\vE)=\vB\tp \vB_{\perp}=0$. Besides, we have  
\begin{equation*}
\begin{aligned}
\vX&=\mc{P}_{\mathcal{S}}\vX+\mc{P}_{\mathcal{S}^{\perp}}\vX
= \vB\vZ+\vB_{\perp}\vE\quad (\because \vB\vB\tp+\vB_\perp\vB_\perp\tp=\vI_p)
\end{aligned}
\end{equation*}
where $\mathcal{S}=\col(\vB)$ is the central space.

Let $\vV=\vB \vZ$ and $\vW=\vB_{\perp}\vE$. Then $\vV\tp \vW=0$. We introduce the notation $\overline{\vV}_{h,\cdot}$, 
$\overline{\vZ}_{h,\cdot}$, 
$\overline{\vW}_{h,\cdot}$, 
and $\overline{\vE}_{h,\cdot}$ similar to
the definition of the sample mean in the $h$-th slice $\overline{\vX}_{h,\cdot}$ near Equation~\eqref{eqn:lambda}.

Let
$\mathbf{\mathcal{V}}=\frac{1}{\sqrt{H}}
\left[\overline{\vV}_{1,\cdot} ~,~ \overline{\vV}_{2,\cdot},...,~ 
\overline{\vV}_{H,\cdot}\right]$,
$\mathbf{\mathcal{Z}}=\frac{1}{\sqrt{H}}
\left[\overline{\vZ}_{1,\cdot} ~,~ \overline{\vZ}_{2,\cdot},...,~ 
\overline{\vZ}_{H,\cdot}\right]$, 
$\mathbf{\mathcal{W}}=\frac{1}{\sqrt{H}}
\left[\overline{\vW}_{1,\cdot} ~,~ \overline{\vW}_{2,\cdot},...,~ \overline{\vW}_{H,\cdot}\right]$,
and $\mathbf{\mathcal{E}}=\frac{1}{\sqrt{H}}
\left[\overline{\vE}_{1,\cdot} ~,~ \overline{\vE}_{2,\cdot},...,~ \overline{\vE}_{H,\cdot}\right]$
be four matrices formed by the $p$-dimensional vectors 
 $\frac{1}{\sqrt{H}}\overline{\vV}_{h,\cdot}$,  $\frac{1}{\sqrt{H}}\overline{\vZ}_{h,\cdot}$, $\frac{1}{\sqrt{H}}\overline{\vW}_{h,\cdot}$,  and $\frac{1}{\sqrt{H}}\overline{\vE}_{h,\cdot}$. 
 We have 
 $\mathbf{\mathcal{V}}=\vB\mathbf{\mathcal{Z}}$,
 $\mathbf{\mathcal{W}}=\vB_{\perp}\mathbf{\mathcal{E}}$, 
and $\mathbf{\mathcal{V}}\tp \mathbf{\mathcal{W}}=0$.

Define $\widehat{\bLambda}_{z}=\mathbf{\mathcal{Z}}\mathbf{\mathcal{Z}}\tp $ and $\widehat{\bLambda}_{V}=\mathbf{\mathcal{V}}\mathbf{\mathcal{V}}\tp =\vB \widehat{\bLambda}_{z}\vB\tp$. Then we have the following decomposition
\begin{equation}
\begin{aligned} \label{eqn:estimator:decomposition}
\widehat{\bLambda}_{H}&={\mathcal{V}}{\mathcal{V}}\tp 
+{\mathcal{V}}{\mathcal{W}}\tp
+{\mathcal{W}}{\mathcal{V}}\tp
+{\mathcal{W}}{\mathcal{W}}\tp \\
&=\widehat{\bLambda}_{V}  
+\vB \mathbf{\mathcal{Z}}\mathbf{\mathcal{E}}\tp \vB_{\perp}\tp 
+ \vB_{\perp}\mathbf{\mathcal{E}}\mathbf{\mathcal{Z}}\tp \vB\tp 
+\vB_{\perp}\mathbf{\mathcal{E}}\mathbf{\mathcal{E}}\tp \vB_{\perp}\tp.
\end{aligned}
\end{equation}
Since $\vE \sim N(0,\vI_{p-d})$ and is independent of $Y$, we know that the entries $\mathcal{E}_{i,j}$ of $\mathcal{E}$ are $i.i.d.$ samples of $N(0,\frac{1}{n})$. 

\subsubsection*{Main part of the proof}

First, we have the following lemma.
\begin{lemma} \label{lem:minimx:key}
	
 Suppose the distribution of data is in $\overline{\mathfrak{M}}\left(p,d,\lambda\right)$
 and  $H\geqslant \max\{K,Cd\}$ for a sufficiently large constant $C$.  We have the following statements. 
	
	\begin{itemize}
		\item[(a)]
		\begin{align*}
		\bbP\left(\|\mathcal{W}\mathcal{W}\tp \|   > 6\frac{p\vee H +t }{n} \right)\leqslant  2 \exp\left(- t\right).
		\end{align*}
		\item[(b)] For $\nu\in (\kappa, 2\kappa]$, 
		\begin{align*}
		\bbP\left( \exists \bbeta\in \mathbb{S}^{p-1}, \text{ s.t. } \Big| \bbeta\tp \left(\widehat{\bLambda}_{V}-\bLambda\right)\bbeta\Big| > \frac{2}{3\nu}\bbeta\tp \bLambda\bbeta \right)\leqslant C_{1}\exp\left(-C_{2}\frac{n\lambda}{H^{2}\nu^{2}} +C_{3}\log(nH)+C_{4}d\right).
		\end{align*} 
	\end{itemize}
\end{lemma}
\begin{proof}
	$(a)$: We apply Lemma~\ref{random:nonasymptotic} to $\sqrt{n} \cdot \mathbf{\mathcal{E}}$ and note that
	
	$$
	\left( \sqrt{p-d}+\sqrt{H}+\sqrt{2t}  \right)^{2}\leqslant 3 \left( p-d + H + 2t \right) \leqslant 6 (p \vee H + t). 
	$$

	$(b)$: 
	Since $\col(\widehat{\bLambda}_{V})=\col(\bLambda)=\col(\vB)$, we only need to consider the vector $\bbeta$ that lies in $\col(\bLambda)$. 
	Let $\bLambda=\vV \vD \vV^{\top}$ be the eigen-decomposition of $\bLambda$ where $\vV$ is a $p\times d$ orthogonal matrix and $\vD$ is a $d\times d$ invertible diagonal matrix. 
	Let $\mathbf{\Omega}:= \vD^{-\frac{1}{2} }\vV\tp (\widehat{\bLambda}_{H}-\bLambda)\vV \vD^{-\frac{1}{2} }$. For any unit vector $\bbeta\in \col(\bLambda)$, consider the transformed vector $\vU=\vD^{1/2}\vV\tp \bbeta$. 
	Since $\col(\vB)=\col(\bLambda)$, one has $\vB_\perp\tp\bbeta=0$. Then from \eqref{eqn:estimator:decomposition} we  turn to prove that
	\begin{align*}
	&\bbP\left( \exists \bbeta\in \mathbb{S}^{p-1}, \text{ s.t. } \Big| \bbeta\tp \left(\widehat{\bLambda}_{H}-\bLambda\right)\bbeta\Big| > \frac{2}{3\nu}\bbeta\tp \bLambda\bbeta \right)\\
	=&\bbP\left( \exists \vU\in \mathbb{R}^{p}, \text{ s.t. } \Big| \vU\tp \bO \vU\Big| > \frac{2}{3\nu}\vU^{\tp}\vU \right)\\
	\leqslant& C_{1}\exp\left(-C_{2}\frac{n\lambda}{H^{2}\nu^{2}} +C_{3}\log(nH)+C_{4}d\right).
	\end{align*}
	Lemma \ref{lem:key lemma in Lin under moment condition} yields 
	\begin{align}\label{eq:deviation-fixed-u}
	\bbP\left( \Big| \vU\tp \bO \vU\Big|> \frac{1}{2\nu} \vU\tp \vU \right) \leqslant   C_{1}\exp\left(-C_{2}\frac{n \lambda}{H^{2}\nu^{2}} +C_{3}\log(nH)\right),
	\end{align} 
	where we have used $\vU\tp \vU = \bbeta\tp \bLambda \bbeta \geqslant \lambda$. 
	We then use the standard $\epsilon$-net argument (see, e.g., \cite[Chapter 2.3.1]{tao2012topics}) to bound 
	\begin{align*}
	\bbP\left( \exists \vU\in \mathbb{R}^{p}, \text{ s.t. } \Big| \vU\tp \bO \vU\Big| > \frac{2}{3\nu} \vU\tp \vU \right)= \bbP\left(\|\bO\|   > \frac{2}{3\nu} \right). 
	\end{align*} 
	Let $\N$ be a $\frac18$-net in $\mathbb{S}^{d-1}$: a minimal set of points  in $\mathbb{S}^{d-1}$  such that  for any $\vu \in \mathbb{S}^{d-1}$, one can find find $\widetilde{\vu} \in \N$ such that $\| \vu-\widetilde{\vu}\|\leqslant 1/8$. This implies that $\vu\tp \bO \vu = \widetilde{\vu} \tp \bO \widetilde{\vu} + (\vu - \widetilde{\vu})\tp \bO \widetilde{\vu} + \vu\tp \bO (\vu-\widetilde{\vu})\leqslant \widetilde{\vu} \tp \bO \widetilde{\vu} +  \|\bO\|  /4$.  Taking the maximum of $\vu\in\mb{S}^{d-1}$, one has $\|\bO\|   \leqslant 4/3 \cdot  \max_{\widetilde{\vu} \in \N} \widetilde{\vu} \tp \bO \widetilde{\vu}$. 
	Therefore
	\begin{align*}
	\bbP\left( \|\bO\|   > \frac{2}{3\nu} \right) &\leqslant\bbP\left(  \max_{\widetilde{\vu} \in \N} \widetilde{\vu} \tp \bO \widetilde{\vu} > \frac{1}{2\nu} \right)\leqslant
	\sum_{\widetilde{\vu}\in \N} \bbP\left(  \Big| \widetilde{\vu} \tp \bO \widetilde{\vu}\Big| > \frac{1}{2\nu} \right)\\
	&\leqslant C_{1}\exp\left(-C_{2}\frac{n\lambda}{H^{2}\nu^{2}} +C_{3}\log(nH)+C_{4}d\right), 
	\end{align*}
	where in the last inequality we use the the fact that $| \N | \leqslant 17^{d}$ (See e.g., Lemma 5.2 in \cite{vershynin2010introduction}) and insert $\bbeta= \vV \vD^{-1/2}  \widetilde{\vu}/ \| \vV \vD^{-1/2} \widetilde{\vu}\|$ and $\vU=\vD^{1/2}\vV\tp \bbeta$ into Equation~\eqref{eq:deviation-fixed-u}.
\end{proof}

To proceed, we define some events: $\ttE_{1}=\Big\{~ \|\mathbf{\mathcal{W}}\mathbf{\mathcal{W}}\tp \|   \leqslant 6 \frac{p \vee H +\log(n\lambda)}{n}~\Big\}$, 
$\ttE_{2}=\Big\{~ \|\widehat{\bLambda}_{V}-\bLambda\|   \leqslant \frac{2}{3 \nu}\kappa\lambda~\Big\}$ 
and $\ttE=\ttE_{1}\cap \ttE_{2}$. 

\begin{corollary}\label{cor:minimax:key} 
	For any $\nu \in (\kappa, 2\kappa]$, we can find constants $C$ and $\widetilde{C}$, such that $\bbP\left( \ttE^{c}\right) \leqslant \frac{ \widetilde{C} }{n\lambda}$ holds if \\ 
	\begin{equation}
 \label{eq:cor-key-require-1,known covariance,low dim}
	\kappa^2 H^2\left( \log (nH\kappa) + d \right)<C n \lambda.
	\end{equation}
If further $\kappa \left( p\vee H + \log (n\lambda) \right)< 1800^{-1} n\lambda$, then on the event $ \ttE$, the followings hold
	\begin{itemize}
				\item[$a)$] 
		$ \frac{1}{3}\lambda\leqslant \lambda_{d}(\widehat{\bLambda}_{V}) \leqslant \lambda_{1}(\widehat{\bLambda}_{V})\leqslant 2\kappa \lambda$.
		\item[$b)$]
		$\|\widehat{\bLambda}_{H}-\widehat{\bLambda}_{V}\|  
		\leqslant \lambda \sqrt{18 \kappa \frac{p \vee H +\log(n\lambda )}{n \lambda}}< \frac{1}{4}\lambda$. 
		\item[$c)$] 
		$\lambda_{d+1}(\widehat{\bLambda}_{H}) < \frac{1}{4}\lambda$.
	\end{itemize}
\end{corollary}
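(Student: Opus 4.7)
The plan is to bound $\bbP(\ttE^c)$ by a union argument on $\ttE_1^c$ and $\ttE_2^c$ using Lemma~\ref{lem:minimx:key}, and then to derive the three structural consequences on $\ttE$ via Weyl's inequality combined with the block decomposition~\eqref{eqn:estimator:decomposition}.

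For the tail probability, I would first apply Lemma~\ref{lem:minimx:key}(a) with $t=\log(n\lambda)$ to obtain $\bbP(\ttE_1^c)\leqslant 2/(n\lambda)$. For $\ttE_2^c$, since $\col(\widehat{\bLambda}_V)=\col(\bLambda)=\col(\vB)$, the operator norm is realized by some unit vector $\bbeta\in\col(\vB)$; for such $\bbeta$, $\bbeta\tp\bLambda\bbeta\leqslant \kappa\lambda$, so on $\ttE_2^c$ we have $|\bbeta\tp(\widehat{\bLambda}_V-\bLambda)\bbeta|>\tfrac{2\kappa\lambda}{3\nu}\geqslant \tfrac{2}{3\nu}\bbeta\tp\bLambda\bbeta$, putting us inside the event of Lemma~\ref{lem:minimx:key}(b). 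Using $\nu^2\leqslant 4\kappa^2$, the resulting exponent becomes at least $C_2 n\lambda/(4H^2\kappa^2)-C_3\log(nH)-C_4 d$; the hypothesis $\kappa^2 H^2(\log(nH\kappa)+d)<C n\lambda$, with $C$ sufficiently small, forces this to dominate $\log(n\lambda)$, giving $\bbP(\ttE_2^c)\leqslant \widetilde C/(n\lambda)$.

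For part (a), since $\bLambda$ has rank $d$ with eigenvalues in $[\lambda,\kappa\lambda]$, Weyl's inequality together with $\|\widehat{\bLambda}_V-\bLambda\|\leqslant \tfrac{2}{3\nu}\kappa\lambda\leqslant \tfrac{2}{3}\lambda$ (as $\nu>\kappa$) gives the sandwich $\tfrac{1}{3}\lambda\leqslant \lambda_d(\widehat{\bLambda}_V)\leqslant\lambda_1(\widehat{\bLambda}_V)\leqslant 2\kappa\lambda$. For part (b), I would use $\widehat{\bLambda}_H-\widehat{\bLambda}_V=\mathcal{V}\mathcal{W}\tp+\mathcal{W}\mathcal{V}\tp+\mathcal{W}\mathcal{W}\tp$ and observe that in the basis $[\vB,\vB_\perp]$ this becomes a symmetric block matrix with zero $(1,1)$-block, off-diagonal block $\mathcal{Z}\mathcal{E}\tp$, and $(2,2)$-block $\mathcal{E}\mathcal{E}\tp$, whose operator norm is at most $\|\mathcal{Z}\mathcal{E}\tp\|+\|\mathcal{E}\mathcal{E}\tp\|$. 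Using $\|\mathcal{Z}\mathcal{E}\tp\|\leqslant\sqrt{\|\widehat{\bLambda}_V\|\cdot\|\mathcal{W}\mathcal{W}\tp\|}$ together with $\|\widehat{\bLambda}_V\|\leqslant 2\kappa\lambda$ from (a) and $\|\mathcal{W}\mathcal{W}\tp\|\leqslant 6r/n$ (where $r=p\vee H+\log(n\lambda)$), one obtains
\[
\|\widehat{\bLambda}_H-\widehat{\bLambda}_V\|\leqslant \sqrt{12\kappa\lambda r/n}+6r/n.
\]
The auxiliary hypothesis $\kappa r<n\lambda/1800$ ensures that $6r/n$ is a small fraction of $\sqrt{12\kappa\lambda r/n}$ (the ratio is $\sqrt{3r/(\kappa\lambda n)}<\sqrt{1/600}$), so the sum is bounded by $\sqrt{18\kappa\lambda r/n}=\lambda\sqrt{18\kappa r/(n\lambda)}\leqslant \lambda/10<\lambda/4$. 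Part (c) then follows immediately from Weyl's inequality and $\lambda_{d+1}(\widehat{\bLambda}_V)=0$, which holds since $\widehat{\bLambda}_V=\vB\widehat{\bLambda}_z\vB\tp$ has rank at most $d$.

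The main technical subtlety is the sharp constant $18\kappa$ in part (b): one must exploit the numerical slack in $\kappa r<n\lambda/1800$ to absorb the quadratic noise term $\|\mathcal{W}\mathcal{W}\tp\|$ into the cross term, so as to recombine two contributions of different orders into a single square-root expression. All other steps are routine applications of Weyl's inequality, sub-multiplicativity of the operator norm, and the elementary block-matrix norm bound.
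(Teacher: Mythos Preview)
Your proof is correct and follows essentially the same route as the paper: union bound via Lemma~\ref{lem:minimx:key} for $\bbP(\ttE^c)$, Weyl's inequality for (a), the decomposition~\eqref{eqn:estimator:decomposition} for (b), and Weyl again for (c). The only (harmless) difference is in (b): the paper uses the cruder triangle inequality $\|\widehat{\bLambda}_H-\widehat{\bLambda}_V\|\leqslant 2\|\mathcal{V}\mathcal{W}\tp\|+\|\mathcal{W}\mathcal{W}\tp\|$ and then absorbs $\|\mathcal{W}\mathcal{W}\tp\|$ under the square root via $\|\mathcal{W}\mathcal{W}\tp\|\leqslant 2\kappa\lambda$, whereas your block-matrix observation saves the factor of two on the cross term and lets the stated constant $18\kappa$ (rather than the $108\kappa$ that the paper's displayed computation actually produces) go through cleanly.
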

\begin{proof}
	From Lemma \ref{lem:minimx:key}, one has
\begin{align*}
\bbP(\ttE_1^c)&=\bbP\left(\|\mathcal{W}\mathcal{W}\tp \|   > 6\frac{p\vee H +\log(n\lambda) }{n} \right)\leqslant  2 \exp\left(- \log(n\lambda)\right)=\frac{2}{n\lambda}\\
\bbP(\ttE_2^c)&= \bbP\left( \exists \bbeta\in\mb{S}^{p-1}, \text{ s.t. } \Big| \bbeta\tp \left(\widehat{\bLambda}_{V}-\bLambda\right)\bbeta\Big| > \frac{2}{3\nu}\kappa\lambda\right)\\
&\leqslant \bbP\left( \exists \bbeta, \text{ s.t. } \Big| \bbeta\tp \left(\widehat{\bLambda}_{V}-\bLambda\right)\bbeta\Big| > \frac{2}{3\nu}\bbeta\tp \bLambda\bbeta \right)\leqslant C_{1}\exp\left(-C_{2}\frac{n\lambda}{H^{2}\nu^{2}} +C_{3}\log(nH)+C_{4}d\right).
\end{align*}
Thus to show 
\begin{align*}
\bbP(\ttE^c)&=\bbP(\ttE_1^c\cup \ttE_2^c)\leqslant \bbP(\ttE_1^c)+\bbP(\ttE_2^c) \leqslant \frac{ \widetilde{C} }{n\lambda}
\end{align*}
for some $\widetilde{C}$, one only need  $\exp\left(-C_{2}\frac{n\lambda}{H^{2}\nu^{2}} +C_{3}\log(nH)+C_{4}d\right) \lesssim \frac{1}{ n \lambda}$. 
This will be  true if the followings are bounded from below by some positive constant
\begin{align*}
\frac{n \lambda}{ H^2 \nu^2} / \log (nH), ~~ 
\frac{n \lambda}{ H^2 \nu^2} /  d, ~~ 
\frac{n \lambda}{ H^2 \nu^2} / \log(n\lambda). 
\end{align*}
Since $\nu\leqslant 2\kappa$, the first two are bounded by choosing a small $C$ in Equation~\eqref{eq:cor-key-require-1,known covariance,low dim}. Since $x / \log(x)$ is increasing for $x>e$, one has 
\begin{align*}
n\lambda / \log (n \lambda) > C^{-1} H^2\kappa^{2} (\log (nH \kappa)+d) / \log \left[ C^{-1} H^2\kappa^{2} (\log (nH \kappa)+d)  \right] \gtrsim H^2 \nu^2    
\end{align*}
and the last one is also bounded.

On the event $ \ttE_{2}$, Weyl's inequality implies that $\lambda_{d}(\widehat{\bLambda}_{V}) \geqslant \lambda_{d}( \bLambda) -  \frac{2}{3 \nu}\kappa\lambda > \frac{1}{3}\lambda$ and $ \lambda_{1}(\widehat{\bLambda}_{V})\leqslant \lambda_{1}(\bLambda) + 2\kappa \lambda/(3\nu)  \leqslant 2\kappa \lambda$.

From Equation~\eqref{eqn:estimator:decomposition}, 
\begin{equation}
\begin{aligned} 
\|\widehat{\bLambda}_{H}- \widehat{\bLambda}_{V} \|  
& \leqslant \|{\mathcal{V}}{\mathcal{W}}\tp\|  
+\|{\mathcal{W}}{\mathcal{V}}\tp\|  
+\|{\mathcal{W}}{\mathcal{W}}\tp\|   \\
& \leqslant 2 \sqrt{ \| \widehat{\bLambda}_{V} \|   \| {\mathcal{W}}{\mathcal{W}}\tp \|   } + \| {\mathcal{W}}{\mathcal{W}}\tp \|  . 
\end{aligned}
\end{equation}
On the event $\ttE$, if $6 \frac{p \vee H +\log(n\lambda)}{n \lambda}\leqslant 2\kappa$, the last display is further bounded by $\sqrt { 6 \cdot 3^2 \cdot 2 \kappa \lambda   \frac{p \vee H +\log(n\lambda)}{n} }$. 
If $\kappa \frac{p \vee H +\log(n\lambda)}{n \lambda}<2^{-6} 3^{-3}$, the bound is smaller than $\frac{1}{4} \lambda$. By Lemma \ref{lem:weyl} (Weyl's inequality) and $\lambda_{d+1}( \widehat{\bLambda}_{V} )=0$, one has $\lambda_{d+1}(\widehat{\bLambda}_{H})<\frac{1}{4}\lambda$. 
\end{proof}

Now we start the proof of Theorem~\ref{thm:risk:oracle:upper:d}. 
Throughout the proof $C$ is a constant independent of$(p,d,H,n,\lambda)$ whose value may very from line to line. 
Note that
\begin{equation*}
\begin{aligned}
\bbE\|\widehat{\vB}\widehat{\vB}\tp &-\vB\vB\tp \|^{2}_{F}\\
&=\underbrace{\bbE\|\widehat{\vB}\widehat{\vB}\tp -\vB\vB\tp \|^{2}_{F}\mathbf{1}_{\ttE^{c}}}_{I}+\underbrace{\bbE\|\widehat{\vB}\widehat{\vB}\tp -\vB\vB\tp \|^{2}_{F}\mathbf{1}_{\ttE}}_{II}.
\end{aligned}
\end{equation*}
\textbf{For $I$:}
By the triangle inequality and the fact that the Frobenius norm of a projection matrix equals to its rank, we have
\begin{align}
I\leqslant 2d\bbP(\ttE^{c})\leqslant  C \frac{d}{n \lambda}.  \label{eq:sir-upper-low-I}
\end{align}

\textbf{For $II$:}  
Let 
$\widehat{\bLambda}_{V}=
\widetilde{\vB}\vD_{H}\widetilde{\vB}\tp $
be the eigen-decomposition of $\widehat{\bLambda}_{V}$, where $\widetilde{\vB}$ is a $p\times d$ orthogonal matrix and $\vD_{H}$ is a $d\times d$ diagonal matrix. 
Note that  $\widetilde{\vB}$ and $\vB$ are sharing the same column space (i.e., $\widetilde{\vB}\widetilde{\vB}\tp =\vB\vB\tp $) since $\col(\bLambda)=\col(\wh\bLambda_{V})$.

Corollary~\ref{cor:minimax:key} states that 
on $\ttE$, one has $\lambda_{d}(\widehat{\bLambda}_{V})=\lambda_{d}(\vD_{H})\geqslant \frac{\lambda}{3}$, 
$\|\widehat{\bLambda}_{V}\|\leqslant 2\kappa\lambda$, 
and  $\lambda_{d+1}(\widehat{\bLambda}_{H})\leqslant \frac{1}{4}\lambda$. 

Let $\vQ=\widehat{\bLambda}_{H}-\widehat{\bLambda}_{V}$ 
 and let $\widehat{\vB}_{\perp}$ be a $p\times (p-d)$ orthogonal matrix whose columns are the last $(p-d)$ eigenvectors of $\widehat{\bLambda}_{H}$.
Applying  the Sin-Theta theorem (e.g., Lemma \ref{lem:sin_theta}) to  the pair of symmetric matrices
$(\widehat{\bLambda}_{V}, \widehat{\bLambda}_{H}$),
one has
\begin{align*}
II=\, &\bbE\|\vB\vB\tp -\widehat{\vB}\widehat{\vB}\tp \|^{2}_{F}\mathbf{1}_{\ttE}=\bbE\|\widetilde{\vB}\widetilde{\vB}\tp -\widehat{\vB}\widehat{\vB}\tp \|^{2}_{F}\mathbf{1}_{\ttE}\\
\leqslant\, &\frac{288}{\lambda^{2}}
\min
\left(
\bbE\|\widetilde{\vB}_{\perp}\tp  \vQ \widehat{\vB}\|_{F}^{2}\mathbf{1}_{\ttE}, \bbE\|\widetilde{\vB}\tp \vQ \widehat{\vB}_{\perp}\|_{F}^{2}\mathbf{1}_{\ttE}
\right)\\
\leqslant\, &\frac{288}{\lambda^{2}}\min\left(\bbE\|\widetilde{\vB}_{\perp}\tp\vQ\|_{F}^{2}\mathbf{1}_{\ttE}, \bbE\|\widetilde{\vB}\tp\vQ\|_{F}^{2}\mathbf{1}_{\ttE}\right)\quad(\text{Lemma \ref{lem:elementary:trivial2} and } \|\widetilde{\vB}\|\leqs 1, \|\widehat{\vB}_{\perp}\| \leqs 1 )\\
\leqslant&\frac{288}{\lambda^{2}}\bbE\|\widetilde{\vB}\tp\vQ\|_{F}^{2}\mathbf{1}_{\ttE}
\end{align*}
Since $\widetilde{\vB}$ and $\vB$ share the same column space, one has
$
\widetilde{\vB}\tp \mathcal{W}= \bs{0}$. 
Thus, one has
\begin{align*}
\widetilde{\vB}\tp \vQ
&=\widetilde{\vB}\tp \mathcal{V}\mathcal{W}\tp.
\end{align*}

By Lemma~\ref{lem:elementary:trivial2} and note that $\|\widetilde{\vB}\tp \mathcal{V}\|^2 \mathbf{1}_{\ttE} \leqslant \|\widehat{\bLambda}_{V}\|\mathbf{1}_{\ttE}\leqslant 2\kappa\lambda$, one has
\begin{align}\label{eq:bound of II}
\bbE\|\widetilde{\vB}\tp \mathcal{V}\mathcal{W}\tp \|^{2}_{F}\mathbf{1}_{\ttE}\leqslant 2\kappa\lambda \bbE\|\mathcal{W}\tp \|^{2}_{F}\leqslant \frac{2\kappa\lambda}{n} H(p-d)
\end{align}
where in the last inequality we apply Lemma~\ref{inline:trivial1} to $\sqrt{n} \cdot \mathbf{\mathcal{E}}$.
Since $\kappa$ is assumed to be fixed, one has
\begin{align}
II \leqslant\, &\frac{576\kappa}{n \lambda} H (p-d) \nonumber\\
\leqslant \, & C^{''}\frac{H (p-d) }{n\lambda}. \label{eq:sir-upper-low-II} 
\end{align}
Combining \eqref{eq:sir-upper-low-I} and \eqref{eq:sir-upper-low-II}, we conclude that 
\begin{align*}
\sup_{\mathcal{M} \in \mathfrak{M}(p,d,\lambda)}\bbE\|\widehat{\vB}\widehat{\vB}\tp -\vB\vB\tp \|^{2}_{F} \lesssim \frac{d+ H(p-d)}{n\lambda} \lesssim \frac{d p }{n\lambda}. 
\end{align*}

\subsection{Proof of Theorem \ref{thm:risk:sparse:upper:d}} \label{app:sparse:risk:proof}

\subsubsection*{Preliminaries} 

Since we have assumed that $\bS=\vI_p$ in this section, the two-fold estimator $\wh\vB$ defined near Theorem \ref{thm:risk:sparse:upper:d} can be simplified. Specifically,  we  first divide the samples into two equal sets of samples and have the corresponding decomposition 
\eqref{eqn:estimator:decomposition}
\begin{equation*}
\begin{aligned}
\widehat{\bLambda}_{H}&={\mathcal{V}}{\mathcal{V}}\tp 
+{\mathcal{V}}{\mathcal{W}}\tp
+{\mathcal{W}}{\mathcal{V}}\tp
+{\mathcal{W}}{\mathcal{W}}\tp \\
&=\widehat{\bLambda}_{V}  
+\vB \mathbf{\mathcal{Z}}\mathbf{\mathcal{E}}\tp \vB_{\perp}\tp 
+ \vB_{\perp}\mathbf{\mathcal{E}}\mathbf{\mathcal{Z}}\tp \vB\tp 
+\vB_{\perp}\mathbf{\mathcal{E}}\mathbf{\mathcal{E}}\tp \vB_{\perp}\tp.
\end{aligned}
\end{equation*}
for these two sets of samples.   That is, 
for $i=1,2,$ we  define $\bLambda_{H}^{(i)}$, $\bLambda_{V}^{(i)}$, $\mathbf{\mathcal{Z}}^{(i)},\mathcal{W}^{(i)}$   $\mathcal{V}^{(i)},\wh{\bLambda}_{z}^{(i)}$  and $\mathcal{E}^{(i)}$  for the first and second set of samples respectively according to the decomposition \eqref{eqn:estimator:decomposition}. 
Then the two-fold aggregation  estimator  $\widehat{\vB}$ can be defined as:

{\it Two-fold Aggregation Estimator with identity covariance: }
\begin{itemize}
	\item[$\mathbf{(i)}$]  	For each $L \in \mathcal{L}( s)$ (the set of all subsets of $[p]$ with size $s$), let 
	\begin{equation}\label{estimator:sample}
	\begin{aligned}
	&\widehat{\vB}_{L}:= 
	\arg\max_{\vB} ~~~\Tr(\vB\tp
	{\bLambda}_{H}^{(1)}\vB)\\
	&\mbox{ s.t. }\vB\tp\vB=\vI_d \mbox{ and } \supp(\vB)\subset L.
	\end{aligned}
	\end{equation}	
	\item[$\mathbf{(ii)}$]  Our aggregation estimator $\widehat{\vB}$ is defined to be $\widehat{\vB}_{L^{*}}$ where 
	\begin{align*}
	L^{*}:= 	\arg\max_{L\in \mathcal{L}(s)}~~~\Tr(\widehat{\vB}_{L}\tp
	{\bLambda}_{H}^{(2)}\widehat{\vB}_{L}).
	\end{align*}
\end{itemize}	
In addition, we introduce an ``oracle estimator'' $\widehat{\vB}_{O}$ that utilizes the information about $S$, the support of $\vB$. 

{\it Oracle Estimator: }
\begin{equation}\label{estimator:oracle}
\begin{aligned}
&\widehat{\vB}_{O}:= \arg\max_{\vA} \langle\bLambda^{(1)}_{H},\vA\vA\tp\rangle
=\arg\max_{\vA}\Tr(\vA^{\top}\bLambda^{(1)}_{H}\vA)\\
&\mbox{ s.t. }  \vA^{\top}\vA=\vI_{d} \mbox{ and } supp(\vA) = S.
\end{aligned}
\end{equation}

Let us first introduce some notations.  For $i=1,2$, let $\bLambda_V^{(i)}=\vB^{(i)}\vD^{(i)}\vB^{(i),\top}$ where $\vB^{(i)}$ is $p\times d$ orthogonal matrix and  $\vD^{(i)}:=\{\lambda_1^{(i)},\dots,\lambda_d^{(i)}\}$ is a diagonal matrix.  For any subset $\tilde{S}$ of $[p]$, let $\vJ_{\tilde{S}}$ be the diagonal matrix such that $\vJ_{\tilde{S}}(i,i)=1$ if $i\in[\tilde{S}]$ and $\vJ_{\tilde{S}}(i,i)=0$ otherwise.

For $i=1,2$, let $\ttE^{(i)}_{2}$ be the event defined similarly as  $\ttE_{2}$ (which is introduced near Corollary \ref{cor:minimax:key}). Let $\bar{\ttE}_{2}=\ttE^{(1)}_{2}\cap \ttE^{(2)}_{2}$ and $\vQ_{S}=\vJ_{S} \left(\bLambda^{(1)}_{H}-\bLambda^{(1)}_{V}\right)\vJ_{S}$.

Let $\mathtt{F}$ consist of the events such that $\|\vJ_{S}\mathcal{\vW}^{(1)}\mathcal{\vW}^{(1),\top}\vJ_{S}\|\leqslant  6 \frac{s \vee H +\log(n\lambda)}{n}$ and define $\ttE:=\bar{\ttE}_{2}\cap\mathtt{F}$.
Following the reasoning of Corollary~\ref{cor:minimax:key}, if $\nu\in (\kappa, 2\kappa]$,  $\kappa^2 H^2\left( \log(nH) + \log \kappa + d \right)/ (n \lambda) $ and $\kappa \left( s\vee H + \log (n\lambda) \right)/ (n\lambda)$ are sufficiently small, one has $\bbP\left( \ttE^{c}\right) \leqslant \frac{C }{n\lambda}$ and the followings hold on $\ttE$:
\begin{enumerate}
\item
\begin{align}\label{fact:key}
\frac\lambda3\leqslant \lambda_{d}^{(i)}\leqslant ... \leqslant \lambda_{1}^{(i)}\leqslant2\kappa\lambda.
\end{align}
\item
By Weyl's inequality, one has
\begin{align}\label{fact:key2}
\|\vQ_{S}\| < \frac{1}{4}\lambda,\quad\lambda_{d+1}\left(\vJ_S\bLambda_{H}^{(1)}\vJ_{S}\right)\leqslant \frac{\lambda}{4}.
\end{align} 
\end{enumerate}

Let $\widehat{\vB}_{O}^{\top}\vB=\vU_{1}\Delta \vU_{2}^{\top}$ be the singular value decomposition of $\widehat{\vB}_{O}^{\top}\vB$ such that the entries of $\Delta$ are non-negative and  $\vM:= \vU_{2}^{\top}\wh{\bLambda}_z^{(2)}\vU_{2}$.  

\subsubsection*{Main part of the proof} 
Now, we start our proof of Theorem \ref{thm:risk:sparse:upper:d}. It is easy to verify that
\begin{align*}
\|\widehat{\vB}\widehat{\vB}^{\top}-\vB\vB^{\top}\|_{F}^{2}
\leqslant C\left( 
\|\widehat{\vB}\widehat{\vB}^{\top}-\widehat{\vB}_{O}\widehat{\vB}_{O}^{\top}\|_{F}^{2}
+\|\widehat{\vB}_{O}\widehat{\vB}_{O}^{\top}-\vB\vB^{\top}\|_{F}^{2}\right).
\end{align*}
For the first term $\|\widehat{\vB}\widehat{\vB}^{\top}-\widehat{\vB}_{O}\widehat{\vB}_{O}^{\top}\|^{2}_{F}$, conditioning on $\ttE$, we know 
\begin{align}
\|\widehat{\vB}\widehat{\vB}^{\top}-\widehat{\vB}_{O}\widehat{\vB}_{O}^{\top}\|^{2}_{F} 
\leqslant & \frac{2}{\lambda_{d}(\wh{\bLambda}_z^{(2)})}\langle\widehat{\vB}_{O}\vU_{1}\vM\vU_{1}^{\top}\widehat{\vB}_{O}^{\top},\widehat{\vB}_{O}\widehat{\vB}_{O}^{\top}-\widehat{\vB}\widehat{\vB}^{\top} \rangle \label{inline:E-1:temp}\\
\leqslant & \frac{C}{\lambda}\langle\widehat{\vB}_{O}\vU_{1}\vM\vU_{1}^{\top}\widehat{\vB}_{O}^{\top}-\bLambda^{(2)}_{H},\widehat{\vB}_{O}\widehat{\vB}_{O}^{\top}-\widehat{\vB}\widehat{\vB}^{\top} \rangle \label{inline:E-O:temp} \\
\nonumber :=& I+II.
\end{align}
where
\begin{align}
I\nonumber=&\frac{C}{\lambda}\langle\widehat{\vB}_{O}\vU_{1}\vM\vU_{1}^{\top}\widehat{\vB}_{O}^{\top}
-\bLambda^{(2)}_{V},\widehat{\vB}_{O}\widehat{\vB}_{O}^{\top}-\widehat{\vB}\widehat{\vB}^{\top} \rangle \\
II\nonumber =&\frac{C}{\lambda}\langle\bLambda^{(2)}_{V}-\bLambda^{(2)}_{H},\widehat{\vB}_{O}\widehat{\vB}_{O}^{\top}-\widehat{\vB}\widehat{\vB}^{\top} \rangle. &
\end{align}
Inequality \eqref{inline:E-1:temp} follows from applying Lemma \ref{cor:elemetary:temp} with the positive definite matrix $\vU_{1}\vM\vU_{1}^{\top}$.
The inequality \eqref{inline:E-O:temp} follows from the definition of $\widehat{\vB}$  and the fact that the eigenvalues of $\wh{\bLambda}_z^{(2)}$ are in $(\lambda/3,2\kappa\lambda)$ (See fact 1). To simplify the notation, we let
\begin{equation*}
\begin{aligned}
\delta=\|\widehat{\vB}_{O}\widehat{\vB}_{O}^{\top}
-\vB\vB^{\top}\|_{F}.
\end{aligned}
\end{equation*}
\textbf{For I:} 
First, $\widehat{\vB}_{O}\vU_{1}$ and $\vB \vU_{2}$   satisfy the condition that $\vU_{1}^{\top}\widehat{\vB}_{O}^{\top}\vB \vU_{2}=\Delta$ is a diagonal matrix with non-negative entries.
Second, the eigenvalues of $\wh{\bLambda}_{z}^{(2)}$ $\in (\frac{1}{3}\lambda, 2\kappa\lambda)$  thus $\vM:= \vU_{2}^{\top}\wh{\bLambda}_{z}^{(2)}\vU_{2}$ has eigenvalues in $(\lambda/3,2\kappa\lambda)$.  By Lemma \ref{lem:elementary:trivial3}, there exists a constant $C$ such that
\begin{equation}\nonumber
\begin{aligned}
\| \widehat{\vB}_{O}\vU_{1}\vM\vU_{1}^{\top}\widehat{\vB}_{O}^{\top}
-\bLambda_V^{(2)}\|_{F}
\leqslant C\lambda \|\widehat{\vB}_{O}\widehat{\vB}^{\top}_{O}-\vB\vB^{\top}\|_{F}.
\end{aligned}
\end{equation}
Thus, conditioning on $\ttE$, one has
\begin{equation}\label{inlince:ttt}
\begin{aligned}
\big|I\big| \leqslant& C \|\widehat{\vB}_{O}\widehat{\vB}_{O}^{\top}-
\vB\vB^{\top}\|_{F}\|\widehat{\vB}_{O}\widehat{\vB}_{O}^{\top}-\widehat{\vB}\widehat{\vB}^{\top}\|_{F}\\
= & C\delta \|\widehat{\vB}_{O}\widehat{\vB}_{O}^{\top}-\widehat{\vB}\widehat{\vB}^{\top}\|_{F}.
\end{aligned}
\end{equation} 
\textbf{For II:}  
Define $\vK_{L}=\|\widehat{\vB}_{O}\widehat{\vB}_{O}^{\top}-\widehat{\vB}_{L}\widehat{\vB}_{L}^{\top}\|^{-1}_{F}\left(\widehat{\vB}_{O}\widehat{\vB}_{O}^{\top}-\widehat{\vB}_{L}\widehat{\vB}_{L}^{\top}\right)$. ( For any $L \in \mathcal{L}(s)$, $\widehat{\vB}_{L}$ is introduced in \eqref{estimator:sample} ). Then
\begin{align*}
|II|=& \frac{C}{\lambda}\langle\bLambda^{(2)}_{V}-\bLambda^{(2)}_{H},(\widehat{\vB}_{O}\widehat{\vB}_{O}^{\top}-\widehat{\vB}\widehat{\vB}^{\top})\|\widehat{\vB}_{O}\widehat{\vB}_{O}^{\top}-\widehat{\vB}\widehat{\vB}^{\top}\|_F^{-1} \rangle\|\widehat{\vB}_{O}\widehat{\vB}_{O}^{\top}-\widehat{\vB}\widehat{\vB}^{\top}\|_F\\
\leqslant& \frac{C}{\lambda}\max_{L\in\mathcal{L}(s)} \left| \langle\bLambda^{(2)}_{V}-\bLambda^{(2)}_{H},\vK_L\rangle\right|  \|\widehat{\vB}_{O}\widehat{\vB}_{O}^{\top}-\widehat{\vB}\widehat{\vB}^{\top}\|_F.
\end{align*}

From the equation \eqref{eqn:estimator:decomposition}, one has
\begin{equation}\label{inlince:tt}
\begin{aligned}
\big|II \big| 
\leqslant& \frac{C}{\lambda}\|\widehat{\vB}_{O}\widehat{\vB}_{O}^{\top}-\widehat{\vB}\widehat{\vB}^{\top}\|_{F}\left( 2T_{2}+T_{1}\right)
\end{aligned}
\end{equation}
where 
$T_{1}=\max_{L\in\mathcal{L}(s)}\Big|\Big< \mathcal{W}^{(2)}\mathcal{W}^{(2),\top},\vK_{L}\Big>\Big|$,
$T_{2}=\max_{L\in\mathcal{L}(s)}\Big|\Big< \mathcal{V}^{(2)}\mathcal{W}^{(2),\top},\vK_{L}\Big>\Big|$.
\vspace*{3mm}
To summarize, conditioning on $\ttE$, one has
\begin{align}
\|\widehat{\vB}\widehat{\vB}^{\top}-
\widehat{\vB}_{O}\widehat{\vB}_{O}^{\top}\|_{F}\leqslant C\left(\delta+\frac{1}{\lambda}(2T_{2}+T_{1})\right).
\end{align}
Thus, one has
\begin{align*}
\|\widehat{\vB}\widehat{\vB}^{\top}- \vB\vB^{\top}\|_{F}^2\bold{1}_{\ttE} 
&\leqslant 
C\left( \delta^2+\|\widehat{\vB}_{O}\widehat{\vB}_{O}^{\top}-\widehat{\vB}\vB^{\top}\|^{2}_{F}\right)\bold{1}_{\ttE}\\
&\leqslant 
C\left( \delta^2+C\left( \delta+\frac{1}{\lambda}\left(2T_{1}+T_{2} \right)\right)^2\right)\bold{1}_{\ttE}\\
&\leqslant C\left( \delta^2+\left(\frac{1}{\lambda}\left(2T_{1}+T_{2} \right)\right)^2\right)\bold{1}_{\ttE}.
\end{align*}
Note that $\P( \ttE^c )\leqslant \frac{C }{n\lambda}$. If we can prove
\begin{align}
\bbE\delta^{2}\bold{1}_{\ttE}\leqslant C\epsilon_{n}^{2} \quad  
\mbox{ and } \bbE( 2T_{1}+T_{2})^{2}\bold{1}_{\ttE}\leqslant \lambda^{2} \epsilon^{2}_{n},
\end{align}   
then one has $\bbE \|\widehat{\vB}\widehat{\vB}^{\top}- \vB\vB^{\top}\|_{F}^2\bold{1}_{\ttE}\leqslant C\epsilon_{n}^2$ and then 
$$
\E\left[ \|\widehat{\vB}\widehat{\vB}\tp -\vB\vB\tp \|_{F}^{2} \one_{\ttE^c}\right] \leqslant 2 d   \frac{C   }{n\lambda}\lesssim \epsilon_n^2. 
$$
Therefore, we conclude that $\E\left[ \|\widehat{\vB} \widehat{\vB} \tp - \vB\vB\tp \|_{F}^{2}  \right]\lesssim \epsilon_{n}^{2}$. 
Thus, it is suffice to prove the following two lemmas.
\begin{lemma} If $n\lambda\leqslant e^{s\vee H}$, then
	\begin{align}
	\bbE\delta^{2}\bold{1}_{\ttE}\leqslant C\epsilon_{n}^{2}.  
	\end{align}

\end{lemma}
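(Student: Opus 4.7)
The plan is to mirror the argument in the proof of Theorem~\ref{thm:risk:oracle:upper:d}, exploiting that the oracle estimator $\widehat{\vB}_O$ is effectively a low-dimensional SIR estimator confined to the coordinate subspace indexed by $S$. Since $\supp(\vB)\subset S$, one has $\vJ_S\vB=\vB$ and therefore $\vJ_S\bLambda_V^{(1)}\vJ_S=\bLambda_V^{(1)}$. The columns of $\widehat{\vB}_O$ span the top-$d$ eigenspace of the symmetric matrix $\vJ_S\bLambda_H^{(1)}\vJ_S=\bLambda_V^{(1)}+\vQ_S$, while $\col(\vB)$ is the top-$d$ eigenspace of $\bLambda_V^{(1)}$. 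On the event $\ttE$, facts~\eqref{fact:key} and~\eqref{fact:key2} furnish a spectral gap of at least $\lambda/12$ between these two subspaces, which legitimizes an application of the Sin-Theta theorem (Lemma~\ref{lem:sin_theta}) to the pair $(\bLambda_V^{(1)},\vJ_S\bLambda_H^{(1)}\vJ_S)$.

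The Sin-Theta bound yields, on $\ttE$,
\begin{align*}
\delta^2 \;\lesssim\; \frac{1}{\lambda^2}\,\|\widetilde{\vB}^\top\vQ_S\widehat{\vB}_{O,\perp}\|_F^2 \;\leqslant\; \frac{1}{\lambda^2}\,\|\widetilde{\vB}^\top\vQ_S\|_F^2,
\end{align*}
where $\widetilde{\vB}$ is an orthonormal basis of $\col(\vB)$ satisfying $\widetilde{\vB}\widetilde{\vB}^\top=\vB\vB^\top$ and, because $\supp(\widetilde{\vB})\subset S$, $\widetilde{\vB}^\top\vJ_S=\widetilde{\vB}^\top$. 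Expanding $\vQ_S=\vJ_S(\mathcal{V}^{(1)}\mathcal{W}^{(1),\top}+\mathcal{W}^{(1)}\mathcal{V}^{(1),\top}+\mathcal{W}^{(1)}\mathcal{W}^{(1),\top})\vJ_S$ and noting that $\widetilde{\vB}^\top\mathcal{W}^{(1)}=0$ (since $\col(\widetilde{\vB})=\col(\vB)\perp\col(\vB_\perp)$), the two trailing terms vanish, leaving $\widetilde{\vB}^\top\vQ_S=\widetilde{\vB}^\top\mathcal{V}^{(1)}\mathcal{W}^{(1),\top}\vJ_S$. Applying Lemma~\ref{lem:elementary:trivial2} together with $\|\widetilde{\vB}^\top\mathcal{V}^{(1)}\|^2\leqslant\|\bLambda_V^{(1)}\|\leqslant 2\kappa\lambda$ on $\ttE$ (by fact~\eqref{fact:key}) then yields $\delta^2\mathbf{1}_{\ttE}\lesssim\frac{\kappa}{\lambda}\|\vJ_S\mathcal{W}^{(1)}\|_F^2$.

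Taking expectations, the decisive calculation is the Gaussian moment identity
\begin{align*}
\bbE\|\vJ_S\mathcal{W}^{(1)}\|_F^2 \;=\; \Tr(\vJ_S\vB_\perp\bbE[\mathcal{E}^{(1)}\mathcal{E}^{(1),\top}]\vB_\perp^\top\vJ_S) \;\leqslant\; \frac{CH}{n}\,\Tr(\vJ_S\vB_\perp\vB_\perp^\top\vJ_S) \;\leqslant\; \frac{CHs}{n},
\end{align*}
since the entries of $\mathcal{E}^{(1)}$ are i.i.d.\ Gaussians with variance $O(1/n)$, $\vB_\perp\vB_\perp^\top\preceq\vI_p$, and $\Tr(\vJ_S)\leqslant s$. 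Combined with $H\asymp d$, this produces $\bbE\delta^2\mathbf{1}_{\ttE}\lesssim\frac{ds}{n\lambda}\leqslant\epsilon_n^2=\frac{ds+s\log(ep/s)}{n\lambda}$, as required. The hypothesis $n\lambda\leqslant e^{s\vee H}$ plays no role in the displayed bound itself; it enters only through the sparse analogue of Corollary~\ref{cor:minimax:key} that certifies $\bbP(\ttE^c)\lesssim 1/(n\lambda)$, by absorbing the $\log(n\lambda)$ term appearing in the definition of $\mathtt{F}$ into $s\vee H$. No substantive obstacle is anticipated: the whole proof is driven by the single substitution of the dense-case Frobenius bound $\bbE\|\mathcal{W}\|_F^2\lesssim Hp/n$ by $\bbE\|\vJ_S\mathcal{W}^{(1)}\|_F^2\lesssim Hs/n$, obtained through the coordinate projector $\vJ_S$.
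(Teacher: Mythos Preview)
Your argument is correct and follows the same skeleton as the paper's proof: apply the Sin-Theta theorem (Lemma~\ref{lem:sin_theta}) to the pair $(\bLambda_V^{(1)},\vJ_S\bLambda_H^{(1)}\vJ_S)$ using facts~\eqref{fact:key}--\eqref{fact:key2}, exploit $\widetilde{\vB}^\top\mathcal{W}^{(1)}=0$ to reduce $\widetilde{\vB}^\top\vQ_S$ to $\widetilde{\vB}^\top\mathcal{V}^{(1)}\mathcal{W}^{(1),\top}\vJ_S$, and bound $\|\widetilde{\vB}^\top\mathcal{V}^{(1)}\|^2\leqslant 2\kappa\lambda$ on $\ttE$.

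The only divergence is in the last step. The paper bounds $\bbE[\|\vJ_S\mathcal{W}^{(1)}\|_F^2\mathbf{1}_\ttE]$ via $\Tr(\vA)\leqslant\mathrm{rank}(\vA)\|\vA\|$ and then invokes the operator-norm control built into the event $\mathtt{F}$, obtaining $(s\wedge H)\cdot\frac{s\vee H+\log(n\lambda)}{n}$; it is precisely here that the hypothesis $n\lambda\leqslant e^{s\vee H}$ is used to absorb the $\log(n\lambda)$ term. Your route---computing $\bbE\|\vJ_S\mathcal{W}^{(1)}\|_F^2=\frac{CH}{n}\Tr(\vJ_S\vB_\perp\vB_\perp^\top\vJ_S)\leqslant\frac{CHs}{n}$ directly from the Gaussian moments---is cleaner and sidesteps the hypothesis entirely for this lemma. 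So your remark that the hypothesis ``plays no role in the displayed bound'' is correct for \emph{your} proof, but slightly misdiagnoses where the paper uses it: the paper needs it in this very lemma, not in the bound on $\bbP(\ttE^c)$.
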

\begin{proof}

By \eqref{fact:key} and \eqref{fact:key2}, we know that the eigenvalues of $\vJ_{S}\bLambda^{(1)}_{V}\vJ_{S}=\bLambda^{(1)}_{V}$ is in  $(\frac{1}{3}\lambda,2\kappa\lambda)$ and  the $(d+1)$-th largest eigenvalues of $\vJ_{S}\bLambda^{(1)}_{H}\vJ_{S}$ is less than $\frac{\lambda}{4}$. 
Let $\widehat{\vB}^{\perp}_O$ be a $p\times (p-d)$ orthogonal matrix whose columns are the last $(p-d)$ eigenvectors of $\vJ_S\bLambda_{H}^{(1)}\vJ_S$. 
 After applying the Sin-Theta Theorem (Lemma \ref{lem:sin_theta}) to  the pair of symmetric matrices $(\bLambda^{(1)}_{V}=\vJ_{S}\bLambda^{(1)}_{V}\vJ_{S}, \vJ_{S}\bLambda^{(1)}_{H}\vJ_{S})$ , one has
\begin{equation*}\label{use:sintheta}
\begin{aligned}
\delta^2 =\|\widehat{\vB}_{O}\widehat{\vB}_{O}^{\top}
-\vB^{(1)}\vB^{(1),\top}\|_{F}
\leqslant  \frac{C}{\lambda^2} \| \widehat{\vB}_{O}^{\perp,\top } \vQ_{S}  \vB^{(1)}\|_{F}^2
\leqslant  \frac{C}{\lambda^{2}} \|  \vQ_{S}  \vB^{(1)}\|_{F}^{2}.
\end{aligned}
\end{equation*} 
Note that $\vB^{(1),\top}  \vJ_{S} \mathcal{\vW} = \vB^{(1),\top}\vB_{\perp} \mathcal{\vE}=0$ because $\vJ_{S}\vB^{(1)}=\vB^{(1)}$ and $\vB^{(1),\top} \vB_{\perp}=0$. Hence 
$$
\|  \vQ_{S}  \vB^{(1)} \|_{F}^{2}= \|   \vB \tp  \mathcal{\vV}^{(1)} \mathcal{\vW}^{(1),\top} \vJ_{S} \|_{F}^{2} \leqslant  \|  \mathcal{\vV}^{(1)}\|^{2} \|\mathcal{\vW}^{(1),\top} \vJ_{S} \|_{F}^{2}. 
$$
On the event $\ttE$,$\|\mathcal{\vV}^{(1)}\|^{2}\leqs \|\bLambda^{(1)}_{V}\| \leqslant 2 \kappa \lambda$ and $\left\{ \| \vJ_{S}\mathcal{\vW}^{(1)}\mathcal{\vW}^{(1),\top}\vJ_{S}  \| \leqslant  6 \frac{s \vee H +\log(n\lambda)}{n} \right\}$.
Therefore,
\begin{equation*}
\begin{aligned}
\bbE[\delta^{2}\mathbf{1}_{ \ttE}] &\leqslant \frac{C}{\lambda}  \bbE[ \| \vJ_{S} \mathcal{\vW}^{(1)}  \|_{F}^{2}\mathbf{1}_{ \ttE}]\leqslant \frac{C(s\wedge  H)}{\lambda}\E[\| \vJ_{S}\mathcal{\vW}^{(1)}\mathcal{\vW}^{(1),\top}\vJ_{S}\|\mathbf{1}_{ \ttE}]\leqslant C(s\wedge  H) \frac{s \vee H}{n\lambda}\leqslant C\epsilon_n^{2}
\end{aligned}
\end{equation*} 
where the second inequalities follows from the basic inequality that  $\Tr(\vA)\leqslant \mr{rank}(\vA)\|\vA\|$ and the fact that $$\mr{rank}(\vJ_{S}\mathcal{\vW}^{(1)}\mathcal{\vW}^{(1),\top}\vJ_{S})\leqslant s\wedge H$$ and the third inequality follows from $n\lambda<e^{s\vee H}$.
\end{proof}

\begin{lemma} \label{lem:bound T1 T2}
There exists positive constant $C$ such that
	\[
	\bbE(2T_{1}+T_{2})^{2} \bold{1}_{\ttE}\leqslant C\lambda^{2}\epsilon^{2}_{n}.
	\]
\end{lemma}
	\begin{proof}
	 Since $(2T_{1}+T_{2})^{2} \leqslant C(T_{1}^{2}+T_{2}^{2})$, we only need to bound $\bbE T_{1}^{2}$ and $\bbE T_{2}^{2}$ separately.
\newline\textbf{For $T_{1}$. }
Recall that $\mathcal{W}^{(2)}=\vB_{\perp}\mathcal{E}^{(2)}$ (See notation near \eqref{eqn:estimator:decomposition}.) and for each fixed $L \in \mathcal{L}_{s}$, $\vK_{L}\independent \mathcal{W}^{(2)}$, hence 
\begin{align}
\langle\mathcal{W}^{(2)}\mathcal{W}^{(2),\top},\vK_{L} \rangle=\langle\mathcal{E}^{(2)}\mathcal{E}^{(2),\top},\vB_{\perp}^{\top}\vK_{L}\vB_{\perp} \rangle
\end{align} 
and $\vB_{\perp}^{\top}\vK_{L}\vB_{\perp}  \independent \mathcal{W}^{(2)}$.

By Lemma \ref{lem:elementary:trivial2}, $\|\vB_{\perp}^{\top}\vK_{L}\vB_{\perp}\|_{F}\leqslant 1$.  
For any $m\times m$ symmetric matrix $\vA$, $\|\vA - \frac{\Tr(\vA)}{m} \bs{I}_{m}\|_{F}^2=\Tr(\vA\tp \vA)- \frac{1}{m}\Tr(\vA)^2\leqslant \|\vA\|_{F}^2$. Therefore, 
\begin{align*}
\|\vB_{\perp}^{\top}\vK_{L}\vB_{\perp}-\frac{\Tr(\vB_{\perp}^{\top}\vK_{L}\vB_{\perp})}{p-d}\vI_{p-d}\|_F\leqslant\|\vB_{\perp}^{\top}\vK_{L}\vB_{\perp}\|_F\leqslant1.
\end{align*}
Note that 
$\mathcal{E}^{(2)}$ is a $(p-d)\times H$ matrix and  $\sqrt{n}\mathcal{E}_{i,j}^{(2)}\sim N(0,1)$, we can apply  Lemma \ref{lem:Cai:lem4} with $\vZ=\sqrt n\mathcal{E}^{(2)}$ and $\vK=\vB_{\perp}^{\top}\vK_{L}\vB_{\perp}-\frac{\Tr(\vB_{\perp}^{\top}\vK_{L}\vB_{\perp})}{p-d}\vI_{p-d}$ to derive that
\begin{align}
\bbP\left(\Big|\Big\langle\mathcal{E}^{(2)}\mathcal{E}^{(2),\top}, \vB_{\perp}^{\top}\vK_{L}\vB_{\perp}-\frac{\Tr(\vB_{\perp}^{\top}\vK_{L}\vB_{\perp})}{p-d}\vI_{p-d} \Big\rangle\Big
|\geqslant \frac{2\sqrt{H}}{ n}t +\frac{2}{ n}t^{2} \right) \leqslant 2\exp\left(-t^{2} \right).
\end{align}	

After applying Lemma \ref{lem:Cai:lem5} with $N=|\mathcal{L}(s)|\leqslant \left(\frac{ep}{s} \right)^{s}$, $a=\frac{2\sqrt{H}}{n}$, $b=\frac{2}{n}$, $c=2$ and $X_i=\Big\langle \mathcal{E}^{(2)}\mathcal{E}^{(2),\top}, \vB_{\perp}^{\top}\vK_{L}\vB_{\perp}-\frac{\Tr(\vB_{\perp}^{\top}\vK_{L}\vB_{\perp})}{p-d}\vI_{p-d} \Big\rangle$, one has
\begin{align*}
&\E{\max_{L\in\mathcal L(s)}}\Big|\Big\langle \mathcal{E}^{(2)}\mathcal{E}^{(2),\top}, \vB_{\perp}^{\top}\vK_{L}\vB_{\perp}-\frac{\Tr(\vB_{\perp}^{\top}\vK_{L}\vB_{\perp})}{p-d}\vI_{p-d} \Big\rangle\Big|^2\\
\leqslant& \frac{2H+32}{n^2}\log(2eN)+\frac{8}{n^2}\log^2(2N).
\end{align*}
Note that 
\begin{align*}
&\E{\max_{L\in\mathcal L(s)}}\Big|\Big\langle \mathcal{E}^{(2)}\mathcal{E}^{(2),\top}, \vB_{\perp}^{\top}\vK_{L}\vB_{\perp} \Big\rangle\Big|^2\leqs 2\E{\max_{L\in\mathcal L(s)}}\Big|\Big\langle \mathcal{E}^{(2)}\mathcal{E}^{(2),\top}, \frac{\Tr(\vB_{\perp}^{\top}\vK_{L}\vB_{\perp})}{p-d}\vI_{p-d} \Big\rangle\Big|^2\\
+&2\E{\max_{L\in\mathcal L(s)}}\Big|\Big\langle \mathcal{E}^{(2)}\mathcal{E}^{(2),\top}, \vB_{\perp}^{\top}\vK_{L}\vB_{\perp}-\frac{\Tr(\vB_{\perp}^{\top}\vK_{L}\vB_{\perp})}{p-d}\vI_{p-d} \Big\rangle\Big|^2
\end{align*}
and
\begin{align*}
\E{\max_{L\in\mathcal L(s)}}\Big|\Big\langle \mathcal{E}^{(2)}\mathcal{E}^{(2),\top}, \frac{\Tr(\vB_{\perp}^{\top}\vK_{L}\vB_{\perp})}{p-d}\vI_{p-d} \Big\rangle\Big|^2&=\E{\max_{L\in\mathcal L(s)}}(\frac{\Tr(\vB_{\perp}^{\top}\vK_{L}\vB_{\perp})}{p-d})^2\|\mathcal{E}^{(2)}\|_F^4\overset{(a)}{\leqslant}\frac{2s}{(p-d)^2}\E\|\mathcal{E}^{(2)}\|_F^4\\
&\overset{(b)}{=}\frac{2s}{(p-d)^2}\frac{(p-d)^2H^2+2H(p-d)}{n^2}\asymp \frac{H^2s}{n^2}.
\end{align*}
Here in $(a)$ we used the inequality that 
$|\Tr(\vA)|\leqs \sqrt{\mr{rank}(\vA) } \|\vA\|_{F}$ and the facts that $\mr{rank}(\vB_{\perp}^{\top}\vK_{L}\vB_{\perp})\leqs 2s$ and $\|\vB_{\perp}^{\top}\vK_{L}\vB_{\perp}\|_{F}\leqs 1$, 
and in $(b)$ we used Lemma \ref{inline:trivial1}. 
Then we have
\begin{align*}
&\E{\max_{L\in\mathcal L(s)}}\Big|\Big\langle \mathcal{E}^{(2)}\mathcal{E}^{(2),\top}, \vB_{\perp}^{\top}\vK_{L}\vB_{\perp} \Big\rangle\Big|^2\\
\lesssim&  \frac{4H+64}{n^2}\log(2eN)+\frac{16}{n^2}\log^2(2N)+\frac{H^2s}{n^2}\\
\lesssim& \frac{H}{n^2}\log(2eN)+\frac{\log^2(2N)}{n^2}+\frac{H^2s}{n^2}\lesssim\lambda^2\epsilon_n^4\lesssim\lambda^2\epsilon_n^2.
\end{align*}

\textbf{For $T_{2}$. }
Fix $L \in \mathcal{L}(s)$. Since $\mathcal{V}^{(2)}\independent \mathcal{W}^{(2)}$, $\vK_{L}\independent \mathcal{W}^{(2)}$ and  $\vK_{L}\independent \mathcal{V}^{(2)}$, conditioned on the $\mathcal{V}^{(2)}$ and $\vK_{L}$,  we know that 
\[
\sqrt{n}\langle \mathcal{V}^{(2)}\mathcal{W}^{(2),\top},\vK_{L}\rangle=\langle\vB_{\perp}^{\top}\vK_{L}\mathcal{V}^{(2)},\sqrt{n}\mathcal{E}^{(2)} \rangle
\] is distributed according to $N(0, \|\vB_{\perp}^{\top}\vK_{L}\mathcal{V}^{(2)}\|^{2}_{F})$. 
Therefore 
\begin{align*}
\sqrt{n}\langle \mathcal{V}^{(2)}\mathcal{W}^{(2),\top},\vK_{L}\rangle\overset{d}{=}\|\vB_{\perp}^{\top}\vK_{L}\mathcal{V}^{(2)}\|_{F}W
\end{align*}
for some $W \sim N(0,1)$ independent of $\mathcal{V}^{(2)}$ and $\vK_{L}$. 
For simplicity of notation, we denote $ \sqrt{n}\langle \mathcal{V}^{(2)}\mathcal{W}^{(2),\top},\vK_{L} \rangle$ by $F_{L}$. 
Define the event $\tilde{\ttE}=\{ \|\mathcal{V}^{(2)}\mathcal{V}^{(2),\top}\| \leqslant 2\kappa \lambda \}$. Then $\ttE\subset \tilde{\ttE}$ and $\tilde{\ttE}$ only depends on $\mathcal{V}^{(2)}$. 
Note that $\|\vB_{\perp}^{\top}\vK_{L}\mathcal{V}^{(2)}\|_{F}\leqs \|\vB_{\perp}^{\top}\vK_{L}\|_{F} \|\mathcal{V}^{(2)}\|\leqs \sqrt{\|\mathcal{V}^{(2)}\mathcal{V}^{(2),\top}\|}$. 
Consequently, 
\begin{align}
\bbP\left(|F_{L}| >t \mid \tilde{\ttE} \right) \leqslant \bbP\left(\sqrt{2\kappa\lambda}|W| >t\right) \leqslant 2 \exp\left(-\frac{t^{2}}{2\kappa\lambda} \right).
\end{align}
In other words, conditioning on $\tilde{\ttE}$, each of $F_{L}$ ($L\in \mc{L}(s)$) satisfies the premise in Lemma~\ref{lem:Cai:lem5} with $(a,b,c)=(\sqrt{2\kappa \lambda},0,2)$. Therefore, 
$$\bbE \left( T_{2}^{2} \bold{1}_{\ttE}\right)\leqslant  \frac{1}{n} \bbE \max_{L\in \mc{L}(s)}\left( F_{L}^{2} \bold{1}_{\tilde{\ttE}}\right)\leqslant \frac{4 \kappa\lambda}{n}\log \left( 2e N \right) \leqs C\lambda^{2}\epsilon^{2}_{n}.$$
\end{proof}

\section{Proofs of upper bounds with a unknown  covariance matrix}\label{app:ub-general}

In this section, we prove the upper bounds  in Section~\ref{sec:minimax rate} for the general cases where $\bS$ is unknown. 
As in Appendix~\ref{app:ub-identity}, we take $H$ to be an integer such that $H\leqslant H_{0} d$ for some constant $ H_{0}>K_0\vee C$ and the inequality in Lemma~\ref{lem:key lemma in Lin under moment condition} holds.

\subsection{Proof of Theorem~\ref{thm:risk:oracle:upper:d}}\label{app:ub-ld-general}

Let $\bS^{1/2}$ be a square root of $\bS$,   $\widetilde{\vB}=\bS^{1/2}\vB$ and $\widetilde{\vX}_{i}=\bS^{-1/2}\vX_i$. Then $\widetilde{\vX}_{i}\sim N(0, \vI_p)$ and $\vB\tp  \vX_{i}=\widetilde{\vB}\tp  \widetilde{\vX}_{i}$. 
Let $\widetilde{\bLambda}=\Cov \left( \bbE[ \widetilde{\vX} \mid Y] \right)$.

Recall in Definition~\ref{def:class-upper-bound} that $\lambda\leqslant\lambda_{d}(\bLambda) \leqslant\lambda_{1}(\bLambda)\leqslant \kappa \lambda.$
Since $\widetilde{\bLambda}=\bS^{-1/2}\bLambda\bS^{-1/2}$, we can obtain $$
\lambda_i(\widetilde{\bLambda})\leqs \lambda_{\max}(\bS^{-1}) \lambda_i(\bLambda) \text{ and } \lambda_i(\bLambda)\leq \lambda_{\max}(\bS) \lambda_i(\widetilde{\bLambda}),
$$
and thus $\lambda_i(\widetilde{\bLambda})\in [\lambda/M,M\kappa\lambda]$. 
To apply results from Section~\ref{app:ub-identity} for $\widetilde{\bLambda}$, 
we may define $\widetilde{\lambda}=\lambda/M$ and $\widetilde{\kappa}=M^2\kappa$, so that the eigenvalues of $\widetilde{\bLambda}$ lies in $[\widetilde{\lambda},\widetilde{\kappa}\widetilde{\lambda}]$. 
However, to ease the notation, we abuse the notation and still use $(\lambda,\kappa)$ in place of $(\widetilde{\lambda},\widetilde{\kappa})$ in the following.

We will use the results in Section~\ref{app:ub-identity} for the pairs $(\widetilde{\vX}_{i},Y_i)$'s by defining $\widetilde{\vB}_{\perp}$,  $\widetilde{\vE}$, $\widetilde{\vV}$, $\widetilde{\vW}$ accordingly.

Let $\widetilde{\vB}_{\perp}$ be a $p\times (p-d)$ orthogonal matrix such that $\widetilde{\vB}\tp  \widetilde{\vB}_{\perp}=0$. 

For any pair of $(\vX, Y)$ sampled from $\mc M$, let $\vZ=\vB\tp  \vX=\widetilde{\vB}\tp \widetilde{\vX}$ and $\widetilde{\vE}=\widetilde{\vB}_{\perp}\tp  \widetilde{\vX}$. We  have the decomposition that
\begin{equation*}
\begin{aligned}
\widetilde{\vX}&=\widetilde{\vB}\widetilde{\vB}\tp\widetilde{\vX}+\widetilde{\vB}_{\perp}\widetilde{\vB}_{\perp}\tp\widetilde{\vX}
= \widetilde{\vB}\vZ+\widetilde{\vB}_{\perp}\widetilde{\vE}. 
\end{aligned}
\end{equation*}

Let $\widetilde{\vV}=\widetilde{\vB}\vZ$ and $\widetilde{\vW}=\widetilde{\vB}_{\perp}\widetilde{\vE}$. Then $\widetilde{\vV}\tp \widetilde{\vW}=0$.

We introduce the notation $\overline{\widetilde{\vX}}_{h,\cdot}$ similar to
the definition of $\overline{\vX}_{h,\cdot}$ in Equation~\ref{eq:sliced-xy} and 
let  $\widetilde{\mathbf{\mathcal{X}}}=\frac{1}{\sqrt{H}}
\left[\overline{\widetilde{\vX}}_{1,\cdot} ~,~ \overline{\widetilde{\vX}}_{2,\cdot},...,~ 
\overline{\widetilde{\vX}}_{H,\cdot}\right]$. Similarly we define $\overline{\widetilde{\vV}}_{h,\cdot}$, $\overline{\vZ}_{h,\cdot}$, $\overline{\widetilde{\vW}}_{h,\cdot}$, $\overline{\widetilde{\vE}}_{h,\cdot}$ and $\widetilde{\mathbf{\mathcal{V}}}$, $\mathbf{\mathcal{Z}}$, $\widetilde{\mathbf{\mathcal{W}}}$, $\widetilde{\mathbf{\mathcal{E}}}$.  We have $\widetilde{\mathbf{\mathcal{V}}}= \widetilde{\vB} \mathbf{\mathcal{Z}}$ and $\widetilde{\mathcal{W}}=\widetilde{\vB}_{\perp}\widetilde{\mathcal{E}}$. Since $\widetilde{\vE} \sim N(0,\vI_{p-d})$ and is independent of $Y$, we know that the entries $\widetilde{\mathcal{E}}_{i,j}$ of $\widetilde{\mathcal{E}}$ are $i.i.d.$ samples of $N(0,\frac{1}{n})$.

Define $\widehat{\widetilde{\bLambda}}_{H}=\widetilde{\mathbf{\mathcal{X}}}\widetilde{\mathbf{\mathcal{X}}}\tp$, $\widehat{\bLambda}_{z}=\mathbf{\mathcal{Z}}\mathbf{\mathcal{Z}}\tp $ and $\widehat{\widetilde{\bLambda}}_{V}=\widetilde{\mathbf{\mathcal{V}}} \widetilde{\mathbf{\mathcal{V}}}\tp $. Then $\widehat{\widetilde{\bLambda}}_{V}=\widetilde{\vB} \widehat{\bLambda}_{z}\widetilde{\vB}\tp$, 
and $\widehat{\bLambda}_{H}=\bS^{1/2}\widehat{\widetilde{\bLambda}}_{H}\bS^{1/2}$.

We have the following decomposition
\begin{equation}
\begin{aligned}
\widehat{\widetilde{\bLambda}}_{H}&=\wt{\mathcal{V}}\wt{\mathcal{V}}\tp 
+ \widetilde{\mathbf{\mathcal{V}}}\widetilde{\mathbf{\mathcal{W}}}\tp
+\widetilde{\mathbf{\mathcal{W}}}\widetilde{\mathbf{\mathcal{V}}}\tp
+\widetilde{\mathbf{\mathcal{W}}}\widetilde{\mathbf{\mathcal{W}}}\tp. 
\end{aligned}
\end{equation}

We define some events: $\widetilde{\ttE}_{1}=\Big\{~ \| \widetilde{\mathbf{\mathcal{W}}}\widetilde{\mathbf{\mathcal{W}}}\tp \|\leqslant 6 \frac{p \vee H +\log(n\lambda)}{n}~\Big\}$, 
$\widetilde{\ttE}_{2}=\Big\{~ \| \widehat{\widetilde{\bLambda}}_{V}-\widetilde{\bLambda} \|\leqslant \frac{2}{3 \nu}\kappa\lambda~\Big\}$ 
and $\widetilde{\ttE}=\widetilde{\ttE}_{1}\cap \widetilde{\ttE}_{2}$. 
In view of Corollary~\ref{cor:minimax:key}, we have the following result.

\begin{corollary}\label{cor:minimax:key-general}
For $\nu \in (\kappa, 2\kappa]$, we can find constants $C$ and $\widetilde{C}$ , such that if \\ 
\[\label{eq:cor-key-require-1:unknown cov low}
\kappa^2 H^2\left( \log(n\kappa H)  + d \right)<C n \lambda
\] and $\kappa \left( p\vee H + \log (n\lambda) \right)< C n\lambda$,
then 
$\bbP\left( \widetilde{\ttE}^{c}\right) \leqslant \frac{ \widetilde{C} }{n\lambda}$
and on the event $ \widetilde{\ttE}$, the followings hold
\begin{itemize}
\item[$a)$] 
$ \frac{1}{3}\lambda\leqslant \lambda_{d}(\widehat{ \widetilde{\bLambda}}_{V}) \leqslant \lambda_{1}(\widehat{\widetilde{\bLambda}}_{V})\leqslant 2\kappa \lambda$.
\item[$b)$]
$\|\widehat{\widetilde{\bLambda}}_{H}-\widehat{\widetilde{\bLambda}}_{V}\|
\leqslant \lambda \sqrt{18 \kappa \frac{p \vee H +\log(n\lambda )}{n \lambda}}< \frac{1}{4}\lambda$. 
\item[$c)$] 
$\lambda_{d+1}(\widehat{\widetilde{\bLambda}}_{H}) < \frac{1}{4}\lambda$.
\end{itemize}
	
\end{corollary}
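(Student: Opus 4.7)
The plan is to reduce Corollary~\ref{cor:minimax:key-general} to the identity-covariance Corollary~\ref{cor:minimax:key} by transposing the whole argument to the whitened data $\wt{\vX}_i = \bS^{-1/2}\vX_i$ set up in the preamble. Because $\vB\tp\bS\vB = \vI_d$, the matrix $\wt{\vB} := \bS^{1/2}\vB$ lies in $\mathbb{O}(p,d)$ and satisfies $\wt{\vB}\tp\wt{\vX} = \vB\tp\vX = \vZ$, so the conditional law of $Y$ given $\wt{\vX}$ has the same multiple-index structure through $\wt{\vB}$ as $Y\mid\vX$ did through $\vB$. The sliced-mean constructions $\wt{\mathcal{X}}, \wt{\mathcal{V}}, \mathcal{Z}, \wt{\mathcal{W}}, \wt{\mathcal{E}}$ for the whitened data then mirror those in Appendix~\ref{app:sir:low-d}; in particular $\wh{\wt{\bLambda}}_V = \wt{\vB}\wh{\bLambda}_z\wt{\vB}\tp$, the orthogonal decomposition
\begin{equation*}
\wh{\wt{\bLambda}}_H - \wh{\wt{\bLambda}}_V = \wt{\mathcal{V}}\wt{\mathcal{W}}\tp + \wt{\mathcal{W}}\wt{\mathcal{V}}\tp + \wt{\mathcal{W}}\wt{\mathcal{W}}\tp
\end{equation*}
holds, and $\wt{\mathcal{E}}$ has i.i.d.\ $N(0, 1/n)$ entries.

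Before invoking Lemma~\ref{lem:minimx:key}, I will verify that $(\wt{\vX}, Y)$ inherits the coverage and weak sliced stable hypotheses used in the identity-covariance argument. For coverage, note that $\wt{\bLambda} := \Cov(\E[\wt{\vX}\mid Y]) = \wt{\vB}\bLambda_z\wt{\vB}\tp$, and since $\wt{\vB}$ is a partial isometry the nonzero eigenvalues of $\wt{\bLambda}$ coincide with those of $\bLambda_z$; the bracketing $\lambda \leqslant \lambda_d(\wt{\bLambda}) \leqslant \lambda_1(\wt{\bLambda}) \leqslant \kappa\lambda$ thus carries over from Definition~\ref{def: considering f}. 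For WSSC, the whitened central curve is $\wt{\vm}(y) = \wt{\vB}\vm_z(y)$; for any $\bbeta\in\mb{S}^{p-1}$ with $\wt{\vB}\tp\bbeta\neq 0$, setting $\vu := \wt{\vB}\tp\bbeta/\|\wt{\vB}\tp\bbeta\| \in \mb{S}^{d-1}$ yields $\bbeta\tp\wt{\vm}(y) = \|\wt{\vB}\tp\bbeta\|\vu\tp\vm_z(y)$, so the common factor $\|\wt{\vB}\tp\bbeta\|^2$ cancels on both sides of the defining inequality and weak $(K, 32\kappa)$-SSC for $\vm_z$ transfers intact to $\wt{\vm}$ (the zero case being trivial).

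With these two inputs in hand, the proof of Corollary~\ref{cor:minimax:key} applies essentially verbatim. Lemma~\ref{lem:minimx:key}(a), applied to $\sqrt{n}\wt{\mathcal{E}}$, controls $\bbP(\wt{\ttE}_1^c)$; Lemma~\ref{lem:minimx:key}(b), which rests on the key Lemma~\ref{lem:key lemma in Lin under moment condition} and is legitimate for $\nu\in(\kappa, 2\kappa]$ since $32\kappa > 16\nu$, controls $\bbP(\wt{\ttE}_2^c)$. The two scaling assumptions on $\kappa^2 H^2(\log(n\kappa H) + d)$ and on $\kappa(p\vee H + \log(n\lambda))$ relative to $n\lambda$ are calibrated precisely so that these tails sum to $\wt{C}/(n\lambda)$. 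On $\wt{\ttE}$, conclusion (a) follows from Weyl's inequality applied to $\wh{\wt{\bLambda}}_V = \wt{\bLambda} + (\wh{\wt{\bLambda}}_V - \wt{\bLambda})$; conclusion (b) follows from $\|\wt{\mathcal{V}}\wt{\mathcal{W}}\tp\| \leqslant \sqrt{\|\wh{\wt{\bLambda}}_V\|\cdot\|\wt{\mathcal{W}}\wt{\mathcal{W}}\tp\|}$ combined with the bounds from (a) and $\wt{\ttE}_1$, with the $<\lambda/4$ ceiling secured by choosing the constant in $\kappa(p\vee H + \log(n\lambda)) < Cn\lambda$ small enough; conclusion (c) is then Weyl's inequality combined with $\lambda_{d+1}(\wh{\wt{\bLambda}}_V) = 0$.

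The only genuinely delicate step is the transfer of WSSC in the second paragraph. The normalization $\vB\tp\bS\vB = \vI_d$ built into the model class is precisely what makes $\wt{\vB}$ a partial isometry, so that the WSSC constant $32\kappa$ is preserved without degradation; any non-isometric reparametrization would inflate this constant by the condition number of the rescaling and propagate into the threshold on $n\lambda$. Once this transfer is in place, everything else is a mechanical transcription of the identity-covariance argument with $(\vX, \vB)$ replaced by $(\wt{\vX}, \wt{\vB})$.
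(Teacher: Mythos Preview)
Your proposal is correct and follows essentially the same approach as the paper: the paper states Corollary~\ref{cor:minimax:key-general} immediately after the sentence ``In view of Corollary~\ref{cor:minimax:key}, we have the following result,'' relying on the whitening setup $\wt{\vX}_i=\bS^{-1/2}\vX_i$, $\wt{\vB}=\bS^{1/2}\vB$ introduced just before to transcribe the identity-covariance argument verbatim. Your treatment is in fact more explicit than the paper's, since you spell out the transfer of the coverage condition and of weak $(K,32\kappa)$-SSC to the whitened central curve via the partial isometry $\wt{\vB}$, a point the paper leaves implicit.
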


Recall the SIR estimator $\widehat{\vB}$ in \eqref{eqn:Bhat}. 
Our goal is to bound the expectation of $\| \widehat{\vB}^{\otimes} -  \vB^{\otimes} \|^{2}_{F}$. Under the assumption that $\|\bS^{-1}\|< M$, one has 
\begin{align}
\|   \widehat{\vB}^{\otimes} -  \vB^{\otimes}\|_{F} & < M \| \bS^{1/2}\left(  \widehat{\vB}^{\otimes} -  \vB^{\otimes} \right) \bS^{1/2} \|_{F} \nonumber \\
& \leqslant M \left( \| \bS^{1/2}  \widehat{\vB}^{\otimes} \bS^{1/2} -  (\widehat{\bS}^{1/2} \wh{\vB})^{\otimes}  \|_{F} + \|  (\widehat{\bS}^{1/2}\widehat{\vB})^{\otimes}  - \bS^{1/2}\vB^{\otimes}\bS^{1/2} \|_{F}  \right). \label{eq:ub-ld-dec-general}
\end{align}

\textbf{Step 1: Bounding the first term in \eqref{eq:ub-ld-dec-general}. }

For any matrices $\vA \in \R^{ p \times p}$ and $ \vL\in \R^{p \times p}$, one has the identity that $\vA\tp  \vL \vA - \vL= (\vA - \vI_p)\tp \vL \vA+ \vL(\vA-\vI_p)$ and the inequality 
\begin{align*}
  \|\vA\tp  \vL \vA -\vL\|_{F} & \leqslant \|\vA-\vI_p\| \|\vL\|_{F} \| \vA\| + \|\vL\|_{F} \|\vA-\vI_p\|  \\
  & = (\| \vA\| +1) \|\vA-\vI_p\| \|\vL\|_{F}. 
\end{align*}
Substitute $\vA=\widehat{\vI}= \widehat{\bS}^{-1/2}\bS^{1/2}$ and $\vL=(\widehat{\bS}^{1/2}\widehat{\vB})^{\otimes}$. Then
\begin{align*}
\| \bS^{1/2}  \widehat{\vB}^{\otimes} \bS^{1/2} -  (\widehat{\bS}^{1/2} \wh{\vB})^{\otimes}  \|_{F}\leqslant (\| \widehat{\vI}\| +1) \|\widehat{\vI}-\vI_p\| \|(\widehat{\bS}^{1/2} \wh{\vB})^{\otimes}\|_{F}\leqslant d (\| \widehat{\vI}\| +1) \|\widehat{\vI}-\vI_p\|
\end{align*}
since $\widehat{\bS}^{1/2}\widehat{\vB}$ is a $p\times d$ orthogonal matrix. 
In order to obtain an upper bound for the right hand side in the last inequality, we present the following lemma, whose proof will be provided at the end of this subsection.

\begin{lemma}\label{lem:Sigma-control}
There exist constants $C$ and $\widetilde{C}$, such that if $p+\log(n\lambda)< C n$, then 
$\|\widehat{\vI}\|^{2}<2 M^2$, 	$ \|\widehat{\bS}^{-1/2}\|^{2}<2M$ and $\|\widehat{\vI}-\vI_p \|^{2}< \widetilde{C}
	\frac{p+ \log( n\lambda) }{n}$ hold with probability at least $1-\widetilde{C}/n\lambda$. 
\end{lemma}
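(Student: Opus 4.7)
The plan is to reduce the proof to a concentration inequality for a standard Gaussian sample covariance matrix and then transfer the result through the transformation $\bS^{1/2}$. Define the whitened vectors $\widetilde{\vX}_i := \bS^{-1/2}\vX_i \sim N(0,\vI_p)$ i.i.d., and let $\widetilde{\bS}_n := \frac{1}{n}\sum_{i=1}^n \widetilde{\vX}_i\widetilde{\vX}_i^\top$. Then $\widehat{\bS} = \bS^{1/2}\widetilde{\bS}_n\bS^{1/2}$, so $\widehat{\bS}^{-1} = \bS^{-1/2}\widetilde{\bS}_n^{-1}\bS^{-1/2}$, yielding the key algebraic identity $\widehat{\vI}^\top \widehat{\vI} = \bS^{1/2}\widehat{\bS}^{-1}\bS^{1/2} = \widetilde{\bS}_n^{-1}$.

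Apply Lemma~\ref{random:nonasymptotic} to the $p\times n$ matrix with i.i.d.\ $N(0,1)$ entries obtained by stacking the $\widetilde{\vX}_i$, taking the deviation parameter $t=\log(n\lambda)$. Converting singular values to eigenvalues via $\lambda_i(\widetilde{\bS}_n)=\sigma_i^2/n$ and expanding $(1\pm x)^2-1$ for $x=\sqrt{p/n}+\sqrt{2\log(n\lambda)/n}$, one gets
\begin{align*}
\|\widetilde{\bS}_n-\vI_p\|\leqslant C_0\sqrt{\frac{p+\log(n\lambda)}{n}}
\end{align*}
on an event of probability at least $1-2/(n\lambda)$. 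Choosing the hypothesis constant $C$ small enough, this is $<1/2$, so $\|\widetilde{\bS}_n^{-1}\|<2$. The first two claims now follow immediately: $\|\widehat{\vI}\|^2=\|\widehat{\vI}^\top\widehat{\vI}\|=\|\widetilde{\bS}_n^{-1}\|<2\leqslant 2M^2$ (noting $M^2\geqslant\|\bS\|\|\bS^{-1}\|\geqslant 1$), and $\|\widehat{\bS}^{-1/2}\|^2=\|\widehat{\bS}^{-1}\|\leqslant\|\bS^{-1}\|\cdot\|\widetilde{\bS}_n^{-1}\|\leqslant 2M$.

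The third bound is more delicate because $\widehat{\vI}=\widehat{\bS}^{-1/2}\bS^{1/2}$ is generically not symmetric (the two factors need not commute), so the concentration of the quadratic $\widehat{\vI}^\top\widehat{\vI}$ does not directly control $\|\widehat{\vI}-\vI_p\|$. The workaround is to write $\widehat{\vI}-\vI_p=\widehat{\bS}^{-1/2}(\bS^{1/2}-\widehat{\bS}^{1/2})$ and invoke the classical matrix square-root perturbation inequality
\begin{align*}
\|\vA^{1/2}-\vB^{1/2}\|\leqslant\frac{\|\vA-\vB\|}{\sqrt{\lambda_{\min}(\vA)}+\sqrt{\lambda_{\min}(\vB)}}
\end{align*}
with $\vA=\bS$ and $\vB=\widehat{\bS}$. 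Combining $\|\widehat{\bS}^{-1/2}\|\leqslant\sqrt{2M}$, $\lambda_{\min}(\bS)\geqslant 1/M$, and $\|\bS-\widehat{\bS}\|\leqslant\|\bS\|\,\|\widetilde{\bS}_n-\vI_p\|\leqslant C_0 M\sqrt{(p+\log(n\lambda))/n}$ yields $\|\widehat{\vI}-\vI_p\|^2\leqslant C'M^4(p+\log(n\lambda))/n$ on the same event, proving the third claim with $\widetilde{C}\asymp M^4\vee 2$. The main technical obstacle is precisely the asymmetry of $\widehat{\vI}$; the square-root perturbation detour costs an extra factor of $M^{3/2}$, but this is absorbed into $\widetilde{C}$ since $M$ is a fixed constant.
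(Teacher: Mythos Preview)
Your proposal is correct and follows essentially the same route as the paper: whiten to reduce to a standard Gaussian sample covariance, apply the Vershynin-type bound (Lemma~\ref{random:nonasymptotic}) with $t\asymp\sqrt{2\log(n\lambda)}$ (your ``$t=\log(n\lambda)$'' is a slip, but you use the correct scale two lines later), then handle $\widehat{\vI}-\vI_p$ via the factorization $\widehat{\bS}^{-1/2}(\bS^{1/2}-\widehat{\bS}^{1/2})$ together with a matrix square-root perturbation inequality. The paper cites \citet{schmitt1992perturbation} for that last step, whereas you use the equivalent form $\|\vA^{1/2}-\vB^{1/2}\|\leqslant\|\vA-\vB\|/(\lambda_{\min}(\vA)^{1/2}+\lambda_{\min}(\vB)^{1/2})$; your observation that $\widehat{\vI}^\top\widehat{\vI}=\widetilde{\bS}_n^{-1}$ actually gives a slightly cleaner bound on $\|\widehat{\vI}\|$ than the paper's submultiplicative argument, but the two proofs are otherwise the same.
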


Let $\ttE$ be the intersection of the events in Corollary~\ref{cor:minimax:key-general} and Lemma~\ref{lem:Sigma-control}. 

 By Lemma~\ref{lem:Sigma-control}, one has 
\begin{align}\label{eq:ub-ld-intermed-general}
\one_{\ttE}  \| \bS^{1/2}  \widehat{\vB}^{\otimes} \bS^{1/2} -  \widehat{\bS}^{1/2} \widehat{\vB}^{\otimes}\widehat{\bS}^{1/2}  \|_{F}^{2} \lesssim \frac{d[ p+ \log( n\lambda) ] }{n}. 
\end{align}

\textbf{Step 2: Bounding the second term in \eqref{eq:ub-ld-dec-general}.}

We know that $\widehat{\bS}^{1/2}\widehat{\vB}$ is formed by the first $d$ leading eigenvector of $\widehat{\widetilde{\bLambda}}_{H}=\widehat{\bS}^{-1/2}\widehat{\bLambda}_{H}\widehat{\bS}^{-1/2}$.  
Since $\widetilde{\vB}$ is formed by the first $d$ leading eigenvectors of $\widehat{\widetilde{\bLambda}}_{V}$, 
by Lemma~\ref{lem:sin_theta} and Corollary~\ref{cor:minimax:key-general}, on the event $\ttE$, it holds that
\begin{align}\label{eq:ld-main-general}
\| (\widehat{\bS}^{1/2}\widehat{\vB})^{\otimes} - \widetilde{\vB}^{\otimes} \|^{2}_{F}\, & 
\lesssim 
\frac{1}{\lambda^2}\|\widehat{\bS}^{-1/2}\widehat{\bLambda}_{H}\widehat{\bS}^{-1/2}-\widehat{\widetilde{\bLambda}}_{V}\|_{F}^{2}. 
\end{align}
Let $\Delta=\widehat{\bS}^{-1/2}\widehat{\bLambda}_{H}\widehat{\bS}^{-1/2}-\widehat{\widetilde{\bLambda}}_{V}$ and $\widehat{\vI}= \widehat{\bS}^{-1/2}\bS^{1/2}$. Then $\Delta=\widehat{\vI} \left( \widehat{\widetilde{\bLambda}}_{H}-\widehat{\widetilde{\bLambda}}_{V} \right) \widehat{\vI}\tp + (\widehat{\vI}-\vI_p)\widehat{\widetilde{\bLambda}}_{V}\widehat{\vI}\tp + \widehat{\widetilde{\bLambda}}_{V}\left( \widehat{\vI}-\vI_p \right)\tp $.

We then have
\begin{equation}
\label{eq:ld-main-delta-general}
1_{\ttE} \|\Delta\|_{F}^{2}\lesssim  1_{\ttE}\left( \|\widehat{\widetilde{\bLambda}}_{H}-\widehat{\widetilde{\bLambda}}_{V}\|_{F}^{2} + \|\widehat{\widetilde{\bLambda}}_{V}\|_{F}^{2}  \frac{p+ \log(n\lambda)}{n} \right). 
\end{equation}
Note that $\|\widehat{\widetilde{\bLambda}}_{H}-\widehat{\widetilde{\bLambda}}_{V}\|_{F}\leqslant 2 \|\widetilde{\mathbf{\mathcal{V}}}\widetilde{\mathbf{\mathcal{W}}}\tp\|_{F}+\|\widetilde{\mathbf{\mathcal{W}}}\widetilde{\mathbf{\mathcal{W}}}\tp\|_{F}$. By Lemma~\ref{inline:trivial1} and $\widetilde{\mathbf{\mathcal{W}}}=\widetilde{\vB}_{\perp}\widetilde{\mathbf{\mathcal{E}}}$, we have 
\begin{align}\label{eq:ld-ww-general}
\bbE \|\widetilde{\mathbf{\mathcal{W}}}\widetilde{\mathbf{\mathcal{W}}}\tp\|_{F}^{2} \, & \leqslant \bbE \|\widetilde{\mathbf{\mathcal{E}}}\widetilde{\mathbf{\mathcal{E}}}\tp\|_{F}^{2}\nonumber \\
\, & = \frac{1}{n^2} ( p-d ) H (p-d+H+1) \nonumber \\
\, & \lesssim \frac{ p H \left( p+H \right) }{n^2}, 
\end{align}
and 
\begin{align*}
\bbE \Tr \left( \widetilde{\mathbf{\mathcal{W}}}\tp\widetilde{\mathbf{\mathcal{W}}} \right) \, & \leqslant  \frac{1}{n} ( p-d ) H \\
\, & \lesssim \frac{ p H  }{n}. 
\end{align*}

Since on the event $\ttE$, $\| \widetilde{\mathbf{\mathcal{V}}}\tp \widetilde{\mathbf{\mathcal{V}}}\|< 2 \kappa \lambda$, 
\begin{align*}
1_{\ttE} \|\widetilde{\mathbf{\mathcal{V}}}\widetilde{\mathbf{\mathcal{W}}}\tp\|_{F}^{2}\, & =1_{\ttE}\Tr \left( \widetilde{\mathbf{\mathcal{V}}}\widetilde{\mathbf{\mathcal{W}}}\tp \widetilde{\mathbf{\mathcal{W}}}\widetilde{\mathbf{\mathcal{V}}}\tp \right)    \\
 \, & \leqslant 1_{\ttE} \| \widetilde{\mathbf{\mathcal{V}}}\tp \widetilde{\mathbf{\mathcal{V}}}\| \Tr \left(\widetilde{\mathbf{\mathcal{W}}}\tp \widetilde{\mathbf{\mathcal{W}}}  \right) \\
 \,  & \leqslant 2\kappa \lambda \Tr \left(\widetilde{\mathbf{\mathcal{W}}}\tp \widetilde{\mathbf{\mathcal{W}}}  \right), 
\end{align*}
where the second inequality is due to the fact that $\Tr(\vA\vL)\leqslant \|\vA\| \Tr(\vL)$ holds for any positive semi-definite matrices $\vL$ and $\vA$. 
It then follows that $\bbE 1_{\ttE} \|\widetilde{\mathbf{\mathcal{V}}}\widetilde{\mathbf{\mathcal{W}}}\tp\|_{F}^{2} \lesssim \frac{\lambda p H}{n}$. Combining this with Equation~\eqref{eq:ld-ww-general}, we have 
\begin{align}
 \nonumber \bbE 1_{\ttE} \|\widehat{\widetilde{\bLambda}}_{H}-\widehat{\widetilde{\bLambda}}_{V}\|_{F}^{2} & \lesssim  \frac{\lambda p H}{n} +  \frac{ p H \left( p+H \right) }{n^2}\\
& \lesssim \frac{\lambda p H}{n}, \label{eq:ld-Lambda-HV}
\end{align}
because $p\vee H / \left( n\lambda \right)$ is small.

Note that $\widetilde{\mathbf{\mathcal{V}}}= \widetilde{\vB} \mathbf{\mathcal{Z}}$ and $ \mathbf{\mathcal{Z}}=\widetilde{\vB}\tp \widetilde{\mathbf{\mathcal{V}}}$, we have $ \Tr \left( \widetilde{\mathbf{\mathcal{V}}} \widetilde{\mathbf{\mathcal{V}}}\tp   \right) = \Tr \left( {\mathbf{\mathcal{Z}}} {\mathbf{\mathcal{Z}}}\tp   \right) \leqslant d \| \widetilde{\mathbf{\mathcal{V}}} \widetilde{\mathbf{\mathcal{V}}}\tp \| $ because $ \widetilde{\vB}\tp \widetilde{\vB}   =\vI_d$. Therefore, 
\begin{align*}
\|\widehat{\widetilde{\bLambda}}_{V}\|_{F}^{2} \, & = \Tr \left(\widetilde{\mathbf{\mathcal{V}}}\widetilde{\mathbf{\mathcal{V}}}\tp \widetilde{\mathbf{\mathcal{V}}} \widetilde{\mathbf{\mathcal{V}}}\tp  \right) \\  
\, & \leqslant    \|  \widetilde{\mathbf{\mathcal{V}}} \widetilde{\mathbf{\mathcal{V}}}\tp \| \Tr \left( \widetilde{\mathbf{\mathcal{V}}} \widetilde{\mathbf{\mathcal{V}}}\tp   \right) \\
\, & \leqslant d    \|  \widetilde{\mathbf{\mathcal{V}}} \widetilde{\mathbf{\mathcal{V}}}\tp \| ^{2},
\end{align*}
which implies that $1_{\ttE}\|\widehat{\widetilde{\bLambda}}_{V}\|_{F}^{2} \leqslant 4\kappa^2 \lambda^2 d$.

In view of inequalities~\eqref{eq:ld-main-delta-general},\eqref{eq:ld-Lambda-HV}, and  \eqref{eq:ld-main-general}, we have
\begin{align}
  \bbE \left( 1_{\ttE} \| (\widehat{\bS}^{1/2}\widehat{\vB})^{\otimes} - \widetilde{\vB}^{\otimes} \|^{2}_{F} \right)
  & \lesssim
\frac{1}{\lambda^2} \bbE \left( 1_{\ttE} \|\Delta\|_{F}^{2} \right)\nonumber \\
&  \lesssim \frac{1}{\lambda^2} \left(  \frac{\lambda p H}{n } + \lambda^2 d \frac{p+\log (n \lambda) }{n}  \right)\nonumber  \\
& \lesssim  \frac{ H [ p + \log(n\lambda) ] }{n \lambda}.\label{eq:ld-main-F}
\end{align}

\bigskip 

Combining Equations~\eqref{eq:ub-ld-dec-general}, \eqref{eq:ld-main-F} and \eqref{eq:ub-ld-intermed-general}, one has
\begin{equation}\label{eq:ub-ld-B-general}
\bbE\left( \one_{\ttE} \|   \widehat{\vB}^{\otimes} -  \vB^{\otimes}\|_{F}^{2}  \right)\lesssim \frac{H [ p+ \log( n\lambda) ] }{n \lambda}. 
\end{equation}

\textbf{Step 3: Synthesis. }

In addition, one has $\|\widehat{\vB}^{\otimes} -  \vB^{\otimes}\|_{F}^{2} \leqslant 2 d $ and $\bbP(\ttE^{c})\lesssim \frac{1}{n\lambda}$, one has $ \bbE \left( \one_{\ttE^{c}} \|\widehat{\vB}^{\otimes} -  \vB^{\otimes}\|_{F}^{2} \right)\lesssim \frac{d}{n\lambda}$. 
We conclude that
$$
\bbE\|\widehat{\vB}^{\otimes} -  \vB^{\otimes}\|_{F}^{2}\lesssim \frac{H [ p+ \log( n\lambda) ] }{n \lambda}. 
$$ \qed

\begin{proof}[Proof of Lemma~\ref{lem:Sigma-control}]
Note that $\widehat{\widetilde{\bS}}:=\bS^{-1/2}\widehat{\bS}\bS^{-1/2}$ is the empirical covariance matrix of $\widetilde{\vX}_{i}$. 
	By Lemma~\ref{random:nonasymptotic}, with probability at least $1-2\exp(-n t^2/2)$, the eigenvalues of $
\widehat{\widetilde{\bS}}$ lie between $(1-\sqrt{p/n}-t)^{2}$ and $(1+\sqrt{p/n}+t)^{2}$. 

Fix $t \asymp \sqrt{2\log(n\lambda)/n}$ and assume the event happens. 

If $\frac{p+\log(n\lambda)}{n}$ is sufficiently small ($<1$), the eigenvalues of $\widehat{\widetilde{\bS}}$ lie between $(1/2,2)$. In this case, $\|\widehat{\bS}^{-1}\|\leqslant \|  \bS^{1/2}\widehat{\bS}^{-1} \bS^{1/2}\| M < 2M$ because $\|\bS^{-1}\|<M$ by assumption. We can also easily see that $\|\widehat{\vI}\|\leqslant \|\widehat{\bS}^{-1}\|^{1/2}\|\bS\|^{1/2}<\sqrt{2}M$. 

Furthermore, $\widehat{\vI}-\vI_p=\widehat{\bS}^{-1/2}\left( \bS^{1/2}-\widehat{\bS}^{1/2} \right)$. 
By \citet[Lemma~2.2]{schmitt1992perturbation} and the fact that $\bS\gtrsim M^{-1} \vI$ and $\widehat{\bS}\gtrsim (2M)^{-1} \vI$, we have 
\begin{align*}
	\| \bS^{1/2}-\widehat{\bS}^{1/2} \| &\, < 3\sqrt{M} \| \bS-\widehat{\bS}\| \\
		&\, = 3\sqrt{M}  \|\bS^{1/2} ( \vI-\widehat{\widetilde{\bS}} ) \bS^{1/2}\|\\
	&\, \leqslant 3\sqrt{M} M \|\vI-\widehat{\widetilde{\bS}}\|,
\end{align*}
where the last inequality is because $\|\bS\|<M$ by assumption. 
Since $\|\vI-\widehat{\widetilde{\bS}}\|=\max_{1\leqslant i\leqslant p}|1- \sigma_{i}(\widehat{\widetilde{\bS}}) | \lesssim \sqrt{ \frac{p+\log(n\lambda)}{n}}$,  we have $\|\widehat{\vI}-\vI_p\|<5 M^2 \|\vI-\widehat{\widetilde{\bS}}\| \lesssim \sqrt{\frac{p+\log(n\lambda)}{n}}$. 
\end{proof}

\subsection{Proof of Theorem \ref{thm:risk:sparse:upper:d}}\label{app:high dimension upper bound general cov}

\subsubsection*{Preliminaries}

Let $\mathcal{L}=\{T\subset [p]: S\subset T, |T|\leqslant 2s\}$. 
For any $T\in \mathcal{L}$, we define some notations. 

Let $\vJ_{T}$ be the matrix formed by the rows of  $\vI_p$ in $T$. 
Let $\bS_{TT}$ be the sub-matrix of $\bS$ with row indices and column indices both equal to $T$, i.e., $\vJ_{T} \bS \vJ_{T}\tp$. 
Let $\bS_{TT}^{1/2}$ be a square root of $\bS_{TT}$. Note that it is different from the sub-matrix of a square root of $\bS$. 
Let $\widetilde{\vB}_{(T)}=\bS_{TT}^{1/2}\vJ_{T}\vB$. Then 
$\widetilde{\vB}_{(T)}\tp \widetilde{\vB}_{(T)}=\vB\tp \vJ_{T}\tp  \bS_{TT}\vJ_{T}\vB=\vB\tp\vJ_T\tp\vJ_T\bS\vJ_T\tp\vJ_T\vB=\vB\tp\bS\vB=\vI_{d}$ because $S\subset T$. 
Let $\widetilde{\vB}_{(T),\perp}$ is a $|T|\times (|T|-d)$ orthogonal matrix such that $\widetilde{\vB}_{(T)}\tp \widetilde{\vB}_{(T),\perp}=0$. 

\medskip

For a pair of $(\vX, Y)$ that is sampled from the distribution $\mc M \in \mathfrak{M}_{s}\left( p,d,\lambda\right)$, we introduce the following notations. 

Let 
 $\widetilde{\vX}_{(T)}=\bS_{TT}^{-1/2}\vJ_{T}\vX$. Then $\widetilde{\vX}_{(T)}\sim N(0, \vI_{|T|} )$ because $\bS_{TT}^{-1/2}\vJ_{T}\bS \vJ_{T}\tp \bS_{TT}^{-1/2}=\vI_{|T|}$. 

Let $\vZ=\vB\tp  \vX$.Note that $\vZ=\vB\tp \vJ_{T}\tp \vJ_{T}  \vX=\widetilde{\vB}_{(T)}\tp  \widetilde{\vX}_{(T)}$ because $S\subset T$. 
Let  $\widetilde{\vE}_{(T)}=\widetilde{\vB}_{(T),\perp}\tp  \widetilde{\vX}_{(T)}$. 
Since $\widetilde{\vX}_{(T)}\sim N(0, \vI_{|T|} )$, one has $\vZ\sim N(0, \bs{I}_{d})$ and $\widetilde{\vE}_{(T)}\sim N(0, \bs{I}_{|T|-d})$. Furthermore, $\vZ\indp \widetilde{\vE}_{(T)}$ and 
\begin{equation*}
\begin{aligned}
\widetilde{\vX}_{(T)}&=\widetilde{\vB}_{(T)}\widetilde{\vB}_{(T)}\tp\widetilde{\vX}_{(T)}+\widetilde{\vB}_{(T),\perp}\widetilde{\vB}_{(T),\perp}\tp\widetilde{\vX}_{(T)}
= \widetilde{\vB}_{(T)}\vZ+\widetilde{\vB}_{(T),\perp}\widetilde{\vE}_{(T)}. 
\end{aligned}
\end{equation*}
Let $\widetilde{\vV}_{(T)}=\widetilde{\vB}_{(T)}\vZ$ and $\widetilde{\vW}_{(T)}=\widetilde{\vB}_{(T),\perp}\widetilde{\vE}_{(T)}$. Then $\widetilde{\vV}_{(T)}\tp \widetilde{\vW}_{(T)}=0$.
Let  $\bLambda_{z}=\Cov \left( \bbE[ \vZ \mid Y] \right)$ and let $\widetilde{\bLambda}_{(T)}=
\widetilde{\vB}_{(T)}  \bLambda_{z} \widetilde{\vB}_{(T)}\tp$. Then $\widetilde{\bLambda}_{(T)}=\Cov \left( \bbE[ \widetilde{\vX}_{(T)} \mid Y] \right)$.

\medskip
We next introduce the notation for the sliced samples. For example, we define $\overline{\widetilde{\vX}}_{(T),h,\cdot}$  similarly to the definition of $\overline{\vX}_{h,\cdot}$ in \eqref{eq:sliced-xy} and  $\widetilde{\mathbf{\mathcal{X}}}_{(T)}=\frac{1}{\sqrt{H}}
\left[\overline{\widetilde{\vX}}_{(T),1,\cdot} ~,~ \overline{\widetilde{\vX}}_{(T),2,\cdot},...,~ 
\overline{\widetilde{\vX}}_{(T),H,\cdot}\right]$. 

 Similarly, we define $\overline{\widetilde{\vV}}_{(T),h,\cdot}$, $\overline{\widetilde{\vW}}_{(T),h,\cdot}$, $\overline{\widetilde{\vE}}_{(T),h,\cdot}$ and $\widetilde{\mathbf{\mathcal{V}}}_{(T)}$, $\widetilde{\mathbf{\mathcal{W}}}_{(T)}$, $\widetilde{\mathbf{\mathcal{E}}}_{(T)}$.  
Then $\widetilde{\mathbf{\mathcal{V}}}_{(T)}= \widetilde{\vB}_{(T)} \mathbf{\mathcal{Z}}$ and $\widetilde{\mathcal{W}}_{(T)}=\widetilde{\vB}_{(T),\perp}\widetilde{\mathcal{E}}_{(T)}$. We see that $\widetilde{\mathbf{\mathcal{V}}}_{(T)}\tp \widetilde{\mathcal{W}}_{(T)}=0$.
 Since $\widetilde{\vE}_{(T)} \sim N(0,\vI_{|T|-d})$ and is independent of $Y$, we know that the entries $\widetilde{\mathcal{E}}_{(T),i,j}$ of $\widetilde{\mathcal{E}}_{(T)}$ are $i.i.d.$ samples of $N(0,\frac{1}{n})$.

Define 
$\widehat{\widetilde{\bLambda}}_{(T)}=\widetilde{\mathbf{\mathcal{X}}}_{(T)}\widetilde{\mathbf{\mathcal{X}}}_{(T)}\tp$, $\widehat{\widetilde{\bLambda}}_{V,(T)}=\widetilde{\mathbf{\mathcal{V}}}_{(T)} \widetilde{\mathbf{\mathcal{V}}}_{(T)} \tp $ and  $\widehat{\bLambda}_{z}=\mathbf{\mathcal{Z}}\mathbf{\mathcal{Z}}\tp $. 
Recall the definition of  the SIR estimate for $\Cov(\bbE(\vX\mid Y))$: $\widehat{\bLambda}_{H}=\mathbf{\mathcal{X}}\mathbf{\mathcal{X}}\tp$.  
 Then $\vJ_{T}\widehat{\bLambda}_H\vJ_{T}\tp=\bS_{TT}^{1/2}\widehat{\widetilde{\bLambda}}_{(T)}\bS_{TT}^{1/2}$ and $\widehat{\widetilde{\bLambda}}_{V,(T)}=\widetilde{\vB}_{(T)}  \widehat{\bLambda}_{z}\widetilde{\vB}_{(T)} \tp$.

We have the following decomposition
\begin{equation}
\begin{aligned}
\widehat{\widetilde{\bLambda}}_{(T)}&=\widetilde{\mathbf{\mathcal{V}}}_{(T)}\widetilde{\mathbf{\mathcal{V}}}_{(T)}\tp 
+ \widetilde{\mathbf{\mathcal{V}}}_{(T)}\widetilde{\mathbf{\mathcal{W}}}_{(T)}\tp
+\widetilde{\mathbf{\mathcal{W}}}_{(T)}\widetilde{\mathbf{\mathcal{V}}}_{(T)}\tp
+\widetilde{\mathbf{\mathcal{W}}}_{(T)}\widetilde{\mathbf{\mathcal{W}}}_{(T)}\tp
\end{aligned}
\end{equation}
Since we have randomly divided the samples into two equal sets of samples, we have the corresponding statistics 
 $\bLambda_H^{(i)},{\widetilde{\bLambda}}_{(T)}^{(i)}$, ${\widetilde{\bLambda}}_{V,(T)}^{(i)}$, $\widetilde{\mathbf{\mathcal{V}}}_{(T)}^{(i)},\widetilde{\mathcal{E}}_{(T)}^{(i)},\widetilde{\mathbf{\mathcal{W}}}_{(T)}^{(i)}$, and $\widehat{\bLambda}_{z}^{(i)}$ for the $i$th set of samples ($i=1,2$) similar to the definition of $\wh\bLambda_H,\wh{\widetilde{\bLambda}}_{(T)}$, $\wh{\widetilde{\bLambda}}_{V,(T)},\widetilde{\mathbf{\mathcal{V}}}_{(T)},\widetilde{\mathcal{E}}_{(T)},\widetilde{\mathbf{\mathcal{W}}}_{(T)}$, and $\wh{\bLambda}_{z}$ respectively. 
\medskip 

Finally, we  introduce an ``oracle estimator'', where the word \textit{oracle} suggests this is  an estimator only if we know $S$. 
\begin{equation}\label{estimator:bs-oracle-general}
\begin{aligned}
&\widehat{\vB}_{O}:= 
\arg\max_{\vB}\Tr(\vB\tp \bLambda_{H}^{(1)}\vB)\\
&\mbox{ s.t. } ~~~ \vB\tp \widehat{\bS}^{(1)} \vB = \vI_d,~~ \supp(\vB) = S. 
\end{aligned}
\end{equation}

\subsubsection*{Main part of the proof}
For $i=1,2$, we define two events:
\begin{itemize}
    \item [(i)]$\wt\ttE^{(i)}_2:=\left\{\max_{T\in\mc L}\left(\|{\widetilde{\bLambda}}_{V,(T)}^{(i)}-\wt{\bLambda}_{(T)}\|\right)\leqslant\frac{2\kappa\lambda}{3\nu}\right\}$, 
    \item [(ii)]$\widetilde{\ttE}_{1}^{(1)}=\Big\{~ \max_{T\in \mathcal{L}} \left(  \| \widetilde{\mathbf{\mathcal{W}}}_{(T)}^{(1)}\widetilde{\mathbf{\mathcal{W}}}_{(T)}^{(1),\top} \| \right) \leqslant 6 \frac{ 2 s \vee H + s \log(ep/s) +\log(n\lambda)}{n}~\Big\}$. 
\end{itemize}
Furthermore, define $\widetilde{\ttE}=\widetilde{\ttE}_{1}^{(1)} \cap\widetilde{\ttE}_{2}^{(1)}\cap\widetilde{\ttE}^{(2)}_{2}$.

We apply Lemma~\ref{random:nonasymptotic} to $\sqrt{n} \cdot \widetilde{\mathcal{E}}_{(T)}^{(i)}$ to conclude that 
\begin{align*}
\bbP\left(\|   \widetilde{\mathbf{\mathcal{W}}}_{(T)}^{(i)}\widetilde{\mathbf{\mathcal{W}}}_{(T)}^{(i),\top} \|> 6\frac{\max( |T|, H) +t }{n} \right)\leqslant  2 \exp\left(- t\right), 
\end{align*}
which implies that 
\begin{align*}
\bbP\left( \exists T \in \mathcal{L}, \|   \widetilde{\mathbf{\mathcal{W}}}_{(T)}^{(i)}\widetilde{\mathbf{\mathcal{W}}}_{(T)}^{(i),\top} \|> 6\frac{\max( |T|, H) +t }{n} \right)\leqslant  2 |\mathcal{L}|  \exp\left( - t\right).
\end{align*}

In view of Corollary~\ref{cor:minimax:key}, one has the following analogy. 
\begin{corollary}\label{cor:minimax:key-general-T}
For $\nu \in (\kappa, 2\kappa]$, we can find constants $C$ and $\widetilde{C}$ , such that if \\ 
\[\label{eq:cor-key-require-1:unknown cov high}
\kappa^2 H^2\left( \log(nH) + \log \kappa + d \right)<C n \lambda
\] and $\kappa \left( 2s\vee H + s \log(ep/s) + \log (n\lambda) \right)< C n\lambda$,
then 
$\bbP\left( \widetilde{\ttE}^{c}\right) \leqslant \frac{ \widetilde{C} }{n\lambda}$
and on the event $ \widetilde{\ttE}$, the followings hold
\begin{itemize}
\item[$a)$] 
$ \frac{1}{3}\lambda\leqslant \lambda_{d}  (\wh{\bLambda}_{z}^{(i)}) \leqslant \lambda_{1}( {\wh\bLambda}_{z}^{(i)})\leqslant 2\kappa \lambda$.
\item[$b)$]
$\|{\widetilde{\bLambda}}_{(T)}^{(i)}-{\widetilde{\bLambda}}_{V,(T)}^{(i)}\|
\leqslant \lambda \sqrt{18 \kappa \frac{2s \vee H + s\log(ep/s) +\log(n\lambda )}{n \lambda}}< \frac{1}{4}\lambda$, for any $T\in \mathcal{L}$.
\item[$c)$] 
$\lambda_{d+1}({\widetilde{\bLambda}}_{(T)}^{(i)}) < \frac{1}{4}\lambda$, for any $T\in \mathcal{L}$.
\item[$d)$]
$\|{\widetilde{\bLambda}}_{(T)}^{(i)} \|\leqslant 3\kappa \lambda$.
\end{itemize}
\textbf{Proof of $d)$:}
\begin{proof}
Since ${\widetilde{\bLambda}}_{V,(T)}^{(i)}=\widetilde{\vB}_{(T)}^{(i)}  {\wh\bLambda}_{z}^{(i)}\widetilde{\vB}_{(T)} \tp$ and $\widetilde{\vB}_{(T)} \tp \widetilde{\vB}_{(T)} =\vI_{d}$, one  has:  
$$
\lambda_1({\widetilde{\bLambda}}_{V,(T)}^{(i)})=\lambda_1(\widetilde{\vB}_{(T)}  {\wh\bLambda}_{z}^{(i)}\widetilde{\vB}_{(T)} \tp)=\lambda_1(\widetilde{\vB}_{(T)} \tp\widetilde{\vB}_{(T)}  {\wh\bLambda}_{z}^{(i)})=\lambda_1({\wh\bLambda}_{z}^{(i)}).
$$
Combining a) and b) leads to that
$$
\|{\widetilde{\bLambda}}_{(T)}^{(i)} \|\leqslant \|{\widetilde{\bLambda}}_{(T)}^{(i)}-{\widetilde{\bLambda}}_{V,(T)}^{(i)}\|+\lambda_{1}( {\wh\bLambda}_{z}^{(i)})\leqslant 3\kappa\lambda.
$$
\end{proof}	
\end{corollary}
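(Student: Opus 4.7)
The plan is to mimic the proof of Corollary~\ref{cor:minimax:key-general} and upgrade every pointwise deviation estimate to a uniform bound over $T\in\mathcal{L}$ via a union bound. Since $|\mathcal{L}|\leqslant\binom{p}{s}\leqslant(ep/s)^s$, the union bound costs only an additive $s\log(ep/s)$ inside the tail exponents, and the second smallness hypothesis $\kappa(2s\vee H+s\log(ep/s)+\log(n\lambda))<Cn\lambda$ is precisely what absorbs this cost. Item d) is already settled in the excerpt, so I only need to handle the probability bound on $\widetilde{\ttE}^c$ and items a)--c).

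First I would control $\bbP(\widetilde{\ttE}^c)$ by bounding the failure probabilities of its three constituent events separately. For $\widetilde{\ttE}_1^{(1)}$, the Gaussian covariance concentration (quoted just before the corollary statement) gives, at each fixed $T$, an $\exp(-t)$ tail for $\|\widetilde{\mathbf{\mathcal{W}}}_{(T)}^{(1)}\widetilde{\mathbf{\mathcal{W}}}_{(T)}^{(1),\top}\|>6(|T|\vee H+t)/n$; taking $t\asymp s\log(ep/s)+\log(n\lambda)$ and union-bounding over $\mathcal{L}$ yields $\widetilde{\ttE}_1^{(1)}$ with failure probability $\lesssim 1/(n\lambda)$. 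For each $\widetilde{\ttE}_2^{(i)}$, I would, at each fixed $T$, run the $\epsilon$-net argument of Lemma~\ref{lem:minimx:key}(b) inside the $d$-dimensional subspace $\col(\widetilde{\bLambda}_{(T)})$, invoking Lemma~\ref{lem:key lemma in Lin under moment condition} for the pointwise deviation. The WSSC hypothesis needed by the key lemma is inherited by the central curve $\widetilde{\vB}_{(T)}\vm_z(y)$ of $\widetilde{\vX}_{(T)}$ from the WSSC of $\vm_z$, since $\widetilde{\vB}_{(T)}^\top\widetilde{\vB}_{(T)}=\vI_d$ is an isometry on directions. A further union bound over $\mathcal{L}$ adds $s\log(ep/s)$ to the exponent; combined with the $\epsilon$-net factor $e^{O(d)}$, the first smallness condition $\kappa^2 H^2(\log(nH)+\log\kappa+d)<Cn\lambda$ together with the second ensures the net failure probability stays $\lesssim 1/(n\lambda)$.

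Item a) will follow by exploiting the identity $\widetilde{\bLambda}_{V,(T)}^{(i)}=\widetilde{\vB}_{(T)}\widehat{\bLambda}_z^{(i)}\widetilde{\vB}_{(T)}^\top$, whose nonzero spectrum coincides with that of $\widehat{\bLambda}_z^{(i)}$ (again because $\widetilde{\vB}_{(T)}^\top\widetilde{\vB}_{(T)}=\vI_d$), and the parallel identity $\widetilde{\bLambda}_{(T)}=\widetilde{\vB}_{(T)}\bLambda_z\widetilde{\vB}_{(T)}^\top$ whose nonzero spectrum coincides with that of $\bLambda_z$, which lies in $[\lambda,\kappa\lambda]$ by the coverage condition. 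Weyl's inequality applied to the operator-norm control on $\widetilde{\ttE}_2^{(i)}$, together with $\nu>\kappa$, then delivers the stated range. For b), I would expand
\[
\widetilde{\bLambda}_{(T)}^{(i)}-\widetilde{\bLambda}_{V,(T)}^{(i)}=\widetilde{\mathbf{\mathcal{V}}}_{(T)}^{(i)}\widetilde{\mathbf{\mathcal{W}}}_{(T)}^{(i),\top}+\widetilde{\mathbf{\mathcal{W}}}_{(T)}^{(i)}\widetilde{\mathbf{\mathcal{V}}}_{(T)}^{(i),\top}+\widetilde{\mathbf{\mathcal{W}}}_{(T)}^{(i)}\widetilde{\mathbf{\mathcal{W}}}_{(T)}^{(i),\top},
\]
bound the cross terms by $2\sqrt{\|\widetilde{\bLambda}_{V,(T)}^{(i)}\|\cdot\|\widetilde{\mathbf{\mathcal{W}}}_{(T)}^{(i)}\widetilde{\mathbf{\mathcal{W}}}_{(T)}^{(i),\top}\|}\leqslant 2\sqrt{2\kappa\lambda}\cdot\sqrt{\|\widetilde{\mathbf{\mathcal{W}}}_{(T)}^{(i)}\widetilde{\mathbf{\mathcal{W}}}_{(T)}^{(i),\top}\|}$ using a), and plug in the uniform bound from $\widetilde{\ttE}_1^{(1)}$; the strict inequality $<\tfrac{1}{4}\lambda$ then reduces to the assumed largeness of $n\lambda$. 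Item c) is an immediate consequence of b) together with Weyl's inequality, since $\widetilde{\bLambda}_{V,(T)}^{(i)}$ has rank at most $d$ and thus $\lambda_{d+1}(\widetilde{\bLambda}_{V,(T)}^{(i)})=0$.

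The main obstacle is the uniform-in-$T$ deviation estimate for $\widetilde{\ttE}_2^{(i)}$: the key lemma is stated for one ambient dimension, and one must verify that the $\epsilon$-net covering in $\col(\widetilde{\bLambda}_{(T)})\subset\R^{|T|}$ contributes only an $e^{O(d)}$ cardinality factor (not $e^{O(s)}$), because the spectrum carries all the geometry into a fixed $d$-dimensional image, while the subsequent union bound over $\mathcal{L}$ is charged separately at cost $s\log(ep/s)$. Once this accounting is set up correctly, the two smallness hypotheses in the statement are tuned exactly so that the overall failure probability matches the advertised $\widetilde{C}/(n\lambda)$ and the deterministic bounds a)--c) hold verbatim.
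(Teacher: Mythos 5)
Your overall route is the same as the paper's: the paper proves this corollary simply by repeating the argument of Corollary~\ref{cor:minimax:key} (via Corollary~\ref{cor:minimax:key-general}), with a union bound over $T\in\mathcal{L}$ applied to the $\widetilde{\mathbf{\mathcal{W}}}_{(T)}$ concentration (this is exactly the displayed inequality with the factor $2|\mathcal{L}|$ preceding the corollary), and items a)--c) then follow verbatim from Weyl and the decomposition of $\widehat{\widetilde{\bLambda}}_{(T)}$, with only d) proved explicitly. Your treatment of $\widetilde{\ttE}_1^{(1)}$, of a)--c), and your observation that the $\epsilon$-net lives in the $d$-dimensional image (cost $e^{O(d)}$, not $e^{O(s)}$) all match.

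One piece of your accounting, however, does not close as written. For $\widetilde{\ttE}_2^{(i)}$ you propose to run the key-lemma deviation bound at each fixed $T$ and then union-bound over $\mathcal{L}$, charging an extra $s\log(ep/s)$ in the exponent and claiming the two smallness hypotheses absorb it. They do not: the key lemma's tail is $\exp\bigl(-C_2\,n\lambda/(H^2\nu^2)+\cdots\bigr)$, so absorbing a factor $|\mathcal{L}|\leqslant (ep/s)^s$ would require $\kappa^2H^2\,s\log(ep/s)<Cn\lambda$, whereas the stated hypotheses only give $\kappa\, s\log(ep/s)<Cn\lambda$ (the first hypothesis contains no $s\log(ep/s)$ term at all). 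The repair is to notice that the union bound over $\mathcal{L}$ is unnecessary here: since $\widetilde{\bLambda}_{V,(T)}^{(i)}=\widetilde{\vB}_{(T)}\widehat{\bLambda}_z^{(i)}\widetilde{\vB}_{(T)}\tp$ and $\widetilde{\bLambda}_{(T)}=\widetilde{\vB}_{(T)}\bLambda_z\widetilde{\vB}_{(T)}\tp$ with $\widetilde{\vB}_{(T)}\tp\widetilde{\vB}_{(T)}=\vI_d$, the quantity $\|\widetilde{\bLambda}_{V,(T)}^{(i)}-\widetilde{\bLambda}_{(T)}\|=\|\widehat{\bLambda}_z^{(i)}-\bLambda_z\|$ is the same for every $T$, so a single application of the $d$-dimensional deviation bound controls $\max_{T\in\mathcal{L}}$ at no extra cost, which is exactly why the first hypothesis needs no sparsity term. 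With that correction your argument matches the paper's.
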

Let  $\widehat{\bS}_{TT}=\vJ_{T} \wh\bS^{(1)} \vJ_{T}\tp$, 
 and $\widehat{\vI}_{T}= \widehat{\bS}_{TT}^{-1/2}\bS_{TT}^{1/2}$. 
\begin{lemma}\label{lem:Sigma-control-T}
There exist constants $C$ and $\widetilde{C}$, such that if $s\log(ep/s)+\log(n\lambda)< C n$, then it holds with probability at least $1-\widetilde{C}/n\lambda$ that for all $T\in \mathcal{L}$, 
$\|\widehat{\vI}_T\|^{2}<2 M^2$, 	$ \|\widehat{\bS}_{TT}^{-1/2}\|^{2}<2M$ and $\|\widehat{\vI}_{T}-\vI_{|T|} \|^{2}< \widetilde{C}
	\frac{s\log(ep/s)+ \log( n\lambda) }{n}$. 
\end{lemma}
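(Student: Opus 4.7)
The plan is to adapt the proof of Lemma~\ref{lem:Sigma-control} to the sparse setting by running the same argument for each fixed $T\in\mathcal{L}$ and then closing the gap with a union bound. First I would observe that, because $S\subset T$ and $\vJ_{T}\vX_{i}\sim N(0,\bS_{TT})$, the vectors $\widetilde{\vX}_{(T),i}:=\bS_{TT}^{-1/2}\vJ_{T}\vX_{i}$ are i.i.d.\ $N(0,\vI_{|T|})$ samples of dimension at most $2s$, and the matrix $\widehat{\widetilde{\bS}}_{TT}:=\bS_{TT}^{-1/2}\widehat{\bS}_{TT}\bS_{TT}^{-1/2}$ is precisely their sample covariance. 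Lemma~\ref{random:nonasymptotic} then gives, for each $T$ and each $t>0$, that the eigenvalues of $\widehat{\widetilde{\bS}}_{TT}$ lie in $((1-\sqrt{|T|/n}-t)^{2},\,(1+\sqrt{|T|/n}+t)^{2})$ with probability at least $1-2\exp(-nt^{2}/2)$.

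Next I would take a union bound across $\mathcal{L}$. Since $|\mathcal{L}|\leqslant\binom{p}{2s}\leqslant(ep/(2s))^{2s}$, we have $\log|\mathcal{L}|\lesssim s\log(ep/s)$, so choosing $t\asymp\sqrt{[s\log(ep/s)+\log(n\lambda)]/n}$ produces a failure probability of order $\widetilde{C}/(n\lambda)$ uniformly over $T$. Under the hypothesis $s\log(ep/s)+\log(n\lambda)<Cn$ with $C$ small, combined with $|T|\leqslant 2s$, the deviation $\|\widehat{\widetilde{\bS}}_{TT}-\vI_{|T|}\|$ is bounded by a quantity less than $1/2$, so all eigenvalues of $\widehat{\widetilde{\bS}}_{TT}$ lie in $(1/2,2)$ simultaneously for every $T\in\mathcal{L}$.

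From this uniform two-sided eigenvalue control I would derive the three claims as in the scalar case. Because $\bS\succeq M^{-1}\vI_{p}$ implies $\bS_{TT}=\vJ_{T}\bS\vJ_{T}\tp\succeq M^{-1}\vI_{|T|}$, and $\|\bS_{TT}\|\leqslant\|\bS\|\leqslant M$, the identity $\widehat{\bS}_{TT}=\bS_{TT}^{1/2}\widehat{\widetilde{\bS}}_{TT}\bS_{TT}^{1/2}$ yields $\|\widehat{\bS}_{TT}^{-1/2}\|^{2}\leqslant 2\|\bS_{TT}^{-1}\|\leqslant 2M$ and $\|\widehat{\vI}_{T}\|^{2}\leqslant\|\widehat{\bS}_{TT}^{-1}\|\,\|\bS_{TT}\|\leqslant 2M^{2}$. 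For the third bound, writing $\widehat{\vI}_{T}-\vI_{|T|}=\widehat{\bS}_{TT}^{-1/2}(\bS_{TT}^{1/2}-\widehat{\bS}_{TT}^{1/2})$ and applying \citet[Lemma~2.2]{schmitt1992perturbation} with lower curvatures $M^{-1}$ and $(2M)^{-1}$ reduces the task to bounding $\|\bS_{TT}-\widehat{\bS}_{TT}\|=\|\bS_{TT}^{1/2}(\vI_{|T|}-\widehat{\widetilde{\bS}}_{TT})\bS_{TT}^{1/2}\|\leqslant M\|\vI_{|T|}-\widehat{\widetilde{\bS}}_{TT}\|$, and the previous step already controls this by $\sqrt{[s\log(ep/s)+\log(n\lambda)]/n}$ uniformly in $T$.

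The only real obstacle is bookkeeping the union bound: one must ensure that the combinatorial factor $s\log(ep/s)$ from $|\mathcal{L}|$ is absorbed into the chosen $t^{2}$ without conflicting with the hypothesis that $[s\log(ep/s)+\log(n\lambda)]/n$ is small, so that $\widehat{\widetilde{\bS}}_{TT}$ stays in the region where Schmitt's inequality applies with uniform constants. Everything else is a routine transcription of the identity-covariance proof to the submatrix level.
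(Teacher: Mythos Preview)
Your proposal is correct and matches the paper's own approach exactly: the paper simply states that the proof follows the argument of Lemma~\ref{lem:Sigma-control} with $t\asymp\sqrt{2[s\log(ep/s)+\log(n\lambda)]/n}$ and omits the details. Your write-up fills in precisely those details---the union bound over $\mathcal{L}$, the resulting choice of $t$, and the repetition of the Schmitt perturbation step at the submatrix level---so there is nothing to add or correct.
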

\begin{proof}
The proof follows the same argument in Lemma~\ref{lem:Sigma-control} by choosing $t \asymp \sqrt{2[ s\log(ep/s)+\log(n\lambda)]/n}$ and is omitted. 
\end{proof}

Let $\ttE$ be the intersection of $\tilde{\ttE}$ and the events in Lemma~\ref{lem:Sigma-control-T}. 
On the event $\ttE$, the results stated in Corollary~\ref{cor:minimax:key-general-T} and Lemma~\ref{lem:Sigma-control-T} uniformly hold for $T\in \mathcal{L}$, in particular for $\supp(\wh\vB)\cup S$. 

In the following, we set $T$ to be the random element $\supp(\wh\vB)\cup S$. 
Furthermore, we abbreviate $\widetilde{\vB}_{(T)}$ by $\vF$, i.e. 
$\vF:=\bS_{TT}^{1/2}\vJ_{T} \vB$. Similarly, we define  $\widehat{\vF}_{O}=\widehat{\bS}_{TT}^{1/2} \vJ_{T}\widehat{\vB}_{O}$ and $\widehat{\vF}=\widehat{\bS}_{TT}^{1/2} \vJ_{T}\widehat{\vB}$. Then $\vF$, $\widehat{\vF}_{O}$ and $\widehat{\vF}$ are all in $\mathbb{O}(|T|,d)$.

Let $\widehat{\vF}_{O}\tp \vF =\vU_{1}\Delta \vU_{2}\tp $ be the singular value decomposition of $\widehat{\vF}_{O}\tp \vF$ such that $\vU_{i}\in \mathbb{O}(d,d)$ and $\Delta$ is a $d\times d$ diagonal matrix with non-negative entries. 
Let $\vM:= \vU_{2}\tp {\wh\bLambda}_{z}^{(2)}\vU_{2}$.

By the definition of $\wh\vB$ and $\bLambda^{(2)}_{H}$ (the SIR estimator of  $\bLambda$ based on the second set of samples), one has
$0\geqslant\langle {\bLambda}_{H}^{(2)},\widehat{\vB}_{O}\widehat{\vB}_{O}\tp -\widehat{\vB}\widehat{\vB}\tp  \rangle =\langle {\bLambda}_{H}^{(2)},\vJ_{T}\tp \vJ_{T}(\widehat{\vB}_{O}\widehat{\vB}_{O}\tp -\widehat{\vB}\widehat{\vB}\tp)\vJ_{T}\tp\vJ_{T}  \rangle=
 \langle  \vJ_{T}{\bLambda}_{H}^{(2)}\vJ_{T}\tp ,\vJ_{T}(\widehat{\vB}_{O}\widehat{\vB}_{O}\tp -\widehat{\vB}\widehat{\vB}\tp  )\vJ_{T}\tp\rangle$ where the first equality comes from the fact that  $\supp(\widehat{\vB})\cup\supp(\widehat{\vB}_{O})=T$.

Applying the Lemma \ref{cor:elemetary:temp} with the positive definite matrix $\vU_{1}\vM\vU_{1}\tp $, one has 
\begin{align}
\frac{\lambda_{d}(  {\wh\bLambda}_{z}^{(2)} )}{2}\|\widehat{\vF}\widehat{\vF}\tp -\widehat{\vF}_{O}\widehat{\vF}_{O}\tp \|^{2}_{F} 
\leqslant & \langle  \widehat{ \vF}_{O} \vU_{1}\vM\vU_{1}\tp \widehat{\vF}_{O}\tp ,\widehat{\vF}_{O}\widehat{\vF}_{O}\tp -\widehat{\vF}\widehat{\vF}\tp  \rangle  \nonumber \\
= & \langle \widehat{\bS}_{TT}^{1/2}  \widehat{ \vF}_{O} \vU_{1}\vM\vU_{1}\tp \widehat{\vF}_{O}\tp  \widehat{\bS}_{TT}^{1/2} , \vJ_{T}( \widehat{\vB}_{O}\widehat{\vB}_{O}\tp  -\widehat{\vB}\widehat{\vB}\tp)  \vJ_{T}\tp   \rangle  \nonumber \\
\leqslant & \langle \widehat{\bS}_{TT}^{1/2}  \widehat{ \vF}_{O} \vU_{1}\vM\vU_{1}\tp \widehat{\vF}_{O}\tp  \widehat{\bS}_{TT}^{1/2}- \vJ_{T}{\bLambda}_{H}^{(2)} \vJ_{T}\tp, \vJ_{T}( \widehat{\vB}_{O}\widehat{\vB}_{O}\tp  -\widehat{\vB}\widehat{\vB}\tp)  \vJ_{T}\tp   \rangle  \nonumber \\
 :=&   \mathrm{I} + \mathrm{II} \label{eq:ub-hd-general-main}
\end{align}
where
\begin{align}
 \mathrm{I} \nonumber=& \langle \widehat{\bS}_{TT}^{1/2}  \wh{ \vF}_{O} \vU_{1}\vM\vU_{1}\tp \widehat{\vF}_{O}\tp  \widehat{\bS}_{TT}^{1/2}-  \widehat{\bS}_{TT}^{1/2}  {\widetilde{\bLambda}}_{V,(T)}^{(2)}  \widehat{\bS}_{TT}^{1/2}, \vJ_{T}( \widehat{\vB}_{O}\widehat{\vB}_{O}\tp  -\widehat{\vB}\widehat{\vB}\tp)  \vJ_{T}\tp   \rangle  
&\\
 \mathrm{II} \nonumber =&\langle  \widehat{\bS}_{TT}^{1/2}  {\widetilde{\bLambda}}_{V,(T)}^{(2)}\widehat{\bS}_{TT}^{1/2} -\bS_{TT}^{1/2}{\widetilde{\bLambda}}_{(T)}^{(2)}\bS_{TT}^{1/2}, \vJ_{T}( \widehat{\vB}_{O}\widehat{\vB}_{O}\tp  -\widehat{\vB}\widehat{\vB}\tp)  \vJ_{T}\tp   \rangle.  &
\end{align}
The last inequality holds because
$\vJ_{T}{\bLambda}_{H}^{(2)} \vJ_{T}\tp=\bS_{TT}^{1/2}{\widetilde{\bLambda}}_{(T)}^{(2)}\bS_{TT}^{1/2}$.

\noindent\textbf{For I:}

We first rewrite $\mathrm{I}$:
\begin{align*}
 \mathrm{I} =& \langle \widehat{\bS}_{TT}^{1/2}  \widehat{ \vF}_{O} \vU_{1}\vM\vU_{1}\tp \widehat{\vF}_{O}\tp  \widehat{\bS}_{TT}^{1/2}-  \widehat{\bS}_{TT}^{1/2}  {\widetilde{\bLambda}}_{V,(T)}^{(2)} \widehat{\bS}_{TT}^{1/2}, \vJ_{T}( \widehat{\vB}_{O}\widehat{\vB}_{O}\tp  -\widehat{\vB}\widehat{\vB}\tp)  \vJ_{T}\tp   \rangle  \\
=   &  \langle   \widehat{ \vF}_{O} \vU_{1}\vM\vU_{1}\tp \widehat{\vF}_{O}\tp    -{\widetilde{\bLambda}}_{V,(T)}^{(2)} ,\widehat{\vF}_{O}\widehat{\vF}_{O}\tp -\widehat{\vF}\widehat{\vF}\tp  \rangle.
\end{align*}
Note that $\vM=\vU_{2}\tp  \widehat{\bLambda}_{z}^{(2)}\vU_{2}$, so on the event $\ttE$, the eigenvalues of $\vM$ are in $(\frac{1}{3}\lambda, 2\kappa \lambda)$. 
Following the same proof of \eqref{inlince:ttt}, one has $1_{\ttE} | \mathrm{I}|\leqslant C\lambda \|\widehat{\vF}_{O}\widehat{\vF}\tp _{O}-\vF  \vF\tp\|_{F}   \|\widehat{\vF}_{O}\widehat{\vF}_{O}\tp -\widehat{\vF}\widehat{\vF}\tp \|_{F}$.

\textbf{For II:}  
Recall that $\widehat{\vI}_{T}=\widehat{\bS}_{TT}^{-1/2}\bS_{TT}^{1/2}$. 
\begin{align*}
\mathrm{II}= &\,   \langle  \widehat{\bS}_{TT}^{1/2}  {\widetilde{\bLambda}}_{V,(T)}^{(2)}\widehat{\bS}_{TT}^{1/2} -\bS_{TT}^{1/2}{\widetilde{\bLambda}}_{(T)}^{(2)}\bS_{TT}^{1/2}, \vJ_{T}( \widehat{\vB}_{O}\widehat{\vB}_{O}\tp  -\widehat{\vB}\widehat{\vB}\tp)  \vJ_{T}\tp   \rangle \\
=  &\,   \langle {\widetilde{\bLambda}}_{V,(T)}^{(2)} -\widehat{\bS}_{TT}^{-1/2}\bS_{TT}^{1/2}{\widetilde{\bLambda}}_{(T)}^{(2)}\bS_{TT}^{1/2}\widehat{\bS}_{TT}^{-1/2},  \widehat{\bS}_{TT}^{1/2}  \vJ_{T}( \widehat{\vB}_{O}\widehat{\vB}_{O}\tp  -\widehat{\vB}\widehat{\vB}\tp)  \vJ_{T}\tp \widehat{\bS}_{TT}^{1/2}     \rangle \\
=  &\,   \langle {\widetilde{\bLambda}}_{V,(T)}^{(2)} -\widehat{\vI}_{T}{\widetilde{\bLambda}}_{(T)}^{(2)}\widehat{\vI}_{T}\tp,   \widehat{\vF}_{O}\widehat{\vF}_{O}\tp  -\widehat{\vF}\widehat{\vF}\tp  \rangle.
\end{align*}

We can bound the last expression using the next lemma, whose proof is deferred to the end of this section. 

\begin{lemma}\label{lem:decomposition upper spare general bS}
If $\lambda\leqs\varpi_d\leqs\frac1d$ and $n\lambda\leqs e^s$, then on the event $\ttE$, we have:
  \begin{align}\label{eq:decomposition upper spare general bS}
 \langle {\widetilde{\bLambda}}_{V,(T)}^{(2)} -\widehat{\vI}_{T}{\widetilde{\bLambda}}_{(T)}^{(2)}\widehat{\vI}_{T}\tp,   \widehat{\vF}_{O}\widehat{\vF}_{O}\tp  -\widehat{\vF}\widehat{\vF}\tp  \rangle\lesssim&\lambda\epsilon_n\|\widehat{\vF}_{O}\widehat{\vF}_{O}\tp  -\widehat{\vF}\widehat{\vF}\tp\|_F.
\end{align}  
\end{lemma}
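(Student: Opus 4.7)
My strategy is to insert the intermediate quantity $\widetilde\bLambda_{(T)}^{(2)}$ and split
\[
\widetilde\bLambda_{V,(T)}^{(2)}-\widehat{\vI}_T\widetilde\bLambda_{(T)}^{(2)}\widehat{\vI}_T^\top
=\underbrace{\bigl(\widetilde\bLambda_{V,(T)}^{(2)}-\widetilde\bLambda_{(T)}^{(2)}\bigr)}_{\vA_1}
+\underbrace{\bigl(\widetilde\bLambda_{(T)}^{(2)}-\widehat{\vI}_T\widetilde\bLambda_{(T)}^{(2)}\widehat{\vI}_T^\top\bigr)}_{\vA_2}.
\]
Here $\vA_1$ is the SIR discrepancy (already analyzed under identity covariance), and $\vA_2$ isolates the additional plug-in error introduced by replacing $\bS_{TT}^{1/2}$ by $\widehat{\bS}_{TT}^{1/2}$. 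Let $\vR:=\widehat{\vF}_O\widehat{\vF}_O^\top-\widehat{\vF}\widehat{\vF}^\top$ denote the test matrix; I will bound $\langle\vA_1,\vR\rangle$ and $\langle\vA_2,\vR\rangle$ separately on the event $\ttE$.

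For $\vA_2$, I would use the commutator identity $\vL-\vA\vL\vA^\top=(\vI-\vA)\vL+\vA\vL(\vI-\vA)^\top$ with $\vA=\widehat{\vI}_T$, $\vL=\widetilde\bLambda_{(T)}^{(2)}$, which gives $\|\vA_2\|_F\leqslant(1+\|\widehat{\vI}_T\|)\|\widehat{\vI}_T-\vI_{|T|}\|\,\|\widetilde\bLambda_{(T)}^{(2)}\|_F$. On $\ttE$, Lemma~\ref{lem:Sigma-control-T} bounds $\|\widehat{\vI}_T\|\lesssim 1$ and $\|\widehat{\vI}_T-\vI_{|T|}\|^2\lesssim(s\log(ep/s)+\log(n\lambda))/n$, while Corollary~\ref{cor:minimax:key-general-T}(d) together with $\mathrm{rank}(\widetilde\bLambda_{(T)}^{(2)})\leqslant H$ gives $\|\widetilde\bLambda_{(T)}^{(2)}\|_F\lesssim\sqrt{H}\,\lambda$. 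Cauchy--Schwarz then yields $|\langle\vA_2,\vR\rangle|\lesssim\lambda\sqrt{H(s\log(ep/s)+\log(n\lambda))/n}\,\|\vR\|_F$, and under $\lambda\leqslant 1/d$, $H\lesssim d$, $n\lambda\leqslant e^s$, and $2d<s$, the radicand collapses to $\epsilon_n^2$ up to constants.

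For $\vA_1$, I would expand
\[
\vA_1=-\bigl(\widetilde{\mathcal V}_{(T)}^{(2)}\widetilde{\mathcal W}_{(T)}^{(2),\top}+\widetilde{\mathcal W}_{(T)}^{(2)}\widetilde{\mathcal V}_{(T)}^{(2),\top}+\widetilde{\mathcal W}_{(T)}^{(2)}\widetilde{\mathcal W}_{(T)}^{(2),\top}\bigr)
\]
and normalize $\vR$ so that $\vR/\|\vR\|_F$ plays the role of the unit-Frobenius test matrix $\vK_L$ in Lemma~\ref{lem:bound T1 T2}. Since the data-dependent support $T=\supp(\widehat\vB)\cup S$ lies in the combinatorial family $\mathcal L$ with $\log|\mathcal L|\lesssim s\log(ep/s)$, I would bound the three cross-term inner products by their supremum over $\mathcal L$. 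The pure Gaussian quadratic piece $\langle\widetilde{\mathcal W}_{(T)}^{(2)}\widetilde{\mathcal W}_{(T)}^{(2),\top},\vK_T\rangle$ follows from the Hanson--Wright bound applied to $\sqrt{n}\widetilde{\mathcal E}_{(T)}^{(2)}$ and a union bound over $\mathcal L$; the mixed piece $\langle\widetilde{\mathcal V}_{(T)}^{(2)}\widetilde{\mathcal W}_{(T)}^{(2),\top},\vK_T\rangle$ is handled by conditioning on $\widetilde{\mathcal V}_{(T)}^{(2)}$ (which is independent of $\widetilde{\mathcal W}_{(T)}^{(2)}$), a Gaussian tail inequality, and using $\|\widetilde{\mathcal V}_{(T)}^{(2)}\|^2\leqslant\|\widetilde\bLambda_{V,(T)}^{(2)}\|\lesssim\lambda$ on $\ttE$. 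This produces the contribution $\lambda\epsilon_n\|\vR\|_F$.

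The main obstacle is precisely the $\vA_1$ piece: the normalized test matrix $\vR/\|\vR\|_F$ depends on the random support, so the control must be uniform over $\mathcal L$, and the combinatorial factor $\log|\mathcal L|\asymp s\log(ep/s)$ is exactly what the target rate $\epsilon_n^2=(ds+s\log(ep/s))/(n\lambda)$ reserves for the sparsity penalty. Everything else is a careful bookkeeping of the existing tools: Lemma~\ref{lem:Sigma-control-T} handles the covariance plug-in, Corollary~\ref{cor:minimax:key-general-T} provides the spectral control of $\widetilde\bLambda^{(2)}_{(T)}$ and $\widetilde{\mathcal V}^{(2)}_{(T)}$, and the expansion strategy parallels exactly the $T_1,T_2$ argument of Lemma~\ref{lem:bound T1 T2}, now with $\sqrt{n}\widetilde{\mathcal E}_{(T)}^{(2)}$ in place of $\sqrt{n}\mathcal E^{(2)}$.
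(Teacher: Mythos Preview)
Your proof is correct and uses the same toolkit as the paper: the commutator identity together with Lemma~\ref{lem:Sigma-control-T} and Corollary~\ref{cor:minimax:key-general-T} for the covariance plug-in error, and the $T_1,T_2$ argument of Lemma~\ref{lem:bound T1 T2} (with a union bound over the combinatorial family determining $T$) for the Gaussian cross terms. The only difference is where the splitting is made. You insert $\widetilde\bLambda_{(T)}^{(2)}$, so your $\vA_1$ is exactly the identity-covariance discrepancy $-(\widetilde{\mathcal V}_{(T)}^{(2)}\widetilde{\mathcal W}_{(T)}^{(2),\top}+\widetilde{\mathcal W}_{(T)}^{(2)}\widetilde{\mathcal V}_{(T)}^{(2),\top}+\widetilde{\mathcal W}_{(T)}^{(2)}\widetilde{\mathcal W}_{(T)}^{(2),\top})$ and the entire plug-in error lands on the rank-$H$ matrix $\widetilde\bLambda_{(T)}^{(2)}$, which you bound via plain Cauchy--Schwarz using $\|\widetilde\bLambda_{(T)}^{(2)}\|_F\lesssim\sqrt{H}\,\lambda$. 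The paper instead inserts $\widehat{\vI}_T\widetilde\bLambda_{V,(T)}^{(2)}\widehat{\vI}_T^\top$: the plug-in error then sits on the rank-$d$ matrix $\widetilde\bLambda_{V,(T)}^{(2)}=\widetilde{\mathcal V}_{(T)}^{(2)}\widetilde{\mathcal V}_{(T)}^{(2),\top}$ and is handled via $|\Tr(\vA)|\leqslant\sqrt{\mathrm{rank}(\vA)}\,\|\vA\|_F$ with the operator-norm bound $\|\widetilde\bLambda_{V,(T)}^{(2)}\|\lesssim\lambda$, while the noise terms acquire a harmless $\widehat{\vI}_T$ sandwich (since $\|\widehat{\vI}_T\|\lesssim 1$ on $\ttE$). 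Because $H\asymp d$, both decompositions yield the same rate; your version is arguably a little cleaner since $\vA_1$ is literally the identity-covariance object and Lemma~\ref{lem:bound T1 T2} applies verbatim.
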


On the event $\ttE$, $ \lambda_{d}  ({\wh\bLambda}_{z}^{(2)})  \geqslant\lambda/3$. Equation~\eqref{eq:ub-hd-general-main} leads to 
\begin{align*}
  \|\widehat{\vF}\widehat{\vF}\tp -\widehat{\vF}_{O}\widehat{\vF}_{O}\tp \|^{2}_{F} 
\lesssim  \left(  \|\widehat{\vF}_{O}\widehat{\vF}\tp _{O}-\vF  \vF\tp\|_{F}+\sqrt{\epsilon_{n}^{2}}  \right)  \|\widehat{\vF}\widehat{\vF}\tp -\widehat{\vF}_{O}\widehat{\vF}_{O}\tp \|_{F}, 
\end{align*}
which yields 
\begin{equation}
      \|\widehat{\vF}\widehat{\vF}\tp -\widehat{\vF}_{O}\widehat{\vF}_{O}\tp \|_{F} \lesssim  \|\widehat{\vF}_{O}\widehat{\vF}\tp _{O}-\vF  \vF\tp\|_{F}+\sqrt{\epsilon_{n}^{2}}.\label{eq:ub-hd-general-inline1}
  \end{equation}
  By triangle inequality, 
\begin{align*}
\|\widehat{\vF}\widehat{\vF}\tp -\vF\vF \|_{F} \leqslant  \|\widehat{\vF}\widehat{\vF}\tp -\widehat{\vF}_{O}\widehat{\vF}_{O}\tp \|_{F}+ \|\widehat{\vF}_{O}\widehat{\vF}\tp _{O}-\vF  \vF\tp\|_{F}, 
\end{align*}
and thus on the event $\ttE$, 
\begin{align*}
  \|\widehat{\vF}\widehat{\vF}\tp -\vF\vF \|_{F}^{2}\lesssim \|\widehat{\vF}_{O}\widehat{\vF}\tp _{O}-\vF  \vF\tp\|_{F}^{2} + \epsilon_{n}^{2}.
  \end{align*}
Following the proof of Equation \eqref{eq:ub-ld-B-general} in Section~\ref{app:ub-ld-general}, there exists a set $\ttE_{O}$ such that $\bbP(\ttE_{O}^{c})\lesssim \frac{1}{n\lambda}$ and 
\begin{align}
  \bbE \left( \one_{\ttE_{O}}  \|\widehat{\vF}_{O}\widehat{\vF}\tp _{O}-\vF  \vF\tp\|_{F}^{2} \right)
& \lesssim  \epsilon_{n}^{2},\label{eq:ub-hd-general-Eo-F}\\
  \bbE \left( \one_{\ttE_{O}}  \|\widehat{\vB}_{O}\widehat{\vB}\tp _{O}-\vB  \vB\tp\|_{F}^{2} \right)
& \lesssim \epsilon_{n}^{2}. \label{eq:ub-hd-general-Eo-B}
\end{align}

Therefore
\begin{align*}
\bbE \left( \one_{\ttE\cap \ttE_{O}}  \|\widehat{\vF}\widehat{\vF}\tp -\vF  \vF\tp\|_{F}^{2} \right)
& \lesssim \epsilon_{n}^{2}. 
\end{align*}

On the event $\ttE$, by Lemma~\ref{lem:Sigma-control-T} and   $\|\widehat{\bS}_{TT}^{-1/2}\|^{2}< 2M$, 
one has
\begin{align}
& \|\widehat{\vB}\widehat{\vB}\tp - \widehat{\vB}_{O}\widehat{\vB}_{O}\tp\|_{F}  \nonumber  \\
 = \,  &  \|\vJ_{T} \left(\widehat{\vB}\widehat{\vB}\tp-\widehat{\vB}_{O}\widehat{\vB}_{O}\tp\right) \vJ_{T}\tp\|_{F}  \nonumber  \\
  \leqslant  \,  & \|\widehat{\bS}_{TT}^{-1/2}\|^{2}  \| \widehat{\bS}_{TT}^{1/2} \vJ_{T} \left( \widehat{\vB}\widehat{\vB}\tp - \widehat{\vB}_{O}\widehat{\vB}_{O}\tp\right) \vJ_{T}\tp\widehat{\bS}_{TT}^{1/2} \|_{F} \nonumber  \\
= \,  & \|\widehat{\bS}_{TT}^{-1/2}\|^{2}  \|   \widehat{\vF}  \widehat{\vF}\tp -  \widehat{\vF}_{O}  \widehat{\vF}_{O}\tp \|_{F} \nonumber \\
<\, & 2M  \left(  \|\widehat{\vF}_{O}\widehat{\vF}\tp _{O}-\vF  \vF\tp\|_{F}+\sqrt{\epsilon_{n}^{2}} \right), \label{eq:ub-hd-general-BtoF}
\end{align}
where the first equation is due to the definition of $T$, the first inequality is due to Lemma~\ref{lem:elementary:trivial2} and the last inequality is because  Equation~\eqref{eq:ub-hd-general-inline1}.

By triangle inequality, one has $
 \|\widehat{\vB}\widehat{\vB}\tp-\vB  \vB\tp\|_{F}\leqslant  \|\widehat{\vB}\widehat{\vB}\tp - \widehat{\vB}_{O}\widehat{\vB}_{O}\tp\|_{F} +  \|\widehat{\vB}_{O}\widehat{\vB}\tp _{O}-\vB  \vB\tp\|_{F}$. This, together with Equations~\eqref{eq:ub-hd-general-Eo-F} to \eqref{eq:ub-hd-general-BtoF}, yields
 \begin{align*}
  \bbE \left( \one_{\ttE\cap \ttE_{O}}   \|\widehat{\vB}\widehat{\vB}\tp-\vB  \vB\tp\|_{F}^{2} \right)
& \lesssim  \epsilon_{n}^{2}.
\end{align*}

Since $\bbP( \ttE^c \cup \ttE_{O}^{c})\lesssim \frac{1}{n\lambda}$, one has 
 \begin{align*}
  \bbE \|\widehat{\vB}\widehat{\vB}\tp-\vB  \vB\tp\|_{F}^{2}
& \lesssim  \epsilon_{n}^{2}.
\end{align*}

\medskip 

\begin{proof}[Proof of Lemma~\ref{lem:decomposition upper spare general bS}]
First, we have
\begin{align*}
 {\widetilde{\bLambda}}_{V,(T)}^{(2)} -\widehat{\vI}_{T}{\widetilde{\bLambda}}_{(T)}^{(2)}\widehat{\vI}_{T}\tp 
 =& \widetilde{\mathbf{\mathcal{V}}}_{(T)}^{(2)}\widetilde{\mathbf{\mathcal{V}}}_{(T)}^{(2),\top}  -  \wh\vI_T\widetilde{\mathbf{\mathcal{V}}}_{(T)}^{(2)}\widetilde{\mathbf{\mathcal{V}}}_{(T)}^{(2),\top}\wh\vI_T^\top - 
 \\ 
& \left(  \wh\vI_T\widetilde{\mathbf{\mathcal{V}}}_{(T)}^{(2)}\widetilde{\mathbf{\mathcal{W}}}_{(T)}^{(2),\top}\wh\vI_T^\top + \wh\vI_T \widetilde{\mathbf{\mathcal{W}}}_{(T)}^{(2)}\widetilde{\mathbf{\mathcal{V}}}_{(T)}^{(2),\top}\wh\vI_T^\top + \wh\vI_T\widetilde{\mathbf{\mathcal{W}}}_{(T)}^{(2)}\widetilde{\mathbf{\mathcal{W}}}_{(T)}^{(2),\top}\wh\vI_T^\top \right).
\end{align*}
 Following the same proof of Lemma \ref{lem:bound T1 T2} and notice that 
 $1_{\ttE}\|\wh\vI_T\|^2<2M^2$, we have:
\begin{align*}
\langle \wh\vI_T\widetilde{\mathbf{\mathcal{V}}}_{(T)}^{(2)}\widetilde{\mathbf{\mathcal{W}}}_{(T)}^{(2),\top}\wh\vI_T^\top + \wh\vI_T \widetilde{\mathbf{\mathcal{W}}}_{(T)}^{(2)}\widetilde{\mathbf{\mathcal{V}}}_{(T)}^{(2),\top}\wh\vI_T^\top + \wh\vI_T\widetilde{\mathbf{\mathcal{W}}}_{(T)}^{(2)}\widetilde{\mathbf{\mathcal{W}}}_{(T)}^{(2),\top}\wh\vI_T^\top,  \widehat{\vF}_{O}\widehat{\vF}_{O}\tp  -\widehat{\vF}\widehat{\vF}\tp    \rangle\lesssim&\lambda\epsilon_n\|\widehat{\vF}_{O}\widehat{\vF}_{O}\tp  -\widehat{\vF}\widehat{\vF}\tp\|_F.
\end{align*}
Then we only need to show that
\begin{align*}
\langle \widetilde{\mathbf{\mathcal{V}}}_{(T)}^{(2)}\widetilde{\mathbf{\mathcal{V}}}_{(T)}^{(2),\top}  -  \wh\vI_T\widetilde{\mathbf{\mathcal{V}}}_{(T)}^{(2)}\widetilde{\mathbf{\mathcal{V}}}_{(T)}^{(2),\top}\wh\vI_T^\top ,   \widehat{\vF}_{O}\widehat{\vF}_{O}\tp  -\widehat{\vF}\widehat{\vF}\tp  \rangle\lesssim&\lambda\epsilon_n\|\widehat{\vF}_{O}\widehat{\vF}_{O}\tp  -\widehat{\vF}\widehat{\vF}\tp\|_F.
\end{align*}
Using the inequality that $|\Tr(\vA)|\leqs \sqrt{\mr{rank}(\vA) } \|\vA\|_{F}$ and  Lemma \ref{lem:elementary:trivial2}, we have
\begin{equation}\label{eq: inequality general sigma sparse}
\begin{aligned}
&\langle \widetilde{\mathbf{\mathcal{V}}}_{(T)}^{(2)}\widetilde{\mathbf{\mathcal{V}}}_{(T)}^{(2),\top}  -  \wh\vI_T\widetilde{\mathbf{\mathcal{V}}}_{(T)}^{(2)}\widetilde{\mathbf{\mathcal{V}}}_{(T)}^{(2),\top}\wh\vI_T^\top ,   \widehat{\vF}_{O}\widehat{\vF}_{O}\tp  -\widehat{\vF}\widehat{\vF}\tp  \rangle^2\\
\leqs & d \|  \left( \widehat{\vF}_{O}\widehat{\vF}_{O}\tp  -\widehat{\vF}\widehat{\vF}\tp \right) \left( \widetilde{\mathbf{\mathcal{V}}}_{(T)}^{(2)}\widetilde{\mathbf{\mathcal{V}}}_{(T)}^{(2),\top}  -  \wh\vI_T\widetilde{\mathbf{\mathcal{V}}}_{(T)}^{(2)}\widetilde{\mathbf{\mathcal{V}}}_{(T)}^{(2),\top}\wh\vI_T^\top\right) \|_F^2 \\
\leqs & d \|  \widehat{\vF}_{O}\widehat{\vF}_{O}\tp  -\widehat{\vF}\widehat{\vF}\tp \|_{F}^2 \|\widetilde{\mathbf{\mathcal{V}}}_{(T)}^{(2)}\widetilde{\mathbf{\mathcal{V}}}_{(T)}^{(2),\top}  -  \wh\vI_T\widetilde{\mathbf{\mathcal{V}}}_{(T)}^{(2)}\widetilde{\mathbf{\mathcal{V}}}_{(T)}^{(2),\top}\wh\vI_T^\top\|^2 \\ 
\leqs& d \| \widehat{\vF}_{O}\widehat{\vF}_{O}\tp  -\widehat{\vF}\widehat{\vF}\tp\|_F^2(\|\wh\vI_T\|+1)^2\|\widetilde{\mathbf{\mathcal{V}}}_{(T)}^{(2)}\widetilde{\mathbf{\mathcal{V}}}_{(T)}^{(2),\top}\|^2\|\wh\vI_T-\vI_{|T|}\|^2,
\end{aligned}
\end{equation}
where the last inequality is due to the following fact: 
for any two $m\times m$ matrices $\vA$ and $\vB$, we can write $\vA-\vB \vA \vB^\top=(\bs{I}-\vB) \vA + \vB \vA (\bs{I}-\vB)^\top$ and conclude the inequality that $\|\vA-\vB \vA \vB^\top\|\leqslant \|\bs{I}-\vB\| \|\vA\| + \|\vB\| \|\vA\| \|\bs{I}-\vB\| = (\|\vB\|+1 )\|\vA\| \|\bs{I}-\vB\|$. 

By Corollary \ref{cor:minimax:key-general-T} and Lemma \ref{lem:Sigma-control-T}, we know that
\begin{align*}
\|\wh\vI_T\|\lesssim 1;~~\|\widetilde{\mathbf{\mathcal{V}}}_{(T)}^{(2)}\widetilde{\mathbf{\mathcal{V}}}_{(T)}^{(2),\top}\|\lesssim \lambda;~~\|\wh\vI_T-\vI_{|T|}\|^2\lesssim \frac{s\log(ep/s)+\log(n\lambda)}{n}.
\end{align*}
Insert these equations into \eqref{eq: inequality general sigma sparse} and use the conditions that $\lambda\leqs\frac1d$ and $n\lambda\leqs e^s$, we have 
\begin{align*}
&\langle \widetilde{\mathbf{\mathcal{V}}}_{(T)}^{(2)}\widetilde{\mathbf{\mathcal{V}}}_{(T)}^{(2),\top}  -  \wh\vI_T\widetilde{\mathbf{\mathcal{V}}}_{(T)}^{(2)}\widetilde{\mathbf{\mathcal{V}}}_{(T)}^{(2),\top}\wh\vI_T^\top ,   \widehat{\vF}_{O}\widehat{\vF}_{O}\tp  -\widehat{\vF}\widehat{\vF}\tp  \rangle^2\\
\lesssim& d\lambda^2\frac{s\log(ep/s)+\log(n\lambda)}{n} \| \widehat{\vF}_{O}\widehat{\vF}_{O}\tp  -\widehat{\vF}\widehat{\vF}\tp\|_F^2\\
\lesssim& \lambda^2\epsilon_n^2\| \widehat{\vF}_{O}\widehat{\vF}_{O}\tp  -\widehat{\vF}\widehat{\vF}\tp\|_F^2.
\end{align*}
\end{proof}
\section{Assisting Lemmas}
\begin{lemma}\label{inline:trivial1}
	Let $\vK$ be an $a\times b$ matrix with each entry being i.i.d. standard normal random variables. Then
	$
	\bbE[\|\vK\vK\tp \|^{2}_{F}]=ab(a+b+1)$, 
	 $
	\bbE[\|\vK\|^{2}_{F}]=ab 
	$ and  $
	\bbE[\|\vK\|^{4}_{F}]=a^2b^2+2ab. 
	$
\end{lemma}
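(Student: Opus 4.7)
All three identities are direct moment computations for i.i.d.\ standard normal entries, using only the facts that $\bbE[K_{ij}^2]=1$, $\bbE[K_{ij}^4]=3$, and that distinct entries are independent.

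For the second identity, I would simply write $\|\vK\|_F^{2}=\sum_{i,j}K_{ij}^{2}$ and take expectations entry by entry, obtaining $\bbE\|\vK\|_F^{2}=ab$.

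For the third identity, I would expand
\begin{align*}
\bbE\|\vK\|_F^{4} &= \bbE\Bigl(\sum_{i,j}K_{ij}^{2}\Bigr)^{2}
= \sum_{i,j}\bbE[K_{ij}^{4}] + \sum_{(i,j)\neq (k,l)}\bbE[K_{ij}^{2}K_{kl}^{2}]
\end{align*}
and plug in $\bbE[K_{ij}^{4}]=3$ and, by independence, $\bbE[K_{ij}^{2}K_{kl}^{2}]=1$ for $(i,j)\neq(k,l)$. This gives $3ab + (ab)^{2}-ab = a^{2}b^{2}+2ab$.

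For the first identity, the key step is to write $\|\vK\vK\tp\|_F^{2}=\Tr((\vK\vK\tp)^{2})=\sum_{i,j}(\vK\vK\tp)_{ij}^{2}$ with $(\vK\vK\tp)_{ij}=\sum_{k}K_{ik}K_{jk}$, so that
\begin{align*}
\bbE\|\vK\vK\tp\|_F^{2}=\sum_{i,j,k,l}\bbE[K_{ik}K_{jk}K_{il}K_{jl}].
\end{align*}
I would then split the sum by whether $i=j$ and whether $k=l$ and use independence of distinct entries (equivalently, Wick's/Isserlis' theorem). When $i\neq j$ only the pairing $k=l$ contributes, giving a factor of $1$ per term and a contribution $a(a-1)b$. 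When $i=j$ the inner sum splits into $k=l$ (contributing $\bbE[K_{ik}^{4}]=3$) and $k\neq l$ (contributing $1$), yielding $a(3b+b(b-1))=a(b^{2}+2b)$. Adding the two pieces gives $a(a-1)b+a(b^{2}+2b)=ab(a+b+1)$.

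The only real bookkeeping step is the fourth-order case analysis in the last identity; everything else is routine. I would present it as a small case table on $(i,j,k,l)$ to keep the combinatorics transparent.
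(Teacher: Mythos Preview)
Your proof is correct. The paper actually states this lemma without proof, treating it as an elementary identity, so your direct moment computation via case analysis on the indices is exactly the standard argument one would supply here.
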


\begin{lemma}\label{lem:elementary:trivial2}
	Let $\bs{A}$, $\bs{B}$ be  $l\times m$ and  $m\times n$ matrices, respectively. Then one has
	$
	\|\bs{A}\bs{B}\|_{F} \leqslant \|\bs{A}\|\|\bs{B}\|_{F},
	$ where $\|\bs{A}\|$ denotes the largest singular value of $\bs{A}$. 
\end{lemma}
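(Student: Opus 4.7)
The plan is to prove this standard submultiplicativity-type inequality by a column-wise decomposition of the Frobenius norm, which reduces the matrix inequality to a list of vector inequalities that follow directly from the definition of the operator norm.

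First, I would write $\bs{B}=[\bs{b}_1,\ldots,\bs{b}_n]$ in terms of its columns, so that $\bs{A}\bs{B}=[\bs{A}\bs{b}_1,\ldots,\bs{A}\bs{b}_n]$. By the column definition of the Frobenius norm,
\begin{equation*}
\|\bs{A}\bs{B}\|_{F}^{2} \;=\; \sum_{j=1}^{n}\|\bs{A}\bs{b}_j\|^{2},\qquad \|\bs{B}\|_{F}^{2}\;=\;\sum_{j=1}^{n}\|\bs{b}_j\|^{2}.
\end{equation*}

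Next, I invoke the variational characterization of the operator norm: for every $\bs{x}\in\R^{m}$, $\|\bs{A}\bs{x}\|\leqslant \|\bs{A}\|\,\|\bs{x}\|$. Applying this to each column $\bs{b}_j$ and summing the squared inequalities yields
\begin{equation*}
\|\bs{A}\bs{B}\|_{F}^{2} \;=\; \sum_{j=1}^{n}\|\bs{A}\bs{b}_j\|^{2} \;\leqslant\; \|\bs{A}\|^{2}\sum_{j=1}^{n}\|\bs{b}_j\|^{2} \;=\; \|\bs{A}\|^{2}\,\|\bs{B}\|_{F}^{2},
\end{equation*}
and taking square roots gives the claim. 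Alternatively, one could argue via the trace identity $\|\bs{A}\bs{B}\|_{F}^{2}=\Tr(\bs{B}\tp\bs{A}\tp\bs{A}\bs{B})$ together with the positive-semidefinite ordering $\bs{A}\tp\bs{A}\preceq \|\bs{A}\|^{2}\bs{I}_{m}$, which after conjugation by $\bs{B}$ and taking trace yields the same bound; I would likely mention this second route as a one-line alternative.

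There is essentially no obstacle here; the statement is a standard exercise and the proof is two lines once the column decomposition is written down. The only thing to be careful about is making sure the operator norm bound $\|\bs{A}\bs{x}\|\leqslant\|\bs{A}\|\|\bs{x}\|$ is quoted in the exact form used in the paper (largest singular value of $\bs{A}$), which matches the definition already given in the Notation section.
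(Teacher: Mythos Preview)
Your proof is correct; the column-wise decomposition together with the operator-norm bound $\|\bs{A}\bs{x}\|\leqslant\|\bs{A}\|\,\|\bs{x}\|$ is exactly the standard two-line argument, and your trace alternative is equally valid. The paper itself states this lemma without proof (it is labeled ``elementary:trivial2'' and listed among assisting lemmas with no proof environment), so there is nothing to compare against.
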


\begin{lemma}[Weyl's Inequality]\label{lem:weyl}
	Let $\bs{A}$, $\bs{B}$ be $m \times n$   matrices for some $1\leqslant m\leqslant n$. 
	Then for all $1\leqslant i\leqslant m$, 
	$$|\sigma_i( \bs{A}+\bs{B})-\sigma_i(\bs{A})|\leqslant \|\bs{B}\|_{\text{op}}. $$
	If $\bs{A}$, $\bs{B}$ are $m \times m$ symmetric matrices, then  for all $1\leqslant i\leqslant m$, 
	$$|\lambda_i( \bs{A}+\bs{B})-\lambda_i(\bs{A})|\leqslant \|\bs{B}\|_{\text{op}}. $$
	\begin{proof}
	See, e.g., \cite[Chapter~1.3]{tao2012topics}. 
	\end{proof}
\end{lemma}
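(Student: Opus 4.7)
The plan is to reduce both assertions to the Courant--Fischer min-max characterization of eigenvalues of symmetric matrices. For a symmetric $m \times m$ matrix $\vM$, I will use the identity
\[
\lambda_i(\vM) \;=\; \max_{\substack{V \subseteq \mathbb{R}^{m} \\ \dim V = i}}\;\min_{\substack{\vx \in V \\ \|\vx\|=1}} \vx^{\top} \vM \vx,
\]
which is the standard Courant--Fischer formula for the $i$-th largest eigenvalue. This is the only nontrivial ingredient, and I will simply cite it.

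First I will handle the symmetric case. Let $V^{\star}$ achieve the outer maximum for $\lambda_i(\vA+\vB)$. For every unit $\vx \in V^{\star}$, the trivial bound $|\vx^{\top}\vB\vx| \leq \|\vB\|_{\text{op}}$ yields
\[
\vx^{\top}(\vA+\vB)\vx \;\leq\; \vx^{\top}\vA\vx + \|\vB\|_{\text{op}}.
\]
Minimizing over unit $\vx \in V^{\star}$ and using the max-min formula for $\lambda_i(\vA)$ gives $\lambda_i(\vA+\vB) \leq \lambda_i(\vA) + \|\vB\|_{\text{op}}$. Swapping the roles of $\vA$ and $\vA+\vB$ (viewing $\vA = (\vA+\vB) + (-\vB)$ and noting $\|-\vB\|_{\text{op}} = \|\vB\|_{\text{op}}$) yields the reverse inequality, completing the symmetric case.

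Next I will deduce the singular value inequality via the Jordan--Wielandt Hermitian dilation. For an $m \times n$ matrix $\vM$ with $m \leq n$, define the $(m+n) \times (m+n)$ symmetric matrix
\[
\widetilde{\vM} \;=\; \begin{pmatrix} \vzero & \vM \\ \vM^{\top} & \vzero \end{pmatrix}.
\]
A direct SVD calculation shows that the eigenvalues of $\widetilde{\vM}$ are $\pm\sigma_1(\vM), \ldots, \pm\sigma_m(\vM)$ together with $n-m$ zeros, and that $\|\widetilde{\vM}\|_{\text{op}} = \|\vM\|_{\text{op}}$. Applying the symmetric-case bound just proved to $\widetilde{\vA}$ and $\widetilde{\vB}$, and reading off the positive half of the spectrum, yields $|\sigma_i(\vA+\vB) - \sigma_i(\vA)| \leq \|\vB\|_{\text{op}}$.

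There is essentially no obstacle here: both parts follow from the Courant--Fischer formula. The only point that requires a line of care is verifying that the eigenvalues of the dilation $\widetilde{\vM}$ are exactly $\pm \sigma_i(\vM)$ (and zeros when $m < n$), which is a routine SVD computation. A citation such as \cite[Chapter~1.3]{tao2012topics} will suffice in lieu of reproducing these classical arguments.
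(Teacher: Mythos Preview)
Your proposal is correct; the paper itself offers no argument beyond the citation to \cite[Chapter~1.3]{tao2012topics}, and your Courant--Fischer plus Hermitian-dilation sketch is exactly the standard route found there. If anything, you have supplied more detail than the paper does.
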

\begin{lemma}[\cite{vershynin2010introduction}]\label{random:nonasymptotic}
	Let $\vA$ be a $p\times H$ matrix ($p\geqslant H$), whose entries are independent standard normal random variables. 
	Then for every $ t\geqslant 0$, with probability at least $1-2\exp(-t^{2}/2)$, one has that 
	\[ \nonumber
		\sqrt{p}-\sqrt{H}-t \leqslant\,  \sigma_{H}(\vA) \leqslant \, 
	\sigma_{1}(\vA) \leqslant\,  \sqrt{p}+\sqrt{H}+t.\]
	
\end{lemma}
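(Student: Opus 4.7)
\textbf{Proof proposal for Lemma \ref{random:nonasymptotic}.}

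My plan is to combine two classical ingredients: Gaussian concentration of Lipschitz functions (to handle the deviation around the mean) and a Gaussian comparison inequality of Gordon type (to pin down the mean itself). Observe that the singular value map $\vA \mapsto (\sigma_{1}(\vA), \sigma_{H}(\vA))$ can be written as
\begin{align*}
\sigma_{1}(\vA) \;=\; \sup_{\vu\in\mathbb{S}^{p-1},\,\vv\in\mathbb{S}^{H-1}} \vu^{\top}\vA\vv, \qquad \sigma_{H}(\vA) \;=\; \inf_{\vv\in\mathbb{S}^{H-1}} \sup_{\vu\in\mathbb{S}^{p-1}} \vu^{\top}\vA\vv.
\end{align*}
Viewing $\vA$ as a vector of $pH$ i.i.d.\ $N(0,1)$ coordinates, both maps are $1$-Lipschitz with respect to the Frobenius norm (the difference of two suprema is bounded by the sup of the differences, and each bilinear functional $\vu^{\top}\vA\vv$ is $1$-Lipschitz because $\|\vu\|=\|\vv\|=1$). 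Hence by the Borell--Tsirelson--Ibragimov--Sudakov inequality, for every $t\geqslant 0$,
\begin{align*}
\bbP\!\left(\sigma_{1}(\vA) \geqslant \bbE\sigma_{1}(\vA) + t\right) \leqslant e^{-t^{2}/2}, \qquad \bbP\!\left(\sigma_{H}(\vA) \leqslant \bbE\sigma_{H}(\vA) - t\right) \leqslant e^{-t^{2}/2}.
\end{align*}
A union bound then yields the conclusion once we show $\bbE\sigma_{1}(\vA) \leqslant \sqrt{p}+\sqrt{H}$ and $\bbE\sigma_{H}(\vA) \geqslant \sqrt{p}-\sqrt{H}$.

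For the expectation bounds I would invoke Gordon's inequality, the min-max refinement of Slepian's lemma. Define two centered Gaussian processes indexed by $(\vu,\vv)\in\mathbb{S}^{p-1}\times\mathbb{S}^{H-1}$:
\begin{align*}
X_{\vu,\vv} \;=\; \vu^{\top}\vA\vv, \qquad Y_{\vu,\vv} \;=\; \vg^{\top}\vu + \vh^{\top}\vv,
\end{align*}
where $\vg\sim N(0,\vI_{p})$ and $\vh\sim N(0,\vI_{H})$ are independent. A direct computation shows $\bbE(X_{\vu,\vv}-X_{\vu',\vv'})^{2} \leqslant \bbE(Y_{\vu,\vv}-Y_{\vu',\vv'})^{2}$, with equality when $\vv=\vv'$. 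Gordon's inequality then gives
\begin{align*}
\bbE \inf_{\vv}\sup_{\vu} X_{\vu,\vv} \;\geqslant\; \bbE \inf_{\vv}\sup_{\vu} Y_{\vu,\vv}, \qquad \bbE \sup_{\vu,\vv} X_{\vu,\vv} \;\leqslant\; \bbE \sup_{\vu,\vv} Y_{\vu,\vv},
\end{align*}
and the right-hand sides evaluate explicitly to $\bbE\|\vg\|-\bbE\|\vh\|$ and $\bbE\|\vg\|+\bbE\|\vh\|$, which are bounded by $\sqrt{p}-\sqrt{H}$ (from below, using Jensen in the reverse direction after noting $\bbE\|\vg\|\geqslant \sqrt{p}-\tfrac{1}{2\sqrt{p}}$ is not enough, so I would instead use the identity $\bbE\|\vg\|^{2}=p$ combined with Jensen to get $\bbE\|\vg\|\leqslant\sqrt{p}$, and $\bbE\|\vh\|\leqslant\sqrt{H}$; for the lower bound $\bbE\|\vg\|\geqslant \sqrt{p} - C/\sqrt{p}$ via concentration, which suffices after absorbing the $O(1/\sqrt{p})$ slack into the $t$ term) and $\sqrt{p}+\sqrt{H}$, respectively.

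The main technical obstacle is the lower bound on $\bbE\sigma_{H}(\vA)$: the Gordon step is the non-routine piece, and one must verify the covariance comparison carefully and then handle the fact that $\bbE\|\vg\|$ is only close to $\sqrt{p}$, not exactly equal. A cleaner alternative is to cite the Gordon--Chevet inequality (or Theorem 5.32 in \cite{vershynin2010introduction}) to obtain the two expectation bounds directly, after which only the Gaussian concentration step above is needed. Either path produces the desired two-sided estimate with probability at least $1-2\exp(-t^{2}/2)$.
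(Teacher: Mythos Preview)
The paper does not give its own proof of this lemma; it simply cites \cite{vershynin2010introduction} (specifically, this is Corollary~5.35 there). Your outline---Gaussian concentration for the $1$-Lipschitz functionals $\sigma_1,\sigma_H$, combined with Gordon's min--max comparison to bound their expectations---is exactly the standard argument and is correct in structure.

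There is one point worth tightening. Your suggestion to ``absorb the $O(1/\sqrt{p})$ slack into the $t$ term'' for the lower bound does not work as stated, because the inequality must hold for \emph{every} $t\geqslant 0$, including $t$ smaller than that slack. The clean fix is the one you allude to at the end: the Gordon step yields $\bbE\sigma_H(\vA)\geqslant \bbE\|\vg\|-\bbE\|\vh\|$, and the function $k\mapsto \sqrt{k}-\bbE\|\vg_k\|$ (with $\vg_k\sim N(0,\vI_k)$) is nonincreasing in $k$, so for $p\geqslant H$ one has $\sqrt{p}-\bbE\|\vg\|\leqslant \sqrt{H}-\bbE\|\vh\|$, i.e.\ $\bbE\|\vg\|-\bbE\|\vh\|\geqslant \sqrt{p}-\sqrt{H}$ \emph{exactly}. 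This monotonicity (a short computation with the Gamma-function ratio, or see Davidson--Szarek) is the missing line that makes the lower expectation bound sharp; alternatively, citing Theorem~5.32 of \cite{vershynin2010introduction} directly, as you propose, is entirely acceptable.
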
 
\begin{lemma}[Sin-Theta Theorem, \cite{cai2013sparse}] \label{lem:sin_theta} 
	Let $\vA$ and $\vA+\vE$ be symmetric matrices satisfying
	\begin{equation*}
	\vA=[\vF_{0},\vF_{1}]\left[ \begin{array}{cc}
	\vA_{0} & 0\\
	0 & \vA_{1}
	\end{array} \right]
	\left[ \begin{array}{c}
	\vF\tp _{0}\\
	\vF\tp _{1}
	\end{array} \right]
	\quad
	\vA+\vE=[\vG_{0},\vG_{1}]\left[ \begin{array}{cc}
	\bLambda_{0} & 0\\
	0 & \bLambda_{1}
	\end{array} \right]
	\left[ \begin{array}{c}
	\vG\tp _{0}\\
	\vG\tp _{1}
	\end{array} \right]
	\end{equation*}
	where $[\vF_{0},\vF_{1}]$ and $[\vG_{0},\vG_{1}]$ are orthogonal matrices. If the eigenvalues of $\vA_{0}$ lie in an interval $(a,b)$ and the eigenvalues of $\bLambda_{1}$ are excluded from the interval $(a-\delta,b+\delta)$ for some $\delta>0$, then
	\[
	\|\vF_{0}\vF_{0}\tp -\vG_{0}\vG_{0}\tp \|\leqslant \frac{\min(\|\vF_{1}\tp \vE\vG_{0}\|, \|\vF_{0}\tp \vE\vG_{1}\|)}{\delta},
	\]
	and 
	\[
	\frac{1}{\sqrt{2}}\|\vF_{0}\vF_{0}\tp -\vG_{0}\vG_{0}\tp \|_{F} \leqslant \frac{\min(\|\vF_{1}\tp \vE\vG_{0}\|_{F}, \|\vF_{0}\tp \vE\vG_{1}\|_{F})}{\delta}.
	\]
\end{lemma}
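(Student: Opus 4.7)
The plan is to prove the statement through the classical Davis--Kahan argument based on a Sylvester equation, producing bounds in both operator and Frobenius norm with the projection distance on the left-hand side.

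\textbf{Step 1: Derive the Sylvester equations.} Since $[\vF_0,\vF_1]$ diagonalizes $\vA$ we have $\vF_0\tp \vA = \vA_0 \vF_0\tp$; since $[\vG_0,\vG_1]$ diagonalizes $\vA+\vE$ we have $(\vA+\vE)\vG_1=\vG_1\bLambda_1$. Computing $\vF_0\tp(\vA+\vE)\vG_1$ two ways gives
\[
\vA_0 (\vF_0\tp \vG_1) - (\vF_0\tp \vG_1)\bLambda_1 = -\vF_0\tp \vE\vG_1,
\]
and a symmetric manipulation with the complementary blocks yields
\[
\vA_1 (\vF_1\tp \vG_0) - (\vF_1\tp \vG_0)\bLambda_0 = -\vF_1\tp \vE\vG_0.
\]

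\textbf{Step 2: Invert the Sylvester operators using the spectral gap.} The hypothesis puts $\mathrm{spec}(\vA_0)\subset(a,b)$ and $\mathrm{spec}(\bLambda_1)\cap(a-\delta,b+\delta)=\emptyset$, so every eigenvalue pair $(\mu,\nu)$ of $(\vA_0,\bLambda_1)$ satisfies $|\mu-\nu|\geqslant \delta$. A standard Sylvester bound (e.g.\ Bhatia's \emph{Matrix Analysis}, Ch.~VII) then yields $\|\vF_0\tp \vG_1\|\leqslant \|\vF_0\tp \vE\vG_1\|/\delta$ and, in any unitarily invariant norm, $\|\vF_0\tp \vG_1\|_F\leqslant \|\vF_0\tp \vE\vG_1\|_F/\delta$. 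Applying the analogous argument to the second Sylvester equation yields the partner bounds $\|\vF_1\tp \vG_0\|\leqslant \|\vF_1\tp \vE\vG_0\|/\delta$ and the corresponding Frobenius inequality.

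\textbf{Step 3: Translate the canonical-angle bound into the projection distance.} Writing $\vI=\vF_0\vF_0\tp+\vF_1\vF_1\tp$ and $\vI=\vG_0\vG_0\tp+\vG_1\vG_1\tp$ and subtracting the two decompositions gives the key identity
\[
\vF_0\vF_0\tp-\vG_0\vG_0\tp \;=\; \vF_0\vF_0\tp\vG_1\vG_1\tp - \vF_1\vF_1\tp\vG_0\vG_0\tp.
\]
For the operator norm, the classical fact $\|\vF_0\vF_0\tp-\vG_0\vG_0\tp\|=\|\vF_0\tp\vG_1\|=\|\vF_1\tp\vG_0\|$ (the common value $\sin\theta_{\max}$) combined with Step~2 produces the first inequality without any prefactor. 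For the Frobenius norm, the two summands above have mutually orthogonal column spaces and mutually orthogonal row spaces, so their Frobenius norms add in squares, giving $\|\vF_0\vF_0\tp-\vG_0\vG_0\tp\|_F^2 = \|\vF_0\tp\vG_1\|_F^2+\|\vF_1\tp\vG_0\|_F^2 = 2\|\vF_0\tp\vG_1\|_F^2$, which yields the $\sqrt{2}$ factor. Taking the smaller of the two Sylvester bounds from Step~2 delivers the minimum in the final inequality.

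\textbf{Main obstacle.} The only delicate point is the inversion of the \emph{second} Sylvester equation: it requires a spectral gap between $\mathrm{spec}(\vA_1)$ and $\mathrm{spec}(\bLambda_0)$, which is not literally stated in the hypothesis. In the usual Davis--Kahan usage the block structure implicitly singles out $\vA_0$ as an isolated piece of the spectrum of $\vA$, so the complementary block $\vA_1$ has its eigenvalues in the complement of $(a,b)$; symmetrically, $\bLambda_0$'s eigenvalues sit inside $(a-\delta,b+\delta)$. Together these two observations reproduce the required $\delta$-gap, after which the rest of the proof is routine matrix manipulation.
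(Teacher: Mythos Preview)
The paper does not prove this lemma; it is listed among the assisting lemmas with a bare citation to \cite{cai2013sparse}, so there is no in-paper argument to compare against. Your route via the Sylvester equation and the Davis--Kahan separation bound is the standard one, and Steps~1--3 correctly deliver the inequality with $\|\vF_0\tp\vE\vG_1\|$ (and its Frobenius analogue) on the right-hand side.

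Your ``main obstacle'' is a real one, and your proposed resolution does not actually close it. From the stated hypotheses---$\mathrm{spec}(\vA_0)\subset(a,b)$ and $\mathrm{spec}(\bLambda_1)\cap(a-\delta,b+\delta)=\emptyset$---nothing forces $\mathrm{spec}(\vA_1)$ to avoid $(a,b)$, nor $\mathrm{spec}(\bLambda_0)$ to sit inside $(a-\delta,b+\delta)$; the block partition is just some orthogonal decomposition compatible with the spectral decomposition, and the lemma as written imposes no ordering. So the second Sylvester equation cannot be inverted with gap $\delta$ from the hypotheses alone, and the $\|\vF_1\tp\vE\vG_0\|$ half of the $\min$ is not derivable without an additional assumption. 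In every place the paper actually invokes this lemma (e.g.\ the proof of Theorem~\ref{thm:risk:oracle:upper:d}), the blocks are the top-$d$ versus bottom-$(p-d)$ eigenspaces and both gap conditions are verified explicitly beforehand, so the symmetric bound is legitimately available there; but as a freestanding statement you should either record the complementary gap hypothesis or drop the $\min$ and keep only the $\|\vF_0\tp\vE\vG_1\|$ bound.
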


\begin{lemma}[\cite{cai2013sparse}]\label{lem:Cai:lem4}
	Let $\vK \in \bbR^{p\times p}$ be symmetric such that $\Tr(\vK)=0$ and $\|\vK\|_{F}\leqslant1$. Let $\vZ$ be an $H \times p$  matrix consisting of independent standard normal entries. Then for any $t>0$, one has
	\begin{align}
	\bbP\left(\Big|\Big\langle \vZ\tp \vZ, \vK \Big\rangle\Big
	|\geqslant 2\sqrt{H}t +2t^{2} \right) \leqslant 2\exp\left(-t^{2} \right).
	\end{align}
\end{lemma}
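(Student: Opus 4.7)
The plan is to reduce the bilinear form $\langle\vZ\tp\vZ,\vK\rangle$ to a centered weighted sum of i.i.d.\ $\chi_1^2-1$ random variables and then apply a Laurent--Massart/Bernstein-type moment generating function bound. First, since $\vK$ is symmetric with $\Tr(\vK)=0$, I will diagonalize $\vK=\vU\vD\vU\tp$ with $\vU$ orthogonal and $\vD=\mathrm{diag}(d_1,\dots,d_p)$, so $\sum_j d_j=0$. By the rotational invariance of the i.i.d.\ standard Gaussian matrix $\vZ$, the matrix $\vZ\vU$ has the same distribution as $\vZ$, hence
$$\langle\vZ\tp\vZ,\vK\rangle=\Tr(\vZ\vK\vZ\tp)\stackrel{d}{=}\Tr(\vZ\vD\vZ\tp)=\sum_{j=1}^p d_j\sum_{i=1}^H Z_{ij}^2=\sum_{i=1}^H\sum_{j=1}^p d_j(Z_{ij}^2-1),$$
where the last step uses $\sum_j d_j=0$. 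Collecting the weights into $\va\in\bbR^{Hp}$ consisting of each $d_j$ repeated $H$ times, one has $\|\va\|_2^2=H\|\vK\|_F^2\leqslant H$ and $\|\va\|_\infty=\max_j|d_j|=\|\vK\|\leqslant\|\vK\|_F\leqslant 1$.

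Second, I will bound the moment generating function directly. For $\mu<1/2$ one has $\mathbb{E}\,e^{\mu(\chi_1^2-1)}=e^{-\mu}(1-2\mu)^{-1/2}$, and a short Taylor argument gives the elementary inequality $-x-\tfrac12\log(1-2x)\leqslant x^2/(1-2|x|)$ for $|x|<1/2$. Using independence of the $Z_{ij}$'s, for any real $\lambda$ with $2|\lambda|\|\va\|_\infty<1$,
$$\log\mathbb{E}\exp\!\Big(\lambda\sum_{i,j}d_j(Z_{ij}^2-1)\Big)=\sum_{i,j}\left[-\lambda d_j-\tfrac12\log(1-2\lambda d_j)\right]\leqslant\frac{\lambda^2\|\va\|_2^2}{1-2|\lambda|\|\va\|_\infty}.$$

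Third, a standard Chernoff-bound optimization over $\lambda>0$ (and symmetrically over $\lambda<0$ for the lower tail) delivers the two-sided Bernstein-type bound
$$\bbP\!\left(\left|\sum_{i,j}d_j(Z_{ij}^2-1)\right|\geqslant 2\|\va\|_2\sqrt{s}+2\|\va\|_\infty s\right)\leqslant 2e^{-s},\qquad s>0.$$
Substituting $\|\va\|_2\leqslant\sqrt{H}$, $\|\va\|_\infty\leqslant 1$, and reparametrizing $s=t^2$ gives exactly the stated tail bound $2\sqrt{H}\,t+2t^2$ with probability at most $2e^{-t^2}$.

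The one subtle point is that the classical Laurent--Massart lemma is usually stated for \emph{nonnegative} weights, whereas the eigenvalues $d_j$ of $\vK$ are signed; I expect this to be the only nontrivial technicality. The signed MGF computation above handles signs transparently via the uniform bound $x^2/(1-2|x|)$. An alternative is to split $\va$ into its positive and negative parts, invoke Laurent--Massart on each, and union-bound, at the cost of only absolute constants that are already comfortably absorbed by the factors of $2$ in the final bound. All remaining steps (rotational invariance and Chernoff optimization) are routine.
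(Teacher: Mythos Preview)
Your proof is correct and follows the standard route: diagonalize $\vK$, use rotational invariance of the Gaussian matrix to reduce $\langle\vZ\tp\vZ,\vK\rangle$ to a centered weighted $\chi_1^2$ sum, then apply the signed Laurent--Massart/Bernstein MGF bound and a Chernoff optimization. The handling of signed weights via the uniform bound $-x-\tfrac12\log(1-2x)\leqslant x^2/(1-2|x|)$ is the right way to avoid the nonnegativity assumption in the original Laurent--Massart formulation.

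The paper itself does not give an independent proof of this lemma: it is stated as an assisting result cited from \cite{cai2013sparse}, with the remark that it is ``a trivial modification of Lemma~4 in \cite{cai2013sparse}, where they assumed $\|\vK\|_{F}=1$.'' So there is nothing substantive to compare against; your argument is precisely the standard one underlying the cited result, and the relaxation from $\|\vK\|_F=1$ to $\|\vK\|_F\leqslant 1$ is automatic in your derivation since only the inequalities $\|\va\|_2\leqslant\sqrt{H}$ and $\|\va\|_\infty\leqslant 1$ are used.
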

We remind that this lemma is a trivial modification of  Lemma 4 in \cite{cai2013sparse}, where they assumed $\|\vK\|_{F}=1$. 

\begin{lemma}[\cite{cai2013sparse}]\label{lem:Cai:lem5}
	Let $X_{1},...,X_{N}$ be random variables such that each satisfies
	\begin{align}
	\bbP(|X_{i}|\geqslant at+bt^{2})\leqslant c\exp\left(-t^{2} \right)
	\end{align}
	where $a,b,c>0$. Then
	\begin{align}
	\bbE\max|X_{i}|^{2} \leqslant (2a^{2}+8b^{2})\log(ecN)+2b^{2}\log^{2}(cN).
	\end{align}
\end{lemma}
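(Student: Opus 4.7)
The plan is to use the standard maximal-inequality approach: convert the mixed polynomial--exponential tail into a cleaner form, apply a union bound, and integrate via the layer-cake identity with a carefully chosen threshold.

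First, I would simplify the hypothesized tail into a usable ``mixed subgaussian--subexponential'' form. For any $y>0$, setting $t^{*}=\min\{y/(2a),\sqrt{y/(2b)}\}$ guarantees both $at^{*}\leqslant y/2$ and $b(t^{*})^{2}\leqslant y/2$, hence $at^{*}+b(t^{*})^{2}\leqslant y$. The hypothesis then gives
\begin{equation*}
\bbP(|X_{i}|\geqslant y)\;\leqslant\; c\exp\!\left(-\min\!\left\{\tfrac{y^{2}}{4a^{2}},\,\tfrac{y}{2b}\right\}\right),
\end{equation*}
which exhibits a subgaussian tail for $y\leqslant 2a^{2}/b$ and a subexponential tail beyond. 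Combined with a union bound, this yields $\bbP(\max_{i}|X_{i}|\geqslant y)\leqslant Nc\exp(-\min\{y^{2}/(4a^{2}),\,y/(2b)\})$.

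Next, I would invoke the layer-cake identity $\bbE\max_{i}|X_{i}|^{2}=\int_{0}^{\infty}2y\,\bbP(\max_{i}|X_{i}|>y)\,dy$ and split the range at the threshold $y_{0}=2a\sqrt{\log(cN)}+2b\log(cN)$. This choice is designed so that both $y_{0}^{2}/(4a^{2})\geqslant\log(cN)$ and $y_{0}/(2b)\geqslant\log(cN)$, making $Nc\exp(-\min\{\cdot\})\leqslant 1$ at $y=y_{0}$. The contribution from $[0,y_{0}]$ is at most $y_{0}^{2}$, which by $(x+z)^{2}\leqslant 2x^{2}+2z^{2}$ is bounded by $8a^{2}\log(cN)+8b^{2}\log^{2}(cN)$. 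For the tail $\int_{y_{0}}^{\infty}$, I would split further at the crossover $y^{\star}=2a^{2}/b$: on the subgaussian branch use $\int 2y\,e^{-y^{2}/(4a^{2})}dy=-4a^{2}e^{-y^{2}/(4a^{2})}$, and on the subexponential branch use integration by parts to get the antiderivative $-(4by+8b^{2})e^{-y/(2b)}$. Evaluating at $y_{0}$ and using $Nc\exp(-\log(cN))=1$ gives a tail contribution of order $a^{2}+b^{2}\log(cN)+b^{2}$.

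Adding the two pieces produces a bound of the form $C_{1}a^{2}\log(cN)+C_{2}b^{2}\log^{2}(cN)+C_{3}a^{2}+C_{4}b^{2}\log(cN)$. To match the stated coefficients, I would tighten two places: (i) split $y_{0}^{2}$ using AM--GM with weights tuned to transfer the slack into the $b^{2}\log(cN)$ slot rather than the $a^{2}\log(cN)$ slot, and (ii) absorb the residual $a^{2}$ and $b^{2}$ constants by replacing $\log(cN)$ with $\log(ecN)=\log(cN)+1$ in the $a^{2}$ and $b^{2}$ terms. After this rearrangement, the bound takes the claimed form $(2a^{2}+8b^{2})\log(ecN)+2b^{2}\log^{2}(cN)$.

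The conceptual step---decomposing the mixed tail into subgaussian and subexponential regimes and matching the threshold to $\sqrt{\log(cN)}$ respectively $\log(cN)$---is standard and short. The main obstacle is purely bookkeeping: producing a qualitatively correct bound of order $a^{2}\log(cN)+b^{2}\log^{2}(cN)$ is immediate from the outline above, but pinning down the specific constants $2$ and $8$ (and the extra $+1$ inside $\log(ecN)$ that absorbs lower-order terms) requires a careful balance between the trivial contribution $y_{0}^{2}$ and the tail integral, using slightly different weights in the AM--GM step rather than the naive $(x+z)^{2}\leqslant 2x^{2}+2z^{2}$.
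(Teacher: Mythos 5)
The paper does not actually prove this lemma (it is imported from \citet{cai2013sparse}), so I am judging your argument on its own terms. Your overall architecture --- invert the tail, union bound, layer cake with a threshold at the point where the union bound becomes nontrivial --- is the right family of argument and does deliver a bound of the correct order $a^{2}\log(cN)+b^{2}\log^{2}(cN)$. However, there is a genuine gap: the route you describe cannot produce the stated constants, and the two ``tightening'' steps you propose at the end do not repair it. The loss occurs at the inversion step. Your surrogate $t^{*}=\min\{y/(2a),\sqrt{y/(2b)}\}$ undershoots the true inverse of $g(t)=at+bt^{2}$ by up to a factor of $2$ in $t$ (hence $4$ in the exponent $t^{2}$), which forces the threshold $y_{0}=2a\sqrt{\log(cN)}+2b\log(cN)$ to be twice $g(\sqrt{\log(cN)})$. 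The contribution of $[0,y_{0}]$ is then at least $y_{0}^{2}\geqslant 4b^{2}\log^{2}(cN)$ \emph{before} any AM--GM is applied, already exceeding the claimed coefficient $2b^{2}$ on the $\log^{2}(cN)$ term. A deficit of order $\log^{2}(cN)$ cannot be transferred into the $\log(cN)$ slot by reweighting $(x+z)^{2}\leqslant(1+\epsilon)x^{2}+(1+\epsilon^{-1})z^{2}$ (any choice of $\epsilon$ keeps the $z^{2}$ coefficient above $4b^{2}$), nor absorbed by replacing $\log(cN)$ with $\log(ecN)=\log(cN)+1$, which only buys an additive constant and a term linear in $\log(cN)$.

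The fix is to avoid inverting the tail crudely and instead change variables exactly. Since $g(t)=at+bt^{2}$ is an increasing bijection of $[0,\infty)$, set $T_{i}=g^{-1}(|X_{i}|)$ and $T=\max_{i}T_{i}$, so that $\bbP(T\geqslant t)\leqslant\min\bigl(1,\,Nc\,e^{-t^{2}}\bigr)$ and $\max_{i}|X_{i}|^{2}=g(T)^{2}\leqslant 2a^{2}T^{2}+2b^{2}T^{4}$. Integrating the tail with threshold $L=\log(cN)$ gives $\bbE T^{2}\leqslant L+Nce^{-L}=\log(ecN)$ and $\bbE T^{4}=\int_{0}^{\infty}2u\,\bbP(T^{2}>u)\,du\leqslant L^{2}+2Nc(L+1)e^{-L}=\log^{2}(cN)+2\log(ecN)$, whence $\bbE\max_{i}|X_{i}|^{2}\leqslant(2a^{2}+4b^{2})\log(ecN)+2b^{2}\log^{2}(cN)$, which is even slightly stronger than the stated bound. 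The essential point is that the exact substitution places the threshold at $g(\sqrt{L})=a\sqrt{L}+bL$ rather than at twice that value, and $(a\sqrt{L}+bL)^{2}\leqslant 2a^{2}L+2b^{2}L^{2}$ matches the leading terms exactly. (For the purposes of this paper the lemma is only ever used up to absolute constants, so your weaker bound would suffice downstream; but it does not prove the statement as written.)
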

\begin{lemma}\label{lem:elementary:trivial3}
	Let $\vA$, $\vB$ be $m\times l$ orthogonal matrices, i.e., $\vA\tp \vA= \vI_{l}= \vB\tp \vB$ and  $\vM$ be an $l\times l$ positive definite matrix with eigenvalues $d_j$ such as $0<\lambda\leqslant d_{l} \leqslant d_{l-1} \leqslant ... \leqslant d_{1} \leqslant \kappa \lambda$. If $ \vA\tp \vB$ is a diagonal matrix with non-negative entries, then there exists a constant $C$ which only depends on $\kappa$ such that 
	$
	\| \vA \vM \vA\tp - \vB \vM \vB\tp \|_{F}\leqslant C\lambda \|\vA\vA\tp -\vB\vB\tp \|_{F}.$
	\begin{proof}
We first show that for any symmetric matrix $\vC$ and positive semi-definite matrix $\vD$, one has
\begin{align*}
\lambda_{\min}(\vC)\lambda_i(\vD)\leqslant	\lambda_i(\vC\vD)\leqslant\lambda_{\max}(\vC)\lambda_i(\vD).   
\end{align*}
This is because the Courant--Fischer min-max theorem \citep[Theorem 1.3.2]{tao2012topics}:
\begin{align}\label{eq:elementary:trivial3-eigen-prod}
\lambda_i(\vC\vD)=\lambda_i(\sqrt{\vD}\vC\sqrt{\vD})=\inf_{F\subset\R^n,  \atop\mr{dim}(F)=n-i+1}\sup_{x\in F\backslash\{0\}}\frac{(\vC\sqrt{\vD}x,\sqrt{\vD}x)}{(\sqrt{\vD}x,\sqrt{\vD}x)}\frac{(\vD x,x)}{(x,x)}.
\end{align}
 
	Let $\Delta= \vI_{l}- \vB\tp \vA$, then $0 \leqslant \Delta_{ii}\leqslant 1$ for $1\leqslant i \leqslant l$, so $\Tr(\Delta^2)\leqslant \Tr(\Delta)$ and  $4\Tr(\Delta)-2\Tr(\Delta^2)=\| \vA\vA\tp - \vB\vB\tp\|_{F}^2$. 
 
 If $C^{2}>2\kappa^{2}-1$, then $(C^2-1)\Tr(\Delta^2)\leqslant (C^2-1)\Tr(\Delta) \leqslant 2(C^2-\kappa^2)\Tr(\Delta)$, that is, 
 \begin{equation}\label{eq:elementary:trivial3-C}
  2\kappa^2\Tr(\Delta)  - \Tr(\Delta^2)\leqslant  2 C^2 \Tr(\Delta) - 2 C^2  \Tr(\Delta^2).
 \end{equation}
 We have
\begin{equation}\nonumber
\begin{aligned}
\|\vA \vM\vA\tp - \vB \vM \vB\tp \|_{F}^{2}&=4\Tr(\vM^{2}\Delta)-2\Tr(\vM\Delta \vM\Delta)\overset{(a)}{\leqslant}  4\kappa^{2}\lambda^{2} \Tr(\Delta)-2\lambda^{2}\Tr(\Delta^{2})\\ 
& \overset{(b)}{\leqslant} 2C^{2}\lambda^{2}(2\Tr(\Delta)-\Tr(\Delta^{2}))=C^{2}\lambda^{2}\|\vA\vA\tp - \vB\vB\tp \|_{F}^{2},
\end{aligned}
\end{equation}
where $(a)$ is obtained by applying  \eqref{eq:elementary:trivial3-eigen-prod} for three times with $(\vC, \vD)=(\vM^2, \Delta)$, $(\vC, \vD)=(\vM, \Delta\vM\Delta)$, and $(\vC, \vD)=(\vM, \Delta^2)$, respectively, while $(b)$ comes from \eqref{eq:elementary:trivial3-C}. 
\end{proof}

\end{lemma}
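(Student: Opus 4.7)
The plan is to reduce the inequality to a trace identity after computing both Frobenius norms explicitly. Introduce the auxiliary matrix $\Delta := \vI_l - \vB^\top\vA$. By hypothesis $\vA^\top\vB$ is diagonal with non-negative entries; since $\vA$ and $\vB$ have orthonormal columns, the singular values of $\vA^\top\vB$ are at most $1$, so $\vB^\top\vA$ is diagonal with entries in $[0,1]$, and therefore $\Delta$ is a diagonal positive semi-definite matrix whose diagonal entries lie in $[0,1]$. In particular this gives the key scalar fact $\Tr(\Delta^2)\leqslant \Tr(\Delta)$, which will be used at the end.

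Next, I would compute each Frobenius norm by expanding the square and using $\vA^\top\vA=\vB^\top\vB=\vI_l$ together with the cyclic property of the trace. A straightforward calculation gives
\[
\|\vA\vA^\top-\vB\vB^\top\|_F^2 \;=\; 2l-2\Tr((\vA^\top\vB)^2)\;=\;2l-2\Tr((\vI_l-\Delta)^2)\;=\;4\Tr(\Delta)-2\Tr(\Delta^2),
\]
and similarly, substituting $\vM$ into the analogous expansion,
\[
\|\vA\vM\vA^\top-\vB\vM\vB^\top\|_F^2 \;=\;2\Tr(\vM^2)-2\Tr(\vM(\vI_l-\Delta)\vM(\vI_l-\Delta))\;=\;4\Tr(\vM^2\Delta)-2\Tr(\vM\Delta\vM\Delta).
\]

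The heart of the argument is then to control the right side of the second identity by $\lambda^2$ times the right side of the first. For this I would invoke the Courant--Fischer inequality $\lambda_{\min}(\vC)\lambda_i(\vD)\leqslant \lambda_i(\vC\vD)\leqslant \lambda_{\max}(\vC)\lambda_i(\vD)$, valid whenever $\vC$ is symmetric and $\vD$ is positive semi-definite (this is exactly the content of \eqref{eq:elementary:trivial3-eigen-prod} in the text). Applying it to $(\vC,\vD)=(\vM^2,\Delta)$ gives $\Tr(\vM^2\Delta)\leqslant \kappa^2\lambda^2\Tr(\Delta)$. For the lower bound on $\Tr(\vM\Delta\vM\Delta)$, the main obstacle, I would chain two applications: first to $(\vC,\vD)=(\vM,\Delta\vM\Delta)$, where $\Delta\vM\Delta$ is PSD because $\Delta,\vM\succeq 0$, obtaining $\Tr(\vM\Delta\vM\Delta)\geqslant \lambda\Tr(\Delta\vM\Delta)=\lambda\Tr(\vM\Delta^2)$; then to $(\vC,\vD)=(\vM,\Delta^2)$, yielding $\Tr(\vM\Delta^2)\geqslant \lambda\Tr(\Delta^2)$. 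Combining,
\[
\|\vA\vM\vA^\top-\vB\vM\vB^\top\|_F^2 \;\leqslant\; 2\lambda^2\bigl(2\kappa^2\Tr(\Delta)-\Tr(\Delta^2)\bigr).
\]

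Finally, choose $C^2>2\kappa^2-1$ and use $\Tr(\Delta^2)\leqslant \Tr(\Delta)$ to deduce $(C^2-1)\Tr(\Delta^2)\leqslant (C^2-1)\Tr(\Delta)\leqslant 2(C^2-\kappa^2)\Tr(\Delta)$, which rearranges exactly to $2\kappa^2\Tr(\Delta)-\Tr(\Delta^2)\leqslant C^2\bigl(2\Tr(\Delta)-\Tr(\Delta^2)\bigr)$. Substituting back gives
\[
\|\vA\vM\vA^\top-\vB\vM\vB^\top\|_F^2\;\leqslant\; C^2\lambda^2\bigl(4\Tr(\Delta)-2\Tr(\Delta^2)\bigr)\;=\;C^2\lambda^2\|\vA\vA^\top-\vB\vB^\top\|_F^2,
\]
and taking square roots concludes the lemma, with $C$ depending only on $\kappa$. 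The only delicate point is the chained Courant--Fischer step producing the lower bound on $\Tr(\vM\Delta\vM\Delta)$; the hypothesis that $\vA^\top\vB$ is diagonal (so $\Delta$ itself is diagonal, hence PSD) is essential there, since without it $\Delta\vM\Delta$ might not be accessible to the symmetric-times-PSD version of Courant--Fischer.
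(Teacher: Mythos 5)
Your proposal is correct and follows essentially the same route as the paper: the same auxiliary matrix $\Delta=\vI_l-\vB\tp\vA$, the same trace identities $\|\vA\vA\tp-\vB\vB\tp\|_F^2=4\Tr(\Delta)-2\Tr(\Delta^2)$ and $\|\vA\vM\vA\tp-\vB\vM\vB\tp\|_F^2=4\Tr(\vM^2\Delta)-2\Tr(\vM\Delta\vM\Delta)$, the same three applications of the symmetric-times-PSD eigenvalue inequality, and the same final algebra with $C^2>2\kappa^2-1$. Your added justification that $\Delta$ is diagonal with entries in $[0,1]$ (via the singular values of $\vA\tp\vB$) is a welcome clarification of a point the paper merely asserts.
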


\begin{lemma}\label{cor:elemetary:temp}
	For a positive definite matrix $\vM$ with eigenvalue  $\lambda_{1}\geqslant ... \geqslant \lambda_{d}>0$ and orthogonal matrices $\vA,\vB,\vE,\vF$, i.e., $ \vA\tp \vA= \vB\tp \vB= \vE\tp \vE= \vF\tp \vF= \vI_{d}$, 
	one has
	\[
	\frac{\lambda_{d}}{2}\| \vA \vB\tp - \vE \vF\tp \|^{2}_{F} \leqslant ~\langle \vA \vM \vB\tp , \vA \vB\tp -\vE\vF\tp  \rangle ~\leqslant \frac{\lambda_{1}}{2}\| \vA \vB\tp - \vE \vF\tp \|^{2}_{F}.
	\]
	\begin{proof}
	It is a direct corollary of the Lemma 8 in \cite{gao2014minimax}.
	\end{proof}
\end{lemma}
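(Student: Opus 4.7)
The plan is to reduce both the inner product and the Frobenius norm to expressions involving a single $d \times d$ matrix $\vC := \vA\tp \vE \vF\tp \vB$, then invoke a standard eigenvalue--trace inequality. First I would carry out the routine algebra. Using the cyclic property of trace together with $\vA\tp\vA = \vB\tp\vB = \vI_d$,
\begin{align*}
\langle \vA\vM\vB\tp,\, \vA\vB\tp - \vE\vF\tp \rangle
&= \Tr(\vB\vM\vA\tp\vA\vB\tp) - \Tr(\vB\vM\vA\tp\vE\vF\tp)
\,=\, \Tr(\vM) - \Tr(\vM\vC),
\end{align*}
and similarly $\|\vA\vB\tp - \vE\vF\tp\|_F^2 = 2d - 2\Tr(\vC) = 2\,\Tr(\vI_d - \vC)$.

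Second, I would symmetrize $\vC$. Since $\vM$ is symmetric, $\Tr(\vM\vC)=\Tr((\vM\vC)\tp)=\Tr(\vM\vC\tp)$, so replacing $\vC$ by $\widetilde{\vC} := (\vC+\vC\tp)/2$ does not change either expression. Thus the desired inequality reduces to
\[
\lambda_d \,\Tr(\vI_d - \widetilde{\vC}) \;\leqs\; \Tr\!\bigl(\vM(\vI_d - \widetilde{\vC})\bigr) \;\leqs\; \lambda_1 \,\Tr(\vI_d - \widetilde{\vC}).
\]

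Third, I would verify that $\vI_d - \widetilde{\vC}$ is positive semi-definite. For any unit $\vx\in\R^d$, Cauchy--Schwarz and $\vA\tp\vA=\vB\tp\vB=\vI_d$ give $\|\vA\vx\|=\|\vB\vx\|=1$, while $\|\vE\vF\tp\|\leqs \|\vE\|\cdot\|\vF\tp\|=1$, hence
\[
|\vx\tp \vC \vx| \,=\, |(\vA\vx)\tp(\vE\vF\tp)(\vB\vx)| \,\leqs\, 1.
\]
Therefore the eigenvalues of $\widetilde{\vC}$ lie in $[-1,1]$ and $\vI_d - \widetilde{\vC} \succeq 0$.

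Finally, I would apply the standard fact that for any symmetric $\vM$ with eigenvalues in $[\lambda_d,\lambda_1]$ and any PSD $\vR$, $\lambda_d \Tr(\vR) \leqs \Tr(\vM\vR) \leqs \lambda_1\Tr(\vR)$: writing the spectral decomposition $\vR=\sum_j r_j \vu_j\vu_j\tp$ with $r_j\geqs 0$, we get $\Tr(\vM\vR) = \sum_j r_j\, \vu_j\tp \vM \vu_j$, and every Rayleigh quotient $\vu_j\tp\vM\vu_j$ lies in $[\lambda_d,\lambda_1]$. Taking $\vR = \vI_d - \widetilde{\vC}$ yields the lemma. The whole argument is a short linear-algebra computation; the only point requiring any care is recognizing that, even though $\vC$ itself is neither symmetric nor PSD, the symmetry of $\vM$ lets us replace $\vC$ with $\widetilde{\vC}$, after which the PSD-ness needed for the trace inequality comes cleanly from the operator-norm bound $\|\vC\|\leqs 1$.
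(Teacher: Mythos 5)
Your proof is correct. Every step checks out: the reduction of both sides to the single $d\times d$ matrix $\vC=\vA\tp\vE\vF\tp\vB$ via $\langle \vA\vM\vB\tp,\vA\vB\tp-\vE\vF\tp\rangle=\Tr(\vM(\vI_d-\vC))$ and $\|\vA\vB\tp-\vE\vF\tp\|_F^2=2\Tr(\vI_d-\vC)$, the observation that symmetry of $\vM$ lets you pass to $\widetilde{\vC}=(\vC+\vC\tp)/2$ without changing either trace, the operator-norm bound $|\vx\tp\vC\vx|\leqs 1$ giving $\vI_d-\widetilde{\vC}\succeq 0$, and the final trace inequality $\lambda_d\Tr(\vR)\leqs\Tr(\vM\vR)\leqs\lambda_1\Tr(\vR)$ for PSD $\vR$. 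The route differs from the paper's only in that the paper offers no argument at all: it simply declares the statement "a direct corollary of Lemma 8 in Gao et al. (2014)," a curvature-type lemma from the sparse CCA literature. What you have done, in effect, is supply a self-contained elementary proof of that cited result, which is arguably preferable here since the reader need not chase the reference or check that the hypotheses of the external lemma (stated there for a slightly different configuration of matrices) really match the present setting. The one point worth flagging as the genuinely non-routine idea is the one you already identified: $\vC$ itself is neither symmetric nor a contraction in any obvious spectral sense, and it is only after symmetrizing under the trace against the symmetric $\vM$ that the quadratic-form bound $\|\vC\|\leqs 1$ delivers the positive semi-definiteness needed for the eigenvalue--trace inequality.
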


\section{Additional simulation results}\label{app:assitional-simulation}
The section contains (i) 
the detailed procedures of sampling from a GP  and the additional simulation results in the `Gaussian process' part of Section \ref{sec:simulation small gSNR}; (ii) the additional simulation results in the `dependence of error w.r.t. $d$ and $\lambda$' part of Section \ref{sec:minimax rate}.
\subsection{Detailed procedures of sampling from a GP}
\begin{itemize}
    \item[(i)] generate $\bs X_i\overset{iid}{\sim}N(0,I_p)$ and take $\bs x_i=\vB\tp \bs X_i,i\in[n]$;
    \item[(ii)] generate $(f(\bs x_1),\dots,f(\bs x_n))$ from the $n$-dimensional normal distribution 
    $$N\left(\{\mu(\bs x_i)\}_{i\in[n]},\{\Sigma(\bs x_i,\bs x_j)\}_{i,j\in[n]} \right);$$
    \item[(iii)] generate $\epsilon_i\overset{iid}{\sim}N(0,1),i\in[n]$ and take $Y_i=f(\bs x_i)+\epsilon_i$.
\end{itemize}

\subsection{Average logarithm of gSNR for $H=10,20,30,50$}
This subsection contains average logarithm of  estimated  gSNR  
  as a function of $n$  for various values of $d$ and  as a function of $d$  for various values of $n$. The $H$ is chosen to be  $10,20,30,50$ respectively.
\begin{figure}[H] 
	\centering
	\begin{minipage}{0.5\textwidth}
		\includegraphics[width=\textwidth]{"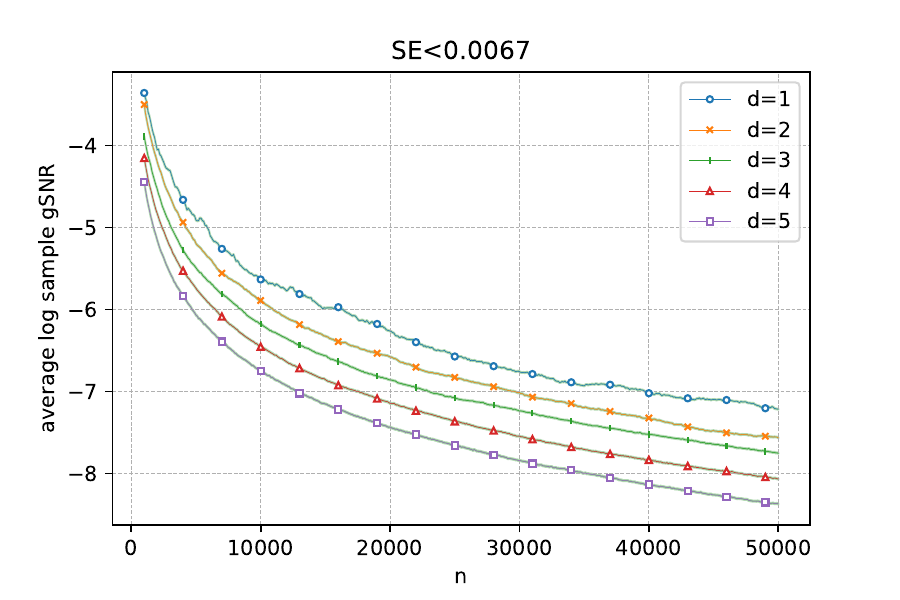"}
	\end{minipage}\hfill
	\begin{minipage}{0.5\textwidth}
		\includegraphics[width=\textwidth]{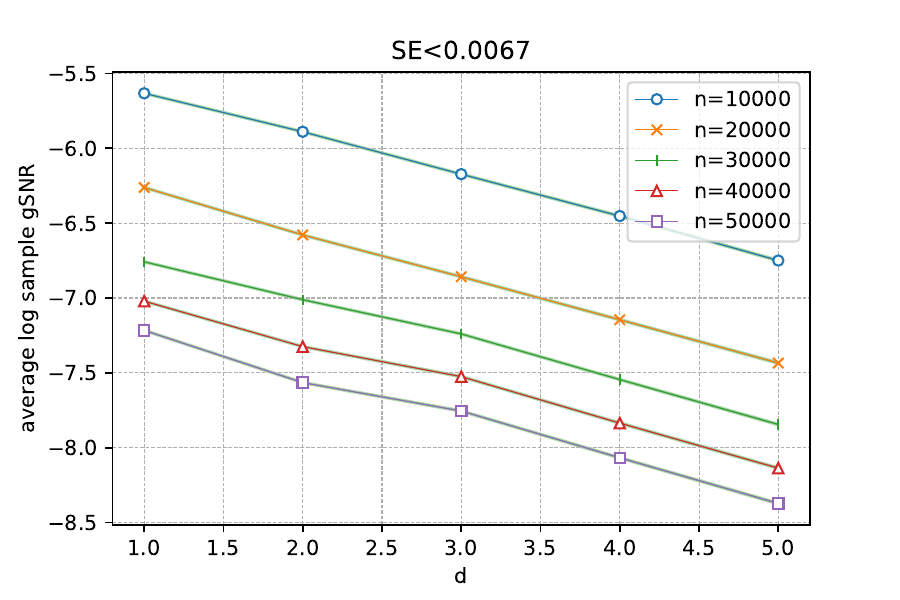}
	\end{minipage}\hfill
	\caption{Average logarithm of    gSNR with increasing $n$ (left) and increasing $d$ (right)  for $H=10$.}
\end{figure}

\begin{figure}[H] 
	\centering
	\begin{minipage}{0.5\textwidth}
		\includegraphics[width=\textwidth]{"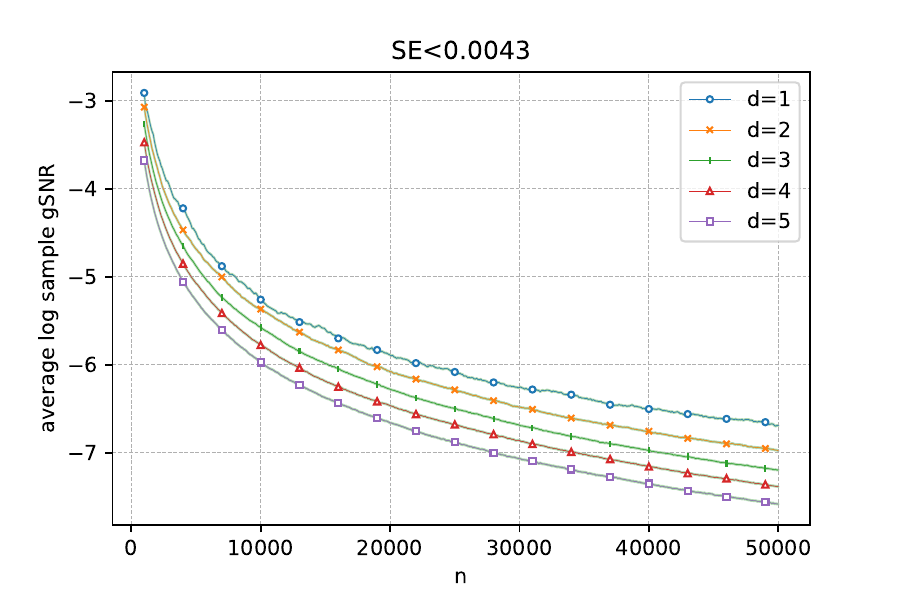"}
	\end{minipage}\hfill
	\begin{minipage}{0.5\textwidth}
		\includegraphics[width=\textwidth]{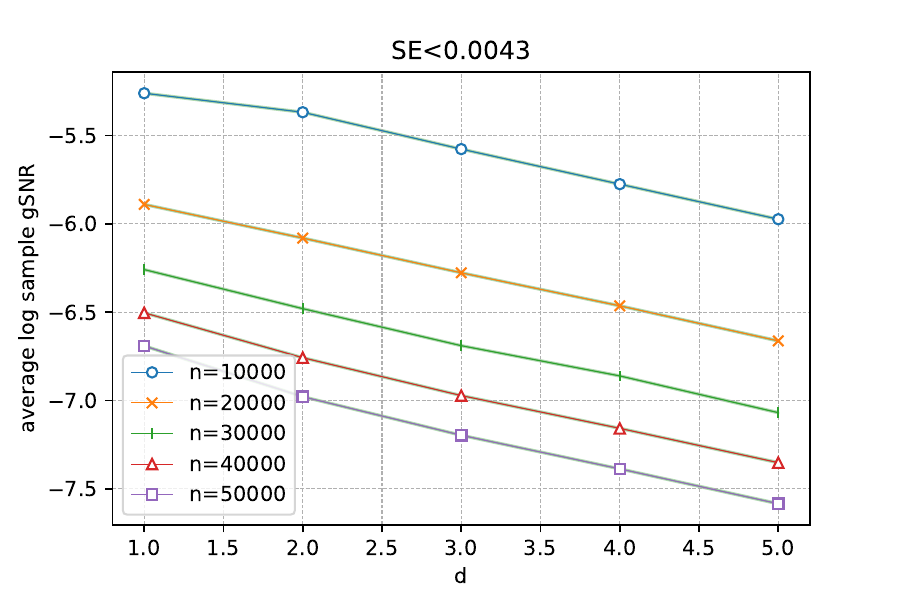}
	\end{minipage}\hfill
	\caption{Average logarithm of    gSNR with increasing $n$ (left) and increasing $d$ (right)  for $H=20$.}
\end{figure}

\begin{figure}[H] 
	\centering
	\begin{minipage}{0.5\textwidth}
		\includegraphics[width=\textwidth]{"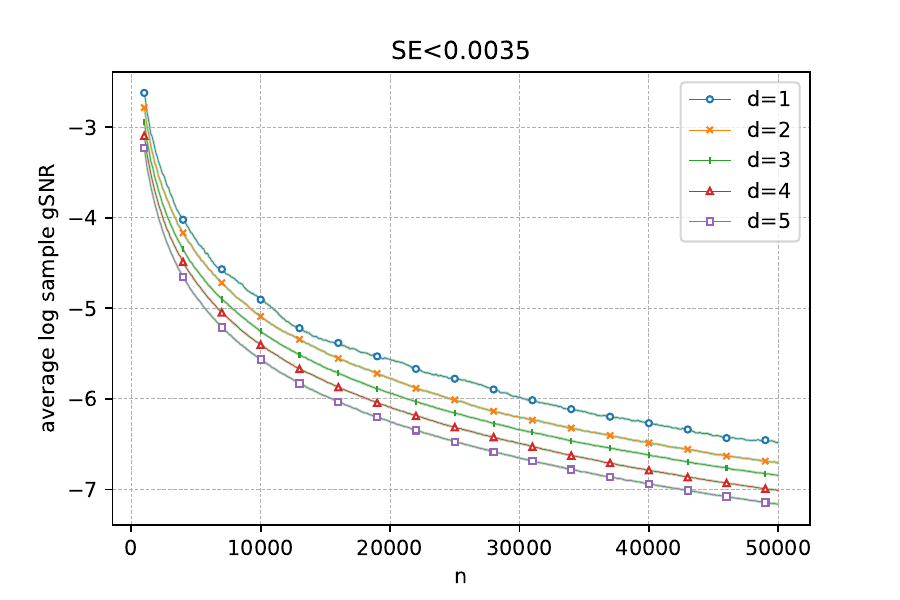"}
	\end{minipage}\hfill
	\begin{minipage}{0.5\textwidth}
		\includegraphics[width=\textwidth]{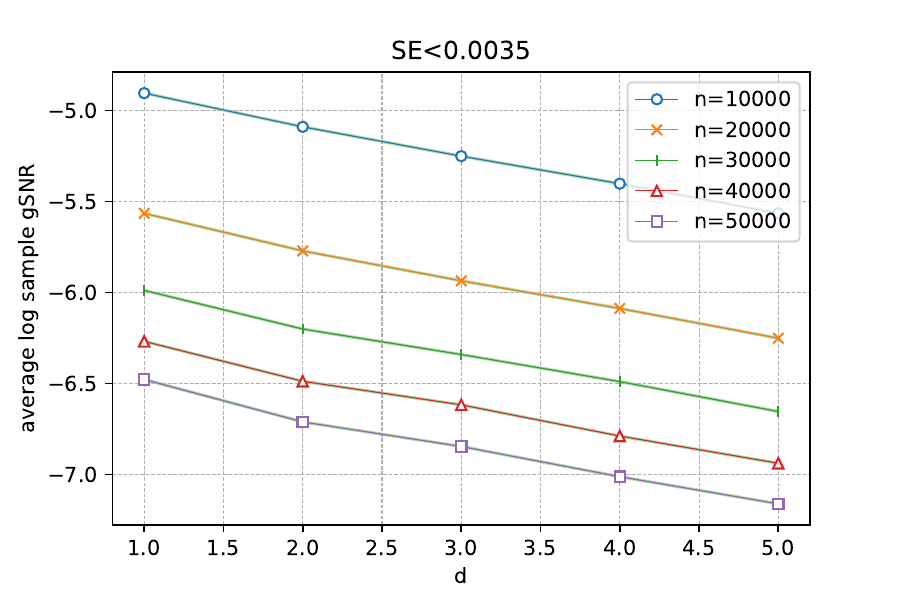}
	\end{minipage}\hfill
	\caption{Average logarithm of    gSNR with increasing $n$ (left) and increasing $d$ (right) for $H=30$.}
\end{figure}

\begin{figure}[H] 
	\centering
	\begin{minipage}{0.5\textwidth}
		\includegraphics[width=\textwidth]{"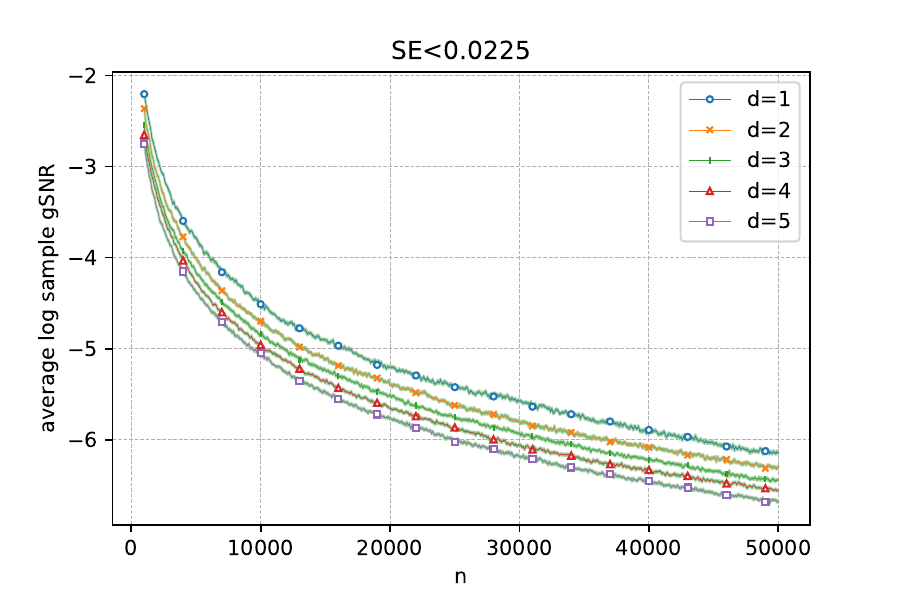"}
	\end{minipage}\hfill
	\begin{minipage}{0.5\textwidth}
		\includegraphics[width=\textwidth]{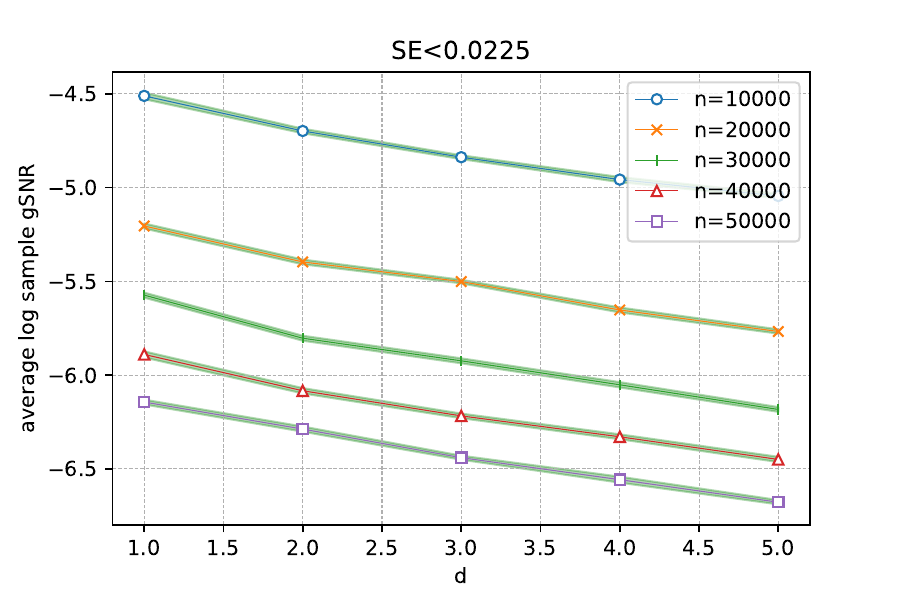}
	\end{minipage}\hfill
	\caption{Average logarithm of    gSNR with increasing $n$ (left) and increasing $d$ (right)  for $H=50$.}
\end{figure}

\subsection{Histogram of  gSNR of GP and pure noise}\label{app:Histogram of sample gSNR of GP}

This subsection contains  the histogram of   gSNR over
$1,000$ random functions with different $d$ and $n$ in
Figure \ref{hist, gSNR}.
\begin{figure}[H] 
	\centering
	\begin{minipage}{0.25\textwidth}
		\includegraphics[width=\textwidth]{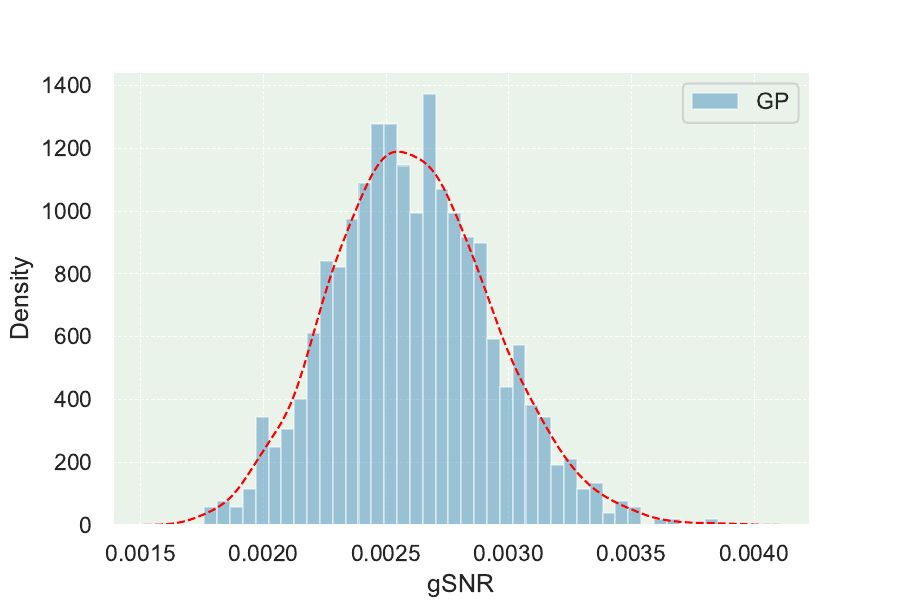}
		\caption*{$d=1,n=20000$}
	\end{minipage}\hfill
	\begin{minipage}{0.25\textwidth}
		\includegraphics[width=\textwidth]{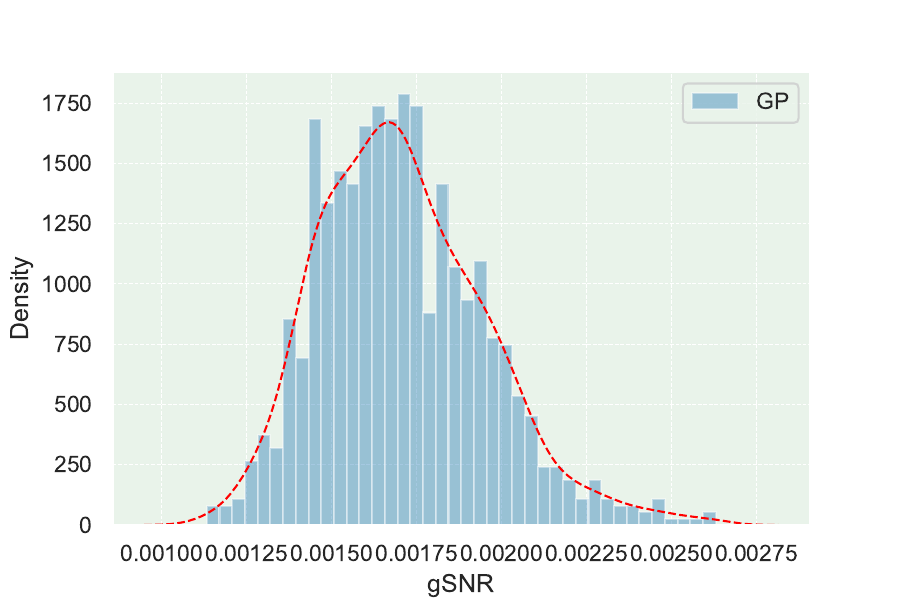}
		\caption*{$d=1,n=30000$}
	\end{minipage}\hfill
	\begin{minipage}{0.25\textwidth}
		\includegraphics[width=\textwidth]{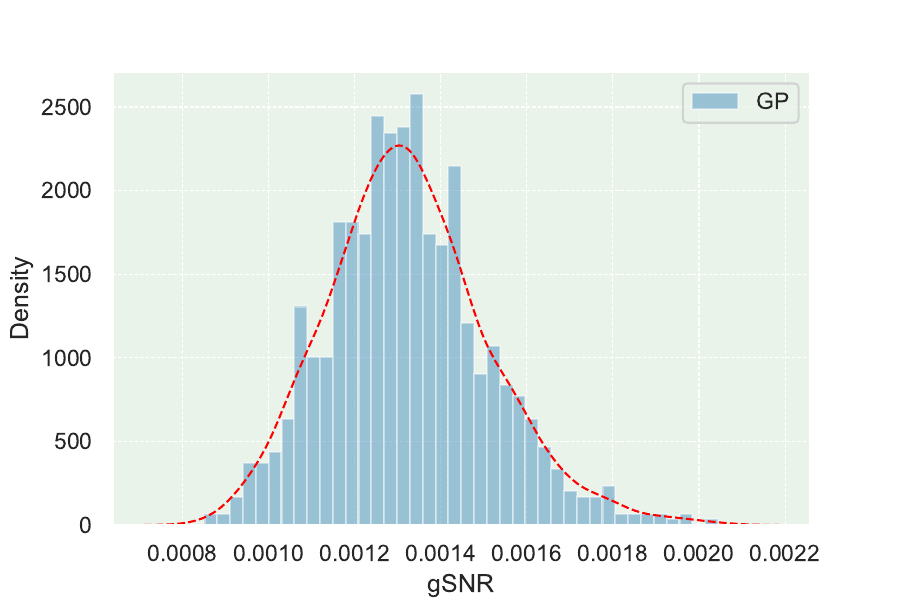}
		\caption*{$d=1,n=40000$}
	\end{minipage}\hfill
	\begin{minipage}{0.25\textwidth}
		\includegraphics[width=\textwidth]{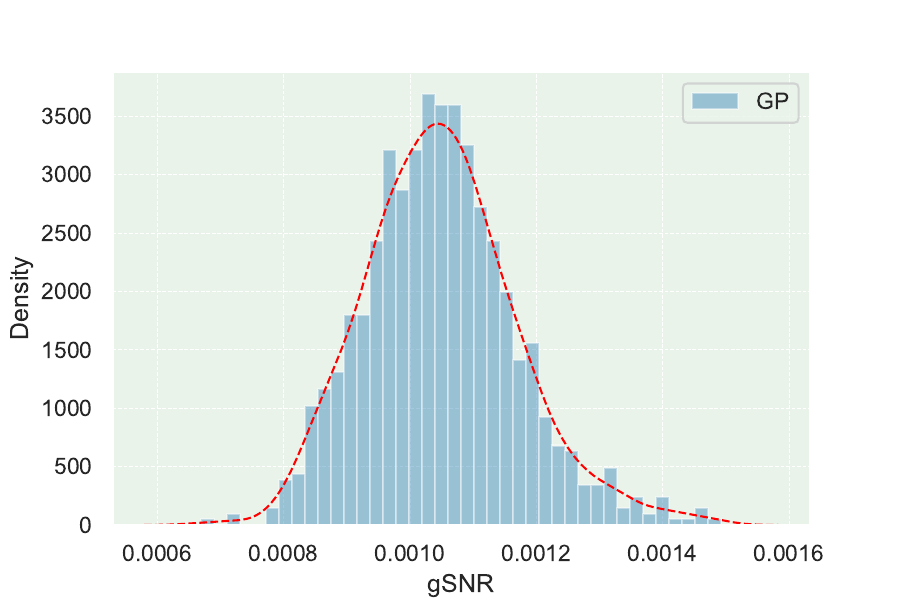}
		\caption*{$d=1,n=50000$}
		 
	\end{minipage}\hfill
	\begin{minipage}{0.25\textwidth}
		\includegraphics[width=\textwidth]{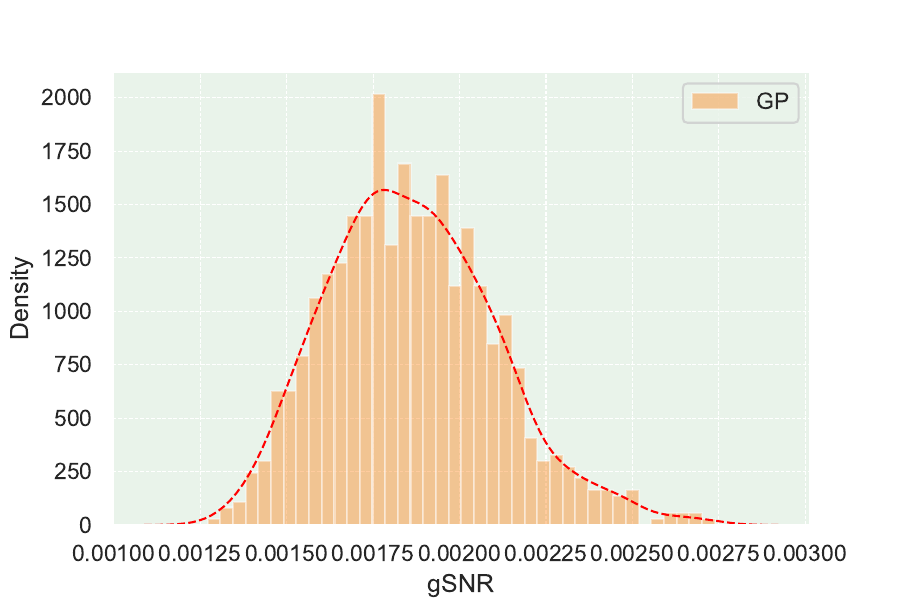}
		\caption*{$d=2,n=20000$}
		 
	\end{minipage}\hfill
	\begin{minipage}{0.25\textwidth}
		\includegraphics[width=\textwidth]{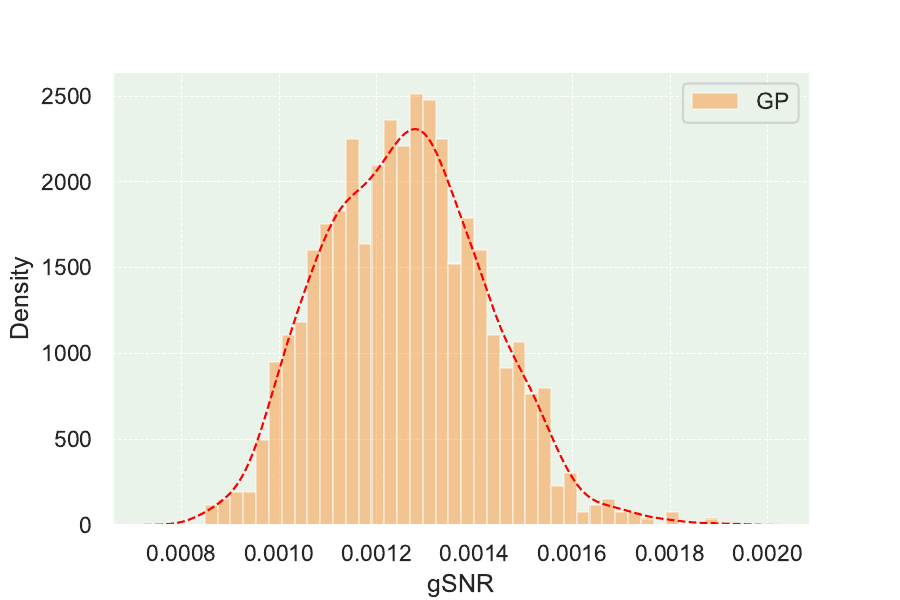}
		\caption*{$d=2,n=30000$}
		 
	\end{minipage}\hfill
	\begin{minipage}{0.25\textwidth}
		\includegraphics[width=\textwidth]{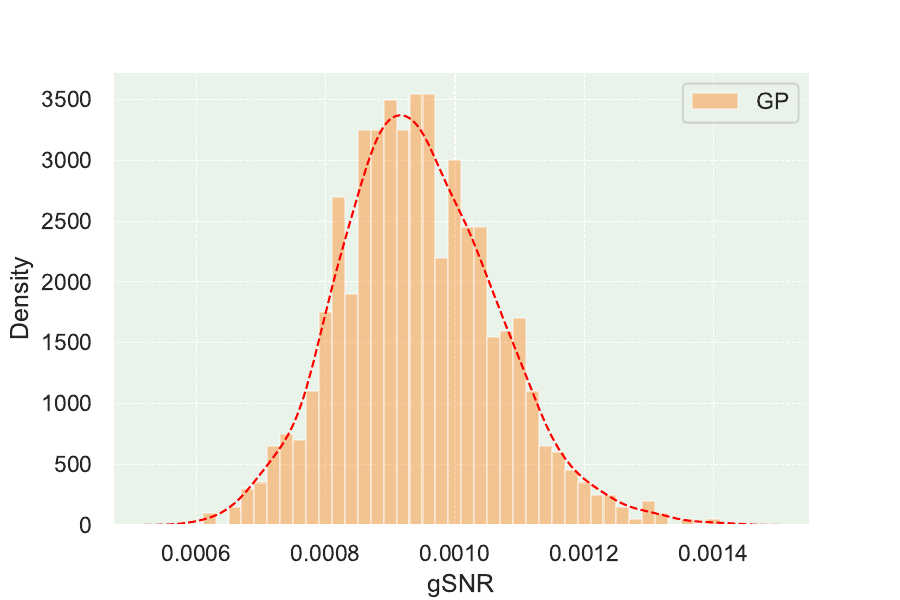}
		\caption*{$d=2,n=40000$}
		 
	\end{minipage}\hfill
	\begin{minipage}{0.25\textwidth}
		\includegraphics[width=\textwidth]{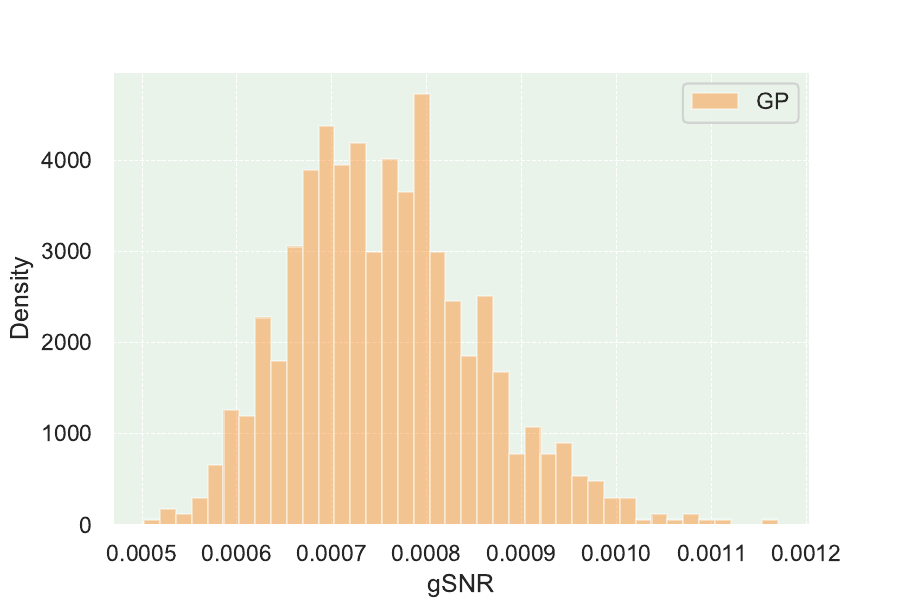}
		\caption*{$d=2,n=50000$}
		 
	\end{minipage}\hfill
	\begin{minipage}{0.25\textwidth}
	\includegraphics[width=\textwidth]{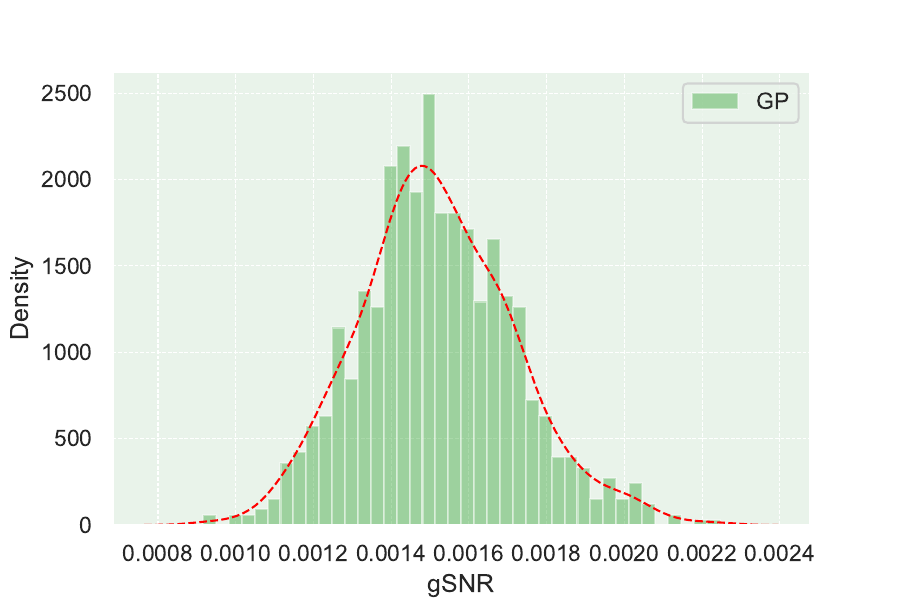}
	\caption*{$d=3,n=20000$}
	 
\end{minipage}\hfill
\begin{minipage}{0.25\textwidth}
	\includegraphics[width=\textwidth]{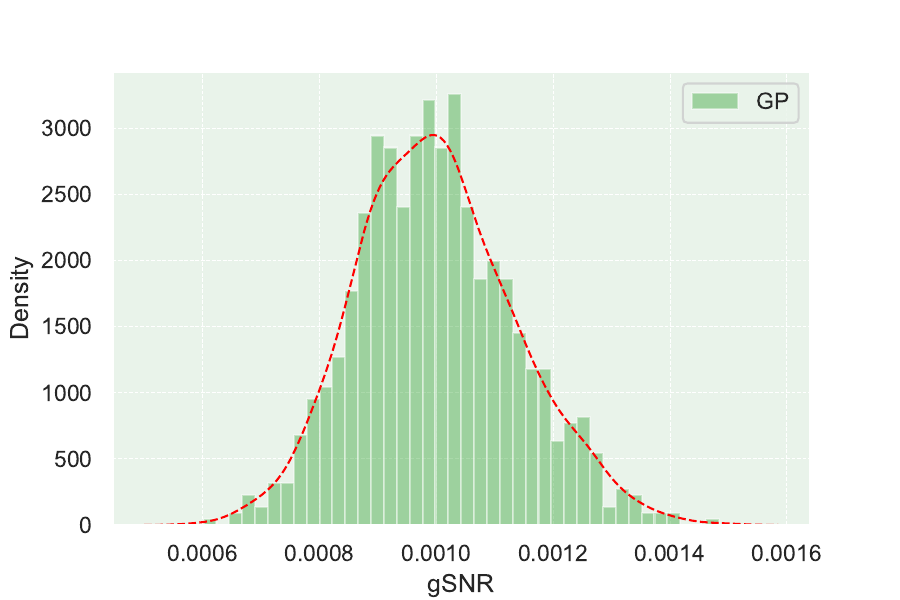}
	\caption*{$d=3,n=30000$}
\end{minipage}\hfill
\begin{minipage}{0.25\textwidth}
	\includegraphics[width=\textwidth]{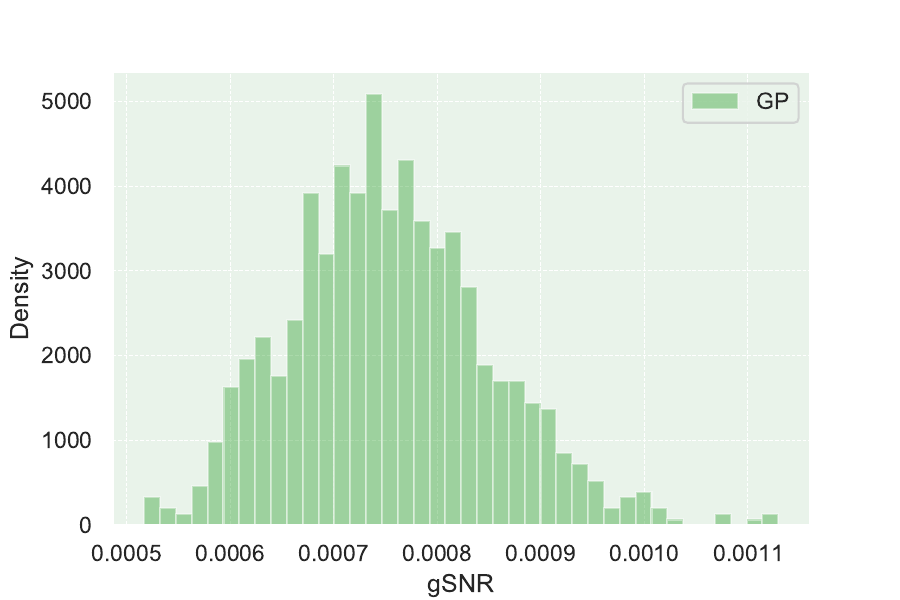}
	\caption*{$d=3,n=40000$}
	 
\end{minipage}\hfill
\begin{minipage}{0.25\textwidth}
	\includegraphics[width=\textwidth]{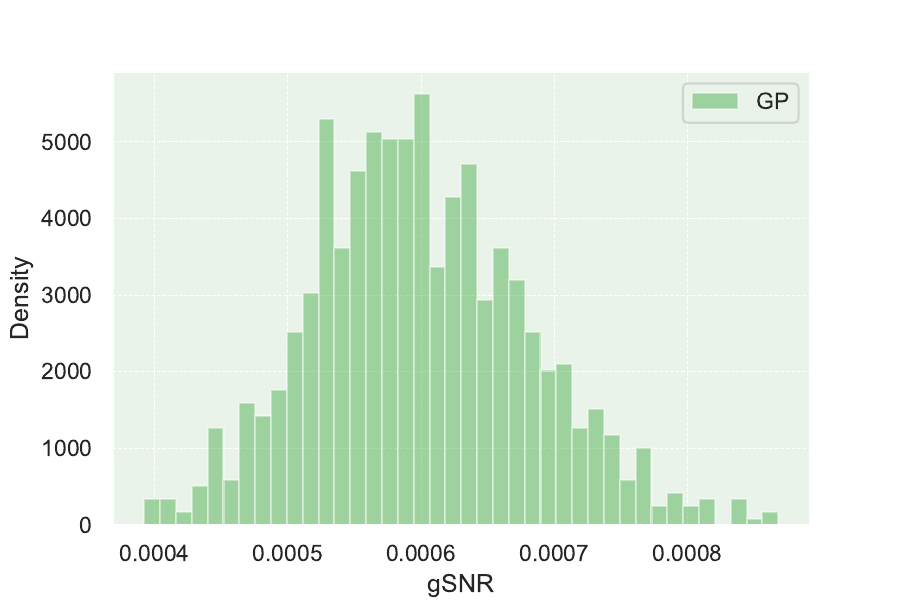}
	\caption*{$d=3,n=50000$}
	 
\end{minipage}\hfill
\begin{minipage}{0.25\textwidth}
	\includegraphics[width=\textwidth]{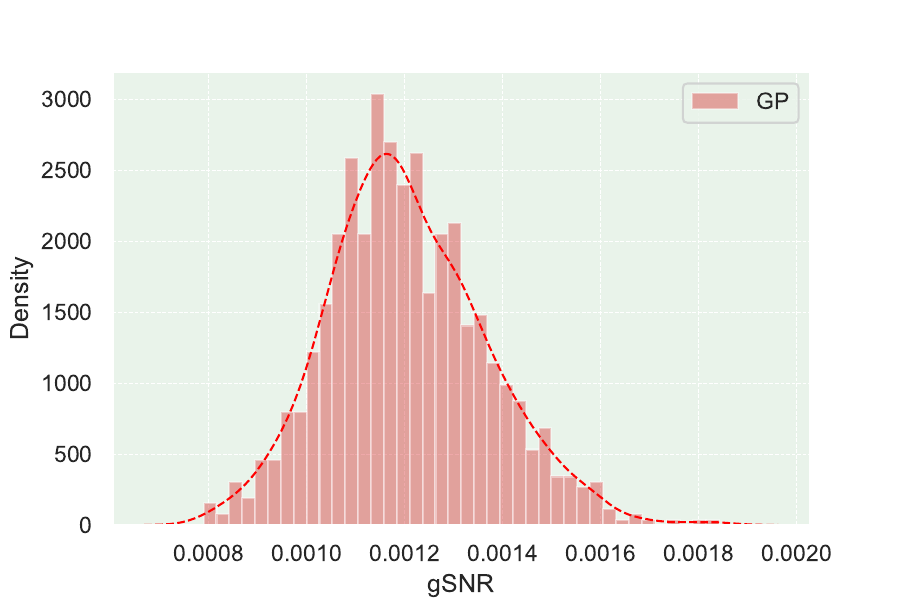}
	\caption*{$d=4,n=20000$}
	 
\end{minipage}\hfill
\begin{minipage}{0.25\textwidth}
	\includegraphics[width=\textwidth]{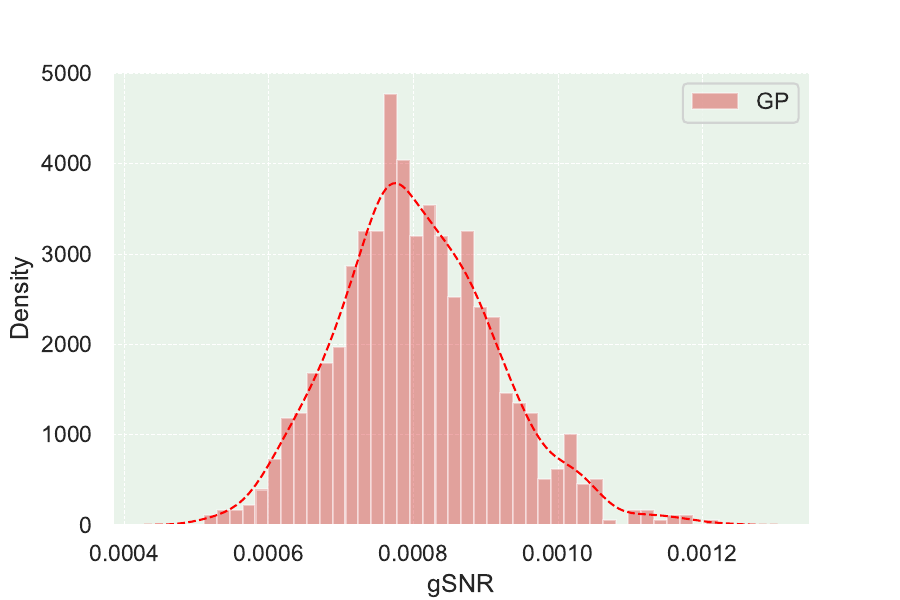}
	\caption*{$d=4,n=30000$}
	 
\end{minipage}\hfill
\begin{minipage}{0.25\textwidth}
	\includegraphics[width=\textwidth]{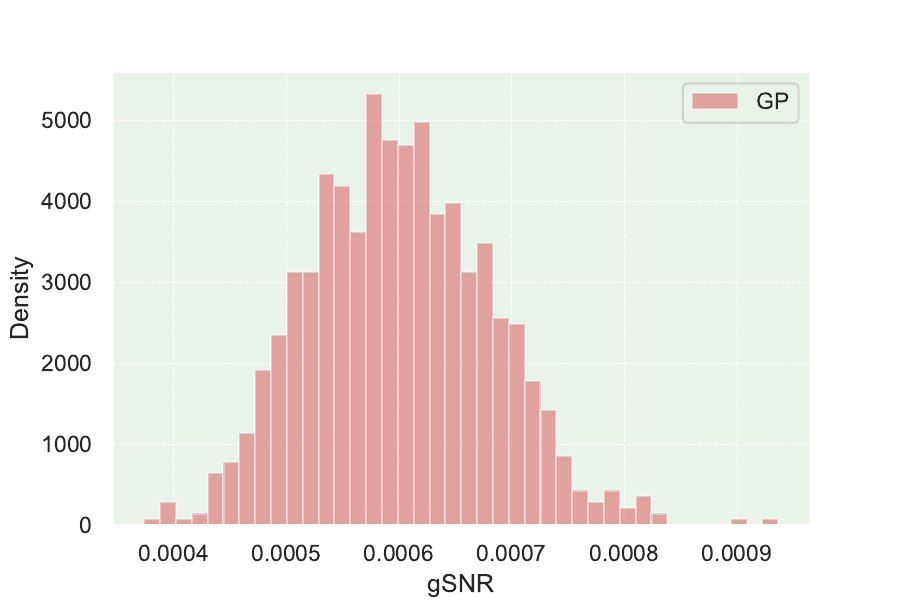}
	\caption*{$d=4,n=40000$}
	 
\end{minipage}\hfill
\begin{minipage}{0.25\textwidth}
	\includegraphics[width=\textwidth]{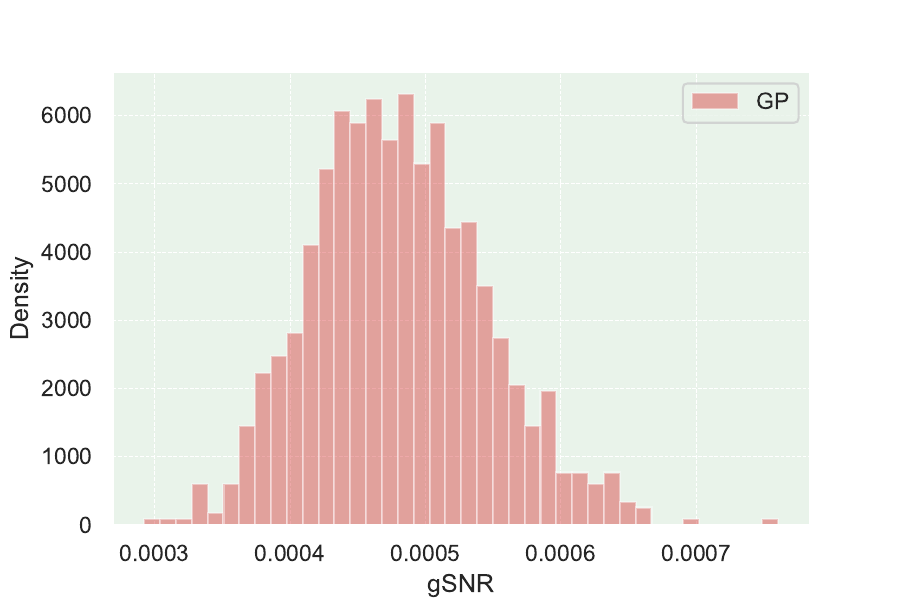}
	\caption*{$d=4,n=50000$}
	 
\end{minipage}\hfill
\begin{minipage}{0.25\textwidth}
	\includegraphics[width=\textwidth]{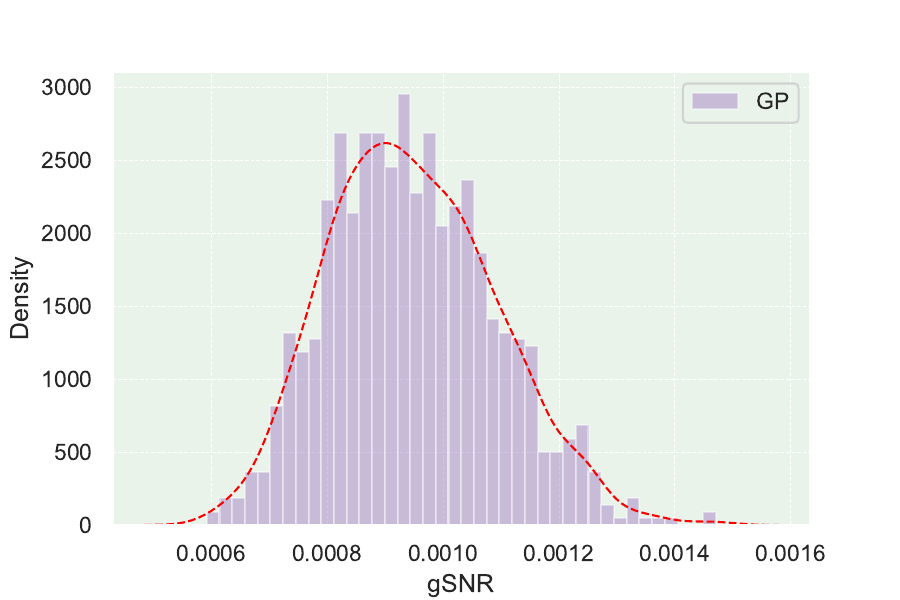}
	\caption*{$d=5,n=20000$}
	 
\end{minipage}\hfill
\begin{minipage}{0.25\textwidth}
	\includegraphics[width=\textwidth]{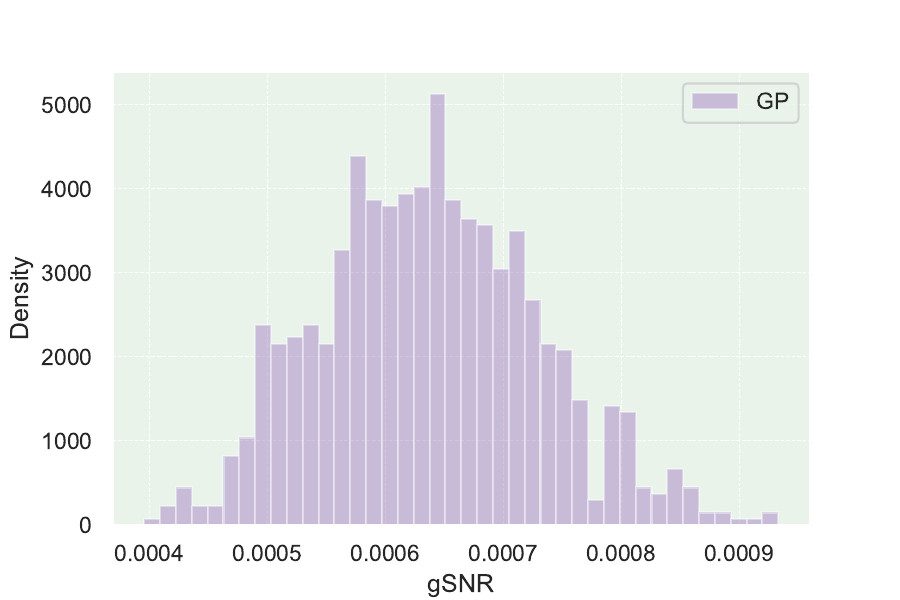}
	\caption*{$d=5,n=30000$}
	 
\end{minipage}\hfill
\begin{minipage}{0.25\textwidth}
	\includegraphics[width=\textwidth]{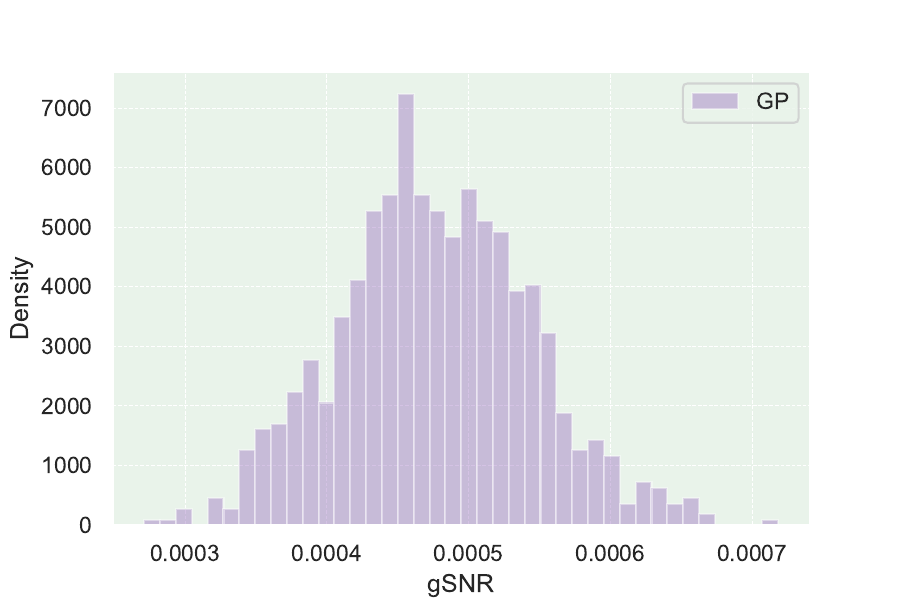}
	\caption*{$d=5,n=40000$}
	 
\end{minipage}\hfill
\begin{minipage}{0.25\textwidth}
	\includegraphics[width=\textwidth]{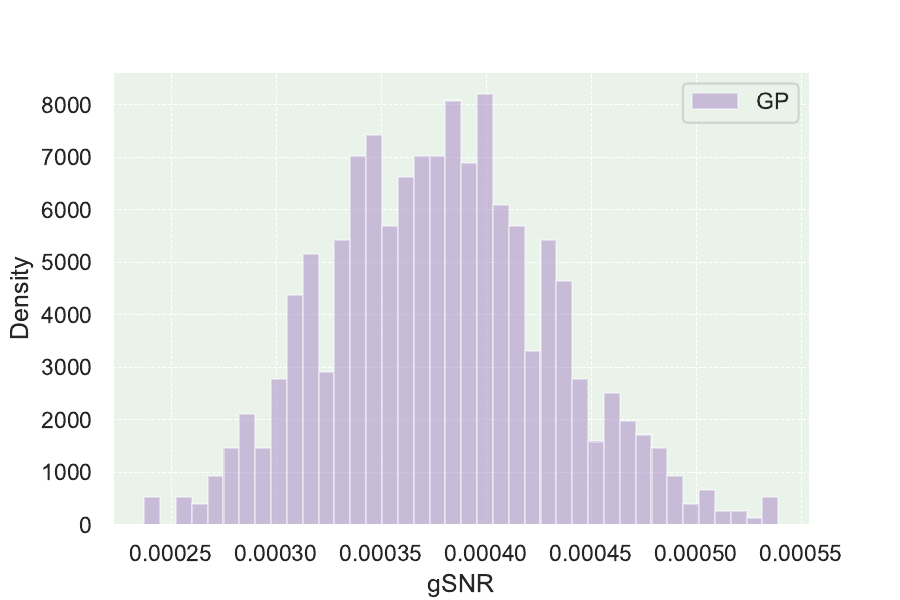}
	\caption*{$d=5,n=50000$}
	 
\end{minipage}\hfill
	\caption{Histogram of   gSNR of GP}
	\label{hist, gSNR}
\end{figure}
\newpage
\subsection{Additional simulation results in Section \ref{sec:minimax rate}}~
\begin{figure}[H] 
	\centering
	\begin{minipage}{0.5\textwidth}
		\includegraphics[width=\textwidth]{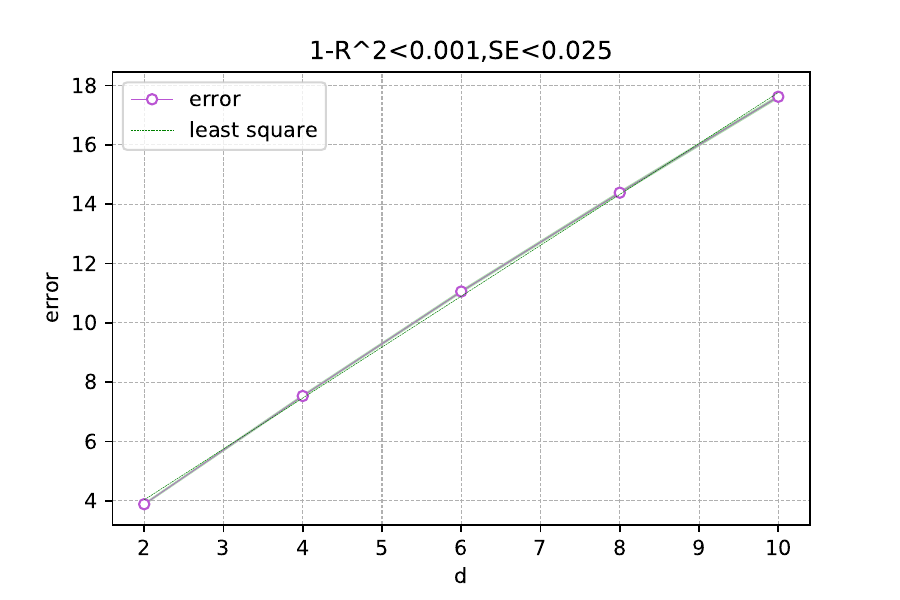}
	\end{minipage}\hfill
	\begin{minipage}{0.5\textwidth}
		\includegraphics[width=\textwidth]{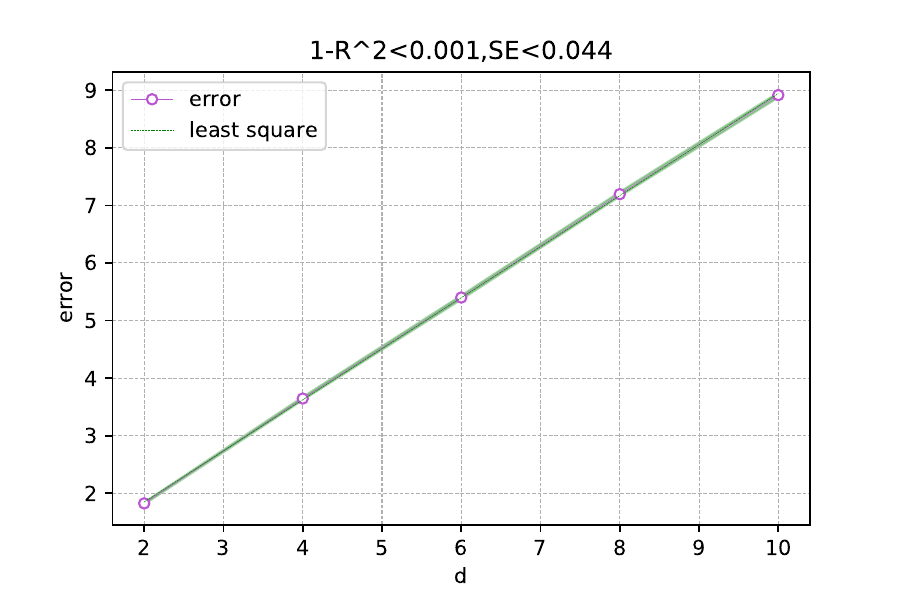}
	\end{minipage}\hfill
	\begin{minipage}{0.5\textwidth}
		\includegraphics[width=\textwidth]{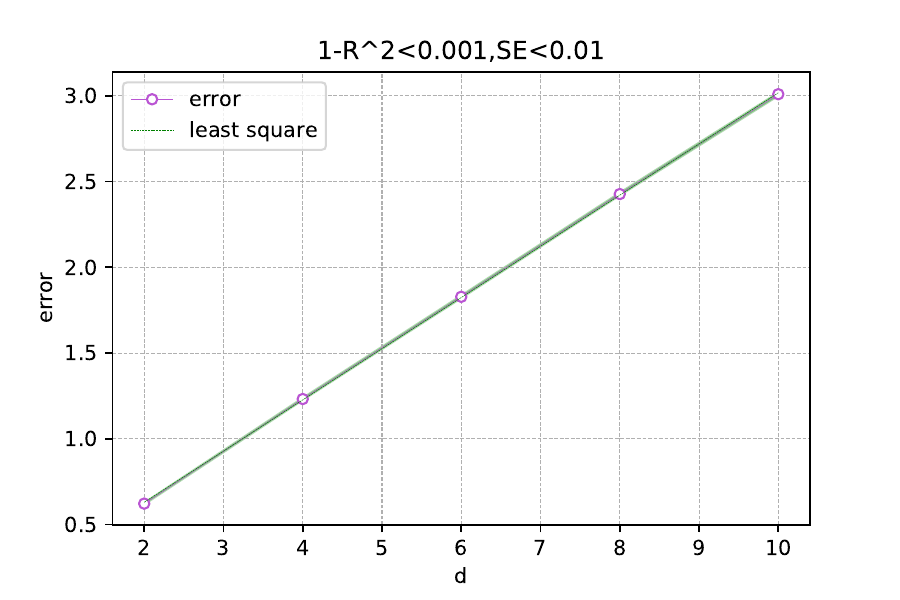}
	\end{minipage}\hfill
	\begin{minipage}{0.5\textwidth}
		\includegraphics[width=\textwidth]{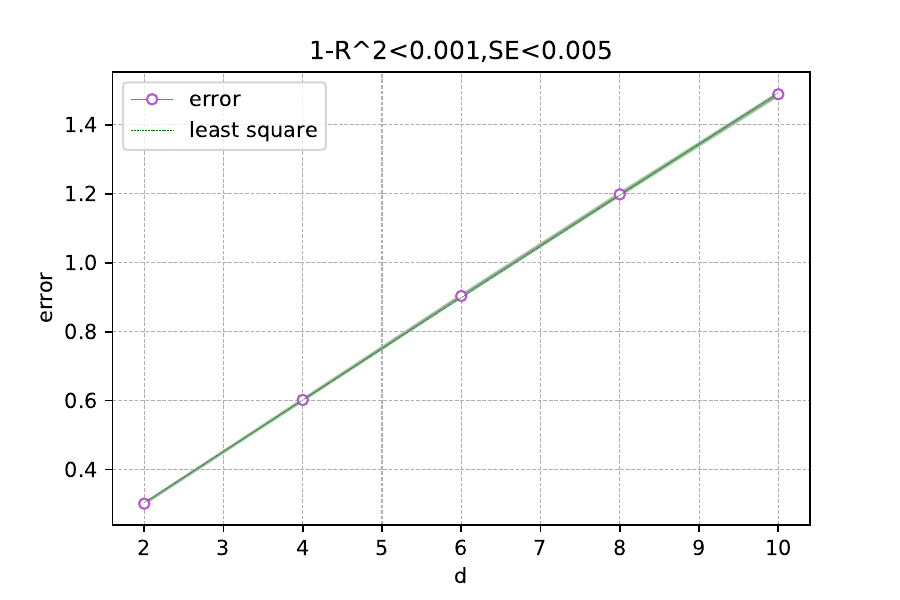}
	\end{minipage}\hfill
 	\begin{minipage}{0.5\textwidth}
		\includegraphics[width=\textwidth]{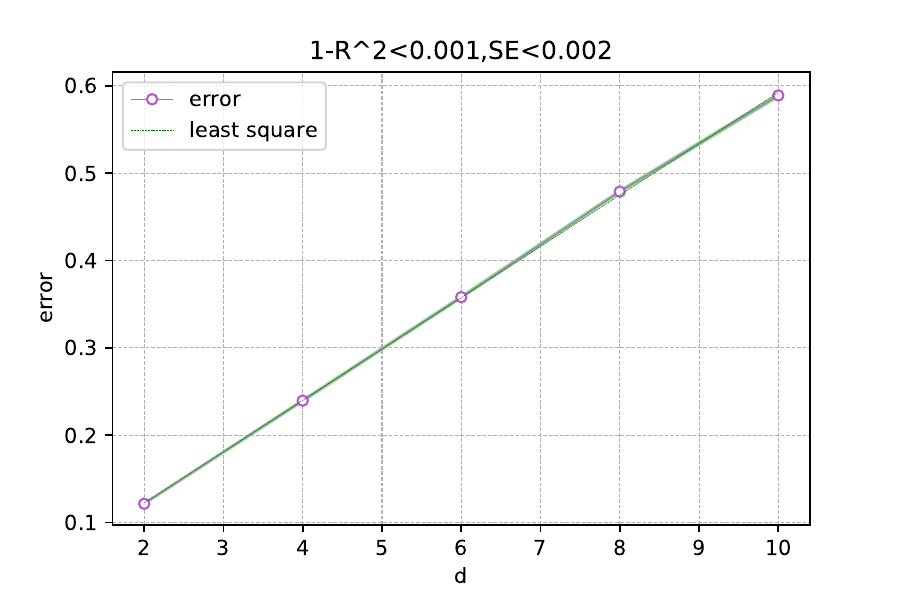}
	\end{minipage}\hfill
	\begin{minipage}{0.5\textwidth}
		\includegraphics[width=\textwidth]{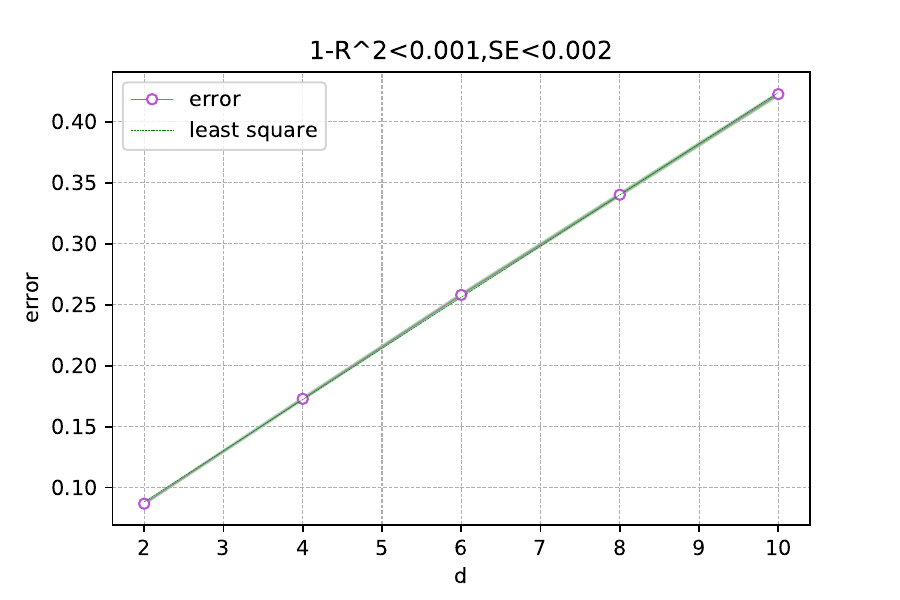}
	\end{minipage}\hfill
	\caption{Error with increasing $d$ for $   \delta\in\{0.01,0.02,0.03,0.04,0.06,0.07\}$}
\end{figure}

\begin{figure}[H] 
	\centering
	\begin{minipage}{0.5\textwidth}
		\includegraphics[width=\textwidth]{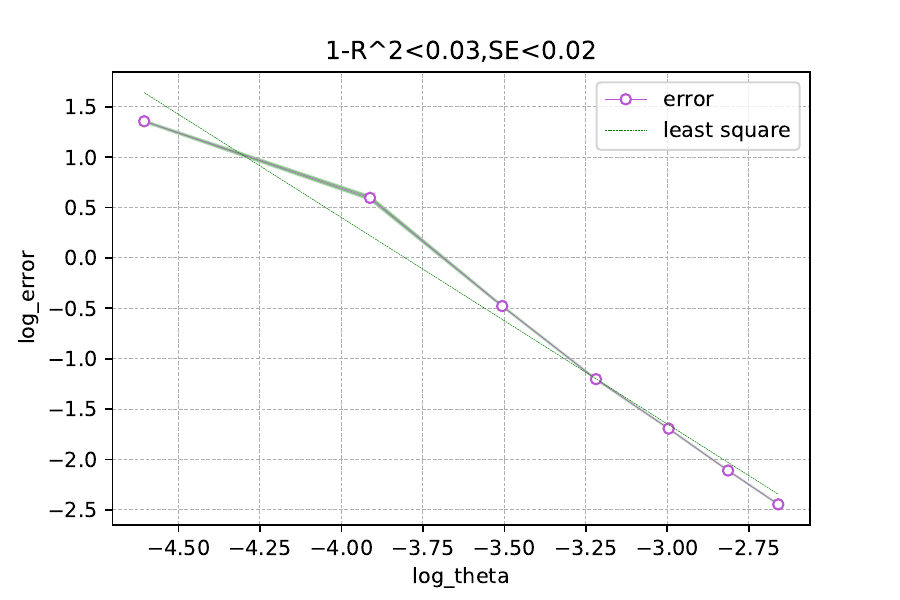}
	\end{minipage}\hfill
	\begin{minipage}{0.5\textwidth}
		\includegraphics[width=\textwidth]{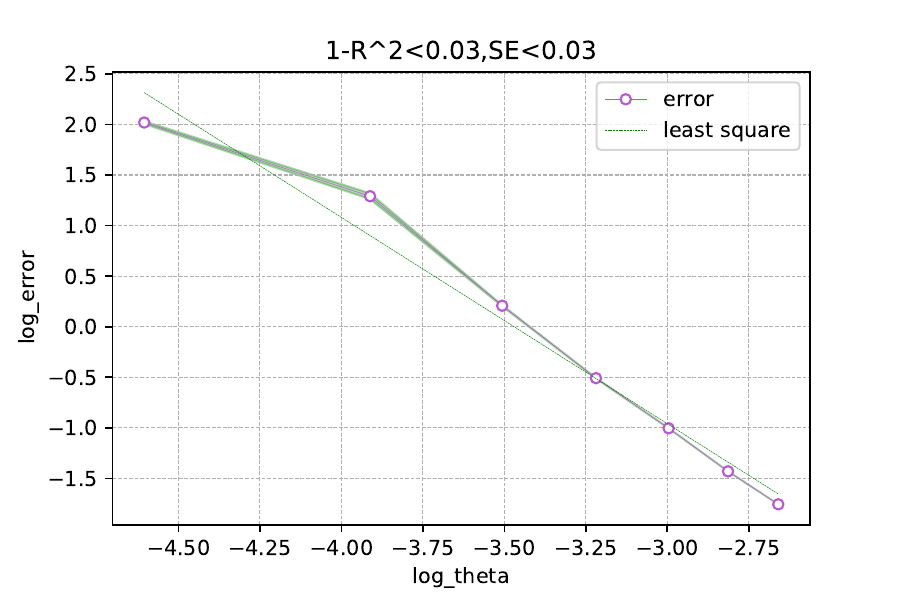}
	\end{minipage}\hfill
	\begin{minipage}{0.5\textwidth}
		\includegraphics[width=\textwidth]{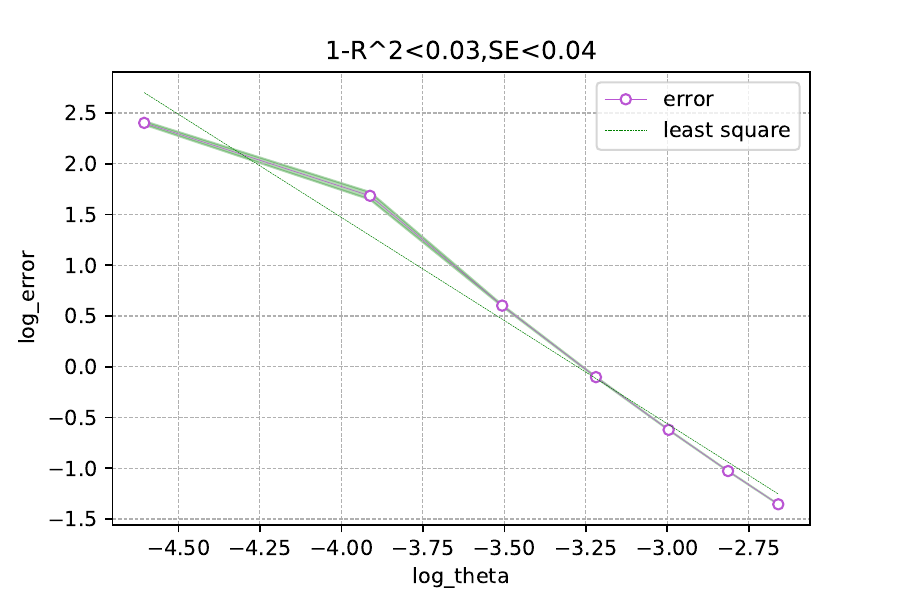}
	\end{minipage}\hfill
	\begin{minipage}{0.5\textwidth}
		\includegraphics[width=\textwidth]{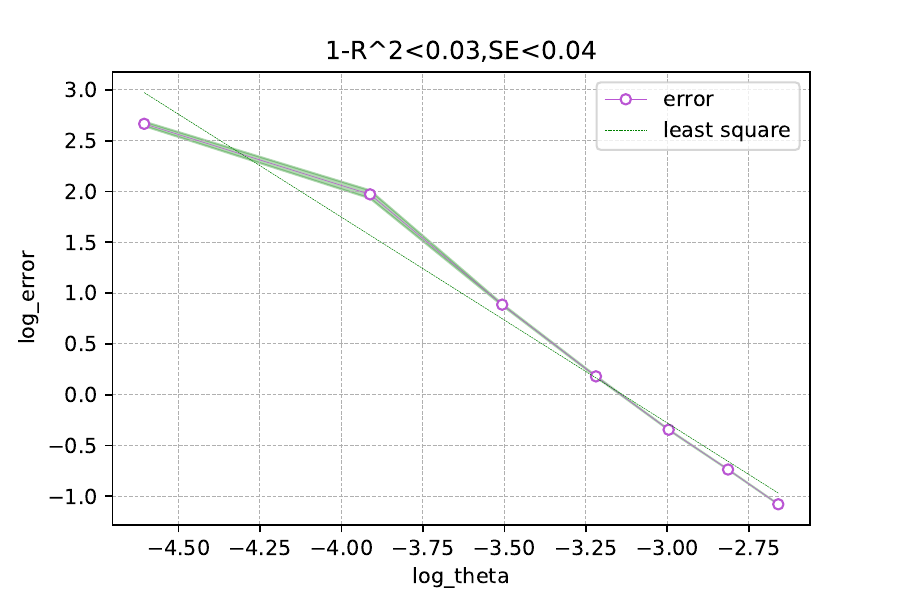}
	\end{minipage}\hfill
	\caption{Estimation error with increasing $\delta$ for $d\in\{2,4,6,8\}$. The slopes are $-2.049,-2.04,-2.032,-2.026$ respectively.}
\end{figure}

\end{document}